\DeclareMathAlphabet{\mathpzc}{OT1}{pzc}{m}{it}
\newcommand{\jx}{J(X)}
\newcommand{\jnx}{J_n(X)}
\renewcommand{\bigast}{\Asterisk}
\newcommand{\ui}{I}
\newcommand{\wt}{\widetilde}
\newcommand{\pioneh}{\pi_{1}(\bbh,b_0)}
\newcommand{\mca}{\mathcal{A}}
\newcommand{\mcc}{\mathcal{C}}
\newcommand{\mcd}{\mathcal{D}}
\newcommand{\mci}{\mathcal{I}}
\newcommand{\scru}{\mathscr{U}}
\newcommand{\bba}{\mathbb{A}}
\newcommand{\bbg}{\mathbb{G}}
\newcommand{\bbh}{\mathbb{H}}
\newcommand{\bbn}{\mathbb{N}}
\newcommand{\bbq}{\mathbb{Q}}
\newcommand{\bbr}{\mathbb{R}}
\newcommand{\bbt}{\mathbb{T}}
\newcommand{\bbz}{\mathbb{Z}}
\newcommand{\ov}{\overline}
\newtheorem{theorem}{Theorem}[section]
\newtheorem{lemma}[theorem]{Lemma}
\newtheorem{proposition}[theorem]{Proposition}
\newtheorem{corollary}[theorem]{Corollary}
\theoremstyle{definition}\newtheorem{definition}[theorem]{Definition}
\newtheorem{example}[theorem]{Example}
\newtheorem{remark}[theorem]{Remark}
\newtheorem{problem}[theorem]{Problem}
\begin{document}
\title[Infinitary commutativity and fundamental groups]{Infinitary commutativity and fundamental groups of topological monoids}
\keywords{topological monoid, fundamental group, transfinite commutativity, James reduced product}
\author[J. Brazas]{Jeremy Brazas}
\address{West Chester University\\ Department of Mathematics\\
West Chester, PA 19383, USA}
\email{jbrazas@wcupa.edu}

\author[P. Gillespie]{Patrick Gillespie}
\address{West Chester University\\ Department of Mathematics\\
West Chester, PA 19383, USA}
\email{pg915111@wcupa.edu}

\subjclass[2010]{Primary 57M05 ,54H13; Secondary 	55Q52, 	08A65 }
\keywords{infinite commutativity, infinite product, fundamental group, topological monoid, cotorsion-free group, James reduced product}

\date{\today}

\begin{abstract}
The well-known Eckmann-Hilton Principle may be applied to prove that fundamental groups of $H$-spaces are commutative. In this paper, we identify an infinitary analogue of the Eckmann-Hilton Principle that applies to fundamental groups of all topological monoids and slightly more general objects called pre-$\Delta$-monoids. In particular, we show that every pre-$\Delta$-monoid $M$ is  ``transfinitely $\pi_1$-commutative" in the sense that permutation of the factors of any infinite loop-concatenation indexed by a countably infinite order and based at the identity $e\in M$ is a homotopy invariant action. We also give a detailed account of fundamental groups of James reduced products and apply transfinite $\pi_1$-commutativity to make several computations.
\end{abstract}

\maketitle

\section{Introduction}

The Eckmann-Hilton Principle \cite{EckHilt} states that if a set $M$ is equipped with two unital binary operations $\ast$ and $\cdot$ satisfying the distributive law $(a\cdot b)\ast (c\cdot d)=(a\ast c)\cdot (b\ast d)$, then the operations $\ast$ and $\cdot$ agree and are associative and commutative. Applying this principle to the fundamental group $\pi_1(X,e)$ of any $H$-space $(X,e)$, it follows that $\pi_1(X,e)$ is abelian \cite[\textsection 3C, Exercise 5]{Hatcher}. Since every loop space $\Omega(X,e)$ is an $H$-space, one may apply this reasoning to confirm that all higher homotopy groups are abelian.

Infinitary operations akin to infinite sums and products in analysis arise naturally in the context of fundamental groups and are often highly non-commutative. These operations arise from the ability to form an infinite loop concatenation $\prod_{n=1}^{\infty}\alpha_n$ (with order type $\omega$) and a transfinite loop-concatenation $\prod_{\tau}\alpha_n$ (with dense order type) from a shrinking sequence of loops $\{\alpha_n\}$ based at a point. Computations of singular homology groups of spaces such as the Hawaiian earring \cite{EKH1ofHE} and other ``wild" spaces \cite{Edasingularonedim16,EdaFischer,HHwildhomology,KarimovRepovs} that admit non-trivial infinitary $\pi_1$-operations typically require abstract infinite abelian group theory \cite{Fuchs} since $H_1(X)$ is only the ``finite" abelianization of the group $\pi_1(X,x)$. For instance, if one projects the homotopy class of a product $\left[\prod_{n=1}^{\infty}\alpha_n\right]\in \pi_1(X,x)$ into $H_1(X)$, one may only decompose the resulting homology class into a finite factorization and permute those finitely many factors without changing the homology class. In general, permuting infinitely many factors will change the homology class.

Remarkably, the difference between finitary and infinitary commutativity seems to disappear in higher dimensions. The work of Eda-Kawamura \cite{EK00higher} on the higher homotopy groups of the $n$-dimensional Hawaiian earrings ($n\geq 2$), suggests that higher homotopy groups are always ``infinitely commutative" in the sense that if we are given an infinite/transfinite product of based $n$-loops, infinite permutation of these factors is a homotopy invariant action. In this paper, we show that this strong version of commutativity occurs in the fundamental group of any topological monoid (and slightly weaker objects called pre-$\Delta$-monoids). We define the point-wise property of a space $X$ being \textit{transfinitely $\pi_1$-commutative} at a point $x \in X$ (Definition \ref{deftransfinitecommut}) and show that every pre-$\Delta$-monoid is transfinitely $\pi_1$-commutative at its identity element (Theorem \ref{monoidtransfinitecommutethm}). We consider the infinite shuffle argument behind this result an infinitary analogue of the Eckmann-Hilton Principle. A crucial part of our argument is maintaining control over the sizes of the homotopies being used at each step.

We apply transfinite $\pi_1$-commutativity by investigating a fundamental construction, namely, the James reduced product $J(X)$ of a based space $(X,e)$. This construction, introduced by I.M. James \cite{James}, is important in classical homotopy theory since $J(X)$ is homotopy equivalent to $\Omega(\Sigma X)$ when $X$ is a connected CW-complex. Applying the fundamental group to this equivalence gives $\pi_1(J(X),e)\cong \pi_2(\Sigma X)\cong H_2(\Sigma X)\cong H_1(X)$, confirming that $\pi_1(J(X),e)$ is the abelianization of $\pi_1(X,e)$ for CW-complex $X$. In the second half of this paper, we give a detailed study of $\pi_1(J(X),e)$ in a much more general setting. Our main technical achievement in Section \ref{sectionjrproduct} is the proof that the inclusion $\sigma:X\to J(X)$ induces a surjection on fundamental groups for any path-connected Hausdorff space (Theorem \ref{surjectivitytheorem}). In particular, our analysis applies to spaces, such as the Hawaiian earring, in which there might be infinitary $\pi_1$-products at the basepoint. Combining Theorem \ref{surjectivitytheorem} with transfinite $\pi_1$-commutativity allows us to compute $\pi_1(J(X),e)$ for several important examples, including when $X$ is the Hawaiian earring, double Hawaiian earring, harmonic archipelago, and Griffiths twin cone. Our treatment of these examples illustrates that $\pi_1(J(X),e)$ appears to behave as a kind of \textit{transfinite abelianization} of $\pi_1(X,e)$ at $e\in X$, a notion that we formalize in Remark \ref{infinitaryabelianization}.

The remainder of this paper is structured as follows. In Section \ref{sectionprelim}, we settle notation and define the notion of a pre-$\Delta$-monoid (Definition \ref{predeltamonoiddef}). This generalization of topological monoids is critical for our study of James reduced products since $J(X)$ is always a pre-$\Delta$-monoid but can fail to be an actual topological monoid (Remark \ref{fretopmonoidremark}). In Section \ref{transfinitesection}, we define and study what it means for a space to be transfinitely $\pi_1$-commutative at a point. In particular, we prove a useful characterization of this property in terms of factorization through the Specker group $\bbz^{\bbn}$ (Theorem \ref{transfinitecharacteriationtheorem}). In Section \ref{sectionmonoids}, we prove that all pre-$\Delta$-monoids are transfinitely $\pi_1$-commutative at their identity element using an infinitary Eckmann-Hilton-type argument (Theorem \ref{monoidtransfinitecommutethm}). In Section \ref{sectionjrproduct}, we focus our attention on the fundamental group of the James reduced product $J(X)$ for a path-connected Hausdorff space $X$ with basepoint $e\in X$. Since our results apply to such a broad class of spaces, we begin this section with a detailed study of the topology of $J(X)$. Section \ref{sectionjrproduct} concludes with several computations of the group $\pi_1(J(X),e)$, the examples being chosen to illustrate a variety of techniques and applications of infinitary commutativity.

\section{Preliminaries and Notation}\label{sectionprelim}

For spaces $X,Y$, let $Y^X$ denote the space of maps $f:X\to Y$ with the compact-open topology generated by subbasic sets $\langle K,U\rangle=\{f\mid f(K)\subseteq U\}$ where $K\subseteq X$ is compact and $U\subseteq Y$ is open. If $A\subseteq X$, $B\subseteq Y$, then $(Y,B)^{(X,A)}\subseteq Y^X$ will denote the subspace of relative maps satisfying $f(A)\subseteq B$. If $A=\{x_0\}$ and $B=\{y_0\}$ contain only basepoints, we may simply write $(Y,y_0)^{(X,x_0)}$. The constant function $X\to Y$ at $y_0$ is denoted $c_{y_0}$. If $f:(X,x)\to (Y,y)$ is a based map, then $f_{\#}:\pi_1(X,x)\to \pi_1(Y,y)$ denotes the homomorphism induced on the fundamental group.

\begin{definition}
A sequence $\{f_n\}$ in $Y^X$ is \textit{null at }$y\in Y$ if for every open neighborhood $U$ of $y$, there is an $N\in\bbn$ such that $Im(f_n)\subseteq U$ for all $n\geq N$, i.e. if $\{f_n\}\to c_y$ in the compact-open topology. In particular, we refer to $\{f_n\}$ as a \textit{null-sequence}.
\end{definition}

A \textit{path} in a topological space $X$ is a continuous function $I\to X$ where $I=[0,1]$ is the closed unit interval. If $\alpha_1,\alpha_2,\dots,\alpha_n:I\to X$ is a sequence of paths satisfying $\alpha_i(1)=\alpha_{i+1}(0)$, we write $\prod_{n=1}^{m}\alpha_n$ or $\alpha_1\cdot\alpha_2\cdots\alpha_n$ for the n-fold \textit{concatenation} defined to be $\alpha_i$ on the interval $\left[\frac{i-1}{n},\frac{i}{n}\right]$. We write $\alpha^{-}(t)=\alpha(1-t)$ for the \textit{reverse} of a path $\alpha$. We write $\Omega(X,x_0)$ for the based loop space $(X,x_0)^{(I,\partial I)}$. If $[a,b],[c,d]\subseteq \ui$ and $\alpha:[a,b]\to X$, $\beta:[c,d]\to X$ are maps, we write $\alpha\equiv\beta$ if $\alpha=\beta\circ \lambda$ for some increasing homeomorphism $\lambda: [a,b]\to [c,d]$; if $\lambda$ is linear and if it does not create confusion, we will identify $\alpha$ and $\beta$. Throughout this paper, any space in which paths are considered is assumed to be path-connected. 

A \textit{monoid} is a set $M$ equipped a monoid operation, i.e. an associative, binary operation $\mu:M\times M\to M$, $\mu(a,b)=a \ast b$, which has an identity element $e\in M$. We utilize the following notation:
\begin{itemize}
\item If $f,g:X\to M$ are maps, let $f\ast g:X\to M$ denote the function $(f\ast g)(x)=f(x)\ast g(x)$.
\item If $f:X\to M$ is a map and $a\in M$, we let $a\ast f$ and $f\ast a$ denote the functions $X\to M$ given by $x\mapsto a\ast f(x)$ and $x\mapsto f(x)\ast a$ respectively.
\item More generally, we write $\bigast_{i=1}^{n}a_i$ to denote iterated products of elements $a_i\in M$. We also use this notation if one or more of the $a_i$ are functions $X\to M$ from a fixed set $X$, e.g. if $a_1,a_3\in M$ and $a_2,a_4:X\to M$ are functions, then $\bigast_{i=1}^{4}a_i:X\to M$ is the function $x\mapsto a_1a_2(x)a_3a_4(x)$.
\item If $A_1,A_2,\dots, A_n\subseteq M$, then $\bigast_{i=1}^{n}A_i$ denotes the product $\mu_n\left(\prod_{i=1}^{n}A_i\right)=\left\{\bigast_{i=1}^{n}a_i\mid a_i\in A_i\right\}$ where $\mu_n:M^n\to M$, $\mu_n(a_1,a_2,\dots,a_n)=\bigast_{i=1}^{n}a_i$ is the $n$-ary multiplication function. In particular, we write $A\ast B$ for $\{a\ast b\mid a\in A,b\in B\}$.

\end{itemize}
If $M$ is also equipped with a topology such that $\mu$ is continuous, then we call $M$ a \textit{topological monoid}. Although concatenation $(\alpha,\beta)\mapsto \alpha\cdot\beta$ is a continuous binary operation on $\Omega(X,x_0)$, it is only associative and unital up to homotopy. Despite this inconvenience, one may employ the homotopy equivalent Moore loop space $\Omega^{\ast}(X,x_0)$, which is a genuine topological monoid. We refer to \cite{WhiteheadEOH} for the details and remark that the homotopy equivalence $\Omega(X,x_0)\to \Omega^{\ast}(X,x_0)$ in \cite[Corollary 2.19]{WhiteheadEOH} holds for arbitrary path-connected spaces $X$.

Since we work in a very general context, there will be some instances when the operation $M\times M\to M$ is not jointly continuous, but is ``continuous enough" to ensure that all of our results apply. To formalize this idea, we recall the notion of a $\Delta$-generated space.

\begin{definition}
A topological space $X$ is $\Delta$\textit{-generated} if $U\subseteq X$ is open if and only if $\alpha^{-1}(U)$ is open in $\ui$ for all paths $\alpha:\ui\to X$.
\end{definition}

Note that a space $X$ is $\Delta$-generated if and only if it is the topological quotient of a disjoint sum of copies of $\ui$. Hence, every Peano continuum (connected, locally path-connected, compact metric space) is $\Delta$-generated and every $\Delta$-generated space is a locally path-connected $k$-space. For any space $X$, let $\Delta(X)$ be the space with the same underlying set as $X$ but with the topology generated by the sets $\alpha^{-1}(U)$ for all paths $\alpha:\ui\to X$. Certainly, this new topology is finer than the original topology on $X$ and is equal to the topology on $X$ if and only if $X$ is already $\Delta$-generated. Moreover, $\Delta(X)$ is $\Delta$-generated and is characterized by the universal property that if $f:D\to X$ is a map from a $\Delta$-generated space $D$, then $f:D\to \Delta(X)$ is also continuous. In particular, the identity function $\Delta(X)\to X$ is a weak homotopy equivalence. We refer to \cite{CSWdiff,FRdirectedhomotopy} for more on $\Delta$-generated spaces.

\begin{definition}\label{predeltamonoiddef}
A \textit{pre-$\Delta$-monoid} consists of a space $M$ and a monoid operation $\mu:M\times M\to M$, $\mu(a,b)=a\ast b$ on $M$ with the property that for any paths $\alpha,\beta:\ui\to M$, the product path $\alpha\ast\beta:\ui\to M$ is continuous. If $M$ is also a group, then we refer to $M$ as a \textit{pre-$\Delta$-group}. If $M$ is $\Delta$-generated, then we call $M$ a \textit{$\Delta$-monoid}.
\end{definition}

Every topological monoid is a pre-$\Delta$-monoid. The following proposition illustrates the sense in which a pre-$\Delta$-monoid is ``almost" a topological monoid. We are particularly motivated to develop results for pre-$\Delta$-monoids since the \textit{James reduced product} $\jx$ studied in Section \ref{sectionjrproduct} is always a pre-$\Delta$-monoid for Hausdorff $X$ but may not be a genuine topological monoid.

\begin{proposition}\label{predeltamoncharprop}
For any monoid $M$ with topology and operation $\mu:M^2\to M$, the following are equivalent:
\begin{enumerate}
\item $M$ is a pre-$\Delta$-monoid,
\item $\Delta(M)$ is a $\Delta$-monoid,
\item the $k$-ary operation $\mu_k:\Delta(M^k)\to M$, $\mu_{k}(a_1,a_2,\dots a_k)=\bigast_{i=1}^{k}a_i$ is continuous for all $k\geq 2$,
\item The operation $M^D\times M^D\to M^D$, $(f,g)\mapsto f\ast g$ is well-defined for any $\Delta$-generated space $D$.
\end{enumerate}
\end{proposition}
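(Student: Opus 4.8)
The plan is to prove the cycle $(1) \Rightarrow (3) \Rightarrow (4) \Rightarrow (1)$ together with the equivalence $(1) \Leftrightarrow (2)$, routing everything through condition (1). Two elementary facts about the $\Delta$-coreflection do all the work, and I would record them first. Since $I$ is $\Delta$-generated and the identity $\Delta(X) \to X$ is continuous, a path in $X$ is literally the same function as a path in $\Delta(X)$; consequently paths in $M$, in $\Delta(M)$, in $M^k$, and in $\Delta(M^k)$ are all nothing but ($k$-tuples of) paths in $M$. And unwinding the definition of a $\Delta$-generated space yields the continuity test: if $D$ is $\Delta$-generated, then $f \colon D \to X$ is continuous if and only if $f \circ \alpha \colon I \to X$ is continuous for every path $\alpha \colon I \to D$ — the forward direction is trivial, and for the converse $\alpha^{-1}(f^{-1}(V)) = (f\alpha)^{-1}(V)$ is open for every open $V \subseteq X$ and every path $\alpha$ in $D$, so $f^{-1}(V)$ is open in $D$.

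The only step with genuine content is $(1) \Rightarrow (3)$. Fix $k \geq 2$. Since $\Delta(M^k)$ is $\Delta$-generated, the continuity test reduces the claim to showing that $\mu_k \circ \gamma \colon I \to M$ is continuous for every path $\gamma \colon I \to \Delta(M^k)$. By the first fact such a $\gamma$ is just a tuple of paths $\gamma_1, \dots, \gamma_k \colon I \to M$, and then $\mu_k \circ \gamma = \bigast_{i=1}^{k} \gamma_i$ as functions $I \to M$. I would induct on $k$: the base case $k = 2$ is exactly hypothesis (1), and for the inductive step associativity of $\mu$ in $M$ gives $\bigast_{i=1}^{k+1}\gamma_i = \left(\bigast_{i=1}^{k}\gamma_i\right) \ast \gamma_{k+1}$ pointwise; the first factor is a path in $M$ by the inductive hypothesis, so applying (1) to it and to $\gamma_{k+1}$ shows the product is continuous.

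The remaining implications are formal. For $(3) \Rightarrow (4)$: given a $\Delta$-generated space $D$ and maps $f, g \colon D \to M$, the map $(f,g) \colon D \to M^2$ is continuous, hence $(f,g) \colon D \to \Delta(M^2)$ is continuous by the universal property of $\Delta$, and composing with the continuous $\mu_2$ shows $f \ast g = \mu_2 \circ (f,g) \colon D \to M$ is continuous — i.e. $f \ast g \in M^D$, which is (4). For $(4) \Rightarrow (1)$, specialize to $D = I$: that $f \ast g \in M^I$ whenever $f, g \in M^I$ is precisely (1). Finally $(1) \Leftrightarrow (2)$ uses only the first fact: $\Delta(M)$ is automatically $\Delta$-generated, paths in $M$ and in $\Delta(M)$ coincide, and $\alpha \ast \beta \colon I \to M$ is continuous if and only if $\alpha \ast \beta \colon I \to \Delta(M)$ is continuous (because $I$ is $\Delta$-generated), so $M$ is a pre-$\Delta$-monoid exactly when $\Delta(M)$ is a $\Delta$-monoid. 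The main obstacle is not really technical: the one thing to keep in mind is that a finite product of $\Delta$-generated spaces need not itself be $\Delta$-generated, which is exactly why condition (3) is phrased in terms of $\Delta(M^k)$ and why its proof must pass through the path-continuity test rather than arguing directly on $M^k$.
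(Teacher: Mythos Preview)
Your proof is correct and essentially the same as the paper's: the paper proves the cycle $(1)\Rightarrow(2)\Rightarrow(3)\Rightarrow(4)\Rightarrow(1)$, whereas you go $(1)\Rightarrow(3)\Rightarrow(4)\Rightarrow(1)$ and handle $(1)\Leftrightarrow(2)$ separately, but the mathematical content is identical. The one place you are slightly more careful is in making the induction on $k$ explicit for $(1)\Rightarrow(3)$; the paper's $(2)\Rightarrow(3)$ simply asserts that $\bigast_{i=1}^{k}\alpha_i:I\to\Delta(M)$ is continuous ``by assumption,'' which tacitly uses the same induction.
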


\begin{proof}
(1) $\Rightarrow$ (2) If $M$ is a pre-$\Delta$-monoid and $\alpha,\beta:I\to \Delta(M)$ are paths, then $\alpha,\beta:I\to M$ are also continuous. Thus $\alpha\ast\beta:I\to M$ is continuous by assumption. By the universal property of $\Delta(M)$, $\alpha\ast\beta:I\to \Delta(M)$ is continuous.

(2) $\Rightarrow$ (3) Suppose $\Delta(M)$ is a $\Delta$-monoid and let $U\subseteq M$ be open. Let $f=(\alpha_1,\alpha_2,\dots,\alpha_k):I\to M^k$ be any path. Then $\alpha_i:I\to \Delta(M)$ is continuous for each $i$ and we have that $\alpha=\mu_{k}\circ f=\bigast_{i=1}^{k}\alpha_i:I\to \Delta(M)$ is continuous by assumption. Since $\Delta(M)$ has a finer topology than $M$, $\alpha:I\to M$ is continuous. Thus $\alpha^{-1}(U)=f^{-1}(\mu_{k}^{-1}(U))$ is open in $\ui$. This shows that $\mu_{k}^{-1}(U)$ is in the topology of $\Delta(M^k)$, proving the continuity of $\mu_k$.

(3) $\Rightarrow$ (4) Suppose $\mu:\Delta(M^2)\to M$ is continuous, $D$ is $\Delta$-generated, and $f,g:D\to M$ are maps. Then $(f,g):D\to M^2$ is continuous and, since $D$ is $\Delta$-generated, so is $(f,g):D\to \Delta(M^2)$. Thus $f\ast g=\mu\circ (f,g)$ is continuous as a function $D\to M$.

(4) $\Rightarrow$ (1) is obvious since $\ui$ is $\Delta$-generated.
\end{proof}

Within the category of $\Delta$-generated spaces, the categorical product of two $\Delta$-generated spaces $X,Y$ is $\Delta(X\times Y)$. Hence, condition (3) in Proposition \ref{predeltamoncharprop} indicates that $M$ is a pre-$\Delta$-monoid if and only if $\Delta(M)$ is a monoid-object in the category of $\Delta$-generated spaces.

\section{Transfinite commutativity in fundamental groups}\label{transfinitesection}

\subsection{The Hawaiian earring and infinitary operations on loops}

To define the notion of transfinite $\pi_1$-commutativity, we employ the fundamental group of the Hawaiian earring. Let $C_n\subset \bbr^2$ denote the circle of radius $\frac{1}{n}$ centered at $\left(\frac{1}{n},0\right)$ and $\bbh=\bigcup_{n\in\bbn}C_n$ be the \textit{Hawaiian earring} with basepoint $b_0=(0,0)$. We define some important loops in $\bbh$ as follows:
\begin{itemize}
\item For each $n\in\bbn$, let $\ell_n\in \Omega(C_n,b_0)$ be the canonical counterclockwise loop traversing the circle $C_n$.
\item Let $\ell_{\infty}\in \Omega(\bbh,b_0)$ denote the loop defined as $\ell_{n}$ on the interval $\left[\frac{n-1}{n},\frac{n}{n+1}\right]$ and $\ell_{\infty}(1)=b_0$.
\item Let $\mcc\subseteq \ui$ be the middle third Cantor set. Write $\ui\backslash \mcc=\bigcup_{n\geq 1}\bigcup_{k=1}^{2^{n-1}} I_{n}^{k}$ where $I_{n}^{k}$ is an open interval of length $\frac{1}{3^n}$ and, for fixed $n$, the sets $I_{n}^{k}$ are indexed by their natural ordering in $\ui$. Let $\ell_{\tau}\in \Omega(\bbh,b_0)$ be the loop defined so that $\ell_{\tau}(\mcc)=b_0$ and $\ell_{\tau}:=\ell_{2^{n-1}+k-1}$ on $\overline{I_{n}^{k}}$ (see Figure \ref{elltaufigure}).
\end{itemize}

\begin{figure}[H]
\centering \includegraphics[height=0.6in]{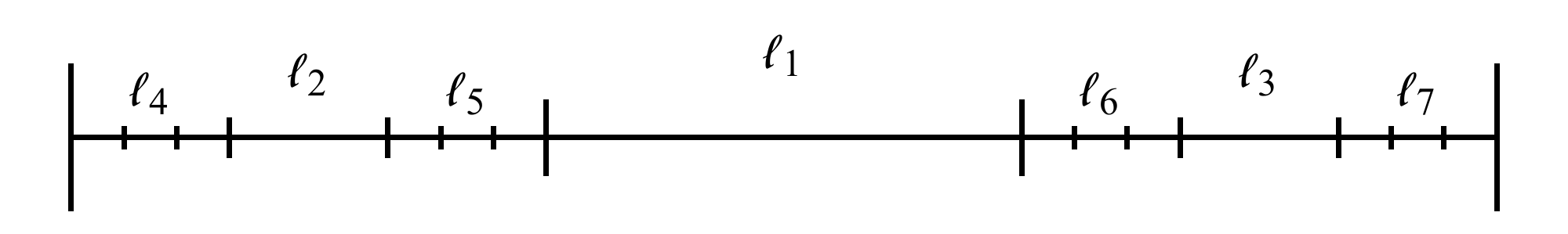}
\caption{\label{elltaufigure}The tranfinite concatenation loop $\ell_{\tau}$}
\end{figure}

The fundamental group $\pioneh$ is uncountable and not free. However, $\pioneh$ is locally free and naturally isomorphic to a subgroup of an inverse limit of free groups. For any set $A\subseteq \bbn$, let $\bbh_{A}=\bigcup_{n\in A}C_n$. As special cases, let $\bbh_{\geq n}=\bigcup_{m\geq n}C_m$ be the smaller homeomorphic copies of $\bbh$ and let $\bbh_{\leq n}=\bigcup_{m=1}^{n}C_n$ be the wedge of the first $n$-circles so that $\pi_1(\bbh_{\leq n},b_0)=F_n$ is the group freely generated by the elements $[\ell_1],[\ell_2],\dots,[\ell_n]$. The retractions $r_{n+1,n}:\bbh_{\leq n+1}\to \bbh_{\leq n}$ collapsing $C_{n+1}$ to $b_0$ induce an inverse sequence
\[\dots \to F_{n+1}\to F_n\to \dots \to F_2\to F_1\]on fundamental groups, in which $F_{n+1}\to F_n$ deletes the letter $[\ell_{n+1}]$ from a given word. The inverse limit $\check{\pi}_{1}(\bbh,b_0)=\varprojlim_{n}F_n$ is the first shape homotopy group \cite{MS82}. The retractions $r_n:\bbh\to \bbh_{\leq n}$, which collapse $\bbh_{\geq n+1}$ to $b_0$, induce a canonical homomorphism \[\psi:\pioneh\to \check{\pi}_{1}(\bbh,b_0)\text{ where }\psi([\alpha])=([r_1\circ\alpha],[r_2\circ\alpha],\dots).\]
It is well known that $\psi$ is injective \cite{MM} and more generally that $\pi_1$-injectivity into the first shape homotopy group holds for all one-dimensional metric spaces \cite{EK98oned}. Thus a homotopy class $[\alpha]\in\pi_1(\bbh,b_0)$ is trivial if and only if for every $n\in \bbn$, the projection $(r_n)_{\#}([\alpha])\in F_n$ reduces to the trivial word as a word in the letters $[\ell_1],[\ell_2],\dots,[\ell_n]$. Based on the injectivity of $\psi$, we also note that for every finite set $F\subseteq \bbn$, the inclusions $\bbh_F\to\bbh$ and $\bbh_{\bbn\backslash F}\to \bbh$ induce an isomorphism $\pi_1(\bbh_{F},b_0)\ast \pi_1(\bbh_{\bbn\backslash F},b_0)\to \pi_1(\bbh,b_0)$. Hence any $[\alpha]\in\pioneh$ factors uniquely as a finite product of homotopy classes alternating between $\bbh_F$ and $\bbh_{\bbn\backslash F}$.

\begin{remark}
If $f:(\bbh,b_0)\to (X,x)$ is a map, then the sequence $\{f\circ\ell_n\}$ is null at $x$. Conversely, if $\{\alpha_n\}$ is a null-sequence of loops based at $x$, then we may define a continuous map $f:\bbh\to X$ by $f\circ \ell_n=\alpha_n$. Hence, null-sequences of loops based at $x\in X$ are in bijective correspondence with maps $(\bbh,b_0)\to (X,x)$. 
\end{remark}

\begin{definition}
Suppose $\alpha_n\in \Omega(X,x)$ is a null-sequence and $f:\bbh\to X$ is the map with $f\circ \ell_n=\alpha_n$. 
\begin{itemize}
\item We write $\prod_{n=1}^{\infty}\alpha_n$ for the loop $f\circ\ell_{\infty}$, which we call the \textit{infinite concatenation} of the sequence $\{\alpha_n\}$.
\item We write $\prod_{\tau}\alpha_n$ for the loop $f\circ \ell_{\tau}$, which we call the \textit{transfinite concatenation} of the sequence $\{\alpha_n\}$.
\end{itemize}
\end{definition} 

%

Let $\bbt=\prod_{n\in\bbn}S^1$ be the infinite torus with identity element/basepoint $x_0$ and let $\eta:\bbh\to \bbt$ be the canonical embedding onto the subspace $\bigvee_{n=1}^{\infty} S^1$ of $\bbt$. The fundamental group $\pi_1(\bbt,x_0)$ may be identified with the Specker group $\bbz^{\bbn}$ where $[\eta\circ\ell_n]$ is identified with the unit vector $\mathbf{e}_n\in \bbz^{\bbn}$, which has $1$ in the $n$-th coordinate and $0$'s elsewhere. It is well-known that the induced homomorphism $\eta_{\#}:\pioneh\to\pi_1(\bbt,x_0)$ is surjective. In \cite{CC,Edafreesigmaproducts}, $\bbz^{\bbn}$ is described as a ``strong abelianization" of $\pioneh$. Our more general approach to infintite commutativity is founded upon the difference between the highly non-commutative group $\pioneh$ and the highly commutative group $\bbz^{\bbn}$.

The proof of the following lemma is sketched in \cite[Example 3.10]{BFTestMap}. Since it is important for the proof of Theorem \ref{transfinitecharacteriationtheorem}, we give a more detailed proof. It states that any non-trivial element $[\alpha]\in\pioneh$ where $\alpha$ has winding number $0$ around $C_n$ for all $n\in\bbn$ factors as the infinite concatenation of a null-sequence of commutators. 

\begin{lemma}\label{infinitecommutatorlemma}
If $1\neq [\alpha]\in \ker(\eta_{\#})$, then there is a null-sequence $\gamma_n\in\Omega(\bbh,b_0)$ such that $[\alpha]=\left[ \prod_{n=1}^{\infty}\gamma_n\right]$ and $\gamma_n\equiv\prod_{i=1}^{k_n}\left(\alpha_{n,i}\cdot\beta_{n,i}\cdot \alpha_{n,i}^{-}\cdot\beta_{n,i}^{-}\right)$ for loops $\alpha_{n,i},\beta_{n,i}\in \Omega(\bbh_{\geq n},b_0)$ and integers $k_n\in\bbn$.
\end{lemma}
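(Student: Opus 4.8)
The plan is to use the injectivity of $\psi\colon\pioneh\to\check{\pi}_1(\bbh,b_0)$ to build the null-sequence $\{\gamma_n\}$ inductively, peeling off one "layer" of the word $\alpha$ at each stage while keeping the tail supported on ever-smaller sub-earrings $\bbh_{\geq n}$. First I would fix a representative loop $\alpha$ and reparametrize so that, for the standard retractions $r_n\colon\bbh\to\bbh_{\leq n}$, the loop $r_1\circ\alpha$ is null-homotopic (this is exactly the hypothesis $[\alpha]\in\ker(\eta_\#)$ restricted to the first circle: the winding number around $C_1$ is zero, so $(r_1)_\#[\alpha]=1$ in $F_1=\bbz$). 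Actually, since $[\alpha]\in\ker(\eta_\#)$, the winding number of $\alpha$ around $C_n$ is $0$ for \emph{every} $n$, which is the crucial extra input that makes the factors come out as commutators rather than arbitrary loops.

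\medskip

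Here is the inductive step I would carry out. Suppose we have written $[\alpha]=[\gamma_1\cdot\gamma_2\cdots\gamma_{n-1}]\cdot[\delta_{n-1}]$ where each $\gamma_i\equiv\prod_{j=1}^{k_i}(\alpha_{i,j}\cdot\beta_{i,j}\cdot\alpha_{i,j}^-\cdot\beta_{i,j}^-)$ with $\alpha_{i,j},\beta_{i,j}\in\Omega(\bbh_{\geq i},b_0)$, and the remainder $\delta_{n-1}$ is a loop in $\bbh_{\geq n}$ with $[\delta_{n-1}]\in\ker(\eta_\#|_{\bbh_{\geq n}})$. Using the free-product decomposition $\pi_1(\bbh_{\geq n},b_0)\cong\pi_1(C_n,b_0)\ast\pi_1(\bbh_{\geq n+1},b_0)$ (a consequence of injectivity of $\psi$, as noted in the excerpt), and the fact that $\delta_{n-1}$ has winding number $0$ around $C_n$, I can write $[\delta_{n-1}]$ as a finite product of conjugates $\prod_j w_j[\ell_n]^{\pm1}w_j^{-1}$ with $w_j\in\pi_1(\bbh_{\geq n+1},b_0)$ whose total exponent sum of $[\ell_n]$ is zero; regrouping these conjugates in pairs of opposite sign turns the whole product into a product of commutators $[u_j,[\ell_n]^{\pm 1}]$ modulo a residual word lying in $\pi_1(\bbh_{\geq n+1},b_0)$. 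This gives $[\delta_{n-1}]=[\gamma_n]\cdot[\delta_n]$ with $\gamma_n$ of the required commutator form (with $k_n$ finitely many commutator factors, the $\alpha_{n,j},\beta_{n,j}\in\Omega(\bbh_{\geq n},b_0)$) and $\delta_n$ a loop in $\bbh_{\geq n+1}$ again in the kernel of the relevant $\eta_\#$. One must also arrange the reparametrizations so that $\gamma_n$ can be taken with image in a small neighborhood of $b_0$ — but since $\gamma_n\in\Omega(\bbh_{\geq n},b_0)$ and the circles $C_m$ for $m\geq n$ shrink to $b_0$, the sequence $\{\gamma_n\}$ is automatically null at $b_0$.

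\medskip

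Finally I would check that $[\alpha]=\left[\prod_{n=1}^\infty\gamma_n\right]$, i.e. that the residuals $\delta_n$ "converge to $b_0$" and contribute nothing in the limit. This is where the injectivity of $\psi$ does the real work: for any fixed $m$, the retraction $r_m$ kills $\gamma_n$ for all $n>m$ and also kills $\delta_m$, so $(r_m)_\#[\alpha]=(r_m)_\#[\gamma_1\cdots\gamma_m]=(r_m)_\#\left[\prod_{n=1}^\infty\gamma_n\right]$; since this holds for all $m$ and $\psi$ is injective, $[\alpha]=\left[\prod_{n=1}^\infty\gamma_n\right]$. The formal manipulation that $\prod_{n=1}^\infty\gamma_n$ is a well-defined element of $\pioneh$ follows from the definition of infinite concatenation of a null-sequence given just before the lemma.

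\medskip

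The main obstacle I expect is the bookkeeping in the inductive step: extracting the commutator form of $\gamma_n$ requires a careful, effective version of the statement "a word in $F(\{x\}\cup S)$ with zero $x$-exponent-sum lies in the normal closure of commutators of the form $[w,x]$," together with control of \emph{parametrizations} so that the resulting $\gamma_n$ is literally a concatenation $\prod_{j=1}^{k_n}(\alpha_{n,j}\cdot\beta_{n,j}\cdot\alpha_{n,j}^-\cdot\beta_{n,j}^-)$ of honest loops in $\bbh_{\geq n}$ (not merely homotopic to one), and so that the successive $\delta_n$ genuinely become loops in $\bbh_{\geq n+1}$ rather than merely homotopic to such. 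Keeping track of this at the level of loops (rather than homotopy classes) is the technical heart of the argument; everything else reduces to the two structural facts about $\pioneh$ quoted in the excerpt — injectivity of $\psi$ and the free-product decompositions it yields.
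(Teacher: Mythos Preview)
Your proposal is correct and follows essentially the same inductive scheme as the paper: peel off a commutator product $\gamma_n$ with $[\gamma_n]\in[G_n,G_n]$ (where $G_n=\pi_1(\bbh_{\geq n},b_0)$) leaving a remainder $\delta_n$ supported on $\bbh_{\geq n+1}$, then conclude $[\alpha]=\left[\prod_{n=1}^{\infty}\gamma_n\right]$ via shape injectivity (the paper phrases this last step through the homotopically Hausdorff property, an immediate consequence). Your flagged ``main obstacle'' about parametrizations is not a genuine difficulty: since you are free to choose the representative of the class $[\gamma_n]=[\delta_{n-1}][\delta_n]^{-1}\in[G_n,G_n]$, simply take one that is literally a finite concatenation of commutators of loops in $\bbh_{\geq n}$, and define $\delta_n$ as the concatenation of the $\bbh_{\geq n+1}$-factors in the free-product normal form of $\delta_{n-1}$ so that it genuinely has image in $\bbh_{\geq n+1}$.
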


\begin{proof}
Let $G=\pioneh$ and $G_n=\pi_1(\bbh_{\geq n},b_0)$ for $n\geq 2$ and let $[K,K]$ denote the commutator subgroup of a group $K$. Consider a non-contractible loop $\alpha\in\Omega(\bbh,b_0)$ such that $[\alpha]\in \ker(\eta_{\#})$. Set $\gamma_0=c_{b_0}$ and $\beta_0=\alpha$ so that $[\alpha]=[\gamma_0][\beta_0]$. Proceeding by induction, suppose that we have constructed loops $\gamma_i\in\Omega(\bbh_{\geq i},b_0)$, $0\leq i\leq n-1$ and $\beta_{n-1}\in \Omega(\bbh_{\geq n},b_0)$ such that
\begin{itemize}
\item $[\alpha]=\left(\prod_{i=0}^{n-1}[\gamma_i]\right)[\beta_{n-1}]$,
\item $[\gamma_i]\in [G_i,G_i]$,
\item $[\beta_{n-1}]\in \ker(\eta_{\#})$.
\end{itemize}
Since $[\beta_{n-1}]\in \pi_1(\bbh_{\geq n},b_0)\cap \ker(\eta_{\#})$, we have factorization $[\beta_{n-1}]=\prod_{i=1}^{m}\left([\delta_i][\ell_{n}]^{\epsilon_i}\right)$ where the loop $\delta_i$ has image in $\bbh_{\geq n+1}$ and $\sum_{i=1}^{m}\epsilon_i=0$. Since $\beta_{n-1}$ and $\beta_{n}=\prod_{i=1}^{m}\delta_i$ are homologous in $\bbh_{\geq n}$, there is a loop $\gamma_{n}\in\Omega(\bbh_{\geq n},b_0)$ such that $[\gamma_n]\in [G_n,G_n]$ and $[\beta_{n-1}]=[\gamma_{n}][\beta_{n}]$. Since $[\beta_{n-1}]\in \ker(\eta_{\#})$ and $[\gamma_{n}]\in [G_n,G_n]\leq [G,G]\leq \ker (\eta_{\#})$, we have $[\beta_{n}]\in \ker(\eta_{\#})$. This completes the induction.

The induction provides a null sequence $\{\gamma_n\}$ such that $[\gamma_n]\in[G_n,G_n]$ and $[\alpha]=\left(\prod_{i=1}^{n}[\gamma_i]\right)[\beta_n]$ for all $n\in\bbn$. Notice that $\left[\prod_{n=1}^{\infty}\gamma_n\right]^{-1}[\alpha]$ is represented by each loop in the null-sequence $\left\{\left(\prod_{i=n+1}^{\infty}\gamma_i\right)^{-}\cdot\beta_n\right\}_{n\in\bbn}$. Thus, since $\bbh$ is homotopically Hausdorff at $b_0$, we have $\left[\prod_{n=1}^{\infty}\gamma_n\right]^{-1}[\alpha]=1$ in $\pioneh$. We conclude that $[\alpha]=\left[\prod_{n=1}^{\infty}\gamma_n\right]$ where $\gamma_n$ has the form described in the statement of the lemma.
\end{proof}

\subsection{Defining and characterizing transfinite $\pi_1$-commutativity}

Since $\ell_{\infty}$ and $\ell_{\tau}$ are fixed, permuting the terms of a null-sequence $\{\alpha_n\}$ will typically change the homotopy class of both $\prod_{n=1}^{\infty}\alpha_n$ and $\prod_{\tau}\alpha_n$. The following definition describes when such permutations are homotopy invariant.

\begin{definition}\label{deftransfinitecommut}
A space $X$ is 
\begin{enumerate}
\item \textit{infinitely $\pi_1$-commutative at} $x\in X$ if for every null sequence $\alpha_n\in \Omega(X,x)$ and bijection $\phi:\bbn\to\bbn$, we have $[\prod_{n=1}^{\infty}\alpha_n]=[\prod_{n=1}^{\infty}\alpha_{\phi(n)}]$ in $\pi_1(X,x)$.
\item \textit{transfinitely $\pi_1$-commutative at} $x\in X$ if for every null sequence $\alpha_n\in \Omega(X,x)$ and bijection $\phi:\bbn\to\bbn$, we have $[\prod_{\tau}\alpha_n]=[\prod_{\tau}\alpha_{\phi(n)}]$ in $\pi_1(X,x)$.
\end{enumerate}
\end{definition}

The Rearrangement Theorem in Calculus says that a convergent infinite sum $\sum_{n=1}^{\infty}a_n$ of real numbers is absolutely convergent if and only if for every bijection $\phi:\bbn\to\bbn$, we have $\sum_{n=1}^{\infty}a_n=\sum_{n=1}^{\infty}a_{\phi(n)}$. The property of being infinitely $\pi_1$-commutative is analogous.

\begin{remark}\label{remarktransfcomimpliesabelian}
If $X$ is infinitely or transfinitely $\pi_1$-commutative at $x\in X$, then $\pi_1(X,x)$ is commutative in the usual sense. For example, if $X$ is transfinitely $\pi_1$-commutative at $x$ and $\alpha,\beta\in\Omega(X,x)$, we may define $f:\bbh\to\bbh$ by $f\circ\ell_1=\alpha$, $f\circ\ell_3=\beta$, and $f\circ\ell_n=c_x$ if $n\notin \{1,3\}$. Any bijection $\phi$ that permutes $1$ and $3$ gives $[\alpha\cdot\beta]=[\prod_{\tau}f\circ\ell_n]=[\prod_{\tau}f\circ\ell_{\phi(n)}]=[\beta\cdot\alpha]$. The argument for the infinite case is similar.
\end{remark}

\begin{theorem}\label{transfinitecharacteriationtheorem}
For any space $X$ and $x\in X$, the following are equivalent:
\begin{enumerate}
\item for every map $f:(\bbh,b_0)\to (X,x)$, there exists a homomorphism $g:\pi_1(\bbt,x_0)\to \pi_1(X,x)$ such that $f_{\#}=g\circ\eta_{\#}$,
\item $X$ is transfinitely $\pi_1$-commutative at $x\in X$,
\item $X$ is infinitely $\pi_1$-commutative at $x\in X$.
\end{enumerate}
\end{theorem}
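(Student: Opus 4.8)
The plan is to establish the cycle of implications $(1)\Rightarrow(2)\Rightarrow(3)\Rightarrow(1)$. Throughout, the basic observation is that a bijection $\phi:\bbn\to\bbn$ carries null-sequences to null-sequences, so $\{\ell_{\phi(n)}\}$ is again a null-sequence in $\bbh$ and determines a map $h_\phi:\bbh\to\bbh$ with $h_\phi\circ\ell_n=\ell_{\phi(n)}$; if $f:(\bbh,b_0)\to(X,x)$ realizes a null-sequence $\{\alpha_n\}\subseteq\Omega(X,x)$ (i.e.\ $f\circ\ell_n=\alpha_n$), then $f\circ h_\phi$ realizes $\{\alpha_{\phi(n)}\}$, so $\big[\prod_{\tau}\alpha_n\big]=f_{\#}([\ell_{\tau}])$ and $\big[\prod_{\tau}\alpha_{\phi(n)}\big]=f_{\#}\big((h_\phi)_{\#}([\ell_{\tau}])\big)$, with the analogous statements for $\ell_{\infty}$. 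For $(1)\Rightarrow(2)$: since $\ell_{\tau}$ traverses each $\ell_n$ exactly once and positively, $\eta_{\#}([\ell_{\tau}])$ is the all-ones element of $\pi_1(\bbt,x_0)=\bbz^{\bbn}$, and the same is true of $\eta_{\#}\big((h_\phi)_{\#}([\ell_{\tau}])\big)=\eta_{\#}\big(\big[\prod_{\tau}\ell_{\phi(n)}\big]\big)$ because permuting the factors still traverses each circle exactly once; hence $\eta_{\#}([\ell_{\tau}])=\eta_{\#}\big((h_\phi)_{\#}([\ell_{\tau}])\big)$, and applying the homomorphism $g$ from (1) gives $f_{\#}([\ell_{\tau}])=f_{\#}\big((h_\phi)_{\#}([\ell_{\tau}])\big)$, i.e.\ $\big[\prod_{\tau}\alpha_n\big]=\big[\prod_{\tau}\alpha_{\phi(n)}\big]$.

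For $(2)\Rightarrow(3)$ I would realize an infinite concatenation as a transfinite concatenation of a padded null-sequence. By the definition of $\ell_{\tau}$, the rightmost complementary interval $I_{n}^{2^{n-1}}$ at level $n$ carries the label $2^{n}-1$; its closure is $\big[1-2\cdot 3^{-n},\,1-3^{-n}\big]$, and as $n$ grows these closures form a strictly increasing sequence of intervals converging to $1$, while (a routine check of labels) no other complementary interval has a label of the form $2^{m}-1$. Hence, given a null-sequence $\{\alpha_n\}$, the null-sequence with $\beta_{2^{n}-1}=\alpha_n$ and $\beta_{k}=c_x$ for all other $k$ has $\prod_{\tau}\beta_{k}$ equal to $\alpha_n$ on $\overline{I_{n}^{2^{n-1}}}$ and constant elsewhere; collapsing the constant stretches is a monotone reparametrization of $I$ fixing $\partial I$, so $\big[\prod_{\tau}\beta_{k}\big]=\big[\prod_{n=1}^{\infty}\alpha_n\big]$. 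Doing this with $\{\alpha_{\phi(n)}\}$ instead gives a null-sequence $\beta'$ with $\big[\prod_{\tau}\beta'_{k}\big]=\big[\prod_{n=1}^{\infty}\alpha_{\phi(n)}\big]$, and $\beta'$ is exactly the rearrangement of $\beta$ under the bijection of $\bbn$ that sends $2^{n}-1\mapsto 2^{\phi(n)}-1$ and fixes every other index. Transfinite $\pi_1$-commutativity then gives $\big[\prod_{\tau}\beta_{k}\big]=\big[\prod_{\tau}\beta'_{k}\big]$, hence $\big[\prod_{n=1}^{\infty}\alpha_n\big]=\big[\prod_{n=1}^{\infty}\alpha_{\phi(n)}\big]$.

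For $(3)\Rightarrow(1)$, since $\eta_{\#}$ is surjective it suffices to show $\ker(\eta_{\#})\subseteq\ker(f_{\#})$ for every $f:(\bbh,b_0)\to(X,x)$; then $g(\eta_{\#}(a)):=f_{\#}(a)$ is the required homomorphism. Fix $1\neq[\alpha]\in\ker(\eta_{\#})$ and use Lemma~\ref{infinitecommutatorlemma} to write $[\alpha]=\big[\prod_{n=1}^{\infty}\gamma_n\big]$ with $\gamma_n\equiv\prod_{i=1}^{k_n}\big(\alpha_{n,i}\cdot\beta_{n,i}\cdot\alpha_{n,i}^{-}\cdot\beta_{n,i}^{-}\big)$ and $\alpha_{n,i},\beta_{n,i}\in\Omega(\bbh_{\geq n},b_0)$. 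Then $f_{\#}([\alpha])$ is represented by $\prod_{n=1}^{\infty}(f\circ\gamma_n)$, and since the loops $a_{n,i}:=f\circ\alpha_{n,i}$ and $b_{n,i}:=f\circ\beta_{n,i}$ have images shrinking to $x$ as $n\to\infty$, reading them off in order yields a null-sequence whose infinite concatenation also represents $f_{\#}([\alpha])$. Infinite $\pi_1$-commutativity now permits the permutation of $\bbn$ that swaps, inside each four-term block $(a_{n,i},b_{n,i},a_{n,i}^{-},b_{n,i}^{-})$, its middle two entries; hence $f_{\#}([\alpha])$ is represented by $\prod_{(n,i)}\big(a_{n,i}\cdot a_{n,i}^{-}\cdot b_{n,i}\cdot b_{n,i}^{-}\big)$. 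Each factor $a_{n,i}\cdot a_{n,i}^{-}\cdot b_{n,i}\cdot b_{n,i}^{-}$ is null-homotopic rel $\partial I$ by a retracing homotopy whose image lies in $\mathrm{Im}(a_{n,i})\cup\mathrm{Im}(b_{n,i})$; because these images shrink to $x$, the corresponding null-homotopies form a null-sequence and can be spliced together into a single homotopy rel $\partial I$ carrying $\prod_{(n,i)}\big(a_{n,i}\cdot a_{n,i}^{-}\cdot b_{n,i}\cdot b_{n,i}^{-}\big)$ to $c_x$. Thus $f_{\#}([\alpha])=1$, completing $(3)\Rightarrow(1)$.

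The implication $(3)\Rightarrow(1)$ is where I expect the real difficulty, for two reasons. First, one must invoke the \emph{controlled} commutator decomposition of Lemma~\ref{infinitecommutatorlemma}, in which the commutators defining $\gamma_n$ come from the shrinking subearring $\bbh_{\geq n}$: an arbitrary commutator factorization of $[\alpha]$ would not produce loops whose $f$-images shrink to $x$, and without that the splicing step fails. Second --- and this is why knowing that $\pi_1(X,x)$ is abelian (Remark~\ref{remarktransfcomimpliesabelian}) is not by itself enough --- the null-homotopy of a bare commutator $a\cdot b\cdot a^{-}\cdot b^{-}$ guaranteed by commutativity need not have small image, so an infinite concatenation of such commutators need not be null-homotopic (as the harmonic archipelago shows). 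The rearrangement into the form $\prod(a\cdot a^{-}\cdot b\cdot b^{-})$ is precisely what replaces each factor by one that admits a null-homotopy with image controlled by $\mathrm{Im}(a)\cup\mathrm{Im}(b)$, and only then does the shrinking of these sets let the homotopies be glued.
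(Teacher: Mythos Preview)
Your proof is correct and follows essentially the same route as the paper: the same cycle $(1)\Rightarrow(2)\Rightarrow(3)\Rightarrow(1)$, the same winding-number argument for $(1)\Rightarrow(2)$, the same padding by constants at the indices $2^n-1$ for $(2)\Rightarrow(3)$, and the same use of Lemma~\ref{infinitecommutatorlemma} followed by swapping the middle two terms of each four-block for $(3)\Rightarrow(1)$. The only cosmetic difference is in the final step of $(3)\Rightarrow(1)$: the paper observes that the rearranged loop $\prod_{m}\delta_{\phi(m)}$ is already null-homotopic \emph{in $\bbh$} (as an infinite concatenation of consecutive inverse pairs) and then pushes forward by $f$, whereas you push forward first and then glue small retracing null-homotopies in $X$; both are valid, and your closing paragraph correctly identifies why the control on the sizes of the $\alpha_{n,i},\beta_{n,i}$ from Lemma~\ref{infinitecommutatorlemma} is indispensable.
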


\begin{proof}
(1) $\Rightarrow$ (2) Suppose that $X$ satisfies (1), $f:(\bbh,b_0)\to (X,x)$ is a map, and $\phi:\bbn\to\bbn$ is a bijection. By assumption, there is a homomorphism $g:\pi_1(\bbt,x_0)\to \pi_1(X,x)$ such that $g\circ \eta_{\#}=f_{\#}$. Set $\alpha_n=f\circ\ell_n$ for $n\in\bbn$ and recall that $\ell_{\tau}=\prod_{\tau}\ell_n$. If we identify $\pi_1(\bbt,x_0)$ with $\bbz^{\bbn}$ in the natural way, then $\eta_{\#}([\ell_{\tau}])=(1,1,1,\dots)$. Since each $\ell_n$ appears exactly once in the concatenation $\prod_{\tau}\ell_{\phi(n)}$, we also have $\eta_{\#}([\prod_{\tau}\ell_{\phi(n)}])=(1,1,1,\dots)$. Therefore, $[\prod_{\tau}\alpha_n]=f_{\#}([\prod_{\tau}\ell_n])=g(\eta_{\#}([\prod_{\tau}\ell_n]))=g(1,1,1,\dots)$ and similarly, we have $[\prod_{\tau}\alpha_{\phi(n)}]=g(1,1,1,\dots)$. Thus $[\prod_{\tau}\alpha_n]=[\prod_{\tau}\alpha_{\phi(n)}]$, proving transfinite $\pi_1$-commutativity.

(2) $\Rightarrow$ (3) Suppose $X$ is transfinitely $\pi_1$-commutative at $x\in X$, $\alpha_n\in\Omega(X,x)$ is a null-sequence, and $\phi:\bbn\to\bbn$ is a bijection. Define maps $f,g:\bbh\to X$ so that $f\circ \ell_{2^k-1}=\alpha_k$, $g\circ\ell_{2^k-1}=\beta_k$ for all $k\in\bbn$ and $f\circ\ell_n=g\circ\ell_n=c_x$ otherwise. Choose any bijection $\psi:\bbn\to\bbn$ satisfying $\psi(2^k-1)=2^{\phi(k)}-1$ for all $k\in\bbn$. By this choice, we have $\prod_{\tau}f\circ\ell_n\simeq \prod_{k=1}^{\infty}\alpha_k$ and $\prod_{\tau}g\circ\ell_n\simeq \prod_{k=1}^{\infty}\alpha_{\phi(k)}$ by collapsing constant subpaths. Thus $\left[\prod_{k=1}^{\infty}\alpha_k\right]=\left[\prod_{\tau}f\circ\ell_n\right] =\left[\prod_{\tau}f\circ\ell_{\psi(n)}\right]=\left[\prod_{\tau}g\circ\ell_n\right]=\left[\prod_{k=1}^{\infty}\alpha_{\phi(k)}\right]$, where the middle equality follows from transfinite $\pi_1$-commutativity.

(3) $\Rightarrow$ (1) Suppose $X$ is infinitely $\pi_1$-commutative at $x\in X$ and $f:(\bbh,b_0)\to (X,x)$ is a map. To show that such a homomorphism $g$ exists, it suffices to show that $f_{\#}(\ker(\eta_{\#}))=1$. Let $1\neq[\alpha]\in \ker(\eta_{\#})$. By Lemma \ref{infinitecommutatorlemma}, we may assume 
\begin{equation}\label{eqn1}\tag{$\ast$}
\alpha \equiv\prod_{n=1}^{\infty}\left(\prod_{i=1}^{k_n}\alpha_{n,i}\cdot\beta_{n,i}\cdot\alpha_{n,i}^{-}\cdot\beta_{n,i}^{-}\right)
\end{equation} where $\alpha_{n,i},\beta_{n,i}\in \Omega(\bbh_{\geq n},b_0)$. Notice the factors of this concatenation have order type $\omega$, so we may write $\alpha\equiv\prod_{m=1}^{\infty}\delta_m$ where $\delta_m$ are the factors $\alpha_{n,i}$, $\beta_{n,i}$, $\beta_{n,i}^{-}$, or $\beta_{n,i}^{-}$ appearing in the same order as the concatenation in (\ref{eqn1}). Let $\phi:\bbn\to\bbn$ be the bijection defined so that for all $j\in\bbn$, we have $\phi(4j-3)=4j-3$, $\phi(4j-2)=4j-1$, $\phi(4j-1)=4j-2$, and $\phi(4j)=4j$. By infinite $\pi_1$-commutativity, we have $[f\circ\alpha]=\left[\prod_{m=1}^{\infty}f\circ\delta_m\right]=\left[\prod_{m=1}^{\infty}f\circ\delta_{\phi(m)}\right]$. The bijection $\phi$ was defined so that 
\begin{equation*}\label{eqn2}
\prod_{m=1}^{\infty}\delta_{\phi(m)} \equiv\prod_{n=1}^{\infty}\left(\prod_{i=1}^{k_n}\alpha_{n,i}\cdot\alpha_{n,i}^{-}\cdot\beta_{n,i}\cdot\beta_{n,i}^{-}\right).
\end{equation*}
The concatenation on the right is a reparemeterization of an infinite concatenation of consecutive inverse pairs and therefore is null-homotopic in $\bbh$. Thus, since $\left[\prod_{m=1}^{\infty}\delta_{\phi(m)}\right]=1$ in $\pioneh$, we have $f_{\#}([\alpha])= f_{\#}\left(\left[\prod_{m=1}^{\infty}\delta_{\phi(m)}\right]\right)=1$ in $\pi_1(X,x)$.
\end{proof}

In Condition (1) of Theorem \ref{transfinitecharacteriationtheorem}, note that if such a $g$ exists, it is necessarily unique by the surjectivity of $\eta_{\#}$. For the remainder of this subsection, we consider some consequences of Theorem \ref{transfinitecharacteriationtheorem}. Applying, once again, the fact that $\eta_{\#}([\ell_{\tau}])=\eta_{\#}([\ell_{\infty}])$, the next result follows immediately.

\begin{corollary}\label{basiccorollary}
If $X$ has transfinite $\pi_1$-commutativity at $x\in X$, then for every null-sequence $\alpha_n\in\Omega(X,x)$, we have $\left[\prod_{\tau}\alpha_n\right]=\left[\prod_{n=1}^{\infty}\alpha_n\right]$.
\end{corollary}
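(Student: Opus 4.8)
The plan is to invoke Theorem \ref{transfinitecharacteriationtheorem} to pass from the point-wise commutativity hypothesis to the factorization property (Condition (1)), and then exploit the elementary fact, already used twice in the proof of that theorem, that $\eta_{\#}([\ell_\tau])=\eta_{\#}([\ell_\infty])=(1,1,1,\dots)$ in $\pi_1(\bbt,x_0)\cong\bbz^{\bbn}$. Concretely, let $\alpha_n\in\Omega(X,x)$ be a null-sequence and let $f:(\bbh,b_0)\to(X,x)$ be the induced map with $f\circ\ell_n=\alpha_n$, so that $f\circ\ell_\infty=\prod_{n=1}^{\infty}\alpha_n$ and $f\circ\ell_\tau=\prod_{\tau}\alpha_n$. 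Since $X$ is transfinitely $\pi_1$-commutative at $x$, Theorem \ref{transfinitecharacteriationtheorem} gives a homomorphism $g:\pi_1(\bbt,x_0)\to\pi_1(X,x)$ with $f_{\#}=g\circ\eta_{\#}$.

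Now I would simply compute in two ways. On one hand,
\[
\left[\prod_{n=1}^{\infty}\alpha_n\right]=f_{\#}([\ell_\infty])=g\bigl(\eta_{\#}([\ell_\infty])\bigr)=g(1,1,1,\dots),
\]
using that $\eta\circ\ell_\infty$ traverses each generating circle $S^1$ of $\bigvee_{n=1}^{\infty}S^1\subseteq\bbt$ exactly once (with the correct orientation), so its class in $\bbz^{\bbn}$ is the all-ones vector. On the other hand, $\eta\circ\ell_\tau$ likewise traverses each generating circle exactly once, hence $\eta_{\#}([\ell_\tau])=(1,1,1,\dots)$ as well, and therefore
\[
\left[\prod_{\tau}\alpha_n\right]=f_{\#}([\ell_\tau])=g\bigl(\eta_{\#}([\ell_\tau])\bigr)=g(1,1,1,\dots).
\]
Comparing the two displays yields $\left[\prod_{\tau}\alpha_n\right]=\left[\prod_{n=1}^{\infty}\alpha_n\right]$, as claimed.

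There is essentially no obstacle here: the one point that deserves a sentence of care is the claim $\eta_{\#}([\ell_\tau])=(1,1,1,\dots)$, i.e.\ that composing $\ell_\tau$ with the abelianizing map $\eta$ and recording winding numbers gives the all-ones vector. This is immediate from the construction of $\ell_\tau$, since $\ell_\tau$ restricted to each closed interval $\overline{I_n^k}$ traverses the loop $\ell_{2^{n-1}+k-1}$ once counterclockwise and is constant on the Cantor set $\mcc$, so the $m$-th coordinate of $\eta_{\#}([\ell_\tau])$ counts the (single) appearance of $\ell_m$ — exactly as for $\ell_\infty$. (This is the same observation invoked in the proof of Theorem \ref{transfinitecharacteriationtheorem}.) All other steps are formal consequences of the equivalence (1)$\Leftrightarrow$(2) already established.
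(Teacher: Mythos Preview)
Your proof is correct and is exactly the argument the paper has in mind: it states that the corollary ``follows immediately'' from Theorem \ref{transfinitecharacteriationtheorem} by once again using $\eta_{\#}([\ell_{\tau}])=\eta_{\#}([\ell_{\infty}])$, which is precisely the computation you wrote out.
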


For a compact nowhere dense subset $A\subseteq \ui$, let $\mci(A)$ denote the set of connected components of $[\min(A),\max(A)]\backslash A$ with the natural linear ordering inherited from $\ui$. For instance, the set $\mci(\mcc)$ of components of the complement of the Cantor set $\mcc$ in $I$ has the order type of the rationals.

\begin{lemma}\label{generalpermutationlemma}
Suppose $X$ has transfinite $\pi_1$-commutativity at $x\in X$ and $\alpha,\beta\in\Omega(X,x)$ are non-constant paths. If there is a bijection $\psi:\mci(\alpha^{-1}(x))\to \mci(\beta^{-1}(x))$ such that for every $J\in \mci(\alpha^{-1}(x))$, we have $\alpha|_{\ov{J}}\equiv \beta|_{\ov{\psi(J)}}$, then $[\alpha]=[\beta]$ in $\pi_1(X,x)$.
\end{lemma}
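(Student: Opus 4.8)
The plan is to lean on the characterization of transfinite $\pi_1$-commutativity in Theorem \ref{transfinitecharacteriationtheorem}: the key point is that a loop assembled from a null family of ``bubbles'' is detected in $\pi_1(X,x)$ only through how many times it winds around each circle of $\bbh$, never through the order in which it does so, and the bijection $\psi$ preserves exactly this winding data. For the setup, for $\gamma\in\{\alpha,\beta\}$ write $A_\gamma=\gamma^{-1}(x)$, a closed subset of $I$ containing $\{0,1\}$, so that $\mci(A_\gamma)$ (the set of components of $I\setminus A_\gamma$, naturally ordered) is countable and, since $\gamma$ is non-constant, nonempty. For $J\in\mci(A_\gamma)$ the endpoints of $J$ lie in $A_\gamma$, hence $\gamma|_{\overline{J}}$ is a non-constant loop at $x$. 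The first thing I would record is that the family $\{\gamma|_{\overline{J}}\}_{J\in\mci(A_\gamma)}$ is \emph{null} at $x$: for each neighborhood $U$ of $x$ one has $\gamma(\overline{J})\subseteq U$ for all but finitely many $J$. This is a routine compactness argument --- if infinitely many $J_k$ carried a point $t_k$ with $\gamma(t_k)\notin U$, a convergent subsequence $t_k\to t$ together with the fact that the (pairwise disjoint) $J_k$ have lengths tending to $0$ forces the endpoints of $J_k$, which lie in $A_\gamma$, to converge to $t$ as well, so $\gamma(t)=x$ and $\gamma(t_k)\to x$, a contradiction.

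Next I would encode each loop as a map out of $\bbh$ together with an auxiliary loop. Choose an injection $b\colon\mci(A_\alpha)\hookrightarrow\bbn$ (a bijection if $\mci(A_\alpha)$ is infinite) with $R=\mathrm{range}(b)$. Using the null family, let $f_\alpha\colon(\bbh,b_0)\to(X,x)$ be the map with $f_\alpha\circ\ell_n$ equal to $\alpha|_{\overline{b^{-1}(n)}}$ reparametrized linearly onto $I$ for $n\in R$, and $f_\alpha\circ\ell_n=c_x$ otherwise; then define $w_\alpha\colon I\to\bbh$ by $w_\alpha(A_\alpha)=\{b_0\}$ and $w_\alpha|_{\overline{J}}=\ell_{b(J)}$ reparametrized linearly onto $\overline{J}$. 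I expect the continuity of $w_\alpha$ to be the main technical obstacle: it is clear away from $A_\alpha$, and at a point $t\in A_\alpha$ it is exactly where the null property is used --- given a neighborhood $V$ of $b_0$, all but finitely many circles $C_n$ lie in $V$, and for the finitely many remaining ``bad'' bubbles $\overline{J}$ one either has $t\notin\overline{J}$ or $t$ an endpoint of $\overline{J}$, in which latter case $w_\alpha$ stays near $b_0$ inside $C_{b(J)}$ near $t$. Granting continuity, $w_\alpha\in\Omega(\bbh,b_0)$, the two linear reparametrizations cancel to give $f_\alpha\circ w_\alpha=\alpha$, hence $[\alpha]=(f_\alpha)_\#([w_\alpha])$; and since $w_\alpha$ winds once around $C_n$ for each $n\in R$ and not at all around the other circles, identifying $\pi_1(\bbt,x_0)$ with $\bbz^{\bbn}$ gives $\eta_\#([w_\alpha])=v:=\sum_{n\in R}\mathbf{e}_n$.

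Finally I would compare $\alpha$ and $\beta$. Transfinite $\pi_1$-commutativity of $X$ at $x$ and Theorem \ref{transfinitecharacteriationtheorem} produce a unique homomorphism $g_\alpha\colon\pi_1(\bbt,x_0)\to\pi_1(X,x)$ with $(f_\alpha)_\#=g_\alpha\circ\eta_\#$, so $[\alpha]=g_\alpha(v)$. Running the identical construction for $\beta$ with the injection $b\circ\psi^{-1}\colon\mci(A_\beta)\hookrightarrow\bbn$ (which has the same range $R$) produces $f_\beta$, $w_\beta$ with $\eta_\#([w_\beta])=v$, and $g_\beta$ with $[\beta]=g_\beta(v)$. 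The hypothesis $\alpha|_{\overline{J}}\equiv\beta|_{\overline{\psi(J)}}$ says precisely that for every $n$ the $n$-th terms of the two defining null-sequences agree up to an increasing self-homeomorphism of $I$; hence $f_\beta=f_\alpha\circ h$ for a self-homeomorphism $h$ of $\bbh$ restricting to an orientation-preserving self-homeomorphism of each $C_n$ fixing $b_0$. Such an $h$ is homotopic to $\mathrm{id}_\bbh$ rel $b_0$ (isotope each circle-reparametrization to the identity and glue, using that all but finitely many circles are arbitrarily small), so $(f_\alpha)_\#=(f_\beta)_\#$ and therefore $g_\alpha=g_\beta$ by uniqueness; thus $[\alpha]=g_\alpha(v)=g_\beta(v)=[\beta]$. (When $\mci(A_\alpha)$ is finite the same argument applies unchanged; alternatively one may collapse the constant subpaths of $\alpha$ and $\beta$ and appeal to ordinary commutativity of $\pi_1(X,x)$, which holds by Remark \ref{remarktransfcomimpliesabelian}.)
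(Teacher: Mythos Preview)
Your argument is correct and complete; the null-sequence check, the continuity of $w_\alpha$, the factorization $f_\alpha\circ w_\alpha=\alpha$, and the reparametrization homotopy $h\simeq \mathrm{id}_{\bbh}$ all go through as you indicate.

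The paper's proof takes a different route. Rather than factoring each loop through an auxiliary loop in $\bbh$ and invoking condition (1) of Theorem~\ref{transfinitecharacteriationtheorem} (the $\bbz^{\bbn}$-factorization), the paper works directly with condition (2). It order-embeds $\mci(\alpha^{-1}(x))$ and $\mci(\beta^{-1}(x))$ into $\mci(\mcc)$, extends $\psi$ to a bijection $\Psi$ of $\mci(\mcc)$, and pads the resulting transfinite concatenations with constants so that $\prod_{\tau}\gamma_n\simeq\alpha$, $\prod_{\tau}\delta_n\simeq\beta$, and $\gamma_n\equiv\delta_{\phi(n)}$ for a bijection $\phi$ of $\bbn$ read off from $\Psi$; then the defining permutation-invariance of $\prod_\tau$ finishes. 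Your approach trades this order-embedding bookkeeping for constructing $w_\alpha,w_\beta$ and the circle-by-circle isotopy $h$; what you gain is a transparent reason for the equality --- both loops have the same ``winding vector'' $v\in\bbz^{\bbn}$ under essentially the same map $\bbh\to X$ --- while the paper's version stays closer to the definition and avoids the extra homotopy $h\simeq\mathrm{id}_{\bbh}$.
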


\begin{proof}
If $\mci(\alpha^{-1}(x))$ is finite, then the conclusion follows from the fact that $\pi_1(X,x)$ is abelian. Suppose $\mci(\alpha^{-1}(x))$ is infinite. Since $\mci(\mcc)$ is a dense countable order, there exist order embeddings $\mu:\mci(\alpha^{-1}(x))\to \mci(\mcc)$ and $\nu:\mci(\beta^{-1}(x))\to \mci(\mcc)$. Find a bijection $\Psi:\mci(\mcc)\to\mci(\mcc)$, which extends $\psi$ in the sense that $\Psi\circ\mu=\nu\circ\psi$. Define a bijection $\phi:\bbn\to\bbn$ by $\phi=\theta^{-1}\circ\Psi\circ\theta$. Hence, we have the following commutative diagram of linear orders where $\mu$ and $\nu$ are order-preserving and all other morphisms are set-bijections.
\[\xymatrix{
\mci(\alpha^{-1}(x)) \ar[d]_-{\psi} \ar[r]^-{\mu} & \mci(\mcc) \ar[d]_-{\Psi}  & \bbn \ar[d]^-{\phi} \ar[l]_-{\theta} \\
\mci(\beta^{-1}(x)) \ar[r]_-{\nu} & \mci(\mcc) &  \bbn \ar[l]^-{\theta}
}\]
For $n\in\bbn$, let $\gamma_n\equiv \alpha|_{\ov{J}}$ if $J\in \mci(\alpha^{-1}(x))$ and $\theta(n)=\mu(J)$ and let $\gamma_n=c_x$ be constant otherwise. As before, we have $\prod_{\tau}\gamma_n\simeq \alpha$ by collapsing constant loops. Similarly, let $\delta_m\equiv \beta|_{\ov{K}}$ if $K\in \mci(\beta^{-1}(x))$ and $\theta(m)=\nu(K)$ and let $\delta_m=c_x$ be constant otherwise. From this choice, we have $\prod_{\tau}\delta_m\simeq \beta$ by collapsing constant loops. 

Fix $n\in\bbn$. If $\theta(n)=\mu(J)$, then we have $\theta(\phi(n))=\nu(\psi(J))$ and thus $\gamma_n\equiv \alpha|_{\ov{J}}\equiv \beta|_{\ov{\psi(J)}}\equiv \delta_{\phi(n)}$. If $\theta(n)\notin Im(\mu)$, then $\theta(\phi(n))\notin Im(\mu)$ and we have $\gamma_n=\delta_{\phi(n)}=c_x$. Hence, $\gamma_n\equiv \delta_{\phi(n)}$ for all $n\in\bbn$. This gives
\[[\alpha]=\left[\prod_{\tau}\gamma_n\right]= \left[\prod_{\tau}\delta_{\phi(n)}\right]=\left[\prod_{\tau}\delta_{n}\right]=[\beta]\]
where the second to last equality is given by transfinite $\pi_1$-commutativity.
\end{proof}

As a specific example, and analogue of infinite double series, the next corollary applies Lemma \ref{generalpermutationlemma} to infinite concatenations of order type $\omega^2$.

\begin{corollary}
If $X$ has transfinite $\pi_1$-commutativity at $x\in X$, and $\alpha_{m,n}\in \Omega(X,x)$, $m,n\in\bbn$ is a doubly indexed sequence such that every neighborhood of $e$ contains all but finitely many of the images $Im(\alpha_{m,n})$, then, in $\pi_1(X,x)$, we have:\[\left[\prod_{m=1}^{\infty}\prod_{n=1}^{\infty}\alpha_{m,n}\right]=\left[\prod_{n=1}^{\infty}\prod_{m=1}^{\infty}\alpha_{m,n}\right].\]
\end{corollary}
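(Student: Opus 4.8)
The plan is to deduce the identity from Lemma \ref{generalpermutationlemma}: I would exhibit both sides as honest loops at $x$, observe that the two loops are built from the very same family of ``atomic'' loops $\alpha_{m,n}$ (each used exactly once), and then match those atoms by the identity on the index set $\bbn\times\bbn$. Before that, one must check that the iterated concatenations are legitimate objects of the theory in Section \ref{transfinitesection}, i.e. that the sequences involved are null. The hypothesis is precisely the assertion that $\{\alpha_{m,n}\}_{(m,n)\in\bbn\times\bbn}$ is a null-sequence of loops at $x$. In particular, for each fixed $m$ the sequence $\{\alpha_{m,n}\}_{n}$ is null, so $\beta_m:=\prod_{n=1}^{\infty}\alpha_{m,n}=g_m\circ\ell_{\infty}$ is a well-defined loop at $x$, where $g_m:\bbh\to X$ is the map with $g_m\circ\ell_n=\alpha_{m,n}$. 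Since $\bbh=\bigcup_n C_n$ at the level of underlying sets, $Im(\beta_m)=g_m(\bbh)=\bigcup_n Im(\alpha_{m,n})$, so $\{\beta_m\}_m$ is again null: for a neighborhood $U$ of $x$, only finitely many $\alpha_{m,n}$ escape $U$, hence for all large $m$ every $\alpha_{m,n}$ lands in $U$ and so $Im(\beta_m)\subseteq U$. Thus $\alpha:=\prod_{m=1}^{\infty}\prod_{n=1}^{\infty}\alpha_{m,n}$ is a well-defined loop at $x$, and symmetrically so is $\beta:=\prod_{n=1}^{\infty}\prod_{m=1}^{\infty}\alpha_{m,n}$.

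Next I would unpack $\mci(\alpha^{-1}(x))$. The construction of $\alpha$ equips $\ui$ with a two-level subdivision: intervals $J_m$ with $\alpha|_{J_m}\equiv\beta_m$, each subdivided further into intervals $J_{m,n}$ with $\alpha|_{J_{m,n}}\equiv\alpha_{m,n}$. All of the subdivision points, and $1$, are sent to $x$ (each $\alpha_{m,n}$ and each $\beta_m$ being a loop at $x$), so no component of $\ui\setminus\alpha^{-1}(x)$ crosses a subdivision point, and $\ui\setminus\alpha^{-1}(x)$ is the disjoint union of the sets $J_{m,n}\setminus\alpha^{-1}(x)$. Rescaling $J_{m,n}$ linearly onto $\ui$ carries $\alpha^{-1}(x)\cap J_{m,n}$ onto $\alpha_{m,n}^{-1}(x)$ (whose min and max in $\ui$ are $0$ and $1$, being the $x$-preimage of a loop), so the components of $\ui\setminus\alpha^{-1}(x)$ inside $J_{m,n}$ biject with $\mci(\alpha_{m,n}^{-1}(x))$, and the restriction of $\alpha$ to the closure of such a component is $\equiv$ the restriction of $\alpha_{m,n}$ to the closure of the corresponding member of $\mci(\alpha_{m,n}^{-1}(x))$. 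Collecting over the $(m,n)$ with $\alpha_{m,n}\neq c_x$ identifies $\mci(\alpha^{-1}(x))$, compatibly with these restrictions, with $\bigsqcup_{(m,n):\,\alpha_{m,n}\neq c_x}\mci(\alpha_{m,n}^{-1}(x))$; the identical analysis for $\beta$ identifies $\mci(\beta^{-1}(x))$ with the very same indexed disjoint union.

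Finally I would conclude: if every $\alpha_{m,n}$ equals $c_x$ the identity is trivial, both sides being $c_x$; otherwise $\alpha,\beta$ are non-constant, and the bijection $\psi:\mci(\alpha^{-1}(x))\to\mci(\beta^{-1}(x))$ sending the component of $\alpha$ carrying the label $(m,n,K)$ to the component of $\beta$ carrying the label $(m,n,K)$ satisfies $\alpha|_{\ov J}\equiv\alpha_{m,n}|_{\ov K}\equiv\beta|_{\ov{\psi(J)}}$ for every $J$, so Lemma \ref{generalpermutationlemma} gives $[\alpha]=[\beta]$. I expect the only real obstacle to be bookkeeping: checking carefully that the $\omega^2$-order and its ``transpose'' genuinely yield loops assembled from the same multiset of atoms, matched by an honest bijection of the index sets, and (more technically) that the inner and outer sequences are null so the iterated products exist at all — both of which come down to the elementary observation that $\bbh=\bigcup_n C_n$ already as a set of points.
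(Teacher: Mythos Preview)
Your proposal is correct and follows essentially the same approach as the paper, which simply notes that the corollary is an application of Lemma~\ref{generalpermutationlemma} to infinite concatenations of order type $\omega^2$. You have filled in the details the paper omits: verifying that the iterated concatenations are well-defined loops, identifying $\mci(\alpha^{-1}(x))$ and $\mci(\beta^{-1}(x))$ with the common index set $\bigsqcup_{(m,n)}\mci(\alpha_{m,n}^{-1}(x))$, and then invoking the lemma via the identity bijection on that index set.
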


It will also be useful to know that the property of being transfinitely $\pi_1$-commutative at a given basepoint is invariant under basepoint-preserving homotopy equivalence.

\begin{lemma}\label{homequivlemma}
Suppose $k:X\to Y$ is a homotopy equivalence. If $x\in X$ and $Y$ is transfinitely $\pi_1$-commutative at $k(x)$, then $X$ is transfinitely $\pi_1$-commutative at $x$. In particular, if $f:(X,x)\to (Y,y)$ is a based homotopy equivalence, then $X$ is transfinitely $\pi_1$-commutative at $x$ if and only if $Y$ is transfinitely $\pi_1$-commutative at $y$.
\end{lemma}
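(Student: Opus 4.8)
The plan is to transfer the factorization criterion of Theorem~\ref{transfinitecharacteriationtheorem}(1) across the homotopy equivalence. The one external input needed is the basepoint-free homotopy invariance of the fundamental group: if $k\colon X\to Y$ is a homotopy equivalence and $x\in X$, then $k_{\#}\colon\pi_1(X,x)\to\pi_1(Y,k(x))$ is an isomorphism. (As usual, one picks a homotopy inverse and uses that a homotopy from the identity to a self-map induces on $\pi_1$ the change-of-basepoint isomorphism along the track of $x$; path-connectedness is in force throughout the paper, so there are no component issues.)

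Granting this, here is the argument I would give. Assume $k\colon X\to Y$ is a homotopy equivalence, $x\in X$, and $Y$ is transfinitely $\pi_1$-commutative at $k(x)$; I will check that $X$ satisfies condition~(1) of Theorem~\ref{transfinitecharacteriationtheorem} at $x$. Let $f\colon(\bbh,b_0)\to(X,x)$ be a map. Then $k\circ f\colon(\bbh,b_0)\to(Y,k(x))$ is a based map, so by Theorem~\ref{transfinitecharacteriationtheorem}(1) applied to $Y$ at $k(x)$ there is a homomorphism $h\colon\pi_1(\bbt,x_0)\to\pi_1(Y,k(x))$ with $k_{\#}\circ f_{\#}=(k\circ f)_{\#}=h\circ\eta_{\#}$. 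Since $k_{\#}$ is an isomorphism, set $g:=(k_{\#})^{-1}\circ h\colon\pi_1(\bbt,x_0)\to\pi_1(X,x)$; then $g\circ\eta_{\#}=(k_{\#})^{-1}\circ k_{\#}\circ f_{\#}=f_{\#}$. Thus $X$ satisfies Theorem~\ref{transfinitecharacteriationtheorem}(1) at $x$, hence is transfinitely $\pi_1$-commutative at $x$.

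For the final assertion I would apply the first part twice. A based homotopy equivalence $f\colon(X,x)\to(Y,y)$ is in particular a homotopy equivalence $X\to Y$ with $f(x)=y$, so the first part gives that $Y$ transfinitely $\pi_1$-commutative at $y$ implies $X$ transfinitely $\pi_1$-commutative at $x$. Conversely, a based homotopy inverse $f'\colon(Y,y)\to(X,x)$ is a homotopy equivalence $Y\to X$ with $f'(y)=x$, so the first part gives that $X$ transfinitely $\pi_1$-commutative at $x$ implies $Y$ transfinitely $\pi_1$-commutative at $y$.

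I do not expect a genuine obstacle here: the only point meriting care is justifying that $k_{\#}$ is an isomorphism even though $k$ need not be basepoint-preserving, and after that the proof is a short formal chase through the characterization theorem.
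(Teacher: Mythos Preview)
Your proof is correct and is essentially identical to the paper's own argument: both invoke Theorem~\ref{transfinitecharacteriationtheorem}(1), push $f$ forward along $k$ to obtain a factorization through $\eta_{\#}$ in $\pi_1(Y,k(x))$, and then precompose with $k_{\#}^{-1}$ to get the desired factorization in $\pi_1(X,x)$. The paper likewise deduces the second statement by applying the first to a based homotopy equivalence and its based homotopy inverse.
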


\begin{proof}
We apply Theorem \ref{transfinitecharacteriationtheorem} in both directions. If $f:(\bbh,b_0)\to (X,x)$ is a map, then since $Y$ is is transfinitely $\pi_1$-commutative at $k(x)$, there exists a unique homomorphism $g':\bbz^{\bbn}\to \pi_1(Y,k(x))$ such that $g'\circ\eta_{\#}=(k\circ f)_{\#}$. Now $g=k_{\#}^{-1}\circ g'$ is a unique homomorphism $\bbz^{\bbn}\to \pi_1(X,x)$ satisfying $g\circ\eta_{\#}=f_{\#}$. Thus $X$ is transfinitely $\pi_1$-commutative at $x$. The second statement of the lemma follows easily by applying the first statement to a based homotopy equivalence and its based homotopy inverse.
\end{proof}

We identify an algebraic condition that implies transfinite $\pi_1$-commutativity independent of basepoint. An abelian group $A$ is \textit{cotorsion} provided that whenever $A\leq G$ with $G$ abelian and $ G/A$ torsion-free, we have $G= A \oplus B$ for some $B\leq G$. The abelian group $A$ is called \textit{cotorsion-free} if it does not contain a non-trivial cotorsion subgroup. It is shown in \cite{EKH1ofHE} that the kernel of the induced homomorphism $\eta_{\ast}:H_1(\bbh)\to H_1(\bbt)$ is cotorsion and hence $H_1(\bbh)\cong H_1(\bbt)\oplus \ker(\eta_{\ast})$. Let $h_X:\pi_1(X,x)\to H_1(X)$ denote the Hurewicz homomorphism for a space $X$.

\begin{lemma}\label{cotorsionfreetheorem}
If $\pi_1(X,x)$ is a cotorsion-free abelian group, then $X$ is transfinitely $\pi_1$-commutative at all of its points.
\end{lemma}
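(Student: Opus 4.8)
The plan is to check condition (1) of Theorem~\ref{transfinitecharacteriationtheorem} at an arbitrary point $y\in X$, exploiting that a cotorsion group cannot have a nontrivial homomorphic image inside a cotorsion-free group. First I would note that, $X$ being path-connected, $\pi_1(X,y)\cong\pi_1(X,x)$ is cotorsion-free abelian, so the Hurewicz homomorphism $h_X:\pi_1(X,y)\to H_1(X)$ is an isomorphism; and since $\eta_{\#}$ is surjective, it suffices to prove that every map $f:(\bbh,b_0)\to(X,y)$ satisfies $f_{\#}(\ker(\eta_{\#}))=1$. Indeed, once $\ker(\eta_{\#})\subseteq\ker(f_{\#})$, the rule $g(\eta_{\#}([\beta])):=f_{\#}([\beta])$ is a well-defined homomorphism $\pi_1(\bbt,x_0)\to\pi_1(X,y)$ with $g\circ\eta_{\#}=f_{\#}$, which is exactly condition (1).

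Given such an $f$ and $[\alpha]\in\ker(\eta_{\#})$, I would run a two-step naturality chase of the Hurewicz homomorphism. Naturality for $\eta:\bbh\to\bbt$ gives $\eta_{\ast}(h_{\bbh}([\alpha]))=h_{\bbt}(\eta_{\#}([\alpha]))=0$, so $h_{\bbh}([\alpha])\in\ker(\eta_{\ast}:H_1(\bbh)\to H_1(\bbt))$; naturality for $f$ then gives $h_X(f_{\#}([\alpha]))=f_{\ast}(h_{\bbh}([\alpha]))\in f_{\ast}(\ker(\eta_{\ast}))$. The key external fact, from \cite{EKH1ofHE}, is that $\ker(\eta_{\ast})$ is cotorsion; since homomorphic images of cotorsion groups are cotorsion \cite{Fuchs}, $f_{\ast}(\ker(\eta_{\ast}))$ is a cotorsion subgroup of $H_1(X)\cong\pi_1(X,y)$, which is cotorsion-free, hence $f_{\ast}(\ker(\eta_{\ast}))=0$. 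Therefore $h_X(f_{\#}([\alpha]))=0$, and injectivity of $h_X$ forces $f_{\#}([\alpha])=1$. As $[\alpha]$ was arbitrary, $f_{\#}(\ker(\eta_{\#}))=1$, so by the reduction above $X$ is transfinitely $\pi_1$-commutative at $y$, and since $y$ was arbitrary, at all of its points.

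I do not anticipate a genuine obstacle: the argument is essentially bookkeeping once the hypothesis is packaged through $H_1$ and the cited cotorsion-ness of $\ker(\eta_{\ast})$ is invoked. The only point requiring a little care is confirming that $f_{\ast}(\ker(\eta_{\ast}))$ really is a \emph{homomorphic image} of $\ker(\eta_{\ast})$ (namely $\ker(\eta_{\ast})/(\ker(\eta_{\ast})\cap\ker f_{\ast})$, via the first isomorphism theorem), so that closure of cotorsion under quotients legitimately applies, rather than treating it merely as a subquotient of $H_1(\bbh)$.
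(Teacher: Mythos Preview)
Your proof is correct and follows essentially the same approach as the paper: both arguments factor through $H_1$ using the abelian hypothesis, invoke the Eda--Kawamura fact that $\ker(\eta_{\ast})$ is cotorsion, and use that a cotorsion image inside a cotorsion-free group must vanish to verify condition (1) of Theorem~\ref{transfinitecharacteriationtheorem}. The only cosmetic difference is that the paper constructs the factoring homomorphism $g$ explicitly via the quotient $H_1(\bbh)/\ker(\eta_{\ast})\cong H_1(\bbt)$, whereas you establish the kernel containment $\ker(\eta_{\#})\subseteq\ker(f_{\#})$ by an element chase and then define $g$ from that; the mathematical content is identical.
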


\begin{proof}
If $\pi_1(X,x)$ is cotorsion-free, then $\pi_1(X,y)$ is cotorsion-free for all $y\in X$. Hence, it suffices to show that $X$ is transfinitely $\pi_1$-commutative at $x\in X$. Let $f:(\bbh,b_0)\to (X,x)$ be a map. Since $\pi_1(X,x)$ is abelian, there is a unique homomorphism $g':H_1(\bbh)\to \pi_1(X,x)$ such that $g'\circ h_{\bbh}=f_{\#}$. Since $\ker(\eta_{\ast})$ is cotorsion and any homomorphic image of a cotorsion group is cotorsion, we must have $g'(\ker(\eta_{\ast}))=0$. Hence, there is a unique homomorphism $g'':H_1(\bbh)/\ker(\eta_{\ast})\to \pi_1(X,x)$ such that if $p:H_1(\bbh)\to H_1(\bbh)/\ker(\eta_{\ast})$ is the projection, then $g''\circ p=g'$. Let $k:H_1(\bbt)\to H_1(\bbh)/\ker(\eta_{\ast})$ be the canonical isomorphism  and set $g=g''\circ k\circ h_{\bbt}$. From the naturality of the Hurewicz homomorphisms it follows directly that $g\circ \eta_{\#}=f_{\#}$, verifying (1) in Theorem \ref{transfinitecharacteriationtheorem}.
\end{proof}

\begin{example}
One may construct a CW-complex $X$, with fundamental group isomorphic to $\bbz^{\bbn}/\oplus_{\bbn}\bbz$, which is not a cotorsion-free group. However, since $X$ is locally contractible, $X$ is (trivially) transfinitely $\pi_1$-commutative at all of its points. Hence, the converse of Lemma \ref{cotorsionfreetheorem} is far from being true. This example emphasizes the fact that the transfinitely $\pi_1$-commutative property is an invariant property of the infinitary structure of fundamental groups inherited from the loop space and not the underlying (finitary) group structure.
\end{example}

\begin{example}
It is also worth noting that the transfinitely $\pi_1$-commutative property is not a purely local property since it is possible for a space to fail to be transfinitely $\pi_1$-commutative at a point due to the global structure of the space. For example, if $X=\bbh\cup I\cup \bbt/\mathord{\sim}$ where $b_0\sim 0$ and $x_0\sim 1$, then $\pi_1(X,x_0)$ is isomorphic to the free product $\pi_1(\bbh,b_0)\ast \pi_1(\bbt,x_0)$. Since $\pi_1(X,x_0)$ is not abelian, $X$ is not transfinitely $\pi_1$-commutative at any point. However, the open set $U\subseteq X$, which is the image of $(1/2,1]\cup \bbt$ in $X$, has cotorsion-free fundamental group isomorphic to $\bbz^{\bbn}$. Hence, by Theorem \ref{cotorsionfreetheorem}, $U$ is transfinitely $\pi_1$-commutative at all of its points. Taking this argument further, one could show that $X$ is ``locally" transfinitely $\pi_1$-commutative at $x_0$. Indeed, there are a number of local/semilocal variants of the transfinitely $\pi_1$-commutative property that one could compare and contrast.
\end{example}

\subsection{Well-definedness of operations on homotopy classes}

We have chosen to avoid infinite sum notation $\sum_{n=1}^{\infty}[\alpha_n]$ to represent the homotopy class of an infinite concatenation $\prod_{n=1}^{\infty}\alpha_n$ even in the presence of transfintite $\pi_1$-commutativity because the infinitary operation $\{\alpha_n\}\mapsto \left[\prod_{n=1}^{\infty}\alpha_n\right]$ on null-sequences does not always induce a well-defined operation $\{[\alpha_n]\}\mapsto \left[\prod_{n=1}^{\infty}\alpha_n\right]$ on homotopy classes. The well-definedness of such infinitary operations on fundamental groups has been studied in significant detail in \cite{Brazscattered,BFTestMap}.

\begin{definition}
We say that a space $X$
\begin{enumerate}
\item has \textit{well-defined infinite $\pi_1$-products at $x\in X$} if for any null-sequences $\alpha_n,\beta_n\in\Omega(X,x)$ such that $\alpha_n\simeq\beta_n$ for all $n\in\bbn$, we have $\prod_{n=1}^{\infty}\alpha_n\simeq \prod_{n=1}^{\infty}\beta_n$,
\item has \textit{well-defined transfinite $\pi_1$-products at $x\in X$} if for any null-sequences $\alpha_n,\beta_n\in\Omega(X,x)$ such that $\alpha_n\simeq\beta_n$ for all $n\in\bbn$, we have $\prod_{\tau}\alpha_n\simeq \prod_{\tau}\beta_n$,
\item is \textit{homotopically Hausdorff} at $x\in X$ if for every $[\alpha]\in\pi_1(X,x)$, there exists a neighborhood $U$ of $x$ such that $\Omega(U,x)\cap [\alpha]=\emptyset$.
\end{enumerate}
\end{definition}

\begin{remark} \label{welldefinedremark}
In general, we have (3) $\Rightarrow$ (1) and (2) $\Rightarrow$ (1) where (1) $\Leftrightarrow$ (3) if $X$ is first countable at $x$; see \cite[Section 3]{BFTestMap}. It is an open problem if (1) $\Leftrightarrow$ (2) holds in general. We refer to \cite{Brazscattered,BFTestMap,CMRZZ08,FRVZ11,FZ07} for more on the homotopically Hausdorff property and its relationship to other properties of fundamental group(oid)s.
\end{remark}


\begin{lemma}\label{wdinfiniteimplieswdtransfinitelemma}
Suppose $X$ has transfinite $\pi_1$-commutativity at $x\in X$. Then $X$ has well-defined infinite $\pi_1$-products at $x$ if and only if $X$ has well-defined transfinite $\pi_1$-products at $x$. Moreover, these are equivalent to being homotopically Hausdorff at $x$ if $X$ is first countable at $x$.
\end{lemma}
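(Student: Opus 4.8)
The plan is to prove the biconditional through its two one-directional implications and then append the first-countable case at the end.

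\emph{Well-defined transfinite products $\Rightarrow$ well-defined infinite products.} This direction requires no commutativity hypothesis; it is exactly the general implication (2) $\Rightarrow$ (1) recorded in Remark \ref{welldefinedremark} (established in \cite[Section 3]{BFTestMap}). If a self-contained argument is preferred, I would reuse the constant-collapsing device from the proof of (2) $\Rightarrow$ (3) of Theorem \ref{transfinitecharacteriationtheorem}: given null-sequences $\alpha_n \simeq \beta_n$, pad them to null-sequences $\alpha'_m,\beta'_m$ by setting $\alpha'_{2^k-1}=\alpha_k$, $\beta'_{2^k-1}=\beta_k$ and $\alpha'_m=\beta'_m=c_x$ otherwise, so that $\prod_\tau\alpha'_m \simeq \prod_{k=1}^\infty\alpha_k$ and $\prod_\tau\beta'_m\simeq\prod_{k=1}^\infty\beta_k$ by collapsing constant subloops. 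Since $\alpha'_m\simeq\beta'_m$ for every $m$, well-definedness of transfinite products gives $\prod_\tau\alpha'_m\simeq\prod_\tau\beta'_m$, hence $\prod_{k=1}^\infty\alpha_k\simeq\prod_{k=1}^\infty\beta_k$.

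\emph{Well-defined infinite products $\Rightarrow$ well-defined transfinite products.} This is the direction where transfinite $\pi_1$-commutativity is used, and it enters entirely through Corollary \ref{basiccorollary}, which under that hypothesis says $[\prod_\tau\gamma_n]=[\prod_{n=1}^\infty\gamma_n]$ in $\pi_1(X,x)$ for every null-sequence $\gamma_n\in\Omega(X,x)$. So, given null-sequences $\alpha_n\simeq\beta_n$, I would chain the equalities $[\prod_\tau\alpha_n]=[\prod_{n=1}^\infty\alpha_n]=[\prod_{n=1}^\infty\beta_n]=[\prod_\tau\beta_n]$, where the two outer equalities are Corollary \ref{basiccorollary} and the middle one is the assumed well-definedness of infinite products. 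This yields $\prod_\tau\alpha_n\simeq\prod_\tau\beta_n$, as desired.

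\emph{The first-countable case.} For the final assertion I would invoke the first-countable part of Remark \ref{welldefinedremark} (from \cite[Section 3]{BFTestMap}): being homotopically Hausdorff at $x$ always implies well-defined infinite $\pi_1$-products at $x$, and the converse holds when $X$ is first countable at $x$; combined with the equivalence just proved, all three properties coincide under first countability. I do not expect a genuine obstacle here — the technical content is packaged into Corollary \ref{basiccorollary}, which may be assumed — so the only thing to be careful about is the bookkeeping that ``well-defined'' is a statement about homotopy classes of the concatenated loops, which is precisely the level at which Corollary \ref{basiccorollary} operates; no comparison of the loops themselves beyond homotopy is required.
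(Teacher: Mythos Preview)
Your proposal is correct and matches the paper's proof essentially step for step: the paper also cites Remark~\ref{welldefinedremark} to dispose of the transfinite $\Rightarrow$ infinite direction and the first-countable equivalence, and proves the remaining implication by the same chain $[\prod_\tau\alpha_n]=[\prod_{n=1}^\infty\alpha_n]=[\prod_{n=1}^\infty\beta_n]=[\prod_\tau\beta_n]$ via Corollary~\ref{basiccorollary}. The only cosmetic difference is that the paper phrases the argument through the maps $f_i:\mathbb{H}\to X$ and the classes $[\ell_\infty],[\ell_\tau]$, but this is exactly your chain unwound.
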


\begin{proof}
In light of Remark \ref{welldefinedremark}, it suffices to show that, assuming the hypothesis, well-defined infinite $\pi_1$-products at $x$ $\Rightarrow$ well-defined transfinite $\pi_1$-products at $x$. Suppose $X$ has well-defined infinite $\pi_1$-products at $x$ and $\alpha_n,\beta_n\in\Omega(X,x)$ are null-sequences such that $\alpha_n\simeq \beta_n$ for all $n\in\bbn$. Consider the maps $f_1,f_2:\bbh\to X$ with $f_1\circ\ell_n=\alpha_n$ and $f_2\circ\ell_n=\beta_n$. By assumption, we have $(f_1)_{\#}([\ell_{\infty}])=(f_2)_{\#}([\ell_{\infty}])$. Corollary \ref{basiccorollary} gives $(f_i)_{\#}[\ell_{\infty}]=(f_i)_{\#}([\ell_{\tau}])$ for $i\in\{1,2\}$. Hence, we have $\left[\prod_{\tau}\alpha_n\right]=(f_1)_{\#}([\ell_{\tau}])=(f_2)_{\#}([\ell_{\tau}])=\left[\prod_{\tau}\beta_n\right]$.
\end{proof}

In the next two results, we give each fundamental group the natural quotient topology inherited from the loop space. It is known that this topology on $\pi_1$ is functorial, i.e. a map $f:(X,x)\to (Y,y)$ induces a continuous homomorphism $f_{\#}:\pi_1(X,x)\to \pi_1(Y,y)$. In particular, the quotient topology on $\pi_1(\bbt,x_0)=\bbz^{\bbn}$ agrees with the direct product topology where each factor $\bbz$ is discrete. We refer to \cite{BFqtop} for more on the basic theory of the quotient topology on $\pi_1$.

\begin{lemma}\label{sequenceliftlemma}
If $\{s_k\}_{k\in\bbn}\to s$ is a convergent sequence in $\pi_1(\bbt,x_0)=\bbz^{\bbn}$, then there exists a convergent sequence $\{\beta_k\}_{k\in\mathbb{N}}\to \beta$ in $\Omega(\bbh,b_0)$ such that $\eta_{\#}([\beta_k])=s_k$ and $\eta_{\#}([\beta])=s$. In particular, $\eta_{\#}$ is a topological quotient map.
\end{lemma}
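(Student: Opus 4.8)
The plan is to lift the convergent sequence $\{s_k\} \to s$ in $\bbz^{\bbn}$ to a convergent sequence of loops in $\Omega(\bbh, b_0)$ by exploiting the coordinatewise structure of the Specker group and the fact that convergence in $\bbz^{\bbn}$ is convergence in each coordinate. Write $s_k = (s_k^{(1)}, s_k^{(2)}, \dots)$ and $s = (s^{(1)}, s^{(2)}, \dots)$. Since each coordinate projection $\bbz^{\bbn} \to \bbz$ is continuous and $\bbz$ is discrete, for each fixed $j$ there is an index $K_j$ with $s_k^{(j)} = s^{(j)}$ for all $k \geq K_j$; we may take $K_1 \le K_2 \le \cdots$ and $K_j \to \infty$. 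The idea is to build $\beta_k$ so that it ``agrees with $\beta$ past coordinate $j$'' whenever $k \geq K_j$, which forces $\beta_k \to \beta$ in the compact-open topology because $\{\ell_n\}$ is null in $\bbh$.

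Concretely, first define $\beta$ to be the loop $\prod_{n=1}^{\infty}(\ell_n)^{s^{(n)}}$ (an infinite concatenation of the null-sequence $\{(\ell_n)^{s^{(n)}}\}$), so that $\eta_{\#}([\beta]) = s$ by the identification of $\pi_1(\bbt,x_0)$ with $\bbz^{\bbn}$. For $\beta_k$, I would take the loop that traverses $(\ell_n)^{s_k^{(n)}}$ on the $n$-th subinterval $\left[\frac{n-1}{n}, \frac{n}{n+1}\right]$ of $\ell_{\infty}$'s parametrization — that is, $\beta_k = \prod_{n=1}^{\infty}(\ell_n)^{s_k^{(n)}}$, using the same null-sequence structure. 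Then $\eta_{\#}([\beta_k]) = s_k$. The key point is uniform control: given a basic open neighborhood $\lb \ui, V \rb$ of $\beta$ in $\Omega(\bbh,b_0)$ determined by finitely much data — more carefully, given any open $W \ni \beta$ in the compact-open topology, since $\bbh$ is a one-dimensional Peano continuum, we can find $N$ such that any loop agreeing with $\beta$ on $[0, \frac{N-1}{N}]$ and having image in $\bbh_{\geq N} \cup (\text{the first } N-1 \text{ circles})$... — actually the cleanest route is: for each neighborhood $U$ of $b_0$, choose $N$ with $C_m \subseteq U$ for $m \geq N$; then for $k \geq K_{N}$, the loops $\beta_k$ and $\beta$ have identical restriction to $[0, \frac{N-1}{N}]$ and both send $[\frac{N-1}{N}, 1]$ into $\bigcup_{m \geq N} C_m \subseteq U$, from which one checks directly that $\beta_k$ lies in any prescribed compact-open neighborhood of $\beta$ once that neighborhood is ``resolved'' at level $N$. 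This yields $\beta_k \to \beta$.

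For the final clause, that $\eta_{\#}$ is a topological quotient map: $\eta_{\#}$ is a continuous surjective homomorphism, so it suffices to show it is a quotient map of topological spaces. Using the identification $\Omega(\bbh,b_0)/{\simeq} = \pi_1(\bbh,b_0)$ with its quotient topology and similarly for $\bbt$, and the fact (cited from \cite{BFqtop}) that $\pi_1(\bbt,x_0) = \bbz^{\bbn}$ carries the product-of-discretes topology, I would argue that a subset $A \subseteq \bbz^{\bbn}$ with $\eta_{\#}^{-1}(A)$ open must itself be open: since $\bbz^{\bbn}$ is a sequential space (being a countable product of metrizable spaces, hence metrizable), it is enough to show $A$ is sequentially closed if its preimage is, equivalently that $\eta_{\#}$ maps convergent sequences onto ``enough'' convergent sequences — which is exactly the content of the sequence-lifting statement just proved. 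Spelling this out: if $s_k \to s$ in $\bbz^{\bbn}$ and $s \in A^c$ with $A$ having open preimage, lift to $\beta_k \to \beta$ in $\Omega(\bbh,b_0)$ with $\eta_{\#}[\beta_k] = s_k$, $\eta_{\#}[\beta]=s$; since $\Omega(\bbh,b_0)$ is first countable and $[\beta] \in \eta_{\#}^{-1}(A^c)$, which is closed, eventually $[\beta_k] \in \eta_{\#}^{-1}(A^c)$, so eventually $s_k \in A^c$, proving $A^c$ is sequentially closed hence closed.

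The main obstacle I anticipate is the uniform-control estimate: verifying rigorously that the particular loops $\beta_k$ converge to $\beta$ in the compact-open topology, i.e. producing, for an arbitrary subbasic neighborhood $\lb K, V\rb$ containing $\beta$, an index beyond which all $\beta_k$ land in it. One must handle the compact set $K \subseteq \ui$ by splitting it at a cut point $\frac{N-1}{N}$ (for $N$ large depending on $V$ via the nullity of $\{C_m\}$), using that on $[0,\frac{N-1}{N}]$ the loops eventually coincide with $\beta$ and on the tail they stay in a small neighborhood of $b_0$; the bookkeeping relating ``$V$ open in $\bbh$'' to a choice of $N$ is where care is needed, but it is exactly the standard fact that the compact-open topology on $\Omega(\bbh,b_0)$ has a neighborhood base at any loop described by ``agree up to circle $N$, stay near $b_0$ after.'' Everything else is formal.
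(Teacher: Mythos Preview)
Your construction of the lifted sequence is exactly the paper's: write $s_k=(a_{n}^{k})_n$, $s=(b_n)_n$, set $\beta_k=\prod_{n=1}^{\infty}\ell_{n}^{a_{n}^{k}}$ and $\beta=\prod_{n=1}^{\infty}\ell_{n}^{b_n}$, and use coordinate-wise eventual stabilization to get $\beta_k\to\beta$ in the compact-open topology. The paper states the convergence in one sentence; your more detailed ``agree on $[0,\tfrac{N-1}{N}]$, tail lands in $\bbh_{\geq N}$'' description is a correct unpacking of exactly that.

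The quotient-map argument, however, has a genuine slip. You want to show that if $\eta_{\#}^{-1}(A)$ is open in $\pi_1(\bbh,b_0)$ then $A$ is open in $\bbz^{\bbn}$, and you correctly note that $\bbz^{\bbn}$ is sequential. But then you take $s\in A^{c}$, lift, and write ``$[\beta]\in\eta_{\#}^{-1}(A^{c})$, which is closed, eventually $[\beta_k]\in\eta_{\#}^{-1}(A^{c})$''. That inference is false: a closed set containing the limit of a sequence says nothing about where the terms eventually lie. The correct version swaps $A$ and $A^{c}$ throughout: take $s\in A$, lift to $\beta_k\to\beta$, note $[\beta_k]\to[\beta]$ by continuity of the quotient map $\pi:\Omega(\bbh,b_0)\to\pi_1(\bbh,b_0)$, and use that $[\beta]$ lies in the \emph{open} set $\eta_{\#}^{-1}(A)$ to conclude that eventually $[\beta_k]\in\eta_{\#}^{-1}(A)$, hence eventually $s_k\in A$. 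This shows $A$ is sequentially open, hence open. (Your mention of first countability of $\Omega(\bbh,b_0)$ is a red herring; only convergence $[\beta_k]\to[\beta]$ and the definition of ``open'' are used.) The paper phrases this slightly differently---it shows the composite $\eta_{\#}\circ\pi:\Omega(\bbh,b_0)\to\bbz^{\bbn}$ is quotient via the same sequence-lifting-into-a-sequential-space principle, and then deduces $\eta_{\#}$ is quotient since $\pi$ is---but the content is the same.
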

\begin{proof}
Write $s_k=(a_{1}^{k},a_{2}^{k},a_{3}^{k},\dots)$ and $s=(b_1,b_2,b_3,\dots)$ as sequences of integers. Since $\bbz^{\bbn}$ has the product topology, if $\{s_k\}\to s$, then for every $n\in\bbn$, there exists $K_n\in\bbn$ such that $a_{n}^{k}=b_n$ for all $k\geq K_n$. Consider the infinite concatenations in $\bbh$: $\beta_{k}=\prod_{n=1}^{\infty}\ell_{n}^{a_{n}^{k}}$ and $\beta=\prod_{n=1}^{\infty}\ell_{n}^{b_n}$ where $\ell_{n}^{0}$ denotes $c_{b_0}$. By construction, we have $\eta_{\#}([\beta_k])=s_k$ and $\eta_{\#}([\beta])=s$. Since the $n$-th factor in the sequence $\beta_{k}$ stabilizes to the $n$-th factor of $\beta$, it follows that $\{\beta_k\}\to \beta$ in the compact-open topology on $\Omega(\bbh,b_0)$. 

The identifying map $\pi:\Omega(\bbh,b_0)\to \pioneh$ is assumed to be continuous (and quotient). Note that $\bbz^{\bbn}$ is a sequential space and, by the previous paragraph, $\eta_{\#}\circ\pi:\Omega(\bbh,b_0)\to\pi_1(\bbt,x_0)$ lifts all convergent sequences. It follows that $\eta_{\#}\circ\pi$ is a quotient map and thus $\eta_{\#}$ is quotient as well.
\end{proof}

Recalling the notion of a ``cotorsion-free" abelian group from the previous section, we note that it is well-known from infinite abelian group theory that an abelian group $A$ is cotorsion-free if and only if for every homomorphism $g:\bbz^{\bbn}\to A$, we have $\bigcap_{n\in\bbn}g\left(\prod_{k=n}^{\infty}\bbz\right)=0$ (See \cite[Theorem 7.2]{CC}). We consider a topological analogue of this property in the next corollary where $\ov{0}$ will denote the closure of the trivial subgroup in the group $\pi_1(X,x)$ equipped with the quotient topology.

\begin{corollary}\label{wdtransfiniteprodcorollary}
Suppose $X$ has transfinite $\pi_1$-commutativity at $x\in X$. If for every continuous homomorphism $g:\bbz^{\bbn}\to \ov{0}$ to the closure of the trivial subgroup in $\pi_1(X,x)$, we have $\bigcap_{n\in\bbn}g\left(\prod_{k=n}^{\infty}\bbz\right)=0$, then $X$ has well-defined transfinite $\pi_1$-products at $x$.
\end{corollary}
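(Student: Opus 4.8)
The plan is to reduce to the infinite case and then push both sides of the desired equality through a single ``infinite shuffle.'' Since $X$ is transfinitely $\pi_1$-commutative at $x$ by hypothesis, Lemma~\ref{wdinfiniteimplieswdtransfinitelemma} reduces the claim to showing that $X$ has well-defined \emph{infinite} $\pi_1$-products at $x$. So I would fix null-sequences $\alpha_n,\beta_n\in\Omega(X,x)$ with $\alpha_n\simeq\beta_n$ for all $n$ and aim to prove $\left[\prod_{n=1}^{\infty}\alpha_n\right]=\left[\prod_{n=1}^{\infty}\beta_n\right]$ in $\pi_1(X,x)$.

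First I would build one test map carrying both products. The interleaved family $(\alpha_1,\beta_1,\alpha_2,\beta_2,\dots)$ is again a null-sequence, so there is a map $h\colon\bbh\to X$ with $h\circ\ell_{2n-1}=\alpha_n$ and $h\circ\ell_{2n}=\beta_n$. By Theorem~\ref{transfinitecharacteriationtheorem}, transfinite $\pi_1$-commutativity furnishes a homomorphism $g\colon\pi_1(\bbt,x_0)=\bbz^{\bbn}\to\pi_1(X,x)$ with $g\circ\eta_{\#}=h_{\#}$, and $g$ is continuous for the quotient topologies because $\eta_{\#}$ is a quotient map (Lemma~\ref{sequenceliftlemma}) and $h_{\#}$ is continuous. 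Composition commutes with infinite concatenation, so $h\circ\prod_{n=1}^{\infty}\ell_{2n-1}=\prod_{n=1}^{\infty}\alpha_n$ and $h\circ\prod_{n=1}^{\infty}\ell_{2n}=\prod_{n=1}^{\infty}\beta_n$, while under the identification $\pi_1(\bbt,x_0)=\bbz^{\bbn}$ we have $\eta_{\#}\left[\prod_{n=1}^{\infty}\ell_{2n-1}\right]=a$ and $\eta_{\#}\left[\prod_{n=1}^{\infty}\ell_{2n}\right]=b$, where $a=(1,0,1,0,\dots)$ and $b=(0,1,0,1,\dots)$. Thus $\left[\prod_n\alpha_n\right]=g(a)$ and $\left[\prod_n\beta_n\right]=g(b)$, and it suffices to show that $g$ annihilates $v:=a-b=(1,-1,1,-1,\dots)$.

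Now I would use the hypothesis. Because $g(\mathbf{e}_{2n-1})=[\alpha_n]=[\beta_n]=g(\mathbf{e}_{2n})$, the homomorphism $g$ vanishes on the subgroup $D=\langle\,\mathbf{e}_{2n-1}-\mathbf{e}_{2n}\mid n\in\bbn\,\rangle$, whose closure in $\bbz^{\bbn}$ is $\ov D=\{\,w\in\bbz^{\bbn}\mid w_{2n-1}=-w_{2n}\ \text{for all }n\,\}$; via $w\mapsto(w_1,w_3,w_5,\dots)$ this $\ov D$ is a copy of $\bbz^{\bbn}$, and $v\in\ov D$. Since $g$ is continuous and $D$ is dense in $\ov D$, we get $g(\ov D)\subseteq\overline{g(D)}=\overline{\{0\}}=\ov 0$. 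Restricting $g$ to $\ov D$ and identifying $\ov D\cong\bbz^{\bbn}$ yields a continuous homomorphism $\tilde g\colon\bbz^{\bbn}\to\ov 0$, to which the hypothesis applies: $\bigcap_{m\in\bbn}\tilde g\!\left(\prod_{k=m}^{\infty}\bbz\right)=0$. Finally, for each $m$ write $v=d_m+t_m$ in $\ov D$, where $d_m$ is a finite $\bbz$-combination of the generators $\mathbf{e}_{2n-1}-\mathbf{e}_{2n}$ (so $g(d_m)=0$) and, under $\ov D\cong\bbz^{\bbn}$, the element $t_m$ lies in $\prod_{k=m}^{\infty}\bbz$. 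Then $g(v)=g(t_m)\in\tilde g\!\left(\prod_{k=m}^{\infty}\bbz\right)$ for every $m$, so $g(v)\in\bigcap_m\tilde g\!\left(\prod_{k=m}^{\infty}\bbz\right)=0$. Hence $\left[\prod_n\alpha_n\right]=g(a)=g(b)=\left[\prod_n\beta_n\right]$, and Lemma~\ref{wdinfiniteimplieswdtransfinitelemma} completes the proof.

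I expect the main obstacle to be, somewhat unglamorously, the continuity bookkeeping: the continuity of $g$ and the resulting fact that $g$ carries $\ov D$ into $\ov 0$. This is precisely where Lemma~\ref{sequenceliftlemma} (that $\eta_{\#}$ is a quotient map) and functoriality of the quotient topology on $\pi_1$ do the work, and it is the only point at which the topology on $\pi_1(X,x)$ enters. Everything else --- the shuffle construction, the identification of the two infinite products with $g(a)$ and $g(b)$, the isomorphism $\ov D\cong\bbz^{\bbn}$, and the ``finite part plus tail'' splitting of $v$ --- is routine. One should also note the small point that $\ov 0$ is a subgroup of $\pi_1(X,x)$ (standard for the quotient topology), so that $\tilde g$ is a homomorphism into $\ov 0$ as the hypothesis requires.
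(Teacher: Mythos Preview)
Your proof is correct, but it follows a genuinely different route from the paper's. The paper argues by contraposition and invokes an external characterization (\cite[Theorem 5.12]{Brazscattered}): if $X$ fails to have well-defined infinite $\pi_1$-products at $x$, there is a single map $f:(\bbh,b_0)\to(X,x)$ with $f_{\#}([\ell_n])=0$ for every $n$ yet $f_{\#}([\ell_{\infty}])\neq 0$. The induced $g:\bbz^{\bbn}\to\pi_1(X,x)$ then kills every $\mathbf{e}_n$, hence all of $\oplus_{\bbn}\bbz$, and continuity forces $g(\bbz^{\bbn})\subseteq\ov{0}$; the nonzero element $g(1,1,1,\dots)=f_{\#}([\ell_{\infty}])$ lies in $\bigcap_n g(\prod_{k=n}^{\infty}\bbz)$, contradicting the hypothesis. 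By contrast, you work directly with an arbitrary pair of term-wise homotopic null-sequences, interleave them into a single map $h$, and then restrict the resulting $g$ to the closed subgroup $\ov{D}=\{w:w_{2n-1}=-w_{2n}\}\cong\bbz^{\bbn}$, on which $g$ lands in $\ov{0}$. Your argument is self-contained and avoids the external citation, at the cost of the extra interleaving/subgroup bookkeeping; the paper's proof is shorter once the cited reduction is granted. Both approaches hinge on exactly the same continuity input (Lemma~\ref{sequenceliftlemma} and functoriality of the quotient topology), which you correctly identified as the crux.
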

\begin{proof}
In light of Remark \ref{welldefinedremark} and Lemma \ref{wdinfiniteimplieswdtransfinitelemma}, it suffices to show that $X$ has well-defined infinite $\pi_1$-products at $x$. Suppose to the contrary that $X$ is transfinitely $\pi_1$-commutative at $x$ but does not have well-defined infinite $\pi_1$-products at $x$, then by \cite[Theorem 5.12]{Brazscattered}, there exists a map $f:(\bbh,b_0)\to (X,x)$ such that $f_{\#}([\ell_n])=0$ for all $n\in\bbn$ and $f_{\#}([\ell_{\infty}])\neq 0$. By Theorem \ref{welldefinedremark}, there exists a homomorphism $g:\bbz^{\bbn}\to \pi_1(X,x)$ such that $g\circ\eta_{\#}=f_{\#}$. Since $f_{\#}$ is continuous by the functorality of the quotient topology and $\eta_{\#}$ is quotient by Lemma \ref{sequenceliftlemma}, it follows that $g$ is continuous. Since $g(\mathbf{e}_n)=g(\eta_{\#}([\ell_n]))=f_{\#}([\ell_n])=0$ for all $n\in\bbn$, we have $g\left(\oplus_{\bbn}\bbz\right)=0$. The continuity of $g$ now gives $g(\bbz^{\bbn})=g\left(\ov{\oplus_{\bbn}\bbz}\right)\leq \ov{g(\oplus_{\bbn}\bbz)}=\ov{0}$. Hence, we may regard $g$ as a continuous homomorphism $g:\bbz^{\bbn}\to\ov{0}$.

Let $\mathbf{a}_1=(1,1,1,\dots)$ and, for $n\geq 2$, let $\mathbf{a}_n=\mathbf{a}_1-\sum_{i=1}^{n-1}\mathbf{e}_i\in\prod_{k=n}^{\infty}\bbz$ be the sequence with $0$'s in the first $n$ components and $1$'s in every remaining component. Since $g(\mathbf{a}_1)=g(\eta_{\#}([\ell_{\infty}]))=f_{\#}([\ell_{\infty}])\neq 0$ and $g\left(\oplus_{\bbn}\bbz\right)=0$, we have $g(\mathbf{a}_1)=g(\mathbf{a}_n)$ for all $n\in\bbn$. Hence $0\neq g(\mathbf{a}_1)\in \bigcap_{n\in\bbn}g\left(\prod_{k=n}^{\infty}\bbz\right)$.
\end{proof}

\begin{remark}
Many topologies on $\pi_1$ have been considered in the literature. We note that Corollary \ref{wdtransfiniteprodcorollary} holds if one replaces the quotient topology with any functorial topology on $\pi_1$ for which (1) $\pi_1$ becomes at least a quasitopological group (so that $\ov{0}$ is a subgroup), (2) $\pi_1(\bbt,x_0)\cong\bbz^{\bbn}$ has the product topology (so that $\oplus_{\bbn}\bbz$ is dense), and (3) $\eta_{\#}$ is a topological quotient map.
\end{remark}

Combining Lemma \ref{cotorsionfreetheorem} with Corollary \ref{wdtransfiniteprodcorollary} gives the following.

\begin{theorem}\label{cotorsionfreemainthm}
If $\pi_1(X,x)$ is a cotorsion-free abelian group, then $X$ is transfinitely $\pi_1$-commutative and has well-defined transfinite $\pi_1$-products at all of its points.
\end{theorem}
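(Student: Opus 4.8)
The plan is to assemble the statement directly from Lemma \ref{cotorsionfreetheorem} and Corollary \ref{wdtransfiniteprodcorollary}, which together already cover both assertions. First I would invoke Lemma \ref{cotorsionfreetheorem}: the hypothesis that $\pi_1(X,x)$ is cotorsion-free yields transfinite $\pi_1$-commutativity at every point of $X$. Here one uses that cotorsion-freeness is inherited by all subgroups, hence by all quotients that embed, and in particular that $\pi_1(X,y)$ is cotorsion-free for every $y\in X$ since the fundamental groups at different points of a path-connected space are isomorphic. This settles the first half of the conclusion.

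For the second half, fix $x\in X$. By the previous paragraph $X$ is transfinitely $\pi_1$-commutative at $x$, so Corollary \ref{wdtransfiniteprodcorollary} applies once its hypothesis is verified. Let $g:\bbz^{\bbn}\to\ov{0}$ be an arbitrary continuous homomorphism into the closure of the trivial subgroup of $\pi_1(X,x)$ (a subgroup, since $\pi_1$ with the quotient topology is a quasitopological group). Composing with the inclusion $\ov{0}\hookrightarrow\pi_1(X,x)$, regard $g$ as a homomorphism $\bbz^{\bbn}\to\pi_1(X,x)$. Now apply the purely algebraic characterization recalled before Corollary \ref{wdtransfiniteprodcorollary}: an abelian group $A$ is cotorsion-free if and only if every homomorphism $g:\bbz^{\bbn}\to A$ satisfies $\bigcap_{n\in\bbn}g\left(\prod_{k=n}^{\infty}\bbz\right)=0$ (\cite[Theorem 7.2]{CC}). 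With $A=\pi_1(X,x)$ this gives $\bigcap_{n\in\bbn}g\left(\prod_{k=n}^{\infty}\bbz\right)=0$, which is exactly the hypothesis of Corollary \ref{wdtransfiniteprodcorollary}. The corollary then yields that $X$ has well-defined transfinite $\pi_1$-products at $x$.

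Since $x\in X$ was arbitrary, and the argument runs verbatim at every point of $X$, both conclusions hold at all points. I do not anticipate any real obstacle: the substantive work has already been carried out in Lemma \ref{cotorsionfreetheorem} and Corollary \ref{wdtransfiniteprodcorollary}. The only point requiring a little care is that the continuity of $g$ demanded in the hypothesis of Corollary \ref{wdtransfiniteprodcorollary} plays no role here — cotorsion-freeness is an algebraic condition constraining \emph{all} homomorphisms out of $\bbz^{\bbn}$, so a fortiori the continuous ones into $\ov{0}$.
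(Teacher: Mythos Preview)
Your proposal is correct and follows exactly the approach the paper takes: the paper's proof is the single sentence ``Combining Lemma \ref{cotorsionfreetheorem} with Corollary \ref{wdtransfiniteprodcorollary} gives the following,'' and you have simply unpacked that combination in detail, correctly observing that the algebraic cotorsion-free characterization handles all homomorphisms $\bbz^{\bbn}\to\pi_1(X,x)$ and hence a fortiori the continuous ones into $\ov{0}$.
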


\section{Fundamental groups of monoids}\label{sectionmonoids}

Throughout this section, $M$ will represent a given path-connected pre-$\Delta$-monoid $M$ with identity $e\in M$. To show that $M$ is transfinitely $\pi_1$-commutative at $e$, we begin, in Section \ref{subsectionelementarymonoids}, by expanding upon some familiar results on fundamental groups of monoids. In particular, we will focus on controlling of the size of the homotopies used, making a point to bound the image of a homotopy whenever possible. The strictness of both identity and associativity in $M$ will be used implicitly and this appears to be crucial to the argument. Since all of the properties we consider are invariant under basepoint-preserving homotopy equivalence, our arguments also apply to any based space $(X,x)$ that is based homotopy equivalent to a pre-$\Delta$ monoid $(M,e)$, e.g. loop spaces. Such spaces might be considered ``strong $H$-spaces" and include nearly all $H$-spaces of interest.

\subsection{Elementary homotopies and their images}\label{subsectionelementarymonoids}

We recall the usual \textit{distributive law} for paths: given paths $\alpha_{i,j}:I\to M$, $1\leq i,j\leq n$ satisfying $\alpha_{i+1,j}(0)=\alpha_{i,j}(1)$, we have $\bigast_{j=1}^{n}\prod_{i=1}^{n}\alpha_{i,j}=\prod_{i=1}^{n}\bigast_{j=1}^{n}\alpha_{i,j}$, which is a path from $\bigast_{j=1}^{n}\alpha_{1,j}(0)$ to $\bigast_{j=1}^{n}\alpha_{n,j}(1)$ in $M$.



\begin{lemma}\label{product-to-concatentation}
Let $\alpha_1,\alpha_2,\dots,\alpha_n:\ui\to M$ be paths and $\phi:\{1,2,\dots,n\}\to\{1,2,\dots,n\}$ be any bijection. For $j,k\in \{1,2,\dots,n\}$, let $$\beta_{j,k}:=\begin{cases}
\alpha_k(0), & \text{ if }\phi(k)\in \{j+1,j+2,\dots,n\}\\
\alpha_k, & \text{ if }\phi(k)=j\\
\alpha_k(1), & \text{ if }\phi(k)\in \{1,2,\dots,j-1\}.
\end{cases}$$
Then $\bigast_{k=1}^{n} \alpha_k\simeq \prod_{j=1}^{n}\left(\bigast_{k=1}^{n}\beta_{j,k}\right)$ by a homotopy with image in $\bigast_{k=1}^{n}Im(\alpha_k)$.
\end{lemma}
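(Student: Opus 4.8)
The plan is to funnel both paths in the statement through a single continuous map $P\colon I^n\to M$ and then use that $I^n$ is convex, so that any two paths in it with common endpoints are homotopic rel endpoints. Since the case $n=1$ is immediate (both sides reduce to $\alpha_1$, as $\phi(1)=1$), assume $n\ge 2$ throughout.

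First I would define $P(a_1,\dots,a_n)=\bigast_{k=1}^{n}\alpha_k(a_k)$ and verify it is continuous. The assignment $(a_1,\dots,a_n)\mapsto(\alpha_1(a_1),\dots,\alpha_n(a_n))$ is a continuous map $I^n\to M^n$; because $I^n$ is a Peano continuum, hence $\Delta$-generated, it is also continuous as a map into $\Delta(M^n)$, and composing with the continuous $k$-ary multiplication $\mu_n\colon\Delta(M^n)\to M$ supplied by Proposition \ref{predeltamoncharprop}(3) shows $P$ is continuous. This is the one step that genuinely uses the pre-$\Delta$-monoid hypothesis (joint continuity of $\mu$ is not available in general), and it is where I would be most careful. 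By construction $Im(P)=\bigast_{k=1}^{n}Im(\alpha_k)$, which is exactly the target set for the image bound.

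Next I would exhibit two paths in $I^n$ whose $P$-composites are the two sides. The diagonal path $d(t)=(t,\dots,t)$ clearly gives $P\circ d=\bigast_{k=1}^{n}\alpha_k$. For the right-hand side I would use the ``staircase'' path $q\colon I\to I^n$ which, on the block $\left[\frac{j-1}{n},\frac{j}{n}\right]$ (reparametrized linearly onto $I$), moves the coordinate indexed by $k_j:=\phi^{-1}(j)$ from $0$ to $1$ while freezing coordinate $k_i$ at $1$ for $i<j$ and at $0$ for $i>j$. The endpoints of consecutive blocks agree by construction, so $q$ is a well-defined piecewise-linear path from $(0,\dots,0)$ to $(1,\dots,1)$; unwinding the definition of $\beta_{j,k}$ — namely $\alpha_k(1)$ when $\phi(k)<j$, $\alpha_k$ when $\phi(k)=j$, and $\alpha_k(0)$ when $\phi(k)>j$ — shows $P\circ q=\prod_{j=1}^{n}\left(\bigast_{k=1}^{n}\beta_{j,k}\right)$. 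I expect this indexing bookkeeping to be the fiddliest part of the write-up, though it is routine.

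Finally, since $I^n$ is convex and $d,q$ share the endpoints $(0,\dots,0)$ and $(1,\dots,1)$, the straight-line homotopy $H(s,t)=(1-s)\,d(t)+s\,q(t)$ stays in $I^n$ and is a homotopy rel $\partial I$ from $d$ to $q$. Then $P\circ H$ is a homotopy rel endpoints from $\bigast_{k=1}^{n}\alpha_k$ to $\prod_{j=1}^{n}\left(\bigast_{k=1}^{n}\beta_{j,k}\right)$ whose image is contained in $Im(P)=\bigast_{k=1}^{n}Im(\alpha_k)$, which is precisely the assertion.
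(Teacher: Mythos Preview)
Your proof is correct. Both your approach and the paper's rest on the same essential input---that the pre-$\Delta$-monoid hypothesis makes $\mu_n$ continuous on $\Delta$-generated domains---but the packaging differs. The paper first applies the distributive law to rewrite $\prod_{j}\bigast_{k}\beta_{j,k}=\bigast_{k}\prod_{j}\beta_{j,k}$, notes that each concatenation $\prod_{j}\beta_{j,k}$ is $\alpha_k$ precomposed with a piecewise-constant step map $I\to I$, builds coordinate-wise homotopies $H_k:I^2\to Im(\alpha_k)$ collapsing the constant pieces, and multiplies them as $\bigast_{k}H_k$ (continuity on the $\Delta$-generated square $I^2$). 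You instead assemble everything into a single map $P:I^n\to M$, express both sides as $P\circ d$ and $P\circ q$, and homotope inside the convex cube. Your route is a bit more geometric and makes the image bound $Im(P)=\bigast_{k}Im(\alpha_k)$ immediate; the paper's route exposes the coordinate-wise structure of the homotopy, which is occasionally handy later. In substance the two arguments coincide: the paper's $\bigast_{k}H_k$ is exactly $P$ composed with a homotopy in $I^n$ from $q$ to $d$.
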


\begin{proof}
Replace $\alpha_{k}(0)$ and $\alpha_k(1)$ in the definition of $\beta_{j,k}$ with the constant paths $c_{\alpha_k(0)}$ and $c_{\alpha_k(1)}$ respectively. The value of $\prod_{j=1}^{n}\left(\bigast_{k=1}^{n}\beta_{j,k}\right)$ is the same as in the statement of the lemma and the distributive law gives $\prod_{j=1}^{n}\left(\bigast_{k=1}^{n}\beta_{j,k}\right)=\bigast_{k=1}^{n}\left(\prod_{j=1}^{n}\beta_{j,k}\right)$. For each $k\in\{1,2,\dots,n\}$, every factor $\beta_{j,k}$ of the concatenation $\prod_{j=1}^{n}\beta_{j,k}$ is constant except for $\beta_{\phi(k),k}=\alpha_k$. Hence, there is a path-homotopy $H_k:I^2\to M$ from $\prod_{j=1}^{n}\beta_{j,k}$ to $\alpha_k$ with image in $Im(\alpha_k)$. Since $M$ is a pre-$\Delta$-monoid and $I^2$ is $\Delta$-generated, the monoid product $\bigast_{k=1}^{n}H_k:I^2\to M$ is continuous and gives a path-homotopy from $\bigast_{k=1}^{n}\prod_{j=1}^{n}\beta_{j,k}$ to $\bigast_{k=1}^{n}\alpha_k$ with image in $\bigast_{k=1}^{n}Im(\alpha_k)$.
\end{proof}

\begin{example}
In the case that $n=2$, Lemma \ref{product-to-concatentation} states that for any paths $\alpha,\beta:I\to M$, the three paths $\alpha\ast\beta$, $(\alpha\ast\beta(0))\cdot (\alpha(1)\ast\beta)$, and $(\alpha(0)\ast\beta)\cdot(\alpha\ast \beta(1))$, are all homotopic to each other by homotopies in $Im(\alpha)\ast Im(\beta)$. For a more complicated example, consider paths $\alpha_k:\ui\to M$, $1\leq k\leq 4$ and the bijection $\phi$ given by $\phi(1)=2$, $\phi(2)=3$, $\phi(3)=1$, $\phi(4)=4$. Then the product $\bigast_{k=1}^{4} \alpha_k$ is homotopic (relative to the endpoints $\bigast_{k=1}^{4} \alpha_k(0)$ and $\bigast_{k=1}^{4} \alpha_k(1)$) to the concatenation 
\begin{eqnarray*}
\left(\alpha_1(0)\ast\alpha_2(0)\ast \alpha_3\ast \alpha_{4}(0)\right)\cdot\left(\alpha_1\ast\alpha_2(0)\ast \alpha_3(1)\ast \alpha_{4}(0)\right)\cdot \\
\left(\alpha_1(1)\ast\alpha_2\ast \alpha_3(1)\ast \alpha_{4}(0)\right)\cdot\left(\alpha_1(1)\ast\alpha_2(1)\ast \alpha_3(1)\ast \alpha_{4}\right)
\end{eqnarray*}
by a homotopy in $\bigast_{k=1}^{4}Im(\alpha_k)$.
\end{example}


\begin{corollary}\label{smallcommutinghomotopylemma}
For all $\alpha,\beta\in \Omega(M,e)$, the loops $\alpha\ast\beta$, $\alpha\cdot \beta$, and $\beta\cdot\alpha$ are all homotopic to each other by homotopies in $Im(\alpha)\ast Im(\beta)$. Moreover, $\beta\ast \alpha$ is homotopic to each of these by a homotopy in $(Im(\alpha)\ast Im(\beta))\cup (Im(\beta)\ast Im(\alpha))$.
\end{corollary}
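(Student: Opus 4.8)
The plan is to read off the corollary from Lemma \ref{product-to-concatentation} with $n=2$, i.e.\ from the first case of the Example following it, by specializing the paths to loops based at the identity and exploiting strict unitality.

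First I recall that the Example gives, for \emph{arbitrary} paths $\alpha,\beta\colon I\to M$, that $\alpha\ast\beta$, $(\alpha\ast\beta(0))\cdot(\alpha(1)\ast\beta)$, and $(\alpha(0)\ast\beta)\cdot(\alpha\ast\beta(1))$ are mutually homotopic rel endpoints through homotopies with image in $Im(\alpha)\ast Im(\beta)$. Now I specialize to $\alpha,\beta\in\Omega(M,e)$, so that $\alpha(0)=\alpha(1)=\beta(0)=\beta(1)=e$. Because $e$ is a strict two-sided identity for $\ast$ (not merely an identity up to homotopy), the function $x\mapsto\alpha(x)\ast\beta(0)$ is literally the path $\alpha$, and likewise $x\mapsto\alpha(1)\ast\beta(x)$ is literally $\beta$; hence $(\alpha\ast\beta(0))\cdot(\alpha(1)\ast\beta)=\alpha\cdot\beta$ as paths, and symmetrically $(\alpha(0)\ast\beta)\cdot(\alpha\ast\beta(1))=\beta\cdot\alpha$. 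Substituting, the Example becomes exactly the assertion that $\alpha\ast\beta$, $\alpha\cdot\beta$, and $\beta\cdot\alpha$ are mutually homotopic through homotopies in $Im(\alpha)\ast Im(\beta)$. (Since $e\in Im(\alpha)\cap Im(\beta)$, both $Im(\alpha)$ and $Im(\beta)$ lie in $Im(\alpha)\ast Im(\beta)$, so these three loops genuinely have image in the stated set.)

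For the ``moreover'' clause I apply the same argument with $\alpha$ and $\beta$ interchanged, obtaining $\beta\ast\alpha\simeq\beta\cdot\alpha\simeq\alpha\cdot\beta$ through homotopies in $Im(\beta)\ast Im(\alpha)$. In particular $\beta\ast\alpha$ is homotopic to $\beta\cdot\alpha$ inside $Im(\beta)\ast Im(\alpha)$, and concatenating that homotopy with the appropriate homotopy from the first part (living in $Im(\alpha)\ast Im(\beta)$) connects $\beta\ast\alpha$ to each of $\alpha\ast\beta$, $\alpha\cdot\beta$, and $\beta\cdot\alpha$ through a homotopy with image in $(Im(\alpha)\ast Im(\beta))\cup(Im(\beta)\ast Im(\alpha))$.

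There is no real obstacle here; the only thing requiring care is the image bookkeeping---checking that replacing the endpoint values $\alpha(0),\alpha(1),\beta(0),\beta(1)$ by $e$ does not push the tracked image outside $Im(\alpha)\ast Im(\beta)$---and being explicit that strict unitality (emphasized in the preamble to this section) is precisely what lets the constant factors in Lemma \ref{product-to-concatentation} vanish identically rather than merely up to homotopy, so that no extra homotopy step is needed to pass from $(\alpha\ast\beta(0))\cdot(\alpha(1)\ast\beta)$ to $\alpha\cdot\beta$.
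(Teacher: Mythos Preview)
Your proof is correct and follows essentially the same approach as the paper: apply Lemma~\ref{product-to-concatentation} with $n=2$, use strict unitality of $e$ to simplify the constant factors, and then swap the roles of $\alpha$ and $\beta$ for the ``moreover'' clause. Your write-up is slightly more explicit about the image bookkeeping and the concatenation step for $\beta\ast\alpha$, but the argument is the same.
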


\begin{proof}
Since $e$ is a strict identity for the operation of $M$, we have $\alpha\ast\beta(0)=\alpha=\alpha\ast\beta(1)$
and $\alpha(1)\ast\beta=\beta=\alpha(0)\ast\beta$. Applying Lemma \ref{product-to-concatentation} in the case $n=2$ gives that $\alpha\ast\beta$, $\alpha\cdot\beta$, and $\beta\cdot \alpha$ are all homotopic to each other by homotopies in $Im(\alpha)\ast Im(\beta)$. By switching the roles of $\alpha$ and $\beta$, we also have that $\beta\ast\alpha$ is homotopic to $\beta\cdot\alpha$ by a homotopy in $Im(\beta)\ast Im(\alpha)$.
\end{proof}
\begin{corollary}
If $M$ is a pre-$\Delta$-monoid, then $\pi_1(M,e)$ is commutative and $[\alpha\cdot\beta]=[\alpha\ast\beta]$ for all $\alpha,\beta\in\Omega(M,e)$.
\end{corollary}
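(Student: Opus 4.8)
The plan is to read this off directly from Corollary~\ref{smallcommutinghomotopylemma}, which has already done all of the real work. First I would fix arbitrary loops $\alpha,\beta\in\Omega(M,e)$ and invoke Corollary~\ref{smallcommutinghomotopylemma} to get that $\alpha\ast\beta$, $\alpha\cdot\beta$, and $\beta\cdot\alpha$ are mutually path-homotopic rel $\partial I$ (indeed by homotopies supported in $Im(\alpha)\ast Im(\beta)$, though the size control is irrelevant here). Passing to homotopy classes in $\pi_1(M,e)$ then yields at once $[\alpha\cdot\beta]=[\alpha\ast\beta]$ and $[\alpha\cdot\beta]=[\beta\cdot\alpha]$.

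The first of these two identities is literally the second assertion of the corollary, so nothing more is needed there. For the commutativity assertion, I would recall that the group operation on $\pi_1(M,e)$ is $[\alpha][\beta]=[\alpha\cdot\beta]$, so that $[\beta\cdot\alpha]=[\beta][\alpha]$; the relation $[\alpha\cdot\beta]=[\beta\cdot\alpha]$ thus reads $[\alpha][\beta]=[\beta][\alpha]$. Since $\alpha$ and $\beta$ were arbitrary, every pair of elements of $\pi_1(M,e)$ commutes, i.e. $\pi_1(M,e)$ is abelian.

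I do not anticipate any obstacle: the Eckmann--Hilton-type step was already carried out in Lemma~\ref{product-to-concatentation} via the distributive law for paths, and the continuity of the relevant monoid product of the homotopies $I^2\to M$ was justified there using that $M$ is a pre-$\Delta$-monoid and $I^2$ is $\Delta$-generated (Proposition~\ref{predeltamoncharprop}). This corollary merely records the classical consequence --- that the fundamental group of an $H$-space is abelian --- now in the generality of pre-$\Delta$-monoids, and the only content of the proof is the routine bookkeeping that translates homotopies between concatenations into relations among homotopy classes.
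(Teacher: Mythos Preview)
Your proposal is correct and follows exactly the intended route: the paper states this corollary without proof, as it is an immediate consequence of Corollary~\ref{smallcommutinghomotopylemma}. Your write-up simply spells out the one-line deduction the paper leaves implicit.
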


\begin{remark}[Generalized Distributive Law]
For a pre-$\Delta$-monoid $M$, maps $f,g:\bbh\to M$, and loop $L\in \Omega(\bbh,b_0)$, we have \[(f\ast g)\circ L=(f\circ L)\ast (g\circ L)\]
where both sides of the equality are well-defined since $I$ and $\bbh$ are $\Delta$-generated. For instance, setting $L=\ell_{\infty}$, $f\circ\ell_n=\alpha_n$, and $g\circ \ell_n=\beta_n$, we obtain the distributive law for infinite concatenations: $\prod_{n=1}^{\infty}(\alpha_n\ast\beta_n)=\left(\prod_{n=1}^{\infty}\alpha_n\right)\ast \left(\prod_{n=1}^{\infty}\beta_n\right)$.
\end{remark}

\begin{lemma}\label{smallimagelemma}
If $\{\alpha_{n}^{i}\}_{n\in\bbn}$, $1\leq i\leq k$ is a finite collection of null-sequences in $\Omega(M,e)$, then for any open neighborhood $U$ of $e$ in $M$, we have $\bigast_{i=1}^{k}Im(\alpha_{n}^{i})\subseteq U$ for all but finitely many $n$. In particular, $\left\{\bigast_{i=1}^{k}\alpha_{n}^{i}\right\}_{n\in\bbn}$ is a null sequence.
\end{lemma}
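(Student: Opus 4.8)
The plan is to express each product set $\bigast_{i=1}^{k}Im(\alpha_n^i)$ as the image of a ``circle box'' in a finite power of the Hawaiian earring under one fixed continuous map, and then to exploit the fact that these boxes shrink to the basepoint. For each $i\in\{1,\dots,k\}$, let $f_i:(\bbh,b_0)\to(M,e)$ be the map with $f_i\circ\ell_n=\alpha_n^i$, which exists because null-sequences of loops at $e$ are in bijection with maps out of $\bbh$. Since $\bbh$ is a Peano continuum, so is the finite power $\bbh^k$, and therefore $\bbh^k$ is $\Delta$-generated. Writing $q_j:\bbh^k\to\bbh$ for the $j$-th projection, the continuous map $(f_1\circ q_1,\dots,f_k\circ q_k):\bbh^k\to M^k$ thus factors through $\Delta(M^k)$, and composing with $\mu_k:\Delta(M^k)\to M$ --- continuous by Proposition \ref{predeltamoncharprop}(3) (the case $k=1$ being trivial) --- produces a continuous map $F:\bbh^k\to M$ with $F(x_1,\dots,x_k)=\bigast_{i=1}^{k}f_i(x_i)$ and $F(b_0,\dots,b_0)=e$.

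Next I would record the identity $F(C_n\times\cdots\times C_n)=\bigast_{i=1}^{k}Im(\alpha_n^i)$, where $C_n=Im(\ell_n)$ is the $n$-th circle of $\bbh$; this is immediate from $f_i(C_n)=Im(f_i\circ\ell_n)=Im(\alpha_n^i)$ together with the definition of the product set. Now fix an open neighborhood $U$ of $e$ in $M$. Then $F^{-1}(U)$ is an open neighborhood of $(b_0,\dots,b_0)$ in $\bbh^k$, so it contains a product $V_1\times\cdots\times V_k$ of open neighborhoods of $b_0$. Because $C_n$ lies in the ball of radius $2/n$ centered at $b_0$, every neighborhood of $b_0$ in $\bbh$ contains $C_n$ for all sufficiently large $n$; hence there is an $N$ with $C_n\subseteq V_1\cap\cdots\cap V_k$ for all $n\geq N$, and for such $n$ we obtain
\[\bigast_{i=1}^{k}Im(\alpha_n^i)=F(C_n\times\cdots\times C_n)\subseteq F(F^{-1}(U))\subseteq U,\]
which is the first claim.

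For the final assertion, note that the loop $\bigast_{i=1}^{k}\alpha_n^i$ equals $F\circ(\ell_n,\dots,\ell_n)$, i.e. the diagonal path $t\mapsto(\ell_n(t),\dots,\ell_n(t))$ followed by $F$; hence it is a continuous loop based at $F(b_0,\dots,b_0)=e$ whose image is contained in $F(C_n\times\cdots\times C_n)=\bigast_{i=1}^{k}Im(\alpha_n^i)$. By the previous paragraph this image lies in $U$ for all $n\geq N$, so $\left\{\bigast_{i=1}^{k}\alpha_n^i\right\}_{n\in\bbn}$ is a null-sequence in $\Omega(M,e)$.

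The only real subtlety, and the reason a power of $\bbh$ appears rather than $M^k$ itself, is that $M$ is merely a pre-$\Delta$-monoid: the $k$-ary multiplication need not be continuous on $M^k$, only on $\Delta(M^k)$, so to apply it we must feed it a $\Delta$-generated domain. Choosing that domain to be $\bbh^k$ does double duty, since it simultaneously packages all of the sets $\bigast_{i=1}^{k}Im(\alpha_n^i)$ as images of a nested, shrinking family of compacta, after which the conclusion is forced. Everything else is routine.
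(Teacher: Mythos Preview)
Your proof is correct and follows essentially the same route as the paper: both build the continuous map $F=\mu_k\circ(f_1,\dots,f_k):\bbh^k\to M$ via the $\Delta$-generatedness of $\bbh^k$ and Proposition~\ref{predeltamoncharprop}, then pull back $U$ to a product neighborhood of $(b_0,\dots,b_0)$ that contains $C_n^k$ for large $n$. The only cosmetic differences are that the paper uses a single $V$ with $V^k\subseteq F^{-1}(U)$ rather than your $V_1\times\cdots\times V_k$, and deduces the final statement directly from the containment $Im\bigl(\bigast_{i=1}^{k}\alpha_n^i\bigr)\subseteq\bigast_{i=1}^{k}Im(\alpha_n^i)$ without rewriting the loop as $F\circ(\ell_n,\dots,\ell_n)$.
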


\begin{proof}
For $1\leq i\leq k$, let $f_i:\bbh\to M$ be the maps such that $f_i\circ\ell_n=\alpha_{n}^{i}$ for all $n\in\bbn$. Since $M$ is a pre-$\Delta$-monoid, the $k$-ary operation $\mu_k:\Delta(M^k)\to M$ is continuous (recall Proposition \ref{predeltamoncharprop}). Since $\bbh^k$ is a $\Delta$-generated space, $f=\prod_{i=1}^{k}f_i:\bbh^k\to \Delta(M^k)$ is continuous. We have $\mu_k(f(b_0,b_0,\dots,b_0))=e\in U$ and so there exists an open neighborhood $V$ of $b_0$ such that $\mu_k(f(V^k))\subseteq U$. Find $N\in\bbn$ such that $C_n\subseteq V$ for all $n\geq N$. Then $\bigast_{i=1}^{k}Im(\alpha_{n}^{i})=\mu_k(f((C_{n})^{k}))\subseteq \mu_k(f(V^k))\subseteq U$ for all $n\geq N$, completing the proof of the first statement. Since $Im(\bigast_{i=1}^{k}\alpha_{n}^{i})\subseteq \bigast_{i=1}^{k}Im(\alpha_{n}^{i})$ for all $n\in\bbn$, the second statement is an immediate consequence of the first statement.
\end{proof}

\begin{proposition}\label{infprodprop2}
If $\alpha_n,\beta_n\in \Omega(M,e)$ are null sequences, then $\prod_{n=1}^{\infty}(\alpha_n\cdot\beta_n)\simeq\prod_{n=1}^{\infty}(\alpha_n\ast\beta_n)$ by a homotopy in $\bigcup_{n\in\bbn}Im(\alpha_n)\ast Im(\beta_n)$.
\end{proposition}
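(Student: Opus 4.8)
\emph{Proof proposal.} The plan is to glue together the ``small'' commuting homotopies supplied by Corollary \ref{smallcommutinghomotopylemma}, one on each subinterval of $\ell_{\infty}$, and to use the null-sequence hypothesis to obtain continuity at the accumulation point $t=1$.

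First I would record that both sides make sense: $Im(\alpha_n\cdot\beta_n)=Im(\alpha_n)\cup Im(\beta_n)$ shows $\{\alpha_n\cdot\beta_n\}$ is a null-sequence, and Lemma \ref{smallimagelemma} (applied with $k=2$) shows $\{\alpha_n\ast\beta_n\}$ is a null-sequence, so $\prod_{n=1}^{\infty}(\alpha_n\cdot\beta_n)$ and $\prod_{n=1}^{\infty}(\alpha_n\ast\beta_n)$ are defined. For each $n$, Corollary \ref{smallcommutinghomotopylemma} gives a path-homotopy $H_n:I\times I\to M$, relative to $\partial I$, from $\alpha_n\cdot\beta_n$ to $\alpha_n\ast\beta_n$ whose image lies in $Im(\alpha_n)\ast Im(\beta_n)$; in particular $H_n(0,t)=H_n(1,t)=e$ for all $t$.

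Next I would assemble these into a single map. Let $\lambda_n:\bigl[\tfrac{n-1}{n},\tfrac{n}{n+1}\bigr]\to I$ be the increasing linear homeomorphism and define $G:I\times I\to M$ by $G(s,t)=H_n(\lambda_n(s),t)$ for $s\in\bigl[\tfrac{n-1}{n},\tfrac{n}{n+1}\bigr]$ and $G(1,t)=e$. Across each division point $s=\tfrac{n}{n+1}$ the two relevant formulas agree, both being constantly $e$ since $H_n(1,\cdot)=H_{n+1}(0,\cdot)=e$; hence $G$ is continuous on $[0,1)\times I$. By construction $G(\cdot,0)$ reparametrizes $\prod_{n=1}^{\infty}(\alpha_n\cdot\beta_n)$, $G(\cdot,1)$ reparametrizes $\prod_{n=1}^{\infty}(\alpha_n\ast\beta_n)$, and $G(0,t)=G(1,t)=e$, so $G$ is a based homotopy once continuity is checked everywhere; its image is $\{e\}\cup\bigcup_{n\in\bbn}Im(H_n)\subseteq\bigcup_{n\in\bbn}Im(\alpha_n)\ast Im(\beta_n)$, where we use $e\in Im(\alpha_n)\ast Im(\beta_n)$.

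The one point needing genuine care — the ``hard part,'' such as it is — is continuity of $G$ at points of $\{1\}\times I$. Given an open neighborhood $U$ of $e=G(1,t)$, Lemma \ref{smallimagelemma} yields $N\in\bbn$ with $Im(\alpha_n)\ast Im(\beta_n)\subseteq U$, and hence $Im(H_n)\subseteq U$, for all $n\geq N$; then $G$ carries the neighborhood $\bigl(\tfrac{N-1}{N},1\bigr]\times I$ of $(1,t)$ into $U$. This establishes continuity of $G$ and finishes the argument.
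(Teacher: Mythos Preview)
Your proposal is correct and follows essentially the same approach as the paper: both use Corollary~\ref{smallcommutinghomotopylemma} to obtain small homotopies $H_n$ on each subinterval, glue them into a single map, and invoke Lemma~\ref{smallimagelemma} to verify continuity at $\{1\}\times I$. Your version simply includes a few extra sanity checks (well-definedness of the infinite concatenations, agreement at division points) that the paper leaves implicit.
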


\begin{proof}
By Lemma \ref{smallcommutinghomotopylemma}, there are homotopies $H_n:I^2\to M$ with $H_n(s,0)=\alpha_n\cdot \beta_n$ and $H_n(s,1)=\alpha_n\ast \beta_n$ such that $Im(H_n)\subseteq Im(\alpha_n)\ast Im(\beta_n)$. Define a homotopy $H$ as $H_n$ on $\left[\frac{n-1}{n},\frac{n}{n+1}\right]\times \ui$ for each $n\in\bbn$ and $H(\{1\}\times\ui)=e$. It suffices to show that $H$ is continuous at the points in $\{1\}\times \ui$. Let $U$ be an open neighborhood of $e$ in $M$. By Lemma \ref{smallimagelemma}, $Im(\alpha_n)\ast Im(\beta_n)$ lies in $U$ for all but finitely many $n$. Therefore, since $Im(H_n)\subseteq Im(\alpha_n)\ast Im(\beta_n)$, we have $H(\left[\frac{N-1}{N},1\right]\times \ui)\subseteq U$ for some $N\in\bbn$, verifying the continuity of $H$. Thus, $H$ is the desired homotopy.
\end{proof}

The following theorem, which says that a concatenation of order type $\omega$ may be reordered into a concatenation of order type $\omega+\omega$ is a first step toward infinite commutativity.

\begin{theorem}\label{infiniteproductscommutesthm}
If $M$ is a pre-$\Delta$-monoid and $\alpha_n,\beta_n\in \Omega(M,e)$ are null sequences of loops, then 
$\prod_{n=1}^{\infty}\alpha_n\cdot\beta_n\simeq\left(\prod_{n=1}^{\infty}\alpha_n\right)\cdot \left(\prod_{n=1}^{\infty}\beta_n\right)$ by a homotopy in $Im(\prod_{n=1}^{\infty}\alpha_n)\ast Im(\prod_{n=1}^{\infty}\beta_n)$.
\end{theorem}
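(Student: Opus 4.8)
The plan is to use the Generalized Distributive Law together with Proposition \ref{infprodprop2}. First I would rewrite the monoid product $\left(\prod_{n=1}^{\infty}\alpha_n\right)\ast\left(\prod_{n=1}^{\infty}\beta_n\right)$: since $\bbh$ is $\Delta$-generated, the Generalized Distributive Law (applied with $L=\ell_\infty$, $f\circ\ell_n=\alpha_n$, $g\circ\ell_n=\beta_n$) gives $\left(\prod_{n=1}^{\infty}\alpha_n\right)\ast\left(\prod_{n=1}^{\infty}\beta_n\right)=\prod_{n=1}^{\infty}(\alpha_n\ast\beta_n)$ on the nose, with image exactly $\bigcup_{n\in\bbn}Im(\alpha_n)\ast Im(\beta_n)$, which is contained in $Im\left(\prod_{n=1}^{\infty}\alpha_n\right)\ast Im\left(\prod_{n=1}^{\infty}\beta_n\right)$. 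Then Proposition \ref{infprodprop2} supplies a homotopy $\prod_{n=1}^{\infty}(\alpha_n\cdot\beta_n)\simeq\prod_{n=1}^{\infty}(\alpha_n\ast\beta_n)$ whose image lies in $\bigcup_{n\in\bbn}Im(\alpha_n)\ast Im(\beta_n)$, hence inside the target set. So after this step I have $\prod_{n=1}^{\infty}(\alpha_n\cdot\beta_n)\simeq\left(\prod_{n=1}^{\infty}\alpha_n\right)\ast\left(\prod_{n=1}^{\infty}\beta_n\right)$ by a homotopy with the required image control.

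The remaining task is to pass from the monoid product $\left(\prod_{n=1}^{\infty}\alpha_n\right)\ast\left(\prod_{n=1}^{\infty}\beta_n\right)$ to the concatenation $\left(\prod_{n=1}^{\infty}\alpha_n\right)\cdot\left(\prod_{n=1}^{\infty}\beta_n\right)$. Writing $A=\prod_{n=1}^{\infty}\alpha_n$ and $B=\prod_{n=1}^{\infty}\beta_n$, these are loops in $\Omega(M,e)$, so Corollary \ref{smallcommutinghomotopylemma} applies directly: $A\ast B\simeq A\cdot B$ by a homotopy with image in $Im(A)\ast Im(B)=Im\left(\prod_{n=1}^{\infty}\alpha_n\right)\ast Im\left(\prod_{n=1}^{\infty}\beta_n\right)$, which is precisely the prescribed set. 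Concatenating the two homotopies (first $\prod_{n=1}^{\infty}(\alpha_n\cdot\beta_n)\simeq A\ast B$, then $A\ast B\simeq A\cdot B$) yields the theorem, since the union of the two image sets still lies in $Im(A)\ast Im(B)$.

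The only subtlety I anticipate is bookkeeping with the image bounds: one must check that $\bigcup_{n\in\bbn}Im(\alpha_n)\ast Im(\beta_n)\subseteq Im\left(\prod_{n=1}^{\infty}\alpha_n\right)\ast Im\left(\prod_{n=1}^{\infty}\beta_n\right)$, which is immediate because each $Im(\alpha_n)\subseteq Im\left(\prod_{n=1}^{\infty}\alpha_n\right)$ and likewise for $\beta_n$, and that concatenating two path-homotopies produces a path-homotopy whose image is the union of the two images — a routine fact. No genuinely hard step arises; the content is entirely in Proposition \ref{infprodprop2}, Corollary \ref{smallcommutinghomotopylemma}, and the Generalized Distributive Law, all established above. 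One could even combine the middle two homotopies into a single one by invoking Corollary \ref{smallcommutinghomotopylemma} termwise inside the infinite concatenation (as in the proof of Proposition \ref{infprodprop2}) and then distributing, but the two-step route above is cleaner.
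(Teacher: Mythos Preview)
Your proposal is correct and follows essentially the same route as the paper: apply Proposition~\ref{infprodprop2} to pass from $\prod_{n=1}^{\infty}(\alpha_n\cdot\beta_n)$ to $\prod_{n=1}^{\infty}(\alpha_n\ast\beta_n)$, use the distributive law to identify this with $\left(\prod_{n=1}^{\infty}\alpha_n\right)\ast\left(\prod_{n=1}^{\infty}\beta_n\right)$, and then invoke Corollary~\ref{smallcommutinghomotopylemma} to reach the concatenation, with the same image-inclusion check $\bigcup_{n}Im(\alpha_n)\ast Im(\beta_n)\subseteq Im\left(\prod_{n=1}^{\infty}\alpha_n\right)\ast Im\left(\prod_{n=1}^{\infty}\beta_n\right)$ at the end.
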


\begin{proof}
We have
\[\prod_{n=1}^{\infty}\alpha_n\cdot\beta_n \simeq \prod_{n=1}^{\infty}\alpha_n\ast\beta_n=\left(\prod_{n=1}^{\infty}\alpha_n\right)\ast \left(\prod_{n=1}^{\infty}\beta_n\right)\simeq \left(\prod_{n=1}^{\infty}\alpha_n\right)\cdot \left(\prod_{n=1}^{\infty}\beta_n\right)\]
where the first homotopy is from Proposition \ref{infprodprop2} and may be chosen to have image in $\bigcup_{n\in\bbn}Im(\alpha_n)\ast Im(\beta_n)$. The equality is the distributive law and the last homotopy, using Corollary \ref{smallcommutinghomotopylemma}, may be chosen to have image in $Im(\prod_{n=1}^{\infty}\alpha_n)\ast Im(\prod_{n=1}^{\infty}\beta_n)$. Since $\bigcup_{n\in\bbn}Im(\alpha_n)\ast Im(\beta_n)\subseteq Im(\prod_{n=1}^{\infty}\alpha_n)\ast Im(\prod_{n=1}^{\infty}\beta_n)$, the composition of these two homotopies has image in $Im(\prod_{n=1}^{\infty}\alpha_n)\ast Im(\prod_{n=1}^{\infty}\beta_n)$.
\end{proof}

Summarizing the last two statements, we conclude that for null sequences $\alpha_n,\beta_n\in\Omega(M,e)$, we have the following equalities in $\pi_1(M,e)$:
\[\left[\prod_{n=1}^{\infty}\alpha_n\right]\left[\prod_{n=1}^{\infty}\beta_n\right]=\left[\prod_{n=1}^{\infty}\alpha_n\ast\beta_n\right] =\left[\prod_{n=1}^{\infty}\alpha_n\cdot\beta_n\right].\]

\subsection{Transfinite $\pi_1$-commutativity for monoids}

In the next lemma, we show that $M$ is infinitely $\pi_1$-commutative at $e$ in a strong way.

\begin{lemma}[Infinite Shuffle]\label{monoidtechlemma}
If $\alpha_n\in\Omega(M,e)$ is a null-sequence and $\phi:\bbn\to\bbn$ is a bijection, then $\prod_{n=1}^{\infty}\alpha_n\simeq \prod_{n=1}^{\infty}\alpha_{\phi(n)}$ by a homotopy in $Im(\prod_{n=1}^{\infty}\alpha_n)\ast Im(\prod_{n=1}^{\infty}\alpha_n)$.
\end{lemma}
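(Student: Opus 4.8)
The plan is to reduce an arbitrary permutation of a null-sequence to the special case handled by Theorem~\ref{infiniteproductscommutesthm}, namely rearranging an order-$\omega$ concatenation into an order-$(\omega+\omega)$ concatenation, and to iterate this countably often while controlling the image of every intermediate homotopy. First I would observe that it suffices to treat a single ``transposition-type'' move: if $\phi$ swaps two disjoint finite blocks of the index set $\bbn$, then $\prod_{n=1}^\infty \alpha_n \simeq \prod_{n=1}^\infty \alpha_{\phi(n)}$ by a homotopy inside $Im(\prod_n \alpha_n)\ast Im(\prod_n \alpha_n)$. This is essentially Corollary~\ref{smallcommutinghomotopylemma} applied to the two finite sub-concatenations (which are themselves loops at $e$, with images contained in $Im(\prod_n\alpha_n)$) together with the fact that the rest of the concatenation is left untouched; the monoid product of the commuting homotopy on the finite blocks with the constant homotopy on the tail stays, via the pre-$\Delta$-monoid property and Lemma~\ref{smallimagelemma}, inside the prescribed set.

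Next I would set up the standard ``infinite shuffle'' bookkeeping. Write the source concatenation as $\prod_{n=1}^\infty \alpha_n$ and the target as $\prod_{n=1}^\infty \alpha_{\phi(n)}$. The idea is to build a sequence of finite block-permutations $\psi_1, \psi_2, \psi_3, \dots$ whose composite ``converges'' to $\phi$ in the sense that on each fixed initial segment $\{1,\dots,N\}$ the words agree after finitely many stages, so that the corresponding concatenations form a null-sequence of loops converging in $\Omega(M,e)$ and the concatenation of the successive homotopies is continuous. Concretely, at stage $k$ one moves the term that should end up in position $k$ of the target into position $k$, sliding it past an intervening finite block; this is a single block-swap and hence, by the first paragraph, realizable by a homotopy with image in $Im(\prod_n\alpha_n)\ast Im(\prod_n\alpha_n)$. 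I would then concatenate these homotopies $H_k$ on the intervals $[\tfrac{k-1}{k},\tfrac{k}{k+1}]\times I$ and declare the homotopy to be constant at $e$ on $\{1\}\times I$; continuity at the final slice follows because the terms being manipulated at stage $k$ all lie in $Im(\alpha_m)$ for $m \ge m_k$ with $m_k \to \infty$, so by Lemma~\ref{smallimagelemma} their products eventually lie in any neighborhood of $e$. This is where Theorem~\ref{infiniteproductscommutesthm} really does the heavy lifting: splitting off a tail and reattaching it in a different place is exactly the $\omega \rightsquigarrow \omega+\omega$ move, and its image bound $Im(\prod_n\alpha_n)\ast Im(\prod_n\alpha_n)$ is precisely the bound we need to propagate.

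The main obstacle I anticipate is not the algebra of permutations but the convergence/continuity argument at the limiting slice, and in particular verifying that the composite homotopy genuinely lands in $Im(\prod_{n=1}^\infty \alpha_n)\ast Im(\prod_{n=1}^\infty \alpha_n)$ rather than merely in some larger monoid product of all the images. The point is that every finite sub-concatenation of $\prod_n \alpha_n$ that appears as a ``block'' in an intermediate step has image contained in $Im(\prod_n\alpha_n)$ (a finite concatenation of loops whose images are among the $Im(\alpha_n)$), so each $H_k$ has image in $Im(\prod_n\alpha_n)\ast Im(\prod_n\alpha_n)$, and this set is independent of $k$; hence the union over $k$ stays inside it, and adding the point $e = e\ast e$ on the final slice causes no trouble. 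I would also need to double-check that the ``target'' concatenation produced by the limit of the $\psi_k$ is genuinely $\prod_{n=1}^\infty \alpha_{\phi(n)}$ up to reparametrization (collapsing/expanding the dyadic subintervals), which is routine given that each term appears exactly once and in the order dictated by $\phi$. Once these points are checked, the lemma follows, and combined with Theorem~\ref{transfinitecharacteriationtheorem} it will yield transfinite $\pi_1$-commutativity of $M$ at $e$.
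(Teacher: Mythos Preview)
Your core strategy coincides with the paper's: perform an infinite ``selection sort'' in which, at stage $k$, a single Eckmann--Hilton move (Corollary~\ref{smallcommutinghomotopylemma}) pulls the loop $\alpha_{\phi(k)}$ past the finite block preceding it into position $k$, while the already-placed prefix and the remaining tail are held by constant homotopies; the image bound is exactly the one you state, and Lemma~\ref{smallimagelemma} is what makes the accumulated homotopy continuous at the limit. So the idea is right and matches the paper.

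Where your write-up is genuinely unclear is the geometry of assembling the stage-homotopies into a single continuous $H:I^2\to M$. Saying ``concatenate these homotopies $H_k$ on the intervals $[\tfrac{k-1}{k},\tfrac{k}{k+1}]\times I$ and declare $H$ constant at $e$ on $\{1\}\times I$'' conflates two different directions: a stage-$k$ move alters the loop on an entire initial segment of the $s$-axis, so $H_k$ cannot live on a thin vertical strip of the loop parameter; and a naive vertical stacking in the time direction leaves you to check that the sequence of intermediate loops actually converges in $\Omega(M,e)$ to the target, which is more than ``the terms being manipulated are eventually small.'' The paper resolves this by a specific rectangular subdivision of $I^2$: the bottom-left corner is tiled by constant strips carrying the already-correct prefix $\alpha_{\phi(1)}\cdots\alpha_{\phi(n-1)}$, and each $H_n$ is placed on a rectangle $R_n=\bigl[1-\tfrac{1}{2^{n-1}},1\bigr]\times\bigl[\tfrac{1}{2^{n}},\tfrac{1}{2^{n-1}}\bigr]$ that only touches the tail. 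With this layout the only nontrivial continuity check is at the single corner $(1,0)$, and that is exactly where your null-sequence argument via Lemma~\ref{smallimagelemma} applies. Your invocation of Theorem~\ref{infiniteproductscommutesthm} is not really needed here; the elementary two-loop shuffle of Corollary~\ref{smallcommutinghomotopylemma} suffices at every stage.
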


\begin{proof}
Let $\alpha_n\in\Omega(M,e)$ be a null sequence and $\phi:\bbn\to\bbn$ be a bijection. Note that that $\{\ell_{\phi(n)}\}$ and $\{\alpha_{\phi(n)}\}$ are null sequences in $\Omega(\bbh,b_0)$ and $\Omega(M,e)$ respectively. For convenience, we reparameterize the infinite concatenations $\ell_{\infty}=\prod_{n=1}^{\infty}\ell_{n}$ and $L=\prod_{n=1}^{\infty}\ell_{\phi(n)}$ in $\bbh$ to be defined respectively as the loops $\ell_{n}$ and $\ell_{\phi(n)}$ on $\left[1-1/2^{n-1},1-1/2^n\right]$ and $\ell_{\infty}(1)=L(1)=b_0$. Find a map $f:\bbh\to M$ for which $\alpha_n=f\circ\ell_n$. We construct a path-homotopy $H:I^2\to M$ from $f\circ L=\prod_{n=1}^{\infty}\alpha_{\phi(n)}$ to $f\circ\ell_{\infty}=\prod_{n=1}^{\infty}\alpha_n$ by recursively defining $H$ piecewise on a sequence of shrinking rectangles in $I^2$.

We being the definition of $H$ as follows: set $H(s,0)=f\circ L(s)$, $H(s,1)=f\circ\ell_{\infty}(s)$, $H(0,t)=H(1,t)=e$, and for $(s,t)\in \left[1-\frac{1}{2^{n-1}},1-\frac{1}{2^n}\right]\times \left[0,\frac{1}{2^n}\right]$, we set $H(s,t)=f\circ L(s)$. We complete the definition of $H$ on the rectangles $R_n=\left[1-\frac{1}{2^{n-1}},1\right]\times\left[\frac{1}{2^{n}},\frac{1}{2^{n-1}}\right]$ by constructing individual path-homotopies $H_n:I^2\to M$ and pre-composing with the canonical affine homeomorphism $R_n\to I^2$.

Set $A_n=\{\phi(1),\dots,\phi(n)\}$ and $B_n=\bbn \backslash A_n$ and consider the subspaces $\bbh_{A_n}$ and $\bbh_{B_n}$ of $\bbh$. The first is a finite wedge of circles and the second is homeomorphic to $\bbh$. Since $\pioneh=\pi_1(\bbh_{A_n},b_0)\ast \pi_1(\bbh_{B_n},b_0)$, there are unique (up to reparameterization) loops $a_1,b_1\in \Omega(\bbh_{B_1},b_0)$ such that $\ell_{\infty}\equiv a_1\cdot\ell_{\phi(1)}\cdot b_1$. Applying the same argument inductively, we may find unique (up to reparameterization) loops $a_n,b_n\in \Omega(\bbh_{B_n},b_0)$ such that $a_{n-1}\cdot b_{n-1}\equiv a_n\cdot\ell_{\phi(n)}\cdot b_n$. Observe that both $\{a_n\}$ and $\{b_n\}$ are null sequences.

Set $\beta_0=f\circ\ell_{\infty}$ and for each $n\in\bbn$, set $\delta_n=f\circ a_n$, $\epsilon_n=f\circ b_n$, and $\beta_n=\delta_n\cdot\epsilon_n$. The continuity of $f$ and the loop-concatenation operation ensures that $\{\delta_n\}$, $\{\epsilon_n\}$, and $\{\beta_n\}$ are all null-sequences in $\Omega(M,e)$.

For every $n\in\bbn$, $\beta_{n-1}$ is a reparameterization of $\delta_n\cdot \alpha_{\phi(n)}\cdot \epsilon_n$. Thus $\beta_{n-1}\simeq \alpha_{\phi(n)}\cdot \delta_n\cdot\epsilon_n\equiv \alpha_{\phi(n)}\cdot \beta_n$. In particular, we may choose a homotopy $\delta_n\cdot \alpha_{\phi(n)}\cdot \epsilon_n\simeq  \alpha_{\phi(n)}\cdot\delta_n\cdot \epsilon_n$, which
\begin{itemize}
\item is the constant homotopy of $\epsilon_n$ on $[2/3,1]\times I$,
\item and commutes $\delta_n$ and $\alpha_{\phi(n)}$ on the domain $[0,2/3]\times I$ by a path-homotopy with image in $Im(\alpha_{\phi(n)})\ast Im(\delta_n)$ (recall Corollary \ref{smallcommutinghomotopylemma}).
\end{itemize}
We conclude that there is a path-homotopy $H_n:\ui^2\to M$ with $H_n(s,0)=(\alpha_{\phi(n)}\cdot \beta_n )(s)$, $H_n(s,1)=\beta_{n-1}(s)$, and such that $Im(H_n)\subseteq (Im(\alpha_{\phi(n)})\ast Im(\delta_n))\cup Im(\epsilon_n)$. This completes the definition of $H$ (See Figure \ref{homotopyfigure}). For all $t\in I$ and $n\in\bbn$, we have $\alpha_{\phi(n)}(t),\delta_n(t),\epsilon_n(t)\in Im\left(\prod_{n=1}^{\infty}\alpha_n\right)$, making it clear that $Im(H)\subseteq Im\left(\prod_{n=1}^{\infty}\alpha_n\right)\ast Im\left(\prod_{n=1}^{\infty}\alpha_n\right)$.

The continuity of $H$ is clear at every point except for $(1,0)\in I^2$. However, all three sequences $\alpha_{\phi(n)}$, $\delta_n$, and $\epsilon_n$ are null-sequences based at $e$. By Lemma \ref{smallimagelemma}, every neighborhood $U$ of $e$ in $M$ contains all but finitely many of the sets $(Im(\alpha_{\phi(n)})\ast Im(\delta_n))\cup Im(\epsilon_n)$. Therefore, all but finitely many of the sets $Im(H_n)\cup Im(\alpha_{\phi(n)})$ lie in $U$. Continuity at $(1,0)$ is immediate from this observation.
\end{proof}

\begin{figure}[t]
\centering \includegraphics[height=3.5in]{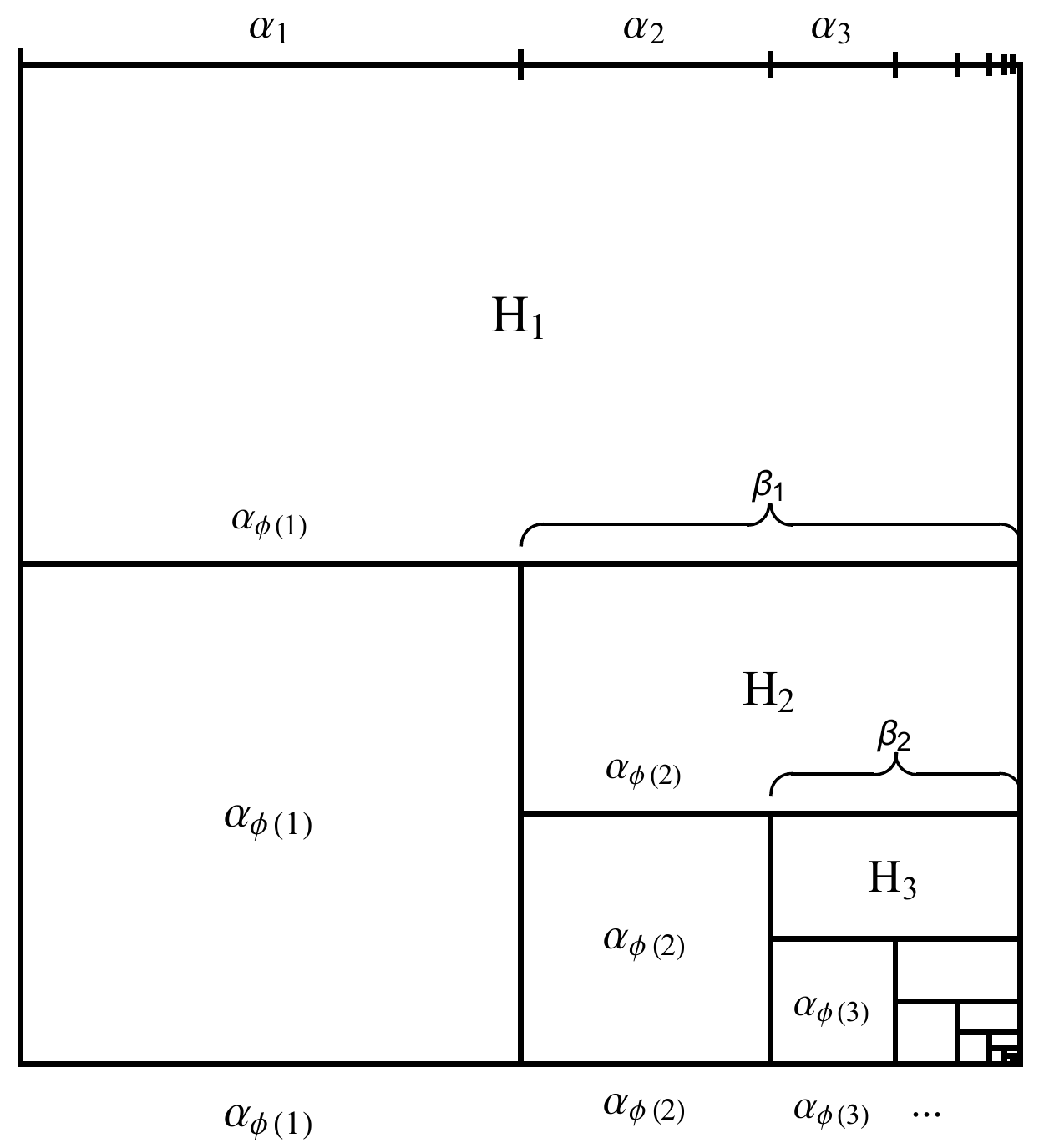}
\caption{\label{homotopyfigure}The homotopy constructed in Lemma \ref{monoidtechlemma}.}
\end{figure}

\begin{theorem} \label{monoidtransfinitecommutethm}
Every pre-$\Delta$-monoid $M$ is transfinitely $\pi_1$-commutative at its identity element.
\end{theorem}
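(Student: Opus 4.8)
The plan is to obtain this as a short formal consequence of the Infinite Shuffle Lemma (Lemma \ref{monoidtechlemma}) together with the characterization of transfinite $\pi_1$-commutativity in Theorem \ref{transfinitecharacteriationtheorem}. Lemma \ref{monoidtechlemma} asserts precisely that for every null-sequence $\alpha_n\in\Omega(M,e)$ and every bijection $\phi:\bbn\to\bbn$ one has $\prod_{n=1}^{\infty}\alpha_n\simeq\prod_{n=1}^{\infty}\alpha_{\phi(n)}$ rel endpoints, hence $\left[\prod_{n=1}^{\infty}\alpha_n\right]=\left[\prod_{n=1}^{\infty}\alpha_{\phi(n)}\right]$ in $\pi_1(M,e)$. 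By Definition \ref{deftransfinitecommut}(1) this is exactly the statement that $M$ is infinitely $\pi_1$-commutative at $e$.

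With that in hand, I would invoke the implication (3) $\Rightarrow$ (2) of Theorem \ref{transfinitecharacteriationtheorem}, which says that being infinitely $\pi_1$-commutative at $e$ is equivalent to being transfinitely $\pi_1$-commutative at $e$ (both being equivalent to the factorization property through $\pi_1(\bbt,x_0)\cong\bbz^{\bbn}$). This immediately yields that $M$ is transfinitely $\pi_1$-commutative at its identity element, which is the theorem. One could instead verify condition (1) of Theorem \ref{transfinitecharacteriationtheorem} directly, using Lemma \ref{monoidtechlemma} to see that any $f:(\bbh,b_0)\to(M,e)$ kills $\ker(\eta_{\#})$ by the same commutator-reshuffling argument used for (3) $\Rightarrow$ (1), but routing through infinite commutativity is the most economical path since Lemma \ref{monoidtechlemma} delivers it outright.

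The substantive content has therefore already been discharged in Lemma \ref{monoidtechlemma}, and that is where I expect the real obstacle to lie: one must reorder a concatenation of order type $\omega$ into another of order type $\omega$ along an arbitrary permutation, using only the strict associativity and unitality of $M$ together with the pre-$\Delta$ condition on $\mu$, while keeping every homotopy's image inside $Im\!\left(\prod_{n=1}^{\infty}\alpha_n\right)\ast Im\!\left(\prod_{n=1}^{\infty}\alpha_n\right)$. The delicate parts are the recursive construction of the homotopy piecewise on the shrinking rectangles $R_n$ (each step commuting a single prefix loop $\delta_n$ past $\alpha_{\phi(n)}$ via Corollary \ref{smallcommutinghomotopylemma}) and, above all, continuity at the lone accumulation point $(1,0)\in I^2$, which rests on Lemma \ref{smallimagelemma} forcing all but finitely many of the pieces $(Im(\alpha_{\phi(n)})\ast Im(\delta_n))\cup Im(\epsilon_n)$ into any prescribed neighborhood of $e$. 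Once Lemma \ref{monoidtechlemma} is established the theorem itself is immediate, and via Lemma \ref{homequivlemma} it extends with no extra work to every space based homotopy equivalent to a pre-$\Delta$-monoid, in particular to loop spaces.
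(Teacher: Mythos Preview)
Your proposal is correct and matches the paper's proof essentially verbatim: invoke Lemma \ref{monoidtechlemma} to obtain infinite $\pi_1$-commutativity at $e$, then apply the implication (3) $\Rightarrow$ (2) of Theorem \ref{transfinitecharacteriationtheorem} to upgrade to transfinite $\pi_1$-commutativity. Your additional commentary on where the real work lies (Lemma \ref{monoidtechlemma}) and the extension via Lemma \ref{homequivlemma} is accurate and aligns with the paper's surrounding remarks.
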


\begin{proof}
By Lemma \ref{monoidtechlemma}, $M$ is infinitely $\pi_1$-commutative at the identity $e\in M$. By Theorem \ref{transfinitecharacteriationtheorem}, $M$ is also transfinitely $\pi_1$-commutative at $e$.
\end{proof}

\begin{remark}
The homotopy $H$ in Lemma \ref{monoidtechlemma} is obtained by gluing together constant homotopies and the homotopies $H_n$, which are themselves an Eckmann-Hilton-type shuffle followed by a constant homotopy. It is in this sense that we consider this infinite shuffle to be an analogue of the Eckmann-Hilton Principle.

We also note that one may replace $\prod_{n=1}^{\infty}\alpha_n$ with $\prod_{\tau}\alpha_n$ in Lemma \ref{monoidtechlemma} and the exact same argument will show that for any bijection $\phi:\bbn\to\bbn$, we have $\prod_{\tau}\alpha_n\simeq \prod_{n=1}^{\infty}\alpha_{\phi(n)}$ by a homotopy in $A=Im(\prod_{\tau}\alpha_n)\ast Im(\prod_{\tau}\alpha_n)=Im(\prod_{n=1}^{\infty}\alpha_n)\ast Im(\prod_{n=1}^{\infty}\alpha_n)$. Using $\phi^{-1}$ gives $\prod_{\tau}\alpha_{\phi(n)}\simeq \prod_{n=1}^{\infty}\alpha_{n}$ in $A$ and concatenating these three homotopies provides a homotopy $\prod_{\tau}\alpha_n\simeq \prod_{\tau}\alpha_{\phi(n)}$ with image in $A$. Hence, one may verify that $M$ is transfinitely $\pi_1$-commutative at $e\in M$ directly (without appealing to Theorem \ref{transfinitecharacteriationtheorem}) while maintaining control of the size of the homotopy. 
\end{remark}

Combining  Theorems \ref{transfinitecharacteriationtheorem} and \ref{monoidtransfinitecommutethm} gives the following result, which will be used in Section \ref{sectionjrproduct}.

\begin{corollary}\label{factorizationlemma}
If $M$ is a pre-$\Delta$-monoid with identity $e$, then for every map $f:(\bbh,b_0)\to (M,e)$ there exists a unique homomorphism $g:\bbz^{\bbn}\to \pi_1(M,e)$ such that $g\circ\eta_{\#}=f_{\#}$.
\end{corollary}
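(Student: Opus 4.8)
The plan is to deduce this statement as a straightforward combination of the two main theorems already established. First I would invoke Theorem \ref{monoidtransfinitecommutethm} to conclude that $M$ is transfinitely $\pi_1$-commutative at its identity element $e$. This is the substantive input; all of the work (the infinite shuffle of Lemma \ref{monoidtechlemma} together with the size-control estimates of Lemma \ref{smallimagelemma}) has already been carried out there, so no new construction is needed here.

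Next I would apply Theorem \ref{transfinitecharacteriationtheorem}, specifically the implication (2) $\Rightarrow$ (1): since $M$ is transfinitely $\pi_1$-commutative at $e$, for every map $f:(\bbh,b_0)\to (M,e)$ there is a homomorphism $g:\pi_1(\bbt,x_0)\to \pi_1(M,e)$ with $f_{\#}=g\circ\eta_{\#}$. Identifying $\pi_1(\bbt,x_0)$ with the Specker group $\bbz^{\bbn}$ in the usual way (as set up in Section \ref{transfinitesection}) gives the desired $g:\bbz^{\bbn}\to\pi_1(M,e)$. For uniqueness I would recall the fact, noted in the text immediately after Theorem \ref{transfinitecharacteriationtheorem} and resting on the well-known surjectivity of $\eta_{\#}:\pioneh\to\pi_1(\bbt,x_0)$, that any homomorphism out of $\bbz^{\bbn}$ satisfying $g\circ\eta_{\#}=f_{\#}$ is completely determined by its values on the image $\eta_{\#}(\pioneh)=\bbz^{\bbn}$, hence unique.

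Since the result is a genuine corollary, I do not anticipate any real obstacle: the only thing to be careful about is the bookkeeping of the identification $\pi_1(\bbt,x_0)\cong\bbz^{\bbn}$ and making sure the uniqueness argument cites surjectivity of $\eta_{\#}$ rather than of $\eta$ itself. Alternatively, one could avoid Theorem \ref{transfinitecharacteriationtheorem} altogether and argue directly, using the strengthened form of Lemma \ref{monoidtechlemma} indicated in the remark preceding this corollary to see that $f_{\#}$ kills $\ker(\eta_{\#})$, and then factor through the quotient $\pioneh/\ker(\eta_{\#})\cong\bbz^{\bbn}$; but the two-line deduction from the stated theorems is cleaner and is the route I would take.
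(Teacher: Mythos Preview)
Your proposal is correct and matches the paper's own argument exactly: the paper simply states that the corollary follows by combining Theorems \ref{transfinitecharacteriationtheorem} and \ref{monoidtransfinitecommutethm}, with uniqueness coming from the surjectivity of $\eta_{\#}$ as noted right after Theorem \ref{transfinitecharacteriationtheorem}. There is nothing to add.
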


\begin{corollary}
Let $M$ be a pre-$\Delta$-monoid. For any null-sequence $\alpha_n,\beta_n\in \Omega(M,e)$, the following elements of $\pi_1(M,e)$ are all equal each other:
\begin{enumerate}
\item $\left[\prod_{n=1}^{\infty}\alpha_n\cdot\beta_n\right]$
\item $\left[\prod_{n=1}^{\infty}\alpha_n\ast\beta_n\right]$
\item $\left[\prod_{n=1}^{\infty}\alpha_n\right]\left[\prod_{n=1}^{\infty}\beta_n\right]$
\item $\left[\prod_{\tau}\alpha_n\cdot\beta_n\right]$
\item $\left[\prod_{\tau}\alpha_n\ast\beta_n\right]$
\item $\left[\prod_{\tau}\alpha_n\right]\left[\prod_{\tau}\beta_n\right]$
\end{enumerate}
\end{corollary}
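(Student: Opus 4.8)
The plan is to reduce everything to results already in hand: the chain of equalities $(1)=(2)=(3)$ for \emph{infinite} concatenations is exactly the displayed summary following Theorem~\ref{infiniteproductscommutesthm} (which in turn packages Proposition~\ref{infprodprop2}, Theorem~\ref{infiniteproductscommutesthm}, and the distributive law), so nothing new is needed there. To pull the three \emph{transfinite} quantities $(4),(5),(6)$ into this chain, I would invoke Theorem~\ref{monoidtransfinitecommutethm}, which gives that $M$ is transfinitely $\pi_1$-commutative at $e$, and then apply Corollary~\ref{basiccorollary}, which says that for any null-sequence $\gamma_n\in\Omega(M,e)$ one has $\left[\prod_{\tau}\gamma_n\right]=\left[\prod_{n=1}^{\infty}\gamma_n\right]$ in $\pi_1(M,e)$.

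The one small verification to carry out is that Corollary~\ref{basiccorollary} applies to each of the four relevant sequences. The sequences $\{\alpha_n\}$ and $\{\beta_n\}$ are null-sequences by hypothesis. For $\{\alpha_n\ast\beta_n\}$, note $Im(\alpha_n\ast\beta_n)\subseteq Im(\alpha_n)\ast Im(\beta_n)$, and by Lemma~\ref{smallimagelemma} (applied with $k=2$) these product sets lie inside any prescribed neighborhood of $e$ for all but finitely many $n$; hence $\{\alpha_n\ast\beta_n\}$ is a null-sequence. For $\{\alpha_n\cdot\beta_n\}$, we have $Im(\alpha_n\cdot\beta_n)=Im(\alpha_n)\cup Im(\beta_n)$, which is eventually contained in any neighborhood of $e$ because both $\{\alpha_n\}$ and $\{\beta_n\}$ are null; so $\{\alpha_n\cdot\beta_n\}$ is a null-sequence as well.

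With these in place, the assembly is immediate: Corollary~\ref{basiccorollary} applied to $\{\alpha_n\cdot\beta_n\}$ gives $(4)=(1)$, applied to $\{\alpha_n\ast\beta_n\}$ gives $(5)=(2)$, and applied to $\{\alpha_n\}$ and to $\{\beta_n\}$ separately gives $\left[\prod_{\tau}\alpha_n\right]=\left[\prod_{n=1}^{\infty}\alpha_n\right]$ and $\left[\prod_{\tau}\beta_n\right]=\left[\prod_{n=1}^{\infty}\beta_n\right]$, whence $(6)=(3)$ after multiplying. Combined with the already-known $(1)=(2)=(3)$, all six elements coincide.

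There is no genuine obstacle here — this is a bookkeeping corollary assembled from Theorem~\ref{monoidtransfinitecommutethm}, Corollary~\ref{basiccorollary}, Lemma~\ref{smallimagelemma}, and the summary after Theorem~\ref{infiniteproductscommutesthm}. The only point requiring a moment's care is confirming the null-sequence hypothesis of Corollary~\ref{basiccorollary} for the concatenated and multiplied sequences, which I would dispatch exactly as above via Lemma~\ref{smallimagelemma}.
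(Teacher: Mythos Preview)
Your proposal is correct and follows essentially the same route as the paper: the paper also cites the $(1)=(2)=(3)$ equalities from Section~\ref{subsectionelementarymonoids} and then applies Corollary~\ref{basiccorollary} to the four null-sequences $\{\alpha_n\}$, $\{\beta_n\}$, $\{\alpha_n\cdot\beta_n\}$, and $\{\alpha_n\ast\beta_n\}$ to obtain $(4)=(1)$, $(5)=(2)$, $(6)=(3)$. Your write-up is slightly more explicit in invoking Theorem~\ref{monoidtransfinitecommutethm} as the reason Corollary~\ref{basiccorollary} is available and in verifying the null-sequence hypotheses via Lemma~\ref{smallimagelemma}, but the argument is the same.
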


\begin{proof}
The equality of (1),(2), and (3) was verified in Section \ref{subsectionelementarymonoids}. Applying Corollary \ref{basiccorollary} to the null-sequences $\alpha_n$, $\beta_n$, $\alpha_n\cdot\beta_n$, and $\alpha_n\ast\beta_n$ respectively gives homotopies $\prod_{\tau}\alpha_n\simeq \prod_{n=1}^{\infty}\alpha_n$, $\prod_{\tau}\beta_n\simeq \prod_{n=1}^{\infty}\beta_n$, $\prod_{\tau}\alpha_n\cdot\beta_n\simeq \prod_{n=1}^{\infty}\alpha_n\cdot\beta_n$, and $\prod_{\tau}\alpha_n\ast\beta_n\simeq \prod_{n=1}^{\infty}\alpha_n\ast\beta_n$. Combining these with the homotopies for (1)-(3) verifies the remaining equalities.
\end{proof}

\begin{example}
For $m\geq 2$, the space $\Omega^{m-1}(X,x)=(X,x)^{(S^{m-1},e_0)}$ of based maps on the $(m-1)$-sphere with basepoint $c_x:S^m\to X$ is based homotopy equivalent to the Moore loop space $\Omega^{\ast}(\Omega^{m-2}(X,x),c_x)$. Since being transfinitely $\pi_1$-commutative at the basepoint is invariant under basepoint-preserving homotopy equivalence (recall Lemma \ref{homequivlemma}), $\Omega^{m-1}(X,x)$ is transfinitely $\pi_1$-commutative at the constant map $c_x:S^m\to X$. We note that applying this infinitary commutativity in each variable of a map $(I^m,\partial I^m)\to (X,x)$ allows one to shuffle certain infinite arrangments of $m$-cubes similar that used in \cite{EK00higher}.
\end{example}

In a pre-$\Delta$-monoid $M$, transfinite $\pi_1$-commutativity is only guaranteed at the identity. We show how this result is strengthened when $M$ is a group.

\begin{theorem}\label{grouptheorem}
A pre-$\Delta$-group is transfinitely $\pi_1$-commutative at all of its points.
\end{theorem}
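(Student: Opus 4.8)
The plan is to reduce transfinite $\pi_1$-commutativity at an arbitrary point $m_0 \in M$ to the already-established case of the identity $e$, using the group structure to translate loops at $m_0$ to loops at $e$. Concretely, let $G$ be a pre-$\Delta$-group with identity $e$ and fix $m_0 \in G$. Given a loop $\alpha \in \Omega(G, m_0)$, the path $m_0^{-1} \ast \alpha$ (using the generalized distributive law for paths in a pre-$\Delta$-monoid, multiplying the constant path at $m_0^{-1}$ by $\alpha$) is a loop based at $e$, and conversely $m_0 \ast (\,\cdot\,)$ sends loops at $e$ to loops at $m_0$. These operations are mutually inverse and respect the infinitary structure: if $\{\alpha_n\}$ is a null-sequence in $\Omega(G, m_0)$, then $\{m_0^{-1}\ast\alpha_n\}$ is a null-sequence at $e$ by Lemma~\ref{smallimagelemma} applied appropriately (or directly, since left-multiplication by a fixed element is continuous on paths and sends small sets near $m_0$ to small sets near $e$ — one should check the null condition honestly using that $g \mapsto m_0^{-1}\ast g$ need not be a homeomorphism, but it is continuous on path-images, which suffices).

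First I would establish the key computational identity: for a null-sequence $\{\alpha_n\}$ at $m_0$, the loop $m_0^{-1}\ast \left(\prod_\tau \alpha_n\right)$ equals, up to reparameterization, $\prod_\tau (m_0^{-1}\ast\alpha_n)$ after collapsing the constant subloops contributed by $m_0^{-1}\ast m_0 = e$ on the Cantor set $\mcc$. This is exactly the content of the generalized distributive law $(f \ast g)\circ \ell_\tau = (f\circ\ell_\tau)\ast(g\circ\ell_\tau)$ with $f$ constant at $m_0^{-1}$ and $g\circ\ell_n = \alpha_n$; since $f\circ\ell_\tau = c_{m_0^{-1}}$ is constant, the product $c_{m_0^{-1}} \ast (g\circ\ell_\tau)$ is literally the loop $g\circ\ell_\tau$ pre-multiplied pointwise by $m_0^{-1}$, and on each interval $\overline{I_n^k}$ it restricts to $m_0^{-1}\ast\ell_{\dots}$ while on $\mcc$ it is constant at $e$. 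Thus translation intertwines the transfinite concatenation operation with itself, compatibly with arbitrary bijections $\phi:\bbn\to\bbn$.

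Next I would run the argument: given a bijection $\phi$, apply Theorem~\ref{monoidtransfinitecommutethm} to the null-sequence $\{m_0^{-1}\ast\alpha_n\}$ at $e$ to get $\left[\prod_\tau (m_0^{-1}\ast\alpha_n)\right] = \left[\prod_\tau (m_0^{-1}\ast\alpha_{\phi(n)})\right]$ in $\pi_1(G,e)$. Translating both sides back by $m_0 \ast(\,\cdot\,)$ — which induces a well-defined map $\pi_1(G,e)\to\pi_1(G,m_0)$ on homotopy classes, being continuous on loop spaces — and using the intertwining identity from the previous paragraph in the reverse direction yields $\left[\prod_\tau \alpha_n\right] = \left[\prod_\tau \alpha_{\phi(n)}\right]$ in $\pi_1(G, m_0)$, as required. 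Alternatively, one may phrase the whole thing via Theorem~\ref{transfinitecharacteriationtheorem}: a map $f:(\bbh,b_0)\to(G,m_0)$ yields $f':(\bbh,b_0)\to(G,e)$ by $f' = (m_0^{-1})\ast f$, Corollary~\ref{factorizationlemma} gives a homomorphism $g':\bbz^\bbn\to\pi_1(G,e)$ with $g'\circ\eta_\# = f'_\#$, and post-composing with the isomorphism $\pi_1(G,e)\to\pi_1(G,m_0)$ induced by translation produces the desired $g$ with $g\circ\eta_\# = f_\#$.

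The main obstacle is purely point-set: since a pre-$\Delta$-group need not be a topological group (joint continuity of $\mu$ may fail), left translation $L_{m_0}: G\to G$, $g\mapsto m_0\ast g$ is \emph{not} obviously continuous, so I cannot simply say "$L_{m_0}$ is a homeomorphism inducing a $\pi_1$-isomorphism." The fix is to only ever use translation on \emph{paths} and \emph{homotopies} (maps out of $\ui$ and $\ui^2$, which are $\Delta$-generated), where the pre-$\Delta$-monoid axiom guarantees $m_0 \ast \alpha$ is continuous. One must then verify: (i) $\alpha\mapsto m_0\ast\alpha$ descends to homotopy classes (clear: a path-homotopy $H$ gives the path-homotopy $m_0\ast H$ since $\ui^2$ is $\Delta$-generated); (ii) it carries null-sequences to null-sequences (this needs care — use that a null-sequence at $m_0$ corresponds to a map $\bbh\to G$, compose with the continuous $\mu$ restricted to $\{m_0^{-1}\}\times G$ via Proposition~\ref{predeltamoncharprop}(4) applied to the $\Delta$-generated space $\bbh$, landing in a map $\bbh\to G$ based at $e$); and (iii) $m_0\ast(m_0^{-1}\ast\alpha) = \alpha$, which is immediate from associativity and $m_0\ast m_0^{-1} = e$. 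Once these are in hand the rest is formal.
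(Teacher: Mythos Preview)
Your proposal is correct and follows essentially the same approach as the paper: translate $f:(\bbh,b_0)\to(G,m_0)$ to $m_0^{-1}\ast f$ based at $e$ (continuous since $\bbh$ is $\Delta$-generated), apply Corollary~\ref{factorizationlemma} there, and transfer back to $\pi_1(G,m_0)$. The only cosmetic difference is that the paper performs the transfer back via the standard basepoint-change isomorphism $\varphi_\alpha$ along a chosen path $\alpha:e\to m_0$ (exhibited by the free homotopy $H(x,t)=\alpha(t)\ast m_0^{-1}\ast f(x)$), whereas you use the direct translation isomorphism $[\beta]\mapsto[m_0\ast\beta]$; both are valid and your point-set caveats about working only on $\Delta$-generated domains are exactly the right checks.
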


\begin{proof}
Let $G$ be a pre-$\Delta$-group. For given $g\in G$, we verify the factorization property given in Theorem \ref{transfinitecharacteriationtheorem}. Let $f:(\bbh,b_0)\to (G,g)$ be a map and pick any path $\alpha:(I,0,1)\to (G,e,g)$. Consider the map $g^{-1}\ast f:(\bbh,b_0)\to (G,e)$, which is continuous since $G$ is a pre-$\Delta$-group. The map $H:\bbh\times I\to G$, $k(x,t)=\alpha(t)\ast g^{-1}\ast f(x)$ is also continuous and defines a free homotopy from $H(x,0)=g^{-1}\ast f(x)$ to $H(x,1)=f(x)$ along the path $H(b_0,t)=\alpha(t)$. Thus if $\varphi_{\alpha}:\pi_1(G,g)\to \pi_1(G,e)$ is the basepoint-change isomorphism $\varphi_{\alpha}([\beta])=[\alpha\cdot\beta\cdot\alpha^{-}]$, we have $(g^{-1}\ast f)_{\#}=\varphi_{\alpha}\circ f_{\#}$. Since $g^{-1}\ast f:(\bbh,b_0)\to (G,e)$ is based at the identity and $G$ is a pre-$\Delta$-monoid, Corollary \ref{factorizationlemma} gives the existence of a unique homomorphism $F_{\alpha}:\pi_1(\bbt,x_0)\to (G,e)$ such that $F_{\alpha}\circ\eta_{\#}=(g^{-1}\ast f)_{\#}=\varphi_{\alpha}\circ f_{\#}$. Now $\lambda_{\alpha}=\varphi_{\alpha}^{-1}\circ F_{\alpha}$ is a homomorphism satisfying $\lambda_{\alpha}\circ\eta_{\#}= f_{\#}$. By Theorem \ref{transfinitecharacteriationtheorem}, we conclude that $G$ is transfinitely $\pi_1$-commutative at $g$.
\end{proof}

\subsection{Slide homotopies}

In this subsection, we observe another interesting phenomenon that occurs in monoids, namely, that we may begin with a transfinite product $\prod_{\tau}\alpha_n\in \Omega(M,e)$ and ``slide" individual factors $\alpha_n$ along a given path $\beta:(I,0)\to (M,e)$ so that the result remains homotopic to $\left(\prod_{\tau}\alpha_n\right)\cdot\beta$. Quite remarkably, the resulting homotopy class is independent of the position along $\beta$ to which we have chosen to slide each $\alpha_n$.

\begin{lemma}[Elementary Slide]\label{slidelemma1}
For any loop $\alpha\in \Omega(M,a)$ and path $\beta:\ui\to M$ from $e$ to $b$, we have
\begin{enumerate}
\item $\alpha\simeq(\beta\ast a)\cdot (b\ast\alpha)\cdot(\beta\ast a)^{-}$ by a homotopy in $Im(\beta)\ast Im(\alpha)$,
\item $\alpha\simeq(a\ast\beta)\cdot (\alpha\ast b)\cdot(a\ast\beta)^{-}$ by a homotopy in $Im(\alpha)\ast Im(\beta)$.
\end{enumerate}
Moreover, if $a=e$, then $[b\ast\alpha]=[\alpha\ast b]$ in $\pi_1(M,b)$.
\end{lemma}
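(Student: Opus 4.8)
The plan is to build the homotopies explicitly as monoid products of path-homotopies, exactly as in the proof of Lemma~\ref{product-to-concatentation}. For part (1), consider the constant path $c_a$ at $a$, the path $\beta$, and the loop $\alpha\in\Omega(M,a)$. Since $M$ is a pre-$\Delta$-monoid, the product path $\beta\ast\alpha:\ui\to M$ is continuous and runs from $\beta(0)\ast\alpha(0)=e\ast a=a$ to $\beta(1)\ast\alpha(1)=b\ast a=b\ast a$. The key identity is that, using the strict identity $e$ of $M$, the loop $\alpha$ equals $(c_a\ast\alpha)$, which I want to compare to the concatenation $(\beta\ast a)\cdot(b\ast\alpha)\cdot(\beta\ast a)^{-}$. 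First I would note that this concatenation is a reparameterization of the monoid product of the three paths $\beta\cdot\beta^{-}$ (from $e$ to $b$ and back to $e$) with the path $a\cdot\alpha\cdot a = c_a\cdot\alpha\cdot c_a$ (reading off the second factor: $a$ while $\beta$ runs, $\alpha$ in the middle, $a$ while $\beta^{-}$ runs), via the distributive law. Since $\beta\cdot\beta^{-}\simeq c_e$ by a homotopy with image in $Im(\beta)$ and $c_a\cdot\alpha\cdot c_a\simeq\alpha$ by a homotopy with image in $Im(\alpha)$, taking the monoid product of these two path-homotopies (which is continuous because $\ui^2$ is $\Delta$-generated and $M$ is a pre-$\Delta$-monoid) yields a path-homotopy $(\beta\ast a)\cdot(b\ast\alpha)\cdot(\beta\ast a)^{-}\simeq c_e\ast\alpha=\alpha$ with image contained in $Im(\beta)\ast Im(\alpha)$. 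This is the standard ``first factor contracts, second factor does the work'' argument and should go through with only care about reparameterization.

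For part (2), the argument is symmetric: swap the roles of the two tensor slots, using the path $a\ast\beta$ (from $a$ to $a\ast b$) and comparing $\alpha=(\alpha\ast c_a)$ with $(a\ast\beta)\cdot(\alpha\ast b)\cdot(a\ast\beta)^{-}$, viewed as the monoid product of $\alpha\cdot\alpha^{-1}$... wait --- actually here the roles are: the first slot carries $c_a\cdot\alpha\cdot c_a$ and the second slot carries $\beta\cdot c_b\cdot\beta^{-}$; but then the middle term of the concatenation would be $\alpha\ast c_b$, giving $\alpha\ast b$ as claimed. Again contract $\beta\cdot\beta^{-}\simeq c_e$ inside $Im(\beta)$ and absorb $c_a\cdot\alpha\cdot c_a\simeq\alpha$ inside $Im(\alpha)$, and take the monoid product of the two homotopies to get image in $Im(\alpha)\ast Im(\beta)$.

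For the final claim, set $a=e$. Then part (1) reads $\alpha\simeq(\beta\ast e)\cdot(b\ast\alpha)\cdot(\beta\ast e)^{-}=\beta\cdot(b\ast\alpha)\cdot\beta^{-}$, so $[b\ast\alpha]=\varphi_{\beta^{-}}([\alpha])$ where $\varphi$ is a basepoint-change isomorphism $\pi_1(M,e)\to\pi_1(M,b)$ along $\beta$. Part (2) with $a=e$ reads $\alpha\simeq(e\ast\beta)\cdot(\alpha\ast b)\cdot(e\ast\beta)^{-}=\beta\cdot(\alpha\ast b)\cdot\beta^{-}$, so $[\alpha\ast b]=\varphi_{\beta^{-}}([\alpha])$ by the \emph{same} basepoint-change isomorphism along \emph{the same} path $\beta$. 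Therefore $[b\ast\alpha]=[\alpha\ast b]$ in $\pi_1(M,b)$.

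The main obstacle I anticipate is purely bookkeeping: getting the two tensor slots and the three concatenation pieces lined up correctly so that the distributive law applies on the nose (up to the usual linear reparameterizations we have agreed to suppress), and checking that the constant paths inserted are at the right points so that the middle factor is exactly $b\ast\alpha$ (resp.\ $\alpha\ast b$) rather than something conjugated differently. No analytic subtlety arises --- everything is a finite monoid product of finitely many path-homotopies, so continuity is immediate from the pre-$\Delta$-monoid hypothesis.
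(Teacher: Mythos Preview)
Your argument is correct. The paper's own proof takes a slightly different route: it views $\alpha$ as a map $S^1\to M$, defines the free homotopy $H:S^1\times I\to M$ by $H(s,t)=\beta(t)\ast\alpha(s)$, and then composes $H$ with the standard path-homotopy $(s,0)\simeq\iota\cdot(s,1)\cdot\iota^{-}$ in the cylinder $S^1\times I$ (where $\iota(t)=(x_0,t)$ is the basepoint track). Your approach instead stays in the spirit of Lemma~\ref{product-to-concatentation}: write the three-fold concatenation as the distributive-law product $(\beta\cdot c_b\cdot\beta^{-})\ast(c_a\cdot\alpha\cdot c_a)$, and then take the $\ast$-product of two independent path-homotopies (a null-homotopy of $\beta\cdot c_b\cdot\beta^{-}$ and a reparameterization of $c_a\cdot\alpha\cdot c_a$). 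Both methods produce homotopies with the same image bound $Im(\beta)\ast Im(\alpha)$; the paper's cylinder argument is a bit more geometric and packages the ``conjugation by a free homotopy'' idea cleanly, while yours is more purely algebraic and dovetails with the other monoid-product arguments in Section~\ref{subsectionelementarymonoids}. Your deduction of the final claim (via the common basepoint-change isomorphism along $\beta$) matches the paper exactly.
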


\begin{proof}
Viewing $\alpha$ as a map $(S^1,x_0)\to (M,a)$ and consider the map $H: S^1\times I\to M$ defined by $H(s,t)=\beta(t)\ast\alpha(s)$. Note that $H(s,0)=\alpha(s)$ and $H(s,1)=b\ast \alpha$. Define $\iota:\ui\to S^1\times \ui$ by $\iota(t)=(x_0,t)$ and note $H\circ \iota(t)=H(x_0,t)=\beta(t)\ast a$. Composing $H$ with the homotopy $(s,0)\simeq\iota \cdot (s,1)\cdot \iota^{-}$ in $S^1\times I$, gives the desired homotopy $\alpha\simeq(\beta\ast a)\cdot (b\ast\alpha)\cdot(\beta\ast a)^{-}$, which has image in $Im(\beta)\ast Im(\alpha)$. Applying the same argument with the map $H': S^1\times I\to M$ defined by $H'(s,t)=\alpha(s)\ast \beta(t)$ gives the homotopy in (2). Finally, when $a=e$, (1) and (2) give $b\ast \alpha\simeq \alpha\ast b$.
\end{proof}

\begin{proposition}\label{hhausprop}
If $M$ does not have well-defined infinite $\pi_1$-products at $e$, then $M$ does not have well-defined infinite $\pi_1$-products at any of its points.
\end{proposition}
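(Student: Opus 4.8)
The plan is to transport a witness of failure at $e$ to an arbitrary point $b\in M$ using left multiplication by $b$, exploiting that this operation behaves well on $\Delta$-generated domains even though it need not be continuous as a self-map of $M$. Suppose $M$ fails to have well-defined infinite $\pi_1$-products at $e$, so there are null-sequences $\alpha_n,\beta_n\in\Omega(M,e)$ with $\alpha_n\simeq\beta_n$ for every $n\in\bbn$, yet $\left[\prod_{n=1}^{\infty}\alpha_n\right]\neq\left[\prod_{n=1}^{\infty}\beta_n\right]$ in $\pi_1(M,e)$. Fix an arbitrary $b\in M$ and set $\alpha_n'=b\ast\alpha_n$ and $\beta_n'=b\ast\beta_n$, which are loops based at $b\ast e=b$. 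I will show that $\{\alpha_n'\}$ and $\{\beta_n'\}$ witness the failure of well-defined infinite $\pi_1$-products at $b$.

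First I would check that $\{\alpha_n'\}$ (and likewise $\{\beta_n'\}$) is a null-sequence at $b$. Let $f:\bbh\to M$ be the map with $f\circ\ell_n=\alpha_n$ and let $c_b:\bbh\to M$ be the constant map at $b$. Since $\bbh$ is $\Delta$-generated, Proposition \ref{predeltamoncharprop}(4) guarantees that $c_b\ast f:\bbh\to M$ is continuous; as $(c_b\ast f)(b_0)=b$ and $(c_b\ast f)\circ\ell_n=b\ast\alpha_n=\alpha_n'$, the sequence $\{\alpha_n'\}$ is null at $b$. The term-wise homotopies transport the same way: given a path-homotopy $G_n:I^2\to M$ rel endpoints from $\alpha_n$ to $\beta_n$, the map $c_b\ast G_n:I^2\to M$ is continuous (again by Proposition \ref{predeltamoncharprop}(4), now with $D=I^2$), and since $G_n(0,t)=G_n(1,t)=e$ it is a path-homotopy rel endpoints from $\alpha_n'$ to $\beta_n'$. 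Hence $\alpha_n'\simeq\beta_n'$ for all $n$.

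It remains to show $\left[\prod_{n=1}^{\infty}\alpha_n'\right]\neq\left[\prod_{n=1}^{\infty}\beta_n'\right]$ in $\pi_1(M,b)$. The Generalized Distributive Law (applied with the constant map $c_b$ and $L=\ell_{\infty}$) yields $\prod_{n=1}^{\infty}\alpha_n'=\prod_{n=1}^{\infty}(b\ast\alpha_n)=b\ast\prod_{n=1}^{\infty}\alpha_n$, and similarly for $\beta$. Thus it suffices to prove that $b\ast(-)$ is injective on the relevant homotopy classes based at $e$, and this is exactly where the Elementary Slide Lemma enters. Choosing a path $\beta_0:I\to M$ from $e$ to $b$ (available since $M$ is path-connected), Lemma \ref{slidelemma1}(1) with $a=e$ gives, for any loop $\gamma\in\Omega(M,e)$, a path-homotopy $\gamma\simeq\beta_0\cdot(b\ast\gamma)\cdot\beta_0^{-}$, i.e. $[\gamma]=\varphi_{\beta_0}([b\ast\gamma])$ where $\varphi_{\beta_0}:\pi_1(M,b)\to\pi_1(M,e)$ is the basepoint-change isomorphism. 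Applying this to $\gamma=\prod_{n=1}^{\infty}\alpha_n$ and $\gamma=\prod_{n=1}^{\infty}\beta_n$ and using that $\varphi_{\beta_0}$ is injective, the inequality $\left[\prod_{n=1}^{\infty}\alpha_n\right]\neq\left[\prod_{n=1}^{\infty}\beta_n\right]$ forces $\left[b\ast\prod_{n=1}^{\infty}\alpha_n\right]\neq\left[b\ast\prod_{n=1}^{\infty}\beta_n\right]$, that is $\left[\prod_{n=1}^{\infty}\alpha_n'\right]\neq\left[\prod_{n=1}^{\infty}\beta_n'\right]$. Therefore $M$ fails to have well-defined infinite $\pi_1$-products at $b$, and since $b$ was arbitrary, at every point.

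The main obstacle I anticipate is the verification in the first step: in a pre-$\Delta$-monoid there is no reason for left multiplication $x\mapsto b\ast x$ to be continuous as a self-map of $M$, so one cannot directly argue that $\{b\ast\alpha_n\}$ is a null-sequence from continuity of that self-map at $e$. The point is to route everything through $\Delta$-generated domains ($\bbh$ and $I^2$) so that Proposition \ref{predeltamoncharprop}(4) applies; this is the only place the pre-$\Delta$-monoid hypothesis is genuinely needed, and it is what makes ``null-sequence'' and ``term-wise homotopic'' survive the transport. Preservation of non-triviality is comparatively routine once the slide lemma is invoked, since it exhibits $b\ast(-)$ as a one-sided inverse of an honest basepoint-change isomorphism.
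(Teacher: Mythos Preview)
Your proof is correct and follows essentially the same approach as the paper: transport the failure at $e$ to an arbitrary $b$ via left multiplication $b\ast(-)$, justified on $\Delta$-generated domains by Proposition~\ref{predeltamoncharprop}(4), and use the Elementary Slide Lemma~\ref{slidelemma1} to show that $b\ast(-)$ reflects homotopy classes through the basepoint-change isomorphism. Your version is somewhat more explicit than the paper's (e.g.\ spelling out why $\{b\ast\alpha_n\}$ is null and transporting the term-wise homotopies directly via $c_b\ast G_n$ rather than reusing the slide lemma for that step), but the core argument is identical.
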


\begin{proof}
Suppose $f_1,f_2:(\bbh,b_0)\to (M,e)$ are maps such that $f_1\circ\ell_n\simeq f_2\circ\ell_n$ for all $n\in\bbn$ and $f_1\circ\ell_{\infty}\nsimeq f_2\circ\ell_{\infty}$. Pick any point $b\in M$ and path $\beta:(\ui,0,1)\to (M,e,b)$. Given any loop $L\in \Omega(\bbh,b_0)$, Lemma \ref{slidelemma1} gives $(f_i\circ L)\simeq \beta\cdot (b\ast (f_i\circ L))\cdot \beta^{-}$. Therefore, $(b\ast (f_1\circ L))\simeq (b\ast (f_2\circ L))$ if and only if $f_1\circ L\simeq f_2\circ L$. In particular, $f_1\circ\ell_n\simeq f_2\circ\ell_n$ $\Rightarrow$ $(b\ast (f_1\circ \ell_n))\simeq (b\ast (f_2\circ \ell_n))$ and $f_1\circ\ell_{\infty}\nsimeq f_2\circ\ell_{\infty}$ $\Rightarrow$ $(b\ast (f_1\circ \ell_{\infty}))\nsimeq (b\ast (f_2\circ \ell_{\infty}))$. It follows that $\{b\ast (f_1\circ \ell_n)\}$ and $\{b\ast (f_1\circ \ell_n)\}$ are null-sequences of term-wise homotopic loops in $\Omega(M,b)$, for which the corresponding infinite concatenations are not homotopic. Thus $M$ is does not have well-define infinite $\pi_1$-products at $b$.
\end{proof}

We exhibit an example of the phenomenon described in Proposition \ref{hhausprop} in Example \ref{exampleha}.

Lemma \ref{slidelemma1} describes a sliding of $\alpha$ along a translation of $\beta$, which becomes a true sliding along $\beta$ when $a=e$. In this case, $\alpha\in \Omega(M,e)$ is the homotopy path-conjugate (by $\beta$) of the loops $b\ast\alpha,\alpha\ast b$. We show that this kind of sliding can occur in a transfinite fashion for a sequence of loops based at $e$, where loops are dropped off wherever one wishes along $\beta$.

Let $I_n\in\mci(\mcc)$ denote the unique component of $I\backslash\mcc$ such that $\ell_{\tau}|_{\ov{I}_n}\equiv \ell_n$. Let $\Gamma:I\to I$ denote the ternary Cantor map, which identifies the closure of each set $I_n\in \mci(\mcc)$ to a point and is bijective elsewhere. Take $k_n:\ov{I_n}\to\ui$, $n\in\bbn$ to be the usual increasing linear homeomorphism.

\begin{definition}
Let $\alpha_n\in \Omega(M,e)$ be a null-sequence and $\beta:(I,0,1)\to (M,e,b)$ be a path. The \textit{transfinite slide of $\{\alpha_n\}$ along $\beta$} is a the path $\mathcal{S}_{\beta}^{\alpha_n}:(I,0,1)\to (M,e,b)$ defined as follows: 
\[\mathcal{S}_{\beta}^{\alpha_n}(t)=\begin{cases}
\beta(\Gamma(t)), & \text{ if }t\in\mcc\\
\beta(\Gamma(t))\ast\alpha_n(k_n(t)), & \text{ if }t\in I_n.
\end{cases}\]
\end{definition}

\begin{figure}[b]
\centering \includegraphics[height=1.5in]{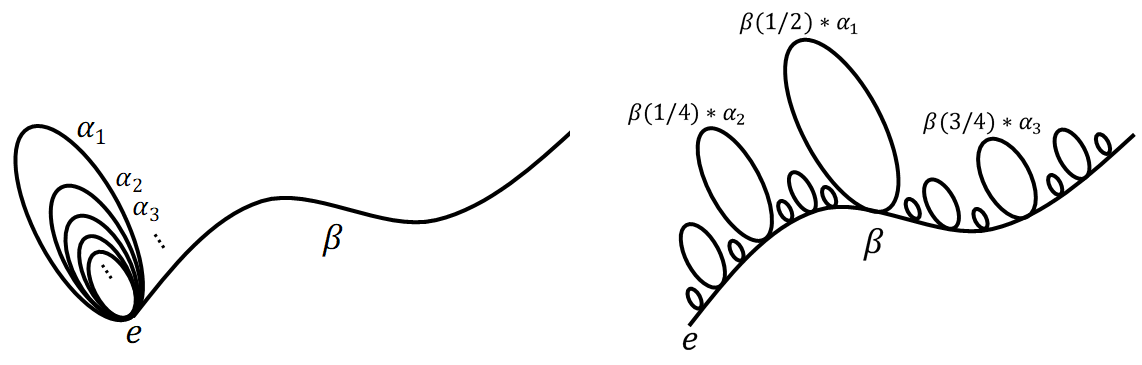}
\caption{\label{slidefig}The null-sequence $\{\alpha_n\}$ and path $\beta$ (left) and the transfinite slide $\mathcal{S}_{\beta}^{\alpha_n}$ (right).}
\end{figure}

\begin{lemma}\label{slidelemmatech}
For any null sequence $\alpha_n\in\Omega(M,e)$ and path $\beta:(I,0,1)\to (M,e,b)$, we have $\mathcal{S}_{\beta}^{\alpha_n}\simeq\left(\prod_{\tau}\alpha_n\right)\cdot\beta$.
\end{lemma}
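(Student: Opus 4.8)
The plan is to build the homotopy $\mathcal{S}_{\beta}^{\alpha_n}\simeq \left(\prod_{\tau}\alpha_n\right)\cdot\beta$ by the same recursive-rectangle technique used in Lemma \ref{monoidtechlemma}, sliding the factors $\alpha_n$ back along $\beta$ one at a time, starting from the ``largest'' component $I_1$ and working through the $I_n$ in order. First I would reparametrize so that the target $\left(\prod_{\tau}\alpha_n\right)\cdot\beta$ is displayed as $\left(\prod_{n=1}^{\infty}\alpha_n\right)\cdot\beta$ by Corollary \ref{basiccorollary} (recall $\ell_{\tau}$ and $\ell_\infty$ agree after collapsing constant subpaths, so this is harmless and lets me work with an order-type-$\omega$ concatenation), and likewise reparametrize $\mathcal{S}_{\beta}^{\alpha_n}$ using the Cantor map $\Gamma$. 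Concretely, $\mathcal{S}_{\beta}^{\alpha_n}$ is, up to reparametrization, the transfinite concatenation whose $J\in\mci(\mcc)$-th factor is $\beta|_{[\text{left of }J]}$-translated copy of $\alpha_n$ sitting over the component $I_n$; collapsing the Cantor set appropriately, $\mathcal{S}_{\beta}^{\alpha_n}$ is homotopic (by collapsing constant/redundant subpaths) to an order-type-$\omega$ product of the form $\prod_{n=1}^{\infty}\big((\beta(t_n)\ast\alpha_n)\cdot \beta|_{[t_n,t_{n+1}]}\big)$ for a suitable cofinal increasing sequence $t_n\to 1$ in $I$ with $t_1=0$ — this is just the statement that $\mathcal{S}_{\beta}^{\alpha_n}$ consists of $\beta$ being traversed with a detour $\beta(t_n)\ast\alpha_n$ inserted at parameter $t_n$.

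Next, the core inductive step: using the Elementary Slide (Lemma \ref{slidelemma1}(1)) with $a=e$, each inserted detour $b'\ast\alpha_n$ (where $b'=\beta(t_n)$) is path-homotopic along the relevant initial segment of $\beta$ to $\alpha_n$ dropped off at the basepoint, i.e. $(\beta|_{[0,t_n]}\ast e)\cdot(b'\ast\alpha_n)\cdot(\beta|_{[0,t_n]}\ast e)^{-}\simeq \alpha_n$ by a homotopy with image in $Im(\beta)\ast Im(\alpha_n)$. Feeding these homotopies into a recursive construction of $H:I^2\to M$ exactly as in Lemma \ref{monoidtechlemma} — piecewise on a sequence of shrinking rectangles $R_n$ accumulating at the corner $(1,0)$, with constant homotopies filling the rest — produces a homotopy from $\mathcal{S}_{\beta}^{\alpha_n}$ to $\left(\prod_{n=1}^{\infty}\alpha_n\right)\cdot\beta$. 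At stage $n$ we have slid $\alpha_1,\dots,\alpha_{n-1}$ all the way back to the front and are left with $\left(\prod_{i=1}^{n-1}\alpha_i\right)\cdot(\text{a tail transfinite slide of }\{\alpha_i\}_{i\geq n}\text{ along }\beta|_{[t_n,1]})$, which matches $\beta_{n-1}$ in the notation of Lemma \ref{monoidtechlemma}; the homotopy $H_n$ on $R_n$ slides $\alpha_n$ past the accumulated block $\prod_{i=1}^{n-1}\alpha_i$ and past the initial segment of $\beta$, commuting it to the front using Lemma \ref{slidelemma1} together with the fact (from Corollary \ref{smallcommutinghomotopylemma}, since $\pi_1$ of a monoid is abelian) that it may be moved past the already-assembled loops.

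The main obstacle is \emph{continuity of $H$ at the accumulation corner $(1,0)$}, which is precisely where the ``control of the sizes of the homotopies'' philosophy of Section \ref{subsectionelementarymonoids} is needed. The sets $Im(\alpha_n)$ shrink to $e$ by hypothesis (null-sequence), and the translating factors $\beta(t_n)$ do \emph{not} shrink — they run along all of $\beta$ — so naively the images $Im(H_n)\subseteq Im(\beta|_{[0,t_n]})\ast Im(\alpha_n)$ need not lie in a small neighborhood of the relevant endpoint. I expect the right fix is to observe that near the corner the homotopy value is $\beta(s)\ast(\text{something with image shrinking to }e)$, and continuity should be extracted by combining the continuity of the map $(s,x)\mapsto \beta(s)\ast x$ on $I\times M$ (well, on the appropriate $\Delta$-generated domain, via Proposition \ref{predeltamoncharprop}) with Lemma \ref{smallimagelemma} applied to the null-sequences $\{\alpha_n\},\{\delta_n\},\{\epsilon_n\}$ arising exactly as in Lemma \ref{monoidtechlemma}; one shows that for any neighborhood $U$ of the corner value, all but finitely many rectangles $R_n$ map into $U$. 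Once continuity at the corner is secured, the rest is the routine bookkeeping of matching $H$ on the boundary of each $R_n$ with its neighbors and with the constant-homotopy region, identical in structure to the argument already given for Lemma \ref{monoidtechlemma}. Finally, note the statement as given asks only for $\mathcal{S}_{\beta}^{\alpha_n}\simeq\left(\prod_{\tau}\alpha_n\right)\cdot\beta$ (free homotopy of paths with fixed endpoints $e$ and $b$), so no basepoint subtleties arise beyond those handled by the path-homotopies above.
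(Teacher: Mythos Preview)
Your approach has a real gap in the very first step. You claim that, after reparametrization, $\mathcal{S}_{\beta}^{\alpha_n}$ becomes an order-type-$\omega$ product $\prod_{n=1}^{\infty}\big((\beta(t_n)\ast\alpha_n)\cdot \beta|_{[t_n,t_{n+1}]}\big)$ with an \emph{increasing} sequence $t_n\to 1$. This is not what the transfinite slide looks like. By definition, $\alpha_n$ is inserted at the point $\Gamma(\ov{I_n})\in I$, and these points are the dyadic rationals in $(0,1)$: $\Gamma(\ov{I_1})=\tfrac12$, $\Gamma(\ov{I_2})=\tfrac14$, $\Gamma(\ov{I_3})=\tfrac34$, and so on. They are \emph{dense} in $I$, not a cofinal increasing sequence. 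So $\mathcal{S}_{\beta}^{\alpha_n}$ is genuinely a transfinite object: between any two insertion points lie infinitely many others. Collapsing constants will not untangle this into an $\omega$-concatenation, and you cannot invoke Corollary \ref{basiccorollary} or Lemma \ref{generalpermutationlemma} to reorder, because the inserted loops $\beta(\Gamma(\ov{I_n}))\ast\alpha_n$ are based at distinct points along $\beta$, not at a common point. Consequently the recursive-rectangle scheme from Lemma \ref{monoidtechlemma} does not get off the ground: at stage $n$ you would need to slide $\alpha_n$ past infinitely many later $\alpha_k$ (those inserted between $\Gamma(\ov{I_n})$ and $0$), not just past a finite block of already-assembled loops.

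The paper's proof avoids all of this with a completely different, much shorter idea: both paths are compositions $H\circ(\text{path})$ for a single map $H:\bbh\times I\to M$, $H(x,s)=\beta(s)\ast f(x)$ (here $f:\bbh\to M$ has $f\circ\ell_n=\alpha_n$). Specifically, $(\prod_{\tau}\alpha_n)\cdot\beta=H\circ\big((\prod_{\tau}\ell_n^{0})\cdot\iota\big)$ and $\mathcal{S}_{\beta}^{\alpha_n}=H\circ\gamma$ for explicit paths in $\bbh\times I$ from $(b_0,0)$ to $(b_0,1)$. The problem then reduces to showing these two paths are homotopic in $\bbh\times I$, which is immediate from the deformation retraction $(x,s,t)\mapsto(x,st)$ of $\bbh\times I$ onto $\bbh\times\{0\}$. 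No shuffling, no rectangles, no continuity-at-the-corner argument is needed.
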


\begin{proof}
Let $f:(\bbh,b_0)\to (M,e)$ be the map such that $f\circ \ell_n=\alpha_n$ and define $H:\bbh\times I\to M$ by $H(x,s)=\beta(s)\ast f(x)$. Notice that $H(b_0,s)=\beta(s)$ and $H(x,0)=f(x)$. Let $\iota:\ui\to \bbh\times I$ the path $\iota(s)=(b_0,s)$ and for each $s\in \ui$, define $\ell_{n}^{s}:I\to C_{n}\times \{s\}$ by $\ell_{n}^{s}(t)=(\ell_{n}(t),s)$. Then we have $H\circ\left(\prod_{\tau}\ell_{n}^{0}\right)\cdot\iota=H\circ\left(\prod_{\tau}\ell_{n}^{0}\right)\cdot H\circ\iota=\left(\prod_{\tau}\alpha_n\right)\cdot\beta$.

Next, define $\gamma:(I,0,1)\to (\bbh\times I,(b_0,0),(b_0,1))$ by \[\gamma(t)=\begin{cases}
\iota(\Gamma(t)), & \text{ if }t\in\mcc\\
\ell_{n}^{\Gamma(t)}(k_n(t)), & \text{ if }t\in I_n
\end{cases}\]and notice that $H\circ \gamma=\mathcal{S}_{\beta}^{\alpha_n}$. It suffices to show that $\left(\prod_{\tau}\ell_{n}^{0}\right)\cdot\iota$ and $\gamma$ are path-homotopic in $\bbh\times I$. Consider the deformation retraction $R:(\bbh\times I)\times I\to \bbh\times I$, $R(x,s,t)=(x,st)$ onto $\bbh\times \{0\}$. Then we have $\gamma(t)=R(\gamma(t),1)\simeq R(\gamma(t),0)\cdot\iota=\left(\prod_{\tau}\ell_{n}^{0}\right)\cdot\iota$, proving the claim.
\end{proof}

\begin{theorem}[Transfinite Slide]\label{slidetheorem}
For any null sequence $\alpha_n\in\Omega(M,e)$, homotopic paths $\beta,\beta ':(I,0,1)\to (M,e,b)$, and bijection $\phi:\bbn\to\bbn$, we have $\mathcal{S}_{\beta}^{\alpha_n}\simeq\mathcal{S}_{\beta '}^{\alpha_{\phi(n)}}$.
\end{theorem}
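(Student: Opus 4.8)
The plan is to deduce the statement directly from Lemma \ref{slidelemmatech} together with the transfinite $\pi_1$-commutativity of $M$ at $e$ established in Theorem \ref{monoidtransfinitecommutethm}; essentially all of the work is already contained in those two results, so what remains is a short homotopy chase. First I would note that since $\phi$ is a bijection, $\{\alpha_{\phi(n)}\}$ is again a null-sequence in $\Omega(M,e)$: given a neighborhood $U$ of $e$, the set $\{n \mid Im(\alpha_n)\not\subseteq U\}$ is finite, hence so is its preimage $\{n\mid Im(\alpha_{\phi(n)})\not\subseteq U\}$. Therefore $\mathcal{S}_{\beta '}^{\alpha_{\phi(n)}}$ is defined, and applying Lemma \ref{slidelemmatech} twice---once to $\{\alpha_n\}$ along $\beta$ and once to $\{\alpha_{\phi(n)}\}$ along $\beta '$---yields path-homotopies (rel the endpoints $\{0,1\}\mapsto\{e,b\}$)
\[\mathcal{S}_{\beta}^{\alpha_n}\simeq \left(\prod_{\tau}\alpha_n\right)\cdot\beta \qquad\text{and}\qquad \mathcal{S}_{\beta '}^{\alpha_{\phi(n)}}\simeq \left(\prod_{\tau}\alpha_{\phi(n)}\right)\cdot\beta '.\]

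Next I would invoke Theorem \ref{monoidtransfinitecommutethm}: since $M$ is a pre-$\Delta$-monoid, it is transfinitely $\pi_1$-commutative at $e$, so $\left[\prod_{\tau}\alpha_n\right]=\left[\prod_{\tau}\alpha_{\phi(n)}\right]$ in $\pi_1(M,e)$; equivalently, $\prod_{\tau}\alpha_n$ and $\prod_{\tau}\alpha_{\phi(n)}$ are path-homotopic loops based at $e$. By hypothesis $\beta\simeq\beta '$ rel endpoints. Because concatenation of paths induces a well-defined operation on path-homotopy classes, concatenating these two homotopies gives $\left(\prod_{\tau}\alpha_n\right)\cdot\beta\simeq\left(\prod_{\tau}\alpha_{\phi(n)}\right)\cdot\beta '$ rel endpoints. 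Chaining the three homotopies produces
\[\mathcal{S}_{\beta}^{\alpha_n}\simeq \left(\prod_{\tau}\alpha_n\right)\cdot\beta\simeq \left(\prod_{\tau}\alpha_{\phi(n)}\right)\cdot\beta '\simeq \mathcal{S}_{\beta '}^{\alpha_{\phi(n)}},\]
which is the desired conclusion.

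There is no serious obstacle here: the geometric content---that a transfinite slide reassembles into $\left(\prod_{\tau}\alpha_n\right)\cdot\beta$, and that the factors of a transfinite concatenation based at $e$ may be permuted---has already been isolated in Lemma \ref{slidelemmatech} and Theorem \ref{monoidtransfinitecommutethm}. The only points demanding attention are bookkeeping ones: verifying that a bijective reindexing of a null-sequence is still a null-sequence (so that Lemma \ref{slidelemmatech} applies to $\{\alpha_{\phi(n)}\}$), and keeping every homotopy relative to the endpoints so that the final concatenation of homotopies is legitimate. I would also remark that this makes precise the assertion preceding the theorem, namely that the homotopy class of the transfinite slide is independent both of the positions along $\beta$ at which the loops are dropped off and of the order in which they are indexed.
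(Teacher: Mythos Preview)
Your argument is correct and follows essentially the same route as the paper: apply Lemma \ref{slidelemmatech} to both slides, invoke transfinite $\pi_1$-commutativity (Theorem \ref{monoidtransfinitecommutethm}) to permute the transfinite concatenation, and use $\beta\simeq\beta'$ to finish. Your explicit verification that $\{\alpha_{\phi(n)}\}$ is again a null-sequence is a nice bit of care that the paper leaves implicit.
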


\begin{proof}
By Lemma \ref{slidelemmatech}, we have $\mathcal{S}_{\beta}^{\alpha_n}\simeq\left(\prod_{\tau}\alpha_n\right)\cdot\beta$ and $\mathcal{S}_{\beta '}^{\alpha_{\phi(n)}}\simeq\left(\prod_{\tau}\alpha_{\phi(n)}\right)\cdot\beta '$. We have $\prod_{\tau}\alpha_n\simeq \prod_{\tau}\alpha_{\phi(n)}$ by transfinite $\pi_1$-commutativity and since we have assumed $\beta\simeq\beta '$, the result follows.
\end{proof}
\begin{remark}
In the above results, we have chosen to slide each $\alpha_n=f\circ\ell_n$ to a unique position along $\beta$. Using transfinite commutativity, one can easily modify this so that any subset of the factors $\{\alpha_n\mid n\in\bbn\}$ slides to the same point along $\beta$. In particular, given any infinite partition $\mathcal{P}=\{P_1,P_2,P_3,\dots\}$ of $\bbn$ and ordering on each $P_m$, define $g:\bbh\to \bbh$ so that $g\circ\ell_m$ is the (finite or infinite) concatenation $\prod_{n\in P_m}\ell_n$. Applying the above results to the sequence $\{f\circ g\circ \ell_m\}$ and path $\beta$ gives the analogues where the loops $\{\alpha_n\mid n\in P_m\}$ slide to the same point on $\beta$ and appear as consecutive subloops of $\mathcal{S}_{\beta}^{f\circ g\circ \ell_m}$. Additionally, by replacing the Cantor map $\Gamma$ with a carefully chosen, non-decreasing, continuous, surjection $\ui\to\ui$, one may slide the loops $\alpha_n$ to any given countable set of points along the parameterization of $\beta$.
\end{remark}

\section{The James Reduced Product}\label{sectionjrproduct}

Let $X$ be a path-connected space with fixed basepoint $e\in X$. The \textit{James Reduced Product} on the based space $(X,e)$ is the set $\jx=\coprod_{n\in\bbn}X^n/\mathord{\sim}$ with the relation generated by $(x_1,\dots ,x_{j-1},e,x_{j+1},\dots,x_n)\sim (x_1,\dots ,x_{j-1},x_{j+1},\dots,x_n)$. We write elements of $J(X,e)$ as finite words $w=x_1x_2\cdots x_n$ in the alphabet $X\backslash\{e\}$ where $|w|=n$ denotes the length of the word. The basepoint represents the empty word $e$ which has length $|e|=0$. Under the operation of word concatenation, $J(X)$ is the free monoid on the based set $(X,e)$. Let $q:\coprod_{n\in\bbn}X^n\to \jx$ be the natural surjection, $J_n(X)=q(X^n)$ be the set of words with length $\leq n$, and $q_n:X^n\to \jnx$ be the restriction of $q$. We give $\jx$ the topology that is typically used in algebraic topology \cite[Chapter VII.2]{WhiteheadEOH}: $X^n$ has the product topology, $\jnx$ has the quotient topology with respect to $q_n$, and $J(X)$ has the weak topology with respect to the subspaces $\{\jnx\mid n\in\bbn\}$ so that $C\subseteq \jx$ is closed (resp. open) if and only if $C\cap \jnx$ is closed (resp. open) for all $n\in\bbn$. In other words, $\jx=\varinjlim_{n}\jnx$ is the inductive limit of the subspaces $\jnx$. We let $\sigma:X\to \jx$ denote the continuous injection of generators and note that every based map $f:X\to M$ to a topological monoid $M$ extends uniquely to a continuous homomorphism $\wt{f}:\jx\to M$ such that $\wt{f}\circ\sigma =f$.

\begin{remark}
In our analysis of $J(X)$, it is helpful to think of $J(X)$ in terms of the following inductive construction: begin with $X=J_1(X)$ and glue $X^2$ to $X$ by considering $X\times\{e\}$ and $\{e\}\times X$ as two sides of a square representing $X^2$ and identifying these two faces homeomorphically with $X$ to obtain $J_2(X)$. Inductively, visualize $X^n$ as an $n$-cube $[0,1]^n$ where the $n$ faces on the coordinate axes correspond to the subspaces $\{e\}\times X^{n-1},X\times \{e\}\times X^{n-2},\dots, X^{n-1}\times \{e\}$, which are homeomorphic to $X^{n-1}$. Now $J_n(X)$ is obtained by attaching $X^n$ to $J_{n-2}(X)$ by gluing each of these faces to $J_{n-1}(X)$ according the identifications given by $q_{n-1}$.
\end{remark}

\begin{remark}\label{fretopmonoidremark}
Although $\jx$ has the universal property of the free topological monoid $TM(X)$ on a based space $(X,e)$, the word-concatenation operation $\jx\times\jx\to \jx$ can fail to be continuous. For example, the proof of \cite[Theorem 4.1]{FOT} may be used to show that $J(\bbq)\times J(\bbq)\to J(\bbq)$ is not continuous. For a path-connected example, one may use the cone $C\bbq=\frac{\bbq\times I}{\bbq\times\{1\}}$ with the image of $(0,0)$ as the basepoint $e$. In contrast, the free topological monoid $TM(X)$ has the finest topology such that $TM(X)\times TM(X)\to TM(X)$ is continuous and $\sigma:X\to TM(X)$ is continuous and universal in the same sense as it is for $\jx$. Thus we have $\jx=TM(X)$ if and only if $\jx\times\jx\to \jx$ is continuous. We verify this equality in the important case of $k_{\omega}$-spaces in Lemma \ref{komegalemma}.
\end{remark}

\subsection{The topology of $\jx$}

Since we consider James reduced products in significant generality, we take care with the topological properties of $J(X)$. Every result here either makes up the argument for Theorem \ref{surjectivitytheorem} or is needed for a computation in Section \ref{computationsection}.

\begin{proposition}\label{quotientmapprop1}
Consider $J(X)$ for any space $X$ and $e\in X$. Then
\begin{enumerate}
\item the canonical map $q:\coprod_{n\in\bbn}X^n\to J(X)$ is a quotient map,
\item if $\{e\}$ is closed in $X$, then $\jnx$ is closed in $\jx$ for all $n\in\bbn$,
\item if $X$ is $T_1$, then $J_n(X)$, $n\in\bbn$ and $J(X)$ are $T_1$.
\end{enumerate}
\end{proposition}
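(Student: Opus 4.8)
The plan is to prove the three parts in order, with each reducing to a compatibility check across the inductive-limit structure $\jx = \varinjlim_n \jnx$.

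For part (1), the goal is to show that $q \colon \coprod_{n\in\bbn} X^n \to \jx$ is a quotient map. Since $\jx$ carries the weak topology with respect to the subspaces $\jnx$, a set $C \subseteq \jx$ is closed if and only if $C \cap \jnx$ is closed in $\jnx$ for all $n$, which by definition of the quotient topology on $\jnx$ happens if and only if $q_n^{-1}(C \cap \jnx)$ is closed in $X^n$. So I would unravel $q^{-1}(C)$: its restriction to the summand $X^n$ is exactly $q_n^{-1}(C)$, and since $\jnx \subseteq \jx$ one has $q_n^{-1}(C) = q_n^{-1}(C \cap \jnx)$. Putting these together, $q^{-1}(C)$ is closed in $\coprod_n X^n$ iff each $q_n^{-1}(C\cap\jnx)$ is closed in $X^n$ iff $C$ is closed in $\jx$; this is precisely the statement that $q$ is quotient. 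The only subtlety to verify carefully is that $q(X^n) = \jnx$ as sets and that the subspace topology $\jnx$ inherits from $\jx$ agrees with the quotient topology from $q_n$ --- but the latter is guaranteed because the weak-topology definition was set up exactly so that $\jnx$ is a \emph{closed} (hence topological) subspace when $\{e\}$ is closed, and more generally one checks directly from the definitions that the subspace and quotient topologies on $\jnx$ coincide.

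For part (2), assume $\{e\}$ is closed in $X$. I want to show $\jnx$ is closed in $\jx$, i.e.\ that $\jnx \cap J_m(X)$ is closed in $J_m(X)$ for every $m$. For $m \le n$ this intersection is all of $J_m(X)$, so there is nothing to prove. For $m > n$, note $\jnx \cap J_m(X) = \jnx$, and since $J_m(X)$ carries the quotient topology from $q_m \colon X^m \to J_m(X)$, it suffices to show $q_m^{-1}(\jnx)$ is closed in $X^m$. But $q_m^{-1}(\jnx)$ is precisely the set of tuples $(x_1,\dots,x_m) \in X^m$ whose reduced word has length $\le n$, which means at least $m-n$ of the coordinates equal $e$; this is the finite union $\bigcup_{|S| = m-n} \{(x_1,\dots,x_m) : x_i = e \text{ for } i \in S\}$ over $(m-n)$-element subsets $S \subseteq \{1,\dots,m\}$. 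Each such set is closed in $X^m$ because $\{e\}$ is closed and finite products of closed sets with copies of $X$ are closed, and a finite union of closed sets is closed. Hence $\jnx$ is closed in $\jx$.

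For part (3), assume $X$ is $T_1$. Since a space is $T_1$ iff every singleton is closed, I would first show every singleton in $J_n(X)$ is closed by descending induction on the length structure, or more directly: a word $w = x_1 \cdots x_k \in \jnx$ has $q_n^{-1}(\{w\})$ equal to the set of $n$-tuples reducing to $w$, which can be described as a finite union of sets of the form $\{e\}^{a_0} \times \{x_1\} \times \{e\}^{a_1} \times \cdots \times \{x_k\} \times \{e\}^{a_k}$ over all ways of distributing $n-k$ copies of $e$ among the $k+1$ gaps; each factor is a singleton in $X$, hence closed, so each such product is closed in $X^n$ and the finite union is closed, giving $\{w\}$ closed in $\jnx$. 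Then $\{w\}$ is closed in $\jx$: for each $m$, $\{w\} \cap J_m(X)$ is either empty or $\{w\}$, and in the latter case (with $m \ge k$) it is closed in $J_m(X)$ by the same argument applied to $q_m$. Thus $\jx$ and each $\jnx$ are $T_1$. I expect the mild bookkeeping in part (3) --- keeping track of the distribution of $e$'s among the gaps to describe $q_n^{-1}(\{w\})$ --- to be the most tedious point, but it is entirely routine; no genuine obstacle arises, since everything rests on the two elementary facts that finite unions of closed sets are closed and that the relevant topologies on $\jnx$ are compatible by construction.
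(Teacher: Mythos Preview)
Your core arguments for all three parts are correct and match the paper's approach. For (1) the paper gives exactly your chain of equivalences; for (2) the paper shows the complement of $q^{-1}(J_n(X))$ is open via product neighborhoods, which is dual to your finite-union-of-closed-sets description; for (3) the paper simply notes that each fiber $q_n^{-1}(w)$ is finite in the $T_1$ space $X^n$, which your explicit enumeration of tuples confirms.

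One caveat: the closing sentence of your part (1) introduces a spurious difficulty. The agreement of subspace and quotient topologies on $J_n(X)$ is \emph{not} needed here, because the weak topology on $J(X)$ is by definition determined by the quotient topologies on the $J_n(X)$; hence ``$C\cap J_n(X)$ closed in $J_n(X)$'' already refers to the quotient topology, and your chain of equivalences is complete as written. That topology agreement is in fact a separate, nontrivial lemma in the paper (requiring $\{e\}$ closed), and your sketched justification of it is partly circular since it appeals to $J_n(X)$ being closed in $J(X)$, which is part (2). Simply drop that final sentence.
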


\begin{proof}
(1) Let $C\subseteq J(X)$ such that $q^{-1}(C)$ is closed. Then $q^{-1}(C)\cap X^n=q_{n}^{-1}(C\cap J_n(X))$ is closed in $X^n$ for all $n\in\bbn$. Since $q_n$ is quotient, $C\cap J_n(X)$ is closed in $J_n(X)$ for all $n\in\bbn$ and thus $C$ is closed in $J(X)$.

(2) By (1), it suffices to show that $\coprod_{i\in\bbn}X^i\setminus q^{-1}(\jnx)$ is open in $\coprod_{i\in\bbn}X^i$. Let $z\in \coprod_{i\in\bbn}X^i\setminus q^{-1}(J_n(X))$. Then $z=(z_1,z_2,\dots ,z_{n+k})$ where $k\geq 1$ and $z_i\neq e$ for at least $(n+1)$-many $i$. Set $U_i=X\setminus e$ if $z_i\neq e$ and $U_i=X$ if $z_i=e$. Then $U=\prod_{i=1}^{n+k}U_i$ is an open neighborhood of $z$ and $U\subseteq \coprod_{i\in\bbn}X^i\setminus q^{-1}(J_n(X))$ since for all $u\in U$, we have $|q(u)|>n$.

(3) if $X$ is $T_1$, then so is $X^n$. Each fiber $q_{n}^{-1}(w)$, $w\in \jnx$ is finite and therefore closed in $X^n$. Thus $\{w\}$ is closed in $J_n(X)$ for all $n\in\bbn$. Since $\jx$ is the inductive limit of $T_1$ spaces, $\jx $ is $T_1$.
\end{proof}

\begin{proposition}\label{sequentiallycompactprop}
If $X$ is $T_1$, then every sequentially compact subspace of $J(X)$ lies in $J_n(X)$ for some $n\in\bbn$. In particular, if $f:Y\to \jx$ is a map from a sequentially compact space $Y$, then $Im(f)\subseteq \jnx$ for some $n\in\bbn$.
\end{proposition}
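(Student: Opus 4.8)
The plan is to argue by contradiction, extracting from a putative counterexample a sequence of words of strictly increasing length and showing such a sequence can have no convergent subsequence. So suppose $K\subseteq\jx$ is sequentially compact but $K\not\subseteq\jnx$ for any $n\in\bbn$. For each $n$ I would choose $b_n\in K\setminus\jnx$; then $|b_n|\geq n+1$, so the lengths $|b_n|$ are unbounded and I may pass to a subsequence $\{a_k\}$ of $\{b_n\}$ with $|a_1|<|a_2|<\cdots$.

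The key step is the observation that any set $S=\{s_k:k\in\bbn\}\subseteq\jx$ with strictly increasing lengths is closed and discrete in $\jx$. Here I would use two facts already in hand: $\jx$ carries the weak topology with respect to the subspaces $\jnx$, and each $\jnx$ is $T_1$ by Proposition \ref{quotientmapprop1}(3) (this is exactly where the $T_1$ hypothesis on $X$ enters). Given any $B\subseteq S$ and any $n\in\bbn$, the set $B\cap\jnx$ is finite --- it contains only those $s_k$ with $|s_k|\leq n$ --- hence closed in $\jnx$; by the weak-topology criterion $B$ is therefore closed in $\jx$. Since every subset of $S$ is closed in $\jx$, $S$ is closed and discrete.

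To conclude the first assertion: by sequential compactness of $K$, the sequence $\{a_k\}$ has a subsequence $\{a_{k_j}\}$ converging to some $w\in K$. The set $S=\{a_{k_j}:j\in\bbn\}$ still has strictly increasing lengths, so by the previous step it is closed and discrete in $\jx$. But a sequence converging into a closed discrete set is eventually constant: if $w\notin S$ then $w\in\overline{S}\setminus S=\emptyset$, which is absurd, and if $w=a_{k_{j_0}}\in S$ then $\{w\}$ is open in $S$, forcing $a_{k_j}=w$ for all large $j$; either alternative contradicts $|a_{k_1}|<|a_{k_2}|<\cdots$. This contradiction proves $K\subseteq\jnx$ for some $n$. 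For the final clause, I would note that a continuous image of a sequentially compact space is sequentially compact (lift a sequence from $Im(f)$ to $Y$, extract a convergent subsequence there, and apply continuity of $f$), so $Im(f)$ is a sequentially compact subspace of $\jx$ and hence lies in some $\jnx$ by what was just proved.

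I do not anticipate a genuine obstacle here: the argument is elementary once the right lemma --- ``a set meeting each $\jnx$ in a finite set is closed and discrete'' --- is isolated, and that lemma is immediate from the inductive-limit topology together with Proposition \ref{quotientmapprop1}(3). The only points requiring any care are the harmless extraction of a length-strictly-increasing subsequence and the final ``eventually constant'' dichotomy.
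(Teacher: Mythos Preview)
Your argument is correct and follows essentially the same route as the paper's: both extract a sequence of unbounded word-length, invoke Proposition \ref{quotientmapprop1}(3) to see that any set meeting each $\jnx$ finitely is closed in the weak topology, and derive a contradiction with convergence of a subsequence. The paper's version is marginally more economical---it discards the finitely many terms of length $\leq m$ (where the limit lies in $J_m(X)$) so that the limit is automatically outside the remaining set $C$, avoiding your case split on $w\in S$ versus $w\notin S$---but the substance is identical.
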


\begin{proof}
Let $Y$ be a sequentially compact subspace of $J(X)$ and suppose, to the contrary, that for all $n\in\bbn$, there exists $y_n\in J_n(X)\cap (Y\backslash J_{n-1}(X))$. Find a convergence subsequence $\{y_{n_k}\}\to y$ in $Y$ and $m\in\bbn$ such that $y\in J_{m}(X)$. Let $C=\{y_{n_k}\mid n_k> m\}$ and notice that $y\notin C$. Since $C\cap J_{n}(X)$ is finite in the $T_1$ space $J_n(X)$ (recall (3) of Proposition \ref{quotientmapprop1}), $C\cap J_{n}(X)$ is closed in $\jnx$ for all $n\in\bbn$. Thus $C$ is closed is $J(X)$; a contradiction.
\end{proof}

To proceed further, we must describe basic open neighborhoods of a point $w\in\jnx$ supposing that $\{e\}$ is closed in $X$. Notice that $\jnx\backslash J_{n-1}(X)\cong (X\backslash e)^n$ where $(X\backslash e)^n$ is open in $X^n$. Hence, if $w=x_1x_2\cdots x_n$ has length $n$, then neighborhoods of the form $q_n(\prod_{i=1}^{n}U_i)$ where $U_i$ is an open neighborhood of $x_i$ in $X\backslash \{e\}$ form a neighborhood base at $w$. Fix $w\in \jnx$ where $|w|=m<n$. The fiber $q_{n}^{-1}(w)$ consists of all $n$-tuples $v=(x_1,x_2,\dots ,x_n)$ such that there exists an $m$-element subset $F_v=\{i_1,i_2,i_3,\dots ,i_m\}\subseteq \{1,2,3,\dots ,n\}$ such that $q_n(v)=w=x_{i_1}x_{i_2}x_{i_3}\cdots x_{i_m}$ and $x_j=e$ if $j\notin F_v$. Let $U_1,U_2,U_3,\dots, U_m,V$ be open neighborhoods in $X$ such that $x_{i_j}\in U_j$ and $e\in V$. Since $\{e\}$ is closed in $X$, we may choose these neighborhoods so that $e\notin U_j$ for all $j\in\{1,2,\dots,m\}$. For each $v\in q_{n}^{-1}(w)$, consider the open neighborhood $N_v=\prod_{k=1}^{n}A_{k}^{v}$ of $v$ in $X^n$ where $A_{k}^{v}=V$ if $k\notin F_v$ and $A_{k}^{v}=U_j$ if $k=i_j$. Therefore, \[N:=N(U_1,U_2,U_3,\dots,U_m,V)=\bigcup \{N_v\mid q(v)=w\}\] is an open set containing $q_{n}^{-1}(w)$ such that if $y\in N$, then $|q_n(y)|\geq m$. 

\begin{proposition}
For $|w|=m<n$, the set $N=\bigcup \{N_v\mid q(v)=w\}$ defined above is saturated with respect to $q_n$.
\end{proposition}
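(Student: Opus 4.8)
The plan is to prove the (purely set-theoretic) equality $N = q_n^{-1}(q_n(N))$; since $N\subseteq q_n^{-1}(q_n(N))$ is automatic, everything reduces to showing that if $y\in N$ and $y'\in X^n$ satisfy $q_n(y')=q_n(y)$, then $y'\in N$. First I would record an intrinsic description of $N$ that eliminates the union over the fiber. Writing $w=w_1w_2\cdots w_m$, a tuple $y=(y_1,\dots,y_n)\in X^n$ lies in $N$ if and only if there is an order-preserving injection $g:\{1,\dots,m\}\to\{1,\dots,n\}$ with $y_{g(j)}\in U_j$ for every $j$ and $y_k\in V$ for every $k\notin\operatorname{im}(g)$. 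One direction is immediate from the definition of $N_v$ (take $g(j)=i_j$ where $F_v=\{i_1<\cdots<i_m\}$); for the converse, given such a $g$ one constructs $v\in q_n^{-1}(w)$ by setting $v_{g(j)}=w_j$ and $v_k=e$ elsewhere, and checks directly that $y\in N_v$.

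With this description in place, the key observation is that $q_n$ merely deletes the $e$-entries, so $q_n(y')=q_n(y)$ forces $y$ and $y'$ to have the same number $r$ of non-$e$ coordinates; listing these in increasing order as $p_1<\cdots<p_r$ for $y$ and $p'_1<\cdots<p'_r$ for $y'$, one has $y'_{p'_l}=y_{p_l}$ for all $l$. Since $e\notin U_j$, any witness $g$ for $y$ takes values among the non-$e$ positions of $y$, say $g(j)=p_{l_j}$ with $l_1<\cdots<l_m$; I would then transport $g$ to $y'$ by defining $g'(j)=p'_{l_j}$, which is again an order-preserving injection. Verifying that $g'$ witnesses $y'\in N$ is a brief bookkeeping check: $y'_{g'(j)}=y_{g(j)}\in U_j$, and for $k\notin\operatorname{im}(g')$ either $y'_k=e\in V$ or $k=p'_l$ for some $l\notin\{l_1,\dots,l_m\}$, in which case $p_l\notin\operatorname{im}(g)$ and hence $y'_k=y_{p_l}\in V$.

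The two hypotheses that are genuinely used --- and the only real subtlety --- are the conditions on the chosen neighborhoods: $e\notin U_j$ ensures that the image of any witness consists of non-$e$ positions, which is exactly what makes the transport $g\mapsto g'$ well-defined, while $e\in V$ ensures that coordinates equal to $e$ (in particular, the ``extra'' $e$-coordinates by which $y$ and $y'$ may differ) automatically meet the $V$-constraint. Beyond that, the only thing requiring care is the order-preservation bookkeeping between the non-$e$ positions of $y$ and those of $y'$. I do not expect a serious obstacle: the statement amounts to the assertion that $N$, viewed through the ``delete the $e$'s'' map, depends only on the decorated word and not on where the suppressed letters sit.
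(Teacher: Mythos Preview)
Your proof is correct and follows essentially the same approach as the paper's: both arguments transport an $m$-element ordered selection of positions (your order-preserving injection $g$, the paper's set $F'$) from one representative to another via the order-preserving bijection between their non-$e$ coordinates, using $e\notin U_j$ to guarantee the selected positions are non-$e$ and $e\in V$ to handle the remaining coordinates. Your intrinsic characterization of $N$ and explicit transport $g\mapsto g'$ make the bookkeeping somewhat cleaner than the paper's more ad hoc notation, but the mathematical content is the same.
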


\begin{proof}
Suppose $y=(y_1,y_2,y_3,\dots, y_n)\in X^n$, $a=(a_1,a_2,a_3,\dots,a_n)\in N_v$, and $q_n(y)=y_{l_1}y_{l_2}y_{l_3}\cdots y_{l_{m'}}=q_n(a)$ for $\{l_1,l_2,l_3,\dots, l_{m'}\}\subseteq \{1,2,3,\dots ,n\}$. Recall that for each $j\in \{1,2,3,\dots ,m\}$, we have $e\neq a_{i_j}\in U_j$ and $a_k\in V$ if $k\neq i_j$ for any $j$. Thus the letters $a_{i_1},a_{i_2},a_{i_3},\dots a_{i_m}$ appear (not necessarily in a unique position) in order within $y_{l_1}y_{l_2}y_{l_3}\cdots y_{l_{m'}}$. Let $F'\subseteq \{l_1,l_2,l_3,\dots, l_{m'}\}$ be an $m$-element set corresponding to such a sequence. Observe that if $k=l_r\notin F'$, then $y_{l_r}=a_k\in V$ and if $k\neq l_r$ for any $r\in\{1,2,\dots,m'\}$, then $y_k=e$. Finally, let $v'\in q_{n}^{-1}(w)$ be the $n$-tuple having the elements $x_{i_1},x_{i_2},\dots, x_{i_m}$ appear in order in the coordinates with indices $F'$ and $e$ elsewhere. Now it is clear that $y\in N_{v'}$, completing the proof.
\end{proof}

The continuity of $q_n$ makes it clear that neighborhoods $q_n(N)$ constructed above (i.e. determined by the open sets $U_1,U_2,U_3,\dots, U_m,V$ in $X$) form a neighborhood base at a word $w\in J_n(X)$ of length $|w|<n$. We call an open set of the form $q_n(N)$ a \textbf{standard neighborhood of }$w$ in $\jnx$. Simply put, $q_n(N)$ consists of all words $y_1y_2\dots y_r$ such that $y_1,y_2,\dots ,y_r$ has a subsequence $y_{i_1},y_{i_2},\dots,y_{i_m}$ with $y_{i_j}\in U_j$ and such that $y_i\in V$ if $i\notin \{i_1,i_2,\dots i_m\}$.

\begin{lemma}\label{agreeingtopologieslemma}
If $\{e\}$ is closed in $X$, then the quotient topology on $\jnx$ agrees with the subspace topology inherited from $\jx$. In particular, the inclusion of generators $\sigma:X\to \jx$ is an embedding.
\end{lemma}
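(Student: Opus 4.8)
The plan is to prove that the two topologies on $J_n(X)$ coincide by a double inclusion, with one direction formal and the other carrying the real content. For the formal direction, recall that $J(X)$ carries the weak (inductive-limit) topology with respect to the subspaces $J_m(X)$, so if $W$ is open in $J(X)$ then $W\cap J_n(X)$ is open in the quotient topology of $J_n(X)$; hence the subspace topology is contained in the quotient topology. (Equivalently, the inclusion $J_n(X)\hookrightarrow J(X)$ is continuous because it precomposes with the quotient map $q_n$ to the restriction of the continuous map $q$.)

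For the reverse inclusion, let $V\subseteq J_n(X)$ be open in the quotient topology and put $C=J_n(X)\setminus V$, so that $q_n^{-1}(C)$ is closed in $X^n$. It suffices to show $C$ is closed in $J(X)$, for then $W=J(X)\setminus C$ is open in $J(X)$ with $W\cap J_n(X)=V$, exhibiting $V$ as subspace-open. By the definition of the topology on $J(X)$ it is enough to check, for every $m\in\bbn$, that $C\cap J_m(X)$ is closed in $J_m(X)$, i.e. (as $q_m$ is a quotient map) that $q_m^{-1}(C)$ is closed in $X^m$. Because $C\subseteq J_n(X)$, every $v\in q_m^{-1}(C)$ reduces to a word of length $\le n$, so $q_m^{-1}(C)$ is contained in $Z_m:=\{v\in X^m: |q_m(v)|\le n\}$, the set of $m$-tuples having at least $\max(0,m-n)$ coordinates equal to $e$.

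The main work is the case $m>n$. For each $(m-n)$-element subset $F\subseteq\{1,\dots,m\}$ set $Z_m^F=\{v\in X^m: v_j=e\text{ for all }j\in F\}$; since $\{e\}$ is closed in $X$, this is a closed subset of $X^m$, and the map $\pi_F\colon Z_m^F\to X^n$ that deletes the coordinates indexed by $F$ is a homeomorphism. A direct check of the word-reduction operation shows $q_m|_{Z_m^F}=q_n\circ\pi_F$, so $q_m^{-1}(C)\cap Z_m^F=\pi_F^{-1}\bigl(q_n^{-1}(C)\bigr)$ is closed in $Z_m^F$, hence in $X^m$. Since $Z_m=\bigcup_{|F|=m-n}Z_m^F$ and this union is finite, $q_m^{-1}(C)=\bigcup_{|F|=m-n}\bigl(q_m^{-1}(C)\cap Z_m^F\bigr)$ is a finite union of closed sets and therefore closed in $X^m$. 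For $m\le n$ the argument is easier: the continuous map $X^m\to X^n$ appending $n-m$ copies of $e$ intertwines $q_m$ with $q_n$, so $q_m^{-1}(C)$ is the preimage of the closed set $q_n^{-1}(C)$ and is closed.

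This shows $q_m^{-1}(C)$ is closed for every $m$, hence $C$ is closed in $J(X)$, completing the proof that the two topologies on $J_n(X)$ agree. The last assertion follows by specializing to $n=1$: since $J_1(X)$ is canonically homeomorphic to $X$ (with $q_1$ the identity), the already-continuous injection $\sigma\colon X\to J(X)$ is a homeomorphism onto its image $J_1(X)$ carrying the subspace topology, i.e. an embedding. I expect the only delicate point to be the bookkeeping needed to verify $q_m|_{Z_m^F}=q_n\circ\pi_F$ and that the finitely many slices $Z_m^F$ exhaust $Z_m$; this is precisely where the hypothesis that $\{e\}$ is closed in $X$ enters.
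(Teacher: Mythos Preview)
Your proof is correct and takes a genuinely different route from the paper's. The paper works with open sets: for each basic open neighborhood of a point $w\in J_n(X)$ (the ``standard neighborhoods'' $q_n(N)$ built from tuples $U_1,\dots,U_m,V$ developed just before the lemma), it explicitly constructs an open set $\mathscr{W}=\bigcup_{k\geq n}\mathscr{W}_k$ in $J(X)$ by extending the same data to each $J_k(X)$ and checking $\mathscr{W}\cap J_n(X)=\mathscr{W}_n$. Your argument instead passes to complements and uses the clean slice decomposition $Z_m=\bigcup_{|F|=m-n}Z_m^F$ together with the factorization $q_m|_{Z_m^F}=q_n\circ\pi_F$. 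Your approach is shorter, more self-contained, and avoids the detailed neighborhood bookkeeping; the paper's approach has the advantage of reusing the standard-neighborhood machinery it has already set up (and which it needs later anyway), and of producing explicit open extensions rather than just closedness of complements.

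One minor correction to your closing remark: the identity $q_m|_{Z_m^F}=q_n\circ\pi_F$ is purely combinatorial and does not require $\{e\}$ to be closed. The hypothesis enters exactly where you use it earlier, namely to conclude that each slice $Z_m^F=\{v:v_j=e\text{ for }j\in F\}$ is closed in $X^m$, so that closedness in $Z_m^F$ propagates to closedness in $X^m$.
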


\begin{proof}
Since $q_n:X^n\to \jx$ is continuous, the quotient topology on $\jnx$ is finer than the subspace topology. First, we show the two topologies agree at points in $\jnx\backslash J_{n-1}(X)$. If $w=x_1x_2\cdots x_n$ has length $n$, then a basic neighborhood of $w$ in the quotient topology on $\jnx$ is of the form $\mathscr{U}=q_{n}(\prod_{j=1}^{n}U_j)$ where $U_i$ is an open neighborhood of $x_i$ in $X\backslash\{e\}$. Let $\mathscr{V}$ be the subset of $\jx$ consisting of all words $y_1y_2\cdots y_k$ such that $y_1,y_2,\dots, y_k$ has a subsequence $y_{i_1},y_{i_2},\dots ,y_{i_n}$ with $y_{i_j}\in U_j$ for all $j$. Now, $q^{-1}(\mathscr{V})$ is the union of the open sets of the form $\prod_{i=1}^{k}B_k$ where $U_1,U_2,\dots ,U_n$ appears as a subsequence of $B_1,B_2,\dots ,B_k$ and such that $B_i=X$ if $B_i$ is not a member of this subsequence. Since $\mathscr{V}$ is open in $\jx$ and $\mathscr{V}\cap \jnx=\mathscr{U}$, it follows that $\mathscr{U}$ is in the subspace topology on $\jnx$.

If $|w|=m<n$, consider an open set $N=N(U_1,U_2,\dots,U_m,V)$ in $X^n$ as constructed above so that $\mathscr{W}_n=q_n(N)$ is a standard neighborhood of $w$ in $\jnx$. For all $k>n$, we may use the same open sets $U_1,U_2,\dots,U_m,V$ in $X$ to form the corresponding open set $N_k$ in $X^k$ so that $\mathscr{W}_k=q_k(N_k)$ is a standard neighborhood of $w$ in $J_k(X)$. Note that $\mathscr{W}_n\subseteq \mathscr{W}_{n+1}\subseteq \mathscr{W}_{n+2}\subseteq \cdots$ and $\mathscr{W}_{k+1}\cap J_k(X)=\mathscr{W}_k$ for all $k\geq n$. Therefore, if $\mathscr{W}=\bigcup_{k\geq n}\mathscr{W}_k$, then $\mathscr{W}\cap J_k(X)=\mathscr{W}_k$ is open in $J_k(X)$ for all $k\geq n$. Since the inclusion functions $J_k(X)\to J_n(X)$, $1\leq k<n$ are continuous with respect to the quotient topology on both domain and codomain, it follows that $\mathscr{W}\cap J_{k}(X)=\mathscr{W}_n\cap J_k(X)$ is open in $J_k(X)$ for $1\leq k<n$. Thus $\mathscr{W}$ is open in $\jx$ and $\mathscr{W}\cap J_n(X)=\mathscr{W}_n$ is in the subspace topology on $\jnx$.
\end{proof}

\begin{remark}
Assuming that $X$ is Hausdorff and $|w|=m<n$ as above, we may choose the open sets $U_1,U_2,\dots,U_m,V$ defining a standard neighborhood $q_n(N)$ to be pairwise disjoint or equal. With this choice, if $v,v'\in q_{n}^{-1}(w)$ and $v\neq v'$, then $N_v\cap N_{v'}=\emptyset$. Hence, $N=N(U_1,U_2,\dots,U_m,V)$ is the disjoint union of $|q_{n}^{-1}(w)|$-many basic open sets $N_v$ in the product topology on $X^n$. This observation makes the proof of the following proposition straightforward.
\end{remark}

\begin{proposition}\label{hausdorffprop}
If $X$ is Hausdorff, then $\jnx$ is Hausdorff for each $n\in\bbn$.
\end{proposition}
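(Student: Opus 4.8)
The plan is to separate two distinct words $w,w'\in\jnx$ by explicit disjoint \emph{standard neighborhoods}, built from the neighborhood basis described just before the statement. I would \emph{not} try to induct on $n$ via the decomposition $\jnx=\bigl(\jnx\setminus J_{n-1}(X)\bigr)\cup J_{n-1}(X)$ into an open Hausdorff piece and a closed piece: a separation of two points inside $J_{n-1}(X)$ only produces sets open in $J_{n-1}(X)$, which need not be open in $\jnx$. So the separating open sets have to be produced directly inside $\jnx$.

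First, since $X$ is Hausdorff it is $T_1$, hence $\{e\}$ is closed in $X$ and the standard-neighborhood machinery of the preceding paragraphs applies. Write $w=x_1\cdots x_m$ and $w'=x_1'\cdots x_{m'}'$ with all letters in $X\setminus\{e\}$ and $m,m'\le n$, and let $a_1,\dots,a_k$ be the distinct elements occurring among the letters of $w$ and of $w'$. Using Hausdorffness of $X$ (finitely many distinct points admit pairwise disjoint open neighborhoods), fix pairwise disjoint open sets $V,W_1,\dots,W_k$ in $X$ with $e\in V$ and $a_l\in W_l$ for each $l$, so in particular $e\notin W_l$. To the $i$-th letter $x_i$ of $w$ assign $U_i:=W_l$ where $x_i=a_l$, and similarly define $U_j'$ from the letters of $w'$. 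Let $\mathscr U$ be the standard neighborhood of $w$ in $\jnx$ determined by $U_1,\dots,U_m$ and $V$ (when $|w|=n$ this is just $q_n(\prod_{i=1}^{n}U_i)$ and $V$ plays no role), and let $\mathscr U'$ be the standard neighborhood of $w'$ determined by $U_1',\dots,U_{m'}'$ and the \emph{same} set $V$. Then $w\in\mathscr U$, $w'\in\mathscr U'$, and both are open in $\jnx$.

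The crux is that $\mathscr U\cap\mathscr U'=\emptyset$. Suppose $z=y_1\cdots y_r$ (a reduced word, $r\le n$) lies in both. Membership of $z$ in $\mathscr U$ gives indices $i_1<\cdots<i_m$ with $y_{i_s}\in U_s$ and $y_i\in V$ for every remaining $i$. Since each $U_s$ is one of the pairwise disjoint $W_l$, and $V$ is disjoint from every $W_l$, the matching index set $\{i_1,\dots,i_m\}$ is exactly $\{\,i:y_i\in\bigcup_l W_l\,\}$; in particular $m=\#\{\,i:y_i\in\bigcup_l W_l\,\}$. Applying the same reasoning to $\mathscr U'$ — crucially, with the same family $W_1,\dots,W_k$ — yields $m'=\#\{\,i:y_i\in\bigcup_l W_l\,\}=m$ and identifies the two matching index sets, so $y_{i_s}\in U_s\cap U_s'$ for every $s$. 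Disjointness of the $W_l$ then forces $U_s=U_s'$, hence $x_s=x_s'$, for all $s$; thus $w=w'$, a contradiction. Therefore $\mathscr U$ and $\mathscr U'$ are disjoint open neighborhoods of $w$ and $w'$, and $\jnx$ is Hausdorff.

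I expect no genuine topological obstacle; the single point the argument hinges on — and the reason the chosen neighborhoods must be coherent — is that $w$ and $w'$ have to be separated using \emph{one} common family $\{V,W_1,\dots,W_k\}$, so that the letter-counting can be compared across $\mathscr U$ and $\mathscr U'$. Once that bookkeeping is in place, the degenerate cases (a letter repeated within or between $w$ and $w'$, the case $|w|=n$ where $V$ is superfluous, or $z$ being the empty word, which forces $m=m'=0$ and hence $w=w'$) are all handled routinely.
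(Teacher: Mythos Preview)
Your argument is correct and follows exactly the approach the paper intends: the preceding remark notes that when $X$ is Hausdorff one may take the data $U_1,\dots,U_m,V$ of a standard neighborhood to be pairwise disjoint or equal, and declares that this observation makes the proof ``straightforward'' without writing it out. Your choice of a single common family $\{V,W_1,\dots,W_k\}$ for both words is precisely the way to implement that hint, and your letter-counting via the set $\{i:y_i\in\bigcup_l W_l\}$ is the clean way to force $m=m'$ and then $U_s=U_s'$ for all $s$.
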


The quotient maps $q_n$ will only be open in degenerate cases. Hence, to prove that $\jx$ is a pre-$\Delta$-monoid, we employ the following notion due to E. Michael.

\begin{definition}\cite{Michaelbiq}
A continuous surjection $f:A\to B$ is a \textit{biquotient map} if whenever $b\in B$ and $\scru$ is an open cover of $f^{-1}(b)$ by open sets in $A$, then finitely many sets $f(U)$ with $U\in\scru$ cover some neighborhood of $b$.
\end{definition}
 
\begin{proposition}
If $X$ is Hausdorff, then $q_n:X^n\to \jnx$ is biquotient for all $n\in \bbn$.
\end{proposition}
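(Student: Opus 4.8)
The plan is to verify the biquotient condition directly, working in the product $X^n$ and using the explicit description of standard neighborhoods developed above. Fix $w \in J_n(X)$ with $|w| = m \leq n$, and let $\scru$ be an open cover of the fiber $q_n^{-1}(w)$ by sets open in $X^n$. Recall that $q_n^{-1}(w)$ is a finite set: it consists of the $n$-tuples $v$ whose nonzero entries spell $w$ in some choice of $m$ positions $F_v \subseteq \{1,\dots,n\}$, with $e$ in the remaining coordinates. For each $v \in q_n^{-1}(w)$, pick $W_v \in \scru$ containing $v$; since $X^n$ carries the product topology, we may shrink $W_v$ to a basic open box $\prod_{k=1}^n A_k^v$ with $v \in \prod A_k^v \subseteq W_v$. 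Because $X$ is Hausdorff (in fact $T_1$ suffices here) and $e \ne x_{i_j}$ for the letters of $w$, we may further shrink so that each coordinate factor corresponding to a letter-position of $v$ avoids $e$.

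The key step is to merge these finitely many boxes into a single standard neighborhood. Using Hausdorffness as in the Remark preceding Proposition \ref{hausdorffprop}, choose open sets $U_1,\dots,U_m$ (one for each letter of $w$) and $V \ni e$ that are pairwise disjoint or equal and that are small enough to refine all the factors $A_k^v$ appearing across the finitely many $v$: concretely, take $U_j$ inside the intersection of those factors $A_{k}^v$ that sit in a position occupied by the $j$-th letter of $w$ in some $v$, and take $V$ inside the intersection of those factors sitting in positions occupied by $e$. Form $N = N(U_1,\dots,U_m,V)$ and the standard neighborhood $q_n(N)$ of $w$ in $J_n(X)$. By construction $N = \bigsqcup_{v \in q_n^{-1}(w)} N_v$ where each $N_v = \prod_k A_k^v$ with $A_k^v$ equal to the appropriate $U_j$ or to $V$, and $N_v \subseteq W_v$. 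Hence $q_n(N_v) \subseteq q_n(W_v)$, so the finitely many images $q_n(W_v)$, $v \in q_n^{-1}(w)$, cover $q_n(N) = \bigcup_v q_n(N_v)$, which is a neighborhood of $w$. This is exactly the biquotient condition, with the finite subfamily $\{W_v : v \in q_n^{-1}(w)\} \subseteq \scru$.

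The only point requiring genuine care — and the main obstacle — is the merging step: one must check that after replacing the many product-factors $A_k^v$ (indexed by both $v$ and the coordinate $k$) by the single system $U_1,\dots,U_m,V$, each original box $N_v$ still lies inside the box $W_v$ that was chosen for $v$. This works because a given position $k \in \{1,\dots,n\}$ plays the role of "the $j$-th letter of $w$" in a $v$ only when $k = i_j$ for that $v$, and the role of "an $e$-slot" otherwise; so intersecting over all $v$ in which $k$ has the letter-role (respectively the $e$-role) produces a single $U_j$ (respectively $V$) contained in $A_k^v$ for every relevant $v$. The Hausdorff hypothesis enters precisely to arrange the $U_j$ pairwise disjoint-or-equal, so that $N$ decomposes cleanly as the disjoint union of the $N_v$ and no spurious tuples are introduced; without it one would still get a cover of some neighborhood but the bookkeeping is less transparent. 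Everything else is the routine verification that $q_n(N)$ is open (which is the content of the saturation lemma proved above) and that a finite subcover of a neighborhood of $w$ is what biquotientness demands.
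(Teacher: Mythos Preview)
Your argument is correct and follows essentially the same route as the paper: pick one member of $\scru$ for each of the finitely many points of the fiber, then shrink to a single standard neighborhood $q_n(N(U_1,\dots,U_m,V))$ whose pieces $N_v$ lie inside the chosen members. The paper treats the case $|w|=n$ separately and dispatches the merging step in one sentence (``we may choose the sets $U_1,\dots,U_m,V$ so that $N_v\subseteq M_v$''), whereas you unify the cases and spell out explicitly how the intersections over the finitely many $v$ produce the $U_j$ and $V$; but the substance is the same.
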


\begin{proof}
Let $w\in \jnx$ and $\scru$ be an open cover of $q_{n}^{-1}(w)$ by open sets in $X^n$. If $w=x_1x_2\cdots x_n$ has length $n$, then $q_{n}^{-1}(w)=\{v\}\subseteq (X\backslash e)^n$. Find $U\in \scru$ containing $v$ and an open neighborhood $W$ of $v$ in $U\cap (X\backslash e)^n$. Then $q_n(W)$ is an open neighborhood of $w$ contained in $q_n(U)$. If $|w|<n$ and we write $w=x_{i_1}x_{i_2}\cdots x_{i_m}\in\jnx$ as described above, then for each $v\in q_{n}^{-1}(w)$, find $M_v\in\scru$ such that $v\in M_v$. Find open neighborhoods $U_1,U_2,\dots ,U_m,V$ and define $N=\bigcup\{N_v\mid v\in q_{n}^{-1}(w)\}$ so that $q_n(N)$ is a standard neighborhood of $w$. Since $q_{n}^{-1}(w)$ is finite, we may choose the sets $U_1,U_2,\dots ,U_m,V$ so that $N_v\subseteq M_v$ for all $v\in q_{n}^{-1}(w)$. Then $q_{n}(N)$ is covered by the finitely many sets $q_n(M_v)$, $v\in q_{n}^{-1}(w)$.
\end{proof}

Every biquotient map is quotient and products of biquotient maps are biquotient \cite[Theorem 1.2]{Michaelbiq}. Therefore, for all $n,m\in\bbn$, the map $q_{n}\times q_{m}:X^n\times X^m\to J_n(X)\times J_m(X)$, $n,m\in\bbn$ is quotient. Let $h:X^n\times X^m\to X^{n+m}$ be the canonical homomorphism and $\mu_{n,m}:J_n(X)\times J_m(X)\to J_{n+m}(X)$ be the word-concatenation function. Since $\mu_{n,m}\circ (q_n\times q_m)=q_{n+m}\circ h$ where $q_n\times q_m$ is quotient, the map $\mu_{n,m}$ is continuous by the universal property of quotient maps. We use this fact in the next lemma.

\begin{lemma}\label{jxisapredeltamonoidlemma}
If $X$ is Hausdorff, then $J(X)$ is a pre-$\Delta$-monoid.
\end{lemma}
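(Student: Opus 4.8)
The plan is to reduce the continuity of an arbitrary product path to the already-established continuity of the finite-stage multiplication maps $\mu_{n,m}\colon J_n(X)\times J_m(X)\to J_{n+m}(X)$. Given paths $\alpha,\beta\colon \ui\to \jx$, the essential first observation is that $\ui$ is (sequentially) compact, so by Proposition \ref{sequentiallycompactprop} — applicable since a Hausdorff space is $T_1$ — there are $n,m\in\bbn$ with $Im(\alpha)\subseteq \jnx$ and $Im(\beta)\subseteq J_m(X)$.

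Next I would promote $\alpha$ and $\beta$ to continuous maps into the finite stages equipped with their quotient topologies. Since $\{e\}$ is closed in the Hausdorff space $X$, Lemma \ref{agreeingtopologieslemma} tells us that the subspace topology on $\jnx$ inherited from $\jx$ coincides with the quotient topology determined by $q_n$ (and likewise for $J_m(X)$); hence the corestrictions $\alpha\colon \ui\to\jnx$ and $\beta\colon \ui\to J_m(X)$ are continuous. By the universal property of the product, $(\alpha,\beta)\colon \ui\to \jnx\times J_m(X)$ is then continuous.

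Finally, composing with $\mu_{n,m}$ — which is continuous by the discussion immediately preceding this lemma, since it factors through the (bi)quotient map $q_n\times q_m$ and the canonical homomorphism $h\colon X^n\times X^m\to X^{n+m}$ via $\mu_{n,m}\circ(q_n\times q_m)=q_{n+m}\circ h$ — and then with the inclusion $J_{n+m}(X)\hookrightarrow\jx$, which is continuous by the definition of the inductive-limit topology on $\jx$, yields that $\alpha\ast\beta=\mu_{n,m}\circ(\alpha,\beta)\colon \ui\to\jx$ is continuous. Since $\jx$ is already a monoid under word concatenation, this verifies the one remaining condition in Definition \ref{predeltamonoiddef}.

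I do not foresee a genuine obstacle: the work has all been front-loaded into the topological lemmas (Proposition \ref{sequentiallycompactprop}, Lemma \ref{agreeingtopologieslemma}, and the biquotient argument giving continuity of $\mu_{n,m}$). The only point requiring a moment's care is ensuring the ``quotient versus subspace topology'' identification is in place before invoking $\mu_{n,m}$, which is precisely what Lemma \ref{agreeingtopologieslemma} supplies, and hence why the Hausdorff (or at least $T_1$) hypothesis on $X$ is needed.
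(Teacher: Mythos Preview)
Your proof is correct and follows essentially the same route as the paper: reduce to a finite stage via Proposition \ref{sequentiallycompactprop}, then invoke the continuity of $\mu_{n,m}$ established just before the lemma. The paper's version is slightly terser (it takes a common $n$ for both paths and uses $\mu_{n,n}$, and it leaves the appeal to Lemma \ref{agreeingtopologieslemma} implicit), but your explicit mention of the subspace-versus-quotient identification is a welcome clarification rather than a different idea.
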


\begin{proof}
Let $\alpha,\beta:\ui\to \jx$ be continuous paths and $\alpha\ast\beta:I\to J(X)$ denote the product path. By Proposition \ref{sequentiallycompactprop}, we have $Im(\alpha)\cup Im(\beta)\subseteq J_n(X)$ for some $n\in\bbn$. Since $\mu_{n,n}:J_n(X)\times J_n(X)\to J_{2n}(X)$ is continuous, $\alpha\ast\beta=\mu_{n,n}\circ (\alpha,\beta):I\to J_{2n}(X)\to J(X)$ is continuous. 
\end{proof}

Even though $\jx$ is not always a true topological monoid, Lemma \ref{jxisapredeltamonoidlemma} ensures that this issue does not affect our ability to apply the results in Section \ref{sectionmonoids}.

\begin{definition}
A \textit{$k_{\omega}$-decomposition} of a space $X$ is a sequence $A_1\subseteq A_2\subseteq A_3\subseteq \cdots\subseteq X$ of compact Hausdorff subspaces of $X$ such that $X=\bigcup_{m\in\bbn}A_m$ and such that $X$ has the weak topology with respect to the collection $\{A_m\}_{m\in\bbn}$. If a space $X$ admits a $k_{\omega}$-decomposition, we call $X$ a $k_{\omega}$\textit{-space}.
\end{definition}

We refer to \cite{FranklinThomaskomega} for the basic theory of $k_{\omega}$-spaces. All Hausdorff $k_{\omega}$-spaces are $T_4$ and the class of $k_{\omega}$-spaces is closed under finite products and countable disjoint unions. Hausdorff quotients of $k_{\omega}$-spaces are $k_{\omega}$-spaces and if $\{A_m\}$ is a $k_{\omega}$-decomposition of $X$, then $\{A_{m}^{n}\}$ is a $k_{\omega}$-decomposition of $X^n$. 

\begin{lemma}\label{komegalemma}
Suppose $\{A_m\}$ is a $k_{\omega}$-decomposition of $X$. Then 
\begin{enumerate}
\item $\jx=TM(X)$,
\item $\jx=\varinjlim_{m}J(A_m)$,
\item $\{J_m(A_m)\}$ is a $k_{\omega}$-decomposition of $J(X)$.
\end{enumerate}
\end{lemma}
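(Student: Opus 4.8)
The plan is to prove the three claims in order, using at each stage the facts about $k_\omega$-spaces recalled just before the lemma, the fact that $q:\coprod_n X^n\to J(X)$ is a quotient map (Proposition \ref{quotientmapprop1}), and the continuity of the multiplication maps $\mu_{n,m}$ established above.

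\textbf{Step (2): $\jx=\varinjlim_m J(A_m)$.} First I would fix a $k_\omega$-decomposition $\{A_m\}$ of $X$. Since $A_m$ is compact Hausdorff, $\{A_m^n\}_m$ is a $k_\omega$-decomposition of $X^n$, so $X^n=\varinjlim_m A_m^n$. The inclusions $A_m\hookrightarrow X$ induce continuous injections $J(A_m)\to J(X)$, and set-theoretically $J(X)=\bigcup_m J(A_m)$ (every word uses finitely many letters, each lying in some $A_m$, hence all in a single $A_{m'}$ by nestedness). To identify the topology: a set $C\subseteq J(X)$ is closed iff $C\cap J_n(X)$ is closed in $J_n(X)$ for all $n$, iff $q_n^{-1}(C)=q_n^{-1}(C)\subseteq X^n$ is closed for all $n$ (since $q_n$ is quotient), iff $q_n^{-1}(C)\cap A_m^n$ is closed in $A_m^n$ for all $m,n$ (since $X^n=\varinjlim_m A_m^n$). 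Unwinding, $q_n^{-1}(C)\cap A_m^n = (q_n|_{A_m^n})^{-1}(C\cap J_n(A_m))$, and $q_n|_{A_m^n}:A_m^n\to J_n(A_m)$ is a quotient map (it is the restriction of $q_n$ over the saturated — indeed closed, as $A_m$ is closed in $X$ once we note $X$ is Hausdorff being $k_\omega$ — subspace $J_n(A_m)$; alternatively $J_n(A_m)$ is a Hausdorff quotient of the compact Hausdorff $A_m^n$, hence this restriction is quotient). So closedness of $C$ is equivalent to $C\cap J_n(A_m)$ being closed in $J_n(A_m)$ for all $m,n$, i.e. to $C\cap J(A_m)$ being closed in $J(A_m)$ for all $m$. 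That is exactly the statement $\jx=\varinjlim_m J(A_m)$.

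\textbf{Step (3): $\{J_m(A_m)\}$ is a $k_\omega$-decomposition of $J(X)$.} Each $A_m^m$ is compact Hausdorff, so its continuous image $J_m(A_m)=q_m(A_m^m)$ is compact, and it is Hausdorff by Proposition \ref{hausdorffprop} applied to the Hausdorff space $A_m$ (using that $J_m(A_m)$ sits inside $J(A_m)$). The sets are nested: $J_m(A_m)\subseteq J_{m+1}(A_{m+1})$ since $A_m\subseteq A_{m+1}$ and $m\le m+1$. Their union is all of $J(X)$: a word of length $k$ with letters in $A_{m'}$ lies in $J_{k}(A_{k})$ once we take $m=\max(m',k)$ and use nestedness. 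It remains to see $J(X)$ carries the weak topology with respect to $\{J_m(A_m)\}$. This follows by a cofinality/interleaving argument from Step (2) together with the original weak topology $\jx=\varinjlim_n J_n(X)$: the family $\{J_m(A_m)\}$ is cofinal among the family $\{J_n(A_m)\}_{n,m}$ (given $n,m$, pick $\ell\ge n,m$, then $J_n(A_m)\subseteq J_\ell(A_\ell)$), and $\jx=\varinjlim_{n,m}J_n(A_m)$ because $\jx=\varinjlim_n J_n(X)$ and each $J_n(X)=\varinjlim_m J_n(A_m)$ (the latter since $J_n(X)=q_n(X^n)$ is a quotient of $X^n=\varinjlim_m A_m^n$, and $q_n$ restricts to quotient maps $A_m^n\to J_n(A_m)$ as in Step (2)). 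A directed colimit over a cofinal subfamily agrees with the full colimit, giving the weak topology claim.

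\textbf{Step (1): $\jx=TM(X)$.} By Remark \ref{fretopmonoidremark} it suffices to show the word-concatenation map $m:\jx\times\jx\to\jx$ is continuous. Using Step (3), $\jx\times\jx=\varinjlim_m\bigl(J_m(A_m)\times J_m(A_m)\bigr)$ — here I use that each $J_m(A_m)$ is compact Hausdorff, hence locally compact, so that the product of the two $k_\omega$-spaces is again $k_\omega$ with decomposition $\{J_m(A_m)\times J_m(A_m)\}$ (the class of $k_\omega$-spaces is closed under finite products, and on locally compact factors the product topology agrees with the $k$-ified product). On the $(m,m)$-th stage, $m$ restricts to $\mu_{m,m}|:J_m(A_m)\times J_m(A_m)\to J_{2m}(A_{2m})\subseteq\jx$, which is continuous by the continuity of $\mu_{m,m}$ established before Lemma \ref{jxisapredeltamonoidlemma} (applied with $A_{2m}$ in place of $X$, or simply as the restriction of $\mu_{m,m}$ for $X$). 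Since these restrictions are compatible and $\jx\times\jx$ carries the colimit topology, $m$ is continuous, so $\jx=TM(X)$.

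\textbf{Anticipated main obstacle.} The delicate point is the identification $\jx\times\jx=\varinjlim_m\bigl(J_m(A_m)\times J_m(A_m)\bigr)$ in Step (1): colimits do not in general commute with products, so one genuinely needs the local compactness of the compact Hausdorff stages $J_m(A_m)$ to push the product inside the colimit. A secondary technical nuisance, recurring in Steps (2) and (3), is verifying that the restriction $q_n|_{A_m^n}:A_m^n\to J_n(A_m)$ is a quotient map; the cleanest route is to observe that $J_n(A_m)$ is a Hausdorff quotient of the compact space $A_m^n$ (so the map is automatically closed, hence quotient), which uses Proposition \ref{hausdorffprop}. Once these two points are in hand, everything else is bookkeeping with nested unions and cofinal subfamilies.
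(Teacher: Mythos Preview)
Your argument is correct, but it differs in structure and emphasis from the paper's proof. The paper proves the three statements in the order (1), (2), (3); you prove them as (2), (3), (1), and each step is argued somewhat differently. For (1), the paper first observes that $J(X)$ is a $k_\omega$-space (as a Hausdorff quotient of the $k_\omega$-space $\coprod_n X^n$) and then invokes the general fact that finite products of quotient maps of $k_\omega$-spaces are quotient maps, applied directly to $q\times q$, to conclude that word-concatenation is continuous; your route through the explicit decomposition $\{J_m(A_m)\times J_m(A_m)\}$ and the stage-wise maps $\mu_{m,m}$ is equivalent but requires (3) first. For (2), the paper arranges the various quotient maps into a single commutative square and reads off that the bottom map $\coprod_m J(A_m)\to J(X)$ is quotient; your direct chain of closed-set equivalences is more explicit and makes transparent exactly where the compact-to-Hausdorff argument for $q_n|_{A_m^n}$ enters. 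For (3), the paper pushes forward a $k_\omega$-decomposition $\{B_m\}$ of $\coprod_{(m,n)} A_m^n$ along the quotient $p$; your cofinality argument inside the double colimit $\varinjlim_{n,m}J_n(A_m)$ achieves the same thing. The paper's proof is more concise and conceptual (one diagram does most of the work), while your approach is more elementary in that it avoids the ``product of quotients of $k_\omega$-spaces is quotient'' black box, at the cost of a longer dependency chain and more bookkeeping.
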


\begin{proof}
If $\{A_m\}$ is a $k_{\omega}$-decomposition of $X$, then $X^n$ is a $k_{\omega}$-space. Since $\jnx$ is a Hausdorff (recall Proposition \ref{hausdorffprop}) quotient of $X^n$, $\jnx$ is a $k_{\omega}$-space. Moreover, since $\jx$ has the weak topology with respect to the sequence of closed $T_4$ spaces $\{\jnx\}$, $\jx$ is $T_4$. As a Hausdorff quotient of the $k_{\omega}$ space $\coprod_{n\in\bbn}X^n$, we conclude that $\jx$ is a $k_{\omega}$-space. 

Since finite products of quotient maps of $k_{\omega}$-spaces of quotient maps \cite{Michaelbiq}, $q\times q$ is quotient and it follows from the universal property of quotient maps that word-concatenation $J(X)\times J(X)\to J(X)$ is continuous. Hence, if $X$ is a $k_{\omega}$-space, then $J(X)$ is a topological monoid and is therefore the free topological monoid $TM(X)$ on $(X,e)$ (recall Remark \ref{fretopmonoidremark}). 

Since $\{A_{m}^{n}\}$ is a $k_{\omega}$-decomposition of $X^n$, the inclusions induce a quotient map $p_n:\coprod_{m\in\bbn}A_{m}^{n}\to X^n$ for each $n\in\bbn$. If $q_{A_m}:\coprod_{n\in\bbn}A_{m}^{n}\to J(A_m)$ is the canonical quotient map, then the top and vertical maps in the following commutative square are quotient. 
\[\xymatrix{\coprod_{(m,n)\in\bbn^2}A_{m}^{n} \ar[drr]^{p} \ar[d]_-{\coprod_{m}q_{A_m}} \ar[rr]^-{\coprod_{n}p_n} && \coprod_{n\in\bbn}X^n \ar[d]^-{q} \\
\coprod_{m\in\bbn}J(A_m) \ar[rr]_-{(J(i_m))} && \jx}\]
It follows that the bottom map is quotient. We conclude that $\jx$ has the weak topology with respect to $\{J(A_m)\}$, i.e. $\jx= \varinjlim_{m}J(A_m)$. Finally, let $p=q\circ \coprod_{n}p_n$ and notice that if $B_m=\bigcup\{A_{i}^{j}\mid \max\{i,j\}\leq m\}$, then $\{B_m\}$ is a $k_{\omega}$-decomposition for $\coprod_{(m,n)\in\bbn^2}A_{m}^{n}$. Since $p(B_m)=J_m(A_m)$, $\{J_m(A_m)\}$ is a $k_{\omega}$-decomposition for the quotient space $\jx$.
\end{proof}

We finish this subsection by showing that the based homotopy type the of James reduced product is an invariant of based homotopy type.

\begin{lemma}\label{homotopylemma}
If $H:X\times I\to Y$ is a basepoint-preserving homotopy, i.e. such that $H(e_1,t)=e_2$ for all $t\in I$, then so is the function $G:J(X,e_1)\times I\to J(Y,e_2)$ given by $G(x_1x_2\cdots x_n,t)=H(x_1,t)\ast H(x_2,t)\ast\,\cdots\, \ast H(x_n,t)$. 
\end{lemma}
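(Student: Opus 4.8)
The plan is to realize $G$ as the map induced on quotient spaces by a manifestly continuous map out of $\coprod_{n\in\bbn}\left(X^n\times I\right)$, using that $q\colon\coprod_{n\in\bbn}X^n\to\jx$ is a quotient map (Proposition \ref{quotientmapprop1}(1)) together with the classical fact that the product of a quotient map with the locally compact Hausdorff space $I$ is again a quotient map.

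First I would verify that the formula defining $G$ is unambiguous and that $G$ preserves basepoints. For each $n$, let $\Phi_n\colon X^n\times I\to J(Y)$ be the map sending $\bigl((x_1,\dots,x_n),t\bigr)$ to the iterated product $H(x_1,t)\ast H(x_2,t)\ast\cdots\ast H(x_n,t)$ taken in the monoid $J(Y)$. Since $H(e_1,t)=e_2$ and $e_2$ is the identity of $J(Y)$, inserting or deleting a coordinate equal to $e_1$ does not change the value of $\Phi_n$; hence the maps $\Phi_n$ respect the relation defining $\jx$ and therefore descend to a single well-defined function $G\colon\jx\times I\to J(Y)$ whose value on a reduced word $x_1x_2\cdots x_n$ is the one in the statement. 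The identity $e_1$ of $J(X,e_1)$ is sent to the empty product, so $G(e_1,t)=e_2$ for all $t\in I$, and $G(-,0)$, $G(-,1)$ are the maps of James reduced products induced by the based maps $H(-,0),H(-,1)\colon(X,e_1)\to(Y,e_2)$. Consequently, once $G$ is known to be continuous, it is a basepoint-preserving homotopy.

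For continuity, observe that $\coprod_{n\in\bbn}\Phi_n\colon\coprod_{n\in\bbn}\left(X^n\times I\right)\to J(Y)$ is continuous: on the $n$-th summand it factors as $X^n\times I\to Y^n\to J_n(Y)\hookrightarrow J(Y)$, where the first map $\bigl((x_i)_i,t\bigr)\mapsto\bigl(H(x_i,t)\bigr)_i$ is continuous because each of its coordinates is $H$ precomposed with a projection, the second is the canonical quotient map $Y^n\to J_n(Y)$, and the third is continuous by the definition of the topology on $J(Y)$. Identifying $\coprod_{n\in\bbn}\left(X^n\times I\right)$ with $\bigl(\coprod_{n\in\bbn}X^n\bigr)\times I$, the construction of the previous paragraph says exactly that $\coprod_n\Phi_n$ equals $G\circ(q\times\mathrm{id}_I)$. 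Since $q$ is a quotient map and $I$ is locally compact Hausdorff, $q\times\mathrm{id}_I$ is a quotient map, so the continuity of $G\circ(q\times\mathrm{id}_I)$ forces $G$ to be continuous.

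The only step that is more than bookkeeping is the appeal to the fact that $f\times\mathrm{id}_I$ is a quotient map whenever $f$ is, which uses the local compactness of $I$; this is the single point at which the argument invokes anything beyond the definitions, and it is where I would expect a careful reader to pause. Note that no separation or compactness hypothesis on $X$ or $Y$ enters — only the given basepoint-preserving homotopy $H$.
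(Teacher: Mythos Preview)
Your proof is correct and follows essentially the same route as the paper: both arguments verify well-definedness via $H(e_1,t)=e_2$, factor the relevant map through $q_X\times\mathrm{id}_I$, and invoke the fact that a quotient map crossed with $I$ remains quotient (the paper cites Whitehead's theorem for this, you appeal to local compactness of $I$). The only cosmetic difference is that the paper routes through an intermediate map $H'\colon\coprod_n X^n\times I\to\coprod_n Y^n$ before composing with $q_Y$, whereas you land directly in $J(Y)$.
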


\begin{proof}
The function $H':\coprod_{n\in\bbn}X^n\times I\to \coprod_{n\in\bbn} Y^n$ defined as $H'((x_1,x_2,\dots, x_n),t)=(H(x_1,t),H(x_2,t),\dots, H(x_n,t))$ on the $n$-th summand is continuous. Fix $(x_1,x_2,\dots, x_n)\in X^n$, $t\in I$, and set $A=\{i\mid H(x_i,t)=e_2\}$. Since $H(e_1,t)=e_2$, we have $G(q_X((x_1,x_2,\dots ,x_n)),t)=\prod_{i\notin A}H(x_i,t)=q_Y(H(x_1,t),H(x_2,t),\dots, H(x_n,t))$. Thus $q_Y\circ H'= G\circ (q_X\times id_I)$. By a theorem of Whitehead \cite[Lemma 4]{whiteheadquotient}, $q_X\times id_I$ is a quotient map. Therefore, $G$ is continuous by the universal property of quotient maps. Since $G(e_1,t)=e_2$ for all $t\in I$, $G$ is a basepoint-preserving homotopy.
\end{proof}

In particular, if two maps $f,g:(X,x)\to (Y,y)$ are homotopic rel. basepoint, then the induced homomorphisms $J(f),J(g):(J(X),x)\to (J(Y),y)$ are also homotopic rel. basepoint. The following lemma is an immediate consequence.

\begin{lemma}\label{homotopyequivalenlemma}
If $f:(X,e_1)\to(Y,e_2)$ is a based homotopy equivalence, then so is $J(f):J(X,e_1)\to J(Y,e_2)$.
\end{lemma}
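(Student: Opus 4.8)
The plan is to deduce the statement directly from Lemma \ref{homotopylemma} together with the functoriality of the assignment $f\mapsto J(f)$ on based maps. First I would record the two elementary facts I need about $J(-)$. Given a based map $f:(X,e_1)\to(Y,e_2)$, the homomorphism $J(f):J(X)\to J(Y)$ sending a word $x_1x_2\cdots x_n$ to $f(x_1)\ast f(x_2)\ast\cdots\ast f(x_n)$ (with any letters equal to $e_2$ deleted) is continuous: this is precisely the content of Lemma \ref{homotopylemma} applied to the constant homotopy $H(x,t)=f(x)$, for which the induced map $G(-,0)$ is exactly $J(f)$. Moreover $J(\mathrm{id}_X)=\mathrm{id}_{J(X)}$ and $J(g\circ f)=J(g)\circ J(f)$ for composable based maps, since both sides send $x_1\cdots x_n$ to $g(f(x_1))\cdots g(f(x_n))$; in particular $J(f)$ is a based map $(J(X),e)\to(J(Y),e)$.

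Then I would take a based homotopy inverse $g:(Y,e_2)\to(X,e_1)$ of $f$, so that $g\circ f\simeq\mathrm{id}_X$ rel $e_1$ and $f\circ g\simeq\mathrm{id}_Y$ rel $e_2$. By the remark following Lemma \ref{homotopylemma}, based-homotopic based maps induce based-homotopic homomorphisms after applying $J$; combining this with functoriality gives
\[
J(g)\circ J(f)=J(g\circ f)\simeq J(\mathrm{id}_X)=\mathrm{id}_{J(X)}
\]
rel basepoint, and symmetrically $J(f)\circ J(g)=J(f\circ g)\simeq\mathrm{id}_{J(Y)}$ rel basepoint. Hence $J(f):(J(X),e)\to(J(Y),e)$ is a based homotopy equivalence with based homotopy inverse $J(g)$, which is the claim.

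I do not anticipate a genuine obstacle here: all of the substantive work was already done in Lemma \ref{homotopylemma}, and the present statement is a formal consequence once one knows that $J$ is a based-homotopy functor. The only points worth a moment's care are the verification that $J(f)$ is well defined and continuous — which is handled by the constant-homotopy instance of Lemma \ref{homotopylemma} — and that $J$ respects composition on the nose, both of which are immediate from the explicit description of $J(f)$ on words.
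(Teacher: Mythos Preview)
Your proposal is correct and follows exactly the approach the paper takes: the paper simply states this lemma as ``an immediate consequence'' of Lemma~\ref{homotopylemma} and the remark that based-homotopic maps induce based-homotopic maps after applying $J$, and you have spelled out precisely these details (functoriality of $J$ plus the based-homotopy invariance from Lemma~\ref{homotopylemma}).
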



\subsection{The surjectivity of the natural homomorphism $\pi_1(X,e)\to \pi_1(\jx,e)$}

The continuous injection $\sigma:X\to J(X)$ induces a natural homomorphism $\sigma_{\#}:\pi_1(X,e)\to \pi_1(\jx,e)$. This section is dedicated to proving the following theorem.

\begin{theorem}\label{surjectivitytheorem}
If $X$ is path connected and Hausdorff, then the inclusion $\sigma:X\to J(X)$ induces a surjection $\sigma_{\#}:\pi_1(X,e)\to\pi_1(J(X),e)$.
\end{theorem}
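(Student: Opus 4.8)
The plan is to show that any loop $\gamma:(I,\partial I)\to (J(X),e)$ is homotopic rel basepoint to a loop that lies in the image of $\sigma$, i.e. a concatenation of loops each traced out inside $\sigma(X)$. Since $I$ is compact (hence sequentially compact), Proposition \ref{sequentiallycompactprop} gives $\mathrm{Im}(\gamma)\subseteq J_n(X)$ for some $n\in\bbn$, so the entire argument takes place inside the finite stage $J_n(X)$, which by Proposition \ref{hausdorffprop} is Hausdorff and, by Lemma \ref{agreeingtopologieslemma}, carries the quotient topology from $q_n:X^n\to J_n(X)$. I would then proceed by induction on $n$: for $n=1$ there is nothing to prove since $J_1(X)=\sigma(X)$, and the inductive step is to push a loop in $J_n(X)$ off of the ``top stratum'' $J_n(X)\setminus J_{n-1}(X)\cong (X\setminus e)^n$ and into $J_{n-1}(X)$, up to homotopy, then invoke the inductive hypothesis.

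**The heart of the argument** is the inductive step. The open stratum $(X\setminus e)^n$ is where $\gamma$ might genuinely ``use'' $n$-fold products. The idea is to subdivide $I$ finely enough (using a Lebesgue-number argument with respect to the cover of $J_n(X)$ by standard neighborhoods and by neighborhoods of top-length points) so that on each subinterval $[t_{i-1},t_i]$ the subpath $\gamma|_{[t_{i-1},t_i]}$ lies either entirely in a single coordinate chart $q_n(\prod_{j=1}^n U_j)$ around a length-$n$ point, or in a standard neighborhood $q_n(N(U_1,\dots,U_m,V))$ of a point of length $m<n$. On a chart of the first kind, $q_n$ is (locally) a homeomorphism onto the open set, so the subpath corresponds to a path $(\alpha_1,\dots,\alpha_n)$ in $(X\setminus e)^n$, and word-concatenation continuity (the pre-$\Delta$-monoid structure, Lemma \ref{jxisapredeltamonoidlemma}, together with Lemma \ref{product-to-concatentation}/Corollary \ref{smallcommutinghomotopylemma}) lets me rewrite $\alpha_1\ast\cdots\ast\alpha_n$, up to homotopy in $J_n(X)$, as the concatenation $\widehat\alpha_1\cdot\widehat\alpha_2\cdots\widehat\alpha_n$ of the individual coordinate loops (each a path in $\sigma(X)\cong X$, after inserting the constant letters $e$), where each $\widehat\alpha_j$ holds the other coordinates at their (fixed) endpoint letters. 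On a standard neighborhood of a shorter word, the subpath already records at most $m<n$ nonconstant letters, so after the same rewriting it becomes a concatenation of paths each living in a copy of $J_{m}(X)\subseteq J_{n-1}(X)$. Patching these homotopies together along the (finitely many) subdivision points — which is where one must be careful that consecutive rewrites agree on the shared endpoint letters, arranged by choosing the endpoints of subintervals to be length-$\le n-1$ points — yields a loop homotopic to $\gamma$ that is a finite concatenation of loops, each of which lies in $J_{n-1}(X)$ or in $\sigma(X)$. Applying the inductive hypothesis to the $J_{n-1}(X)$ pieces completes the step.

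**The main obstacle** I anticipate is the bookkeeping at the subdivision points and the passage through the lower strata: one needs to guarantee that the rewriting of a ``length $n$'' subpath into a product of coordinate loops can be done \emph{compatibly} across adjacent subintervals, which forces a careful choice of the subdivision (endpoints landing in $J_{n-1}(X)$, or even in $\sigma(X)$) and a careful use of the elementary homotopies of Lemma \ref{product-to-concatentation} whose images stay inside $\mathrm{Im}(\alpha_1)\ast\cdots\ast\mathrm{Im}(\alpha_n)\subseteq J_n(X)$. A secondary subtlety is that $J_n(X)$ is only a \emph{subspace} of a pre-$\Delta$-monoid, not a monoid in itself, so all product-path manipulations must be justified by noting that finitely many paths land in some $J_N(X)$ and that the concatenation maps $\mu_{n,m}:J_n(X)\times J_m(X)\to J_{n+m}(X)$ are continuous (Lemma \ref{jxisapredeltamonoidlemma} and the biquotient argument preceding it). Once the stratification induction is set up correctly, no infinitary input is needed here — transfinite $\pi_1$-commutativity enters only later, in the computations — so the proof is a finite, if intricate, manipulation of paths in the compact stage $J_n(X)$.
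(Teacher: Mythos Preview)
Your induction framework and the reduction to $J_n(X)$ are correct starting points, but the inductive step as you describe it has a genuine gap. The core issue is that on a subinterval of ``type (a)''---where $\gamma$ lies in a product chart $q_n(\prod_j U_j)$ with each $U_j\subseteq X\setminus\{e\}$---the rewriting from Lemma~\ref{product-to-concatentation} turns $\alpha_1\ast\cdots\ast\alpha_n$ into a concatenation of paths $\beta_i=\bigast_j\beta_{i,j}$, each of which is a product of one moving coordinate and $n-1$ constants drawn from $X\setminus\{e\}$. These $\beta_i$ are still length-$n$ words, so the rewritten path remains in $J_n(X)\setminus J_{n-1}(X)$; nothing has been pushed down. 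Your parenthetical ``after inserting the constant letters $e$'' is precisely what cannot be done here, since none of the endpoint letters equal $e$. Likewise, your claim for type-(b) subintervals is incorrect: a standard neighborhood $q_n(N(U_1,\dots,U_m,V))$ of a length-$m$ word \emph{does} meet the top stratum (the $V$-coordinates need not equal $e$), so a subpath there may still record $n$ nonconstant letters.

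What actually drives the reduction in the paper is the observation that on the \emph{closure} $[a,b]$ of a connected component of $\gamma^{-1}(J_n(X)\setminus J_{n-1}(X))$, the unique lift to $X^n$ (Corollary~\ref{liftlemma}) has exactly one coordinate equal to $e$ at each endpoint. If the ``$e$-coordinates'' at $a$ and $b$ differ, a single application of Lemma~\ref{product-to-concatentation} with a well-chosen bijection pushes $\gamma|_{[a,b]}$ into $J_{n-1}(X)$ (Lemma~\ref{type1removal}); if they agree, a more elaborate lift to $X^{2n-1}$ is required (Lemma~\ref{type2removal}). Crucially, there may be \emph{infinitely many} such components, so the paper glues infinitely many small homotopies and invokes Lemma~\ref{imageconvergencelemma} to verify continuity at the accumulation points. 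A finite Lebesgue-number subdivision cannot arrange that all subdivision points land in $J_{n-1}(X)$ when $\gamma^{-1}(J_n(X)\setminus J_{n-1}(X))$ has infinitely many components (as already happens for loops in $J_2(\bbh)$), and without that your top-stratum rewriting does not reduce word length. So, contrary to your final remark, an infinitary gluing argument---with control on the sizes of the individual homotopies---is essential here.
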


To prove Theorem \ref{surjectivitytheorem} for such a large class of spaces, we must carefully decompose loops in $\jx$. Recall that since $\jx$ is a pre-$\Delta$-monoid (Lemma \ref{jxisapredeltamonoidlemma}), all results in Section \ref{sectionmonoids} are applicable. For the remainder of this section, we assume $X$ is path connected and Hausdorff.

\begin{lemma}\label{pathextension}
Given $\alpha:\ui\to\jnx$, if there exists map $\beta:(0,1)\to X^n$ such that $q_n\circ\beta=\alpha|_{(0,1)}$, then there exists a unique path $\wt\alpha:\ui\to X^n$ such that $q_n\circ\wt\alpha=\alpha$.
\end{lemma}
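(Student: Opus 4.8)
The statement says: if $\alpha:\ui\to\jnx$ is a path whose restriction to the open interval $(0,1)$ lifts through $q_n$ to a map $\beta:(0,1)\to X^n$, then $\beta$ extends (uniquely) to a path $\wt\alpha:\ui\to X^n$ with $q_n\circ\wt\alpha=\alpha$. So the whole problem is to define the values $\wt\alpha(0)$ and $\wt\alpha(1)$ correctly and to verify continuity at the two endpoints; uniqueness will be automatic. Indeed, uniqueness follows because $q_n:X^n\to\jnx$ is a quotient map with the property that the lift, where it exists, is pinned down on a dense set $(0,1)$, so any two continuous extensions to $\ui$ agree on $(0,1)$ and hence, by continuity and the Hausdorffness of $X^n$ (Proposition \ref{hausdorffprop}), agree everywhere. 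Thus I will concentrate on existence.

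First I would analyze the behavior of $\beta(s)$ as $s\to 0^+$. Write $\beta(s)=(\beta_1(s),\dots,\beta_n(s))\in X^n$. The issue is that coordinates of $\beta(s)$ may ``run off'' in the sense that $q_n\circ\beta$ is continuous at $0$ but the individual coordinates need not converge. However, $w_0:=\alpha(0)\in\jnx$ is a word of some length $m\le n$. Using the description of standard neighborhoods of $w_0$ in $\jnx$ developed just before Lemma \ref{agreeingtopologieslemma} (i.e. pick pairwise disjoint-or-equal Hausdorff-separated neighborhoods $U_1,\dots,U_m$ of the letters of $w_0$ and a neighborhood $V$ of $e$), I would argue: for $s$ small, $\beta(s)$ lies in the saturated set $N(U_1,\dots,U_m,V)$, which (by the Remark following Lemma \ref{agreeingtopologieslemma}, using Hausdorffness) is a \emph{disjoint} union of the basic boxes $N_v$ over $v\in q_n^{-1}(w_0)$. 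Since $\beta$ is continuous on $(0,1)$ and $(0,\delta)$ is connected, $\beta((0,\delta))$ lies in a single box $N_v$ for each choice of neighborhoods; shrinking the neighborhoods and using that $X^n$ is Hausdorff, one sees the index set $F_v\subseteq\{1,\dots,n\}$ of non-basepoint coordinates must eventually stabilize to a fixed $m$-element set $F$, with $\beta_j(s)\to e$ for $j\notin F$ and $\beta_{i_\ell}(s)$ converging to the $\ell$-th letter of $w_0$ for $i_\ell\in F$ — because $q_n\circ\beta\to w_0$ forces each of these coordinate nets into smaller and smaller neighborhoods. This gives a well-defined limit point $\wt\alpha(0)\in X^n$ with $q_n(\wt\alpha(0))=w_0$, and likewise $\wt\alpha(1)$ at the other end.

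Having defined $\wt\alpha$ on all of $\ui$, I would check continuity at $0$ (and symmetrically at $1$): given a basic neighborhood $\prod A_k$ of $\wt\alpha(0)$ in $X^n$ — which we may take of the form $N_v$ with $v=\wt\alpha(0)$ and shrink the $U_\ell$, $V$ — the set $q_n(N(U_1,\dots,U_m,V))$ is an open neighborhood of $w_0=\alpha(0)$ in $\jnx$, so $\alpha^{-1}$ of it contains some $[0,\delta)$; then for $s\in(0,\delta)$ the already-established stabilization of the support set forces $\beta(s)\in N_v$, i.e. $\wt\alpha(s)\in\prod A_k$. Continuity at interior points is inherited from $\beta$. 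Finally $q_n\circ\wt\alpha=\alpha$ holds on $(0,1)$ by hypothesis and at the two endpoints by construction.

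\textbf{Main obstacle.} The delicate point is the stabilization argument: ruling out that, as $s\to0^+$, the ``location'' of the letters of $w_0$ among the $n$ slots of $\beta(s)$ keeps jumping, or that some coordinate fails to converge. The right tool is precisely the saturated standard neighborhood $N(U_1,\dots,U_m,V)$ together with the Remark (Hausdorffness $\Rightarrow$ the $N_v$ are pairwise disjoint), which turns ``$\beta(s)$ is eventually in a small neighborhood of $w_0$ in $\jnx$'' into ``$\beta(s)$ is eventually in \emph{one} small box $N_v$ in $X^n$.'' Once the support set $F$ is fixed, convergence of each coordinate is forced by intersecting over a neighborhood basis. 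Everything else — uniqueness, continuity at interior points, the identity $q_n\circ\wt\alpha=\alpha$ — is routine.
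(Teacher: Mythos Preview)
Your proposal is correct and follows essentially the same route as the paper: use the disjoint-box decomposition $N=\bigsqcup_{v\in q_n^{-1}(w)}N_v$ of a standard neighborhood (available by Hausdorffness), observe that connectedness of $\beta((0,\delta))$ forces it into a single box $N_v$, set $\wt\alpha(0)=v$, and verify continuity by shrinking. Your ``stabilization'' phrasing is a bit more elaborate than needed---the paper simply fixes one initial standard neighborhood to pin down $v$ once and for all, then notes that any refined system $N'$ has $N_{v}'\subseteq N_v$, so $\beta((0,t))\subseteq N'\cap N_v=N_v'$---but the content is identical; also note that the paper treats the case $|w|=n$ separately (where $q_n$ is a local homeomorphism), and that Hausdorffness of $X^n$ is immediate from Hausdorffness of $X$ rather than from Proposition~\ref{hausdorffprop}.
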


\begin{proof}
We show that $\beta$ extends uniquely to $[0,1)$; the extension at $1$ is similar.

Let $w=\alpha(0)$. If $|w|=n$, recall that $J_{n-1}(X)$ is closed in $J_n(X)$, and find $s>0$ such that $\alpha([0,s))\subseteq J_n(X)\backslash J_{n-1}(X)$. If $h$ is the inverse of the homeomorphism $(q_n)|_{(X\backslash \{e\})^n}:(X\backslash \{e\})^n\to J_n(X)\backslash J_{n-1}(X)$, then we have a unique, continuous extension $\wt{\alpha}|_{[0,s)}=h\circ \alpha|_{[0,s)}$.

Suppose that $|w|<n$ and find a standard open neighborhood $U=q_n(N)$ of $w$ where $|w|=m$ and $N:=N(U_1,U_2,U_3,\dots ,U_m,V)$ is the neighborhood of $w$ constructed above. In particular, $q_{n}^{-1}(U)$ is the disjoint union of the open neighborhoods $N_v$ of $v\in q_{n}^{-1}(w)$. Since $\alpha$ is continuous at $0$, there exists $s>0$ such that $\alpha([0,s))\subseteq U$. Since $\beta((0,s))$ is connected and lies in $q_{n}^{-1}(U)$, it must lie in $N_v$ for some unique $v\in q_{n}^{-1}(w)$. Define $\wt{\alpha}(0)=v$. Since $N$ is the disjoint union of the sets $N_v$, it is clear that this is the only possible definition if we desire $\wt{\alpha}$ to be continuous at $0$. To verify the continuity of $\wt{\alpha}$ at $0$, we maintain the value of $v$ and $s$ but consider a new standard neighborhood $q_n(N')$ at $w$.

Let $W$ be an open neighborhood of $v=(x_1,x_2,x_3,\dots ,x_n)$ in $X^n$. We may find $U_1',U_2',U_3',\dots U_m',V'$ be open neighborhoods in $X$, as above, so that the neighborhood $N_{v}'=\prod_{k=1}^{n}B_{k}^{v}$ (where $B_{k}^{v}=V'$ if $k\notin F_v$ and $B_{k}^{v}=U_j'$ if $k=i_j$) of $v$ is contained in $W\cap N_v$. Consider the saturated open set $N':=N(U_1',U_2',U_3',\dots U_m',V')=\bigcup\{N_{y}'\mid y\in q_{n}^{-1}(w)\} $ in $X^n$ whose image $q_n(N')$ is a standard open neighborhood of $w$ in $\jnx$. The continuity of $\alpha$ ensures $\alpha([0,t))\subseteq q_n(N')$ for some $s>t>0$. Since $N_{v}'\subseteq N_v$ and $\beta(0,s)\subseteq N_v$, we must have $\beta((0,t))\subseteq N_{v}'$. In particular, $\wt{\alpha}([0,t))=\{w\}\cup\beta((0,t))\subseteq N_{v}'\subseteq W$.
\end{proof}

\begin{corollary}\label{liftlemma}
If $\alpha:\ui\to \jnx$ is a path such that $\alpha^{-1}(\jnx\backslash J_{n-1}(X))=(0,1)$, then there exists a unique path $\wt{\alpha}:\ui\to X^n$ such that $q_n\circ \wt{\alpha}=\alpha$.
\end{corollary}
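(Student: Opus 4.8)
The plan is to deduce this directly from Lemma \ref{pathextension} by exhibiting the required partial lift on the open interval $(0,1)$. Recall that $X$ is Hausdorff throughout this section, so $\{e\}$ is closed in $X$; recall also, from the discussion preceding Lemma \ref{agreeingtopologieslemma}, that the restriction $(q_n)|_{(X\backslash\{e\})^n}\colon (X\backslash\{e\})^n\to \jnx\backslash J_{n-1}(X)$ is a homeomorphism, and let $h$ denote its inverse.

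First I would observe that the hypothesis $\alpha^{-1}(\jnx\backslash J_{n-1}(X))=(0,1)$ says precisely that $\alpha((0,1))\subseteq \jnx\backslash J_{n-1}(X)$. Hence $\beta:=h\circ\alpha|_{(0,1)}\colon (0,1)\to (X\backslash\{e\})^n\subseteq X^n$ is a well-defined continuous map (a composite of continuous maps) with $q_n\circ\beta=\alpha|_{(0,1)}$. Applying Lemma \ref{pathextension} to $\alpha$ together with this $\beta$ then yields a path $\wt{\alpha}\colon\ui\to X^n$ with $q_n\circ\wt{\alpha}=\alpha$, which is the desired lift.

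For uniqueness, I would argue that \emph{any} lift $\wt{\alpha}\colon\ui\to X^n$ of $\alpha$ must satisfy $\wt{\alpha}((0,1))\subseteq (X\backslash\{e\})^n$: if some coordinate of $\wt{\alpha}(t)$ were equal to $e$ for some $t\in(0,1)$, then $q_n(\wt{\alpha}(t))=\alpha(t)$ would be a word of length $<n$, contradicting $\alpha(t)\in\jnx\backslash J_{n-1}(X)$. Since $q_n$ is injective on $(X\backslash\{e\})^n$, this forces $\wt{\alpha}|_{(0,1)}=h\circ\alpha|_{(0,1)}=\beta$, and then the uniqueness clause of Lemma \ref{pathextension} determines $\wt{\alpha}$ on all of $\ui$.

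There is no genuine obstacle here, since the substantive work has already been carried out in Lemma \ref{pathextension}; the only points needing a moment's care are verifying that $\beta$ really takes values in $X^n$ (each coordinate avoids $e$, so it lands even in $(X\backslash\{e\})^n$) and that $\beta$ is continuous, so that the hypothesis of Lemma \ref{pathextension} is genuinely met.
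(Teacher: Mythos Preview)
Your proof is correct and follows essentially the same approach as the paper: both define the partial lift $\beta$ on $(0,1)$ via the inverse of the homeomorphism $(q_n)|_{(X\backslash\{e\})^n}$ and then invoke Lemma \ref{pathextension} to extend to the endpoints. You spell out the uniqueness argument in slightly more detail than the paper does, but the substance is identical.
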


\begin{proof}
Since $q_n$ restricts to a homeomorphism $(X\backslash e)^n\to \jnx\backslash J_{n-1}(X)$, it is clear that there is a unique map $\beta:(0,1)\to (X\backslash e)^n$ such that $q_n\circ \beta=\alpha|_{(0,1)}$. We then have that $\beta$ extends uniquely to a path $\wt{\alpha}$ by Lemma \ref{pathextension}.
\end{proof}

It is important to note that if $\alpha,\beta:\ui\to J_n(X)$ are composable paths, i.e. $\alpha(1)=\beta(0)=w$ for some $w$, and $\alpha^{-1}(\jnx\backslash J_{n-1}(X))=\beta^{-1}(\jnx\backslash J_{n-1}(X))=(0,1)$, then the unique lifts $\wt{\alpha},\wt{\beta}:\ui\to X^n$ satisfy $\{\wt{\alpha}(1),\wt{\beta}(0)\}\subseteq q_{n}^{-1}(w)$ but may not themselves be composable.

\begin{corollary}\label{decomposition}
For any path $\alpha:\ui\to \jnx$ with $\alpha^{-1}(\jnx\backslash J_{n-1}(X))=(0,1)$, there exists unique paths $\alpha^{1},\alpha^{2},\alpha^{3},\dots,\alpha^{n}:\ui\to X=J_1(X)$ such that $\alpha=\bigast_{i=1}^{n}\alpha^{i}$.
\end{corollary}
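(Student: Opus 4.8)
The plan is to read off the claim from Corollary \ref{liftlemma} by passing to coordinate functions. First I would apply Corollary \ref{liftlemma} to obtain the unique path $\wt{\alpha}:\ui\to X^n$ with $q_n\circ\wt{\alpha}=\alpha$, and define $\alpha^{i}:=\mathrm{pr}_i\circ\wt{\alpha}:\ui\to X=J_1(X)$, where $\mathrm{pr}_i:X^n\to X$ is the $i$-th projection. Each $\alpha^i$ is continuous as a coordinate of the continuous map $\wt{\alpha}$.

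Next I would verify that $\alpha=\bigast_{i=1}^{n}\alpha^{i}$. Unwinding the notation of Section \ref{sectionprelim}, $\bigast_{i=1}^{n}\alpha^{i}$ is the path $t\mapsto \sigma(\alpha^{1}(t))\ast\sigma(\alpha^{2}(t))\ast\cdots\ast\sigma(\alpha^{n}(t))$ in $\jx$; since $\jx$ is the free monoid on $(X,e)$, this pointwise product of length-$\le 1$ words is exactly the word obtained from $(\alpha^{1}(t),\dots,\alpha^{n}(t))$ by deleting the letters equal to $e$, i.e.\ it equals $q_n(\alpha^{1}(t),\dots,\alpha^{n}(t))=q_n(\wt{\alpha}(t))=\alpha(t)$. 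This proves existence (and, incidentally, reconfirms continuity of $\bigast_{i=1}^{n}\alpha^{i}$, which also follows from Lemma \ref{jxisapredeltamonoidlemma} and Proposition \ref{predeltamoncharprop}).

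For uniqueness, suppose $\beta^{1},\dots,\beta^{n}:\ui\to X$ are continuous with $\alpha=\bigast_{i=1}^{n}\beta^{i}$. The key point is that for every $t\in(0,1)$ we have $\alpha(t)\in\jnx\setminus J_{n-1}(X)$, so $|\alpha(t)|=n$; were some $\beta^{i}(t)=e$, the word $\beta^{1}(t)\cdots\beta^{n}(t)$ would have length $<n$, a contradiction. Hence $\beta^{i}(t)\neq e$ for all $i$ and all $t\in(0,1)$, so $(\beta^{1},\dots,\beta^{n})$ restricts to a map $(0,1)\to(X\setminus e)^{n}$ with $q_n\circ(\beta^{1},\dots,\beta^{n})=\alpha|_{(0,1)}$, and $(\beta^{1},\dots,\beta^{n}):\ui\to X^{n}$ is a continuous extension with $q_n\circ(\beta^{1},\dots,\beta^{n})=\alpha$. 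The uniqueness assertion in Lemma \ref{pathextension} (equivalently Corollary \ref{liftlemma}) then forces $(\beta^{1},\dots,\beta^{n})=\wt{\alpha}$, i.e.\ $\beta^{i}=\alpha^{i}$ for all $i$.

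The argument is essentially formal; the only point requiring care is the bookkeeping identity $\bigast_{i=1}^{n}\alpha^{i}=q_n\circ(\alpha^{1},\dots,\alpha^{n})$ relating the monoid product in $\jx$ to the quotient map $q_n$, together with the observation that nonvanishing of all coordinates on $(0,1)$ is exactly what lets us apply the uniqueness half of Lemma \ref{pathextension}. I do not anticipate any genuine obstacle beyond making those identifications explicit.
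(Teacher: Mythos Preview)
Your proposal is correct and matches the paper's approach exactly: the paper states this as an immediate corollary of Corollary \ref{liftlemma} with no separate proof, and your argument---taking coordinate projections of the unique lift $\wt{\alpha}$ and using the identity $\bigast_{i=1}^{n}\alpha^{i}=q_n\circ(\alpha^{1},\dots,\alpha^{n})$---is precisely how one unpacks that corollary. Your uniqueness argument, observing that any candidate $(\beta^{1},\dots,\beta^{n})$ must land in $(X\setminus e)^n$ on $(0,1)$ and hence coincide with $\wt{\alpha}$ by the uniqueness in Lemma \ref{pathextension}, is the right justification.
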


\begin{lemma}\label{imageconvergencelemma}
Suppose $n\geq 2$ and $\alpha_k:\ui\to \jnx$ is a (not necessarily composable) sequence of paths such that $\{\alpha_k\}$ is null at $w\in J_n(X)$ and that $\alpha_k=\bigast_{i=1}^{n}\alpha_{k}^{i}$ with $\alpha_{k}^{i}:\ui\to X$ for all $k\in\bbn$. Then for every neighborhood $U$ of $w$ in $\jnx$, the set $\bigast_{i=1}^{n}Im(\alpha_{k}^{i})$ lies in $U$ for all but finitely many $k\in\bbn$.
\end{lemma}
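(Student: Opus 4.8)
The plan is to reduce the statement to a single continuity assertion about the $n$-fold multiplication map on an appropriate cube, and then exploit the fact that the hypothesis ``$\{\alpha_k\}$ is null at $w$'' gives us control near a single fiber $q_n^{-1}(w)$ in $X^n$. First I would use the decomposition hypothesis: each $\alpha_k$ is written as $\bigast_{i=1}^n \alpha_k^i$ with $\alpha_k^i\colon I\to X$, so the data of $\alpha_k$ is equivalent to the map $\widehat{\alpha}_k=(\alpha_k^1,\dots,\alpha_k^n)\colon I\to X^n$ together with the relation $q_n\circ\widehat{\alpha}_k=\alpha_k$ (up to reparametrization of the concatenation; here I would be a little careful because the concatenation $\bigast_{i=1}^n\alpha_k^i$ as a loop in $J_n(X)$ is the image under $q_n$ of the path that runs through the coordinates one at a time, but what matters for the conclusion is only the image set $\bigast_{i=1}^n Im(\alpha_k^i)=\mu_n\big(\prod_{i=1}^n Im(\alpha_k^i)\big)$, and $\prod_{i=1}^n Im(\alpha_k^i)=Im(\widehat{\alpha}_k^{\,\mathrm{box}})$ where $\widehat{\alpha}_k^{\,\mathrm{box}}\colon I^n\to X^n$ is the ``box'' map $(t_1,\dots,t_n)\mapsto(\alpha_k^1(t_1),\dots,\alpha_k^n(t_n))$).

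Next I would recall from Proposition \ref{predeltamoncharprop} (applied to the pre-$\Delta$-monoid $J(X)$, which is legitimate by Lemma \ref{jxisapredeltamonoidlemma}) that the $n$-ary operation $\mu_n\colon\Delta(J(X)^n)\to J(X)$ is continuous; composing with $\sigma^n\colon X^n\to J(X)^n$ and noting $X^n$ is $\Delta$-generated (it is a $k$-space built from cubes, or more simply: $\sigma$ is an embedding by Lemma \ref{agreeingtopologieslemma} and each coordinate path lands in $X$, so the relevant map $I^n\to X^n\to J(X)$ is continuous), we get that $\Phi\colon X^n\to J_n(X)$, $\Phi(x_1,\dots,x_n)=x_1\cdots x_n$ (word concatenation restricted to generators, i.e. $q_n$ itself), is continuous — which we of course already knew. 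The real point is the \emph{limit} behavior: since $\{\alpha_k\}$ is null at $w$, for any neighborhood $U$ of $w$ in $J_n(X)$ the maps $\alpha_k$ eventually land in $U$, hence the box maps $\widehat{\alpha}_k^{\,\mathrm{box}}\colon I^n\to X^n$ eventually have image in $q_n^{-1}(U)$... but that is precisely what we must prove and does \emph{not} follow formally, since $\alpha_k\subseteq U$ only says the ``diagonal trace'' $q_n(\widehat{\alpha}_k(I))$ is in $U$, not that the whole box is. So here is the actual mechanism: I would mimic the proof of Lemma \ref{smallimagelemma}. Let $f\colon\bbh\to J_n(X)$ be the map with $f\circ\ell_k=\alpha_k$ (null-sequence $\Leftrightarrow$ map from $\bbh$). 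The hypothesis gives $f(b_0)=w$. Now I want to produce, from $f$, a single continuous map $F\colon\bbh^n\to X^n$ (or into $\Delta(X^n)$) whose restriction to $C_k^n$ is the box map $\widehat{\alpha}_k^{\,\mathrm{box}}$, and which sends $(b_0,\dots,b_0)$ to a point of $q_n^{-1}(w)$. If such an $F$ exists and is continuous, the argument of Lemma \ref{smallimagelemma} finishes: given a standard neighborhood $U=q_n(N)$ of $w$ with $N\supseteq q_n^{-1}(w)$ saturated open in $X^n$, we have $F(b_0,\dots,b_0)\in q_n^{-1}(w)\subseteq N$, so there is a neighborhood $V^n$ of $(b_0,\dots,b_0)$ with $F(V^n)\subseteq N$, hence $C_k^n\subseteq V^n$ for all large $k$ gives $\prod_{i=1}^n Im(\alpha_k^i)=F(C_k^n)\subseteq N$, and applying $q_n$ gives $\bigast_{i=1}^n Im(\alpha_k^i)=q_n(F(C_k^n))\subseteq q_n(N)=U$ for all large $k$.

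The crux — and the step I expect to be the main obstacle — is therefore constructing the continuous box map $F\colon\bbh^n\to X^n$ lifting the coordinate-wise data and identifying its value at the basepoint. The issue is that the lifts $\widehat{\alpha}_k=(\alpha_k^1,\dots,\alpha_k^n)$ coming from Corollary \ref{decomposition} are each unique, but their \emph{endpoints} $\widehat{\alpha}_k(0),\widehat{\alpha}_k(1)\in q_n^{-1}(w)$ need not all coincide (the remark after Corollary \ref{liftlemma} warns of exactly this), so a priori the individual coordinate functions $x\mapsto\alpha_k^i(\text{trace})$ need not fit together continuously at $b_0$ across different $k$. To get around this I would \emph{not} try to assemble the $\alpha_k^i$ into a map on $\bbh$ one coordinate at a time; instead, I would work locally: for the conclusion it suffices to handle the tail of the sequence, so I may assume all $\alpha_k$ lie in a single basic (standard) neighborhood $U=q_n(N)$ of $w$ with $N=\bigsqcup_{v\in q_n^{-1}(w)}N_v$ a \emph{disjoint} union of boxes (using Hausdorffness of $X$ and the Remark before Proposition \ref{hausdorffprop}); since each $\alpha_k((0,1))$ is connected it lies in a single $N_v$, and after passing to a subsequence — harmless, since the claim ``eventually in $U'$'' for every $U'$ is a statement we may check along subsequences — we may assume all $\alpha_k$ have image in one fixed $N_{v_0}\cong \prod_{i=1}^n A_i$ where $A_i$ is an open subset of $X$ (either a $U_j$ or $V$). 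On $N_{v_0}$ the map $q_n$ restricted to $N_{v_0}$ is a homeomorphism onto its image inside $J_n(X)$ wait — no, $q_n|_{N_{v_0}}$ need not be injective on all of $N_{v_0}$ when $|w|<n$... but it \emph{is} a homeomorphism on the sub-box $\prod U_j\times\prod V$ onto a standard neighborhood when the $A_i$ are chosen pairwise-disjoint-or-equal; the non-injectivity only permutes equal factors $V$, and collapsing those is continuous. In any case, once all $\alpha_k$ are confined to such a coordinate chart, the coordinate functions $\alpha_k^i$ all take values in the \emph{fixed} open set $A_i\ni$ (the $i$-th coordinate of $v_0$), so the maps $\bbh\to X$ sending $\ell_k\mapsto\alpha_k^i$ and $\ell_j\mapsto c_{(v_0)_i}$ for $j\neq k$ assemble, and more to the point the single map $g_i\colon\bbh\to X$ with $g_i\circ\ell_k=\alpha_k^i$ is continuous (all images lie in $A_i$ and shrink to $(v_0)_i$), whence $G=(g_1,\dots,g_n)\colon\bbh^n\to X^n$ is continuous with $G(b_0,\dots,b_0)=v_0\in q_n^{-1}(w)$ and $G(C_k^n)=\prod_i Im(\alpha_k^i)$. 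Now the Lemma \ref{smallimagelemma}-style argument of the previous paragraph applies verbatim to finish. I would then remark that the subsequence reduction is legitimate because the conclusion — ``for every neighborhood $U$, all but finitely many $k$ have $\bigast_i Im(\alpha_k^i)\subseteq U$'' — follows from its validity along every subsequence (if it failed, some neighborhood $U$ would be violated by infinitely many $k$, giving a bad subsequence).
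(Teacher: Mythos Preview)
Your proposal is correct but considerably more elaborate than necessary, and the paper's proof is a strict simplification of yours. You correctly identify the decisive observation---once the lift $\widehat{\alpha}_k=(\alpha_k^1,\dots,\alpha_k^n)$ lands in a single product box $N_{v}= \prod_{i=1}^n A_i$, each coordinate $\alpha_k^i$ has image in $A_i$, and therefore the full box $\prod_{i=1}^n Im(\alpha_k^i)$ already lies in $N_{v}$---but you then bury it under machinery that is not needed. The paper simply does this: choose a standard neighborhood $q_n(N)\subseteq U$ with $N=\bigsqcup_{v\in q_n^{-1}(w)} N_v$ saturated; since $\{\alpha_k\}$ is null at $w$, for all $k\geq K$ the lift $\widehat{\alpha}_k$ has image in $N$, hence by connectedness in a single box $N_{v_k}$; because $N_{v_k}$ is a product, $\prod_{i=1}^n Im(\alpha_k^i)\subseteq N_{v_k}$, and applying $q_n$ gives $\bigast_{i=1}^n Im(\alpha_k^i)\subseteq q_n(N)\subseteq U$. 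That is the whole proof.

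Three specific detours in your argument can be excised. First, there is no need to pass to a subsequence so that all lifts land in a \emph{fixed} $N_{v_0}$: the box $N_{v_k}$ is allowed to depend on $k$, since the conclusion $\prod_i Im(\alpha_k^i)\subseteq N_{v_k}\subseteq N$ holds for each $k$ individually. Second, once you have observed that $Im(\alpha_k^i)\subseteq A_i$ for each $i$, you are already done---the construction of the map $G\colon\bbh^n\to X^n$ and the appeal to the proof style of Lemma~\ref{smallimagelemma} are superfluous. Third, the parenthetical claim that the coordinate sequences ``shrink to $(v_0)_i$'' (needed for continuity of your $g_i$) is true, but verifying it amounts to re-running the box argument for smaller standard neighborhoods, so invoking it to build $G$ and then using $G$ to recover the box containment is circular scaffolding. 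Strip all of this away and what remains is exactly the paper's argument.
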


\begin{proof}
Let $q_n(N)$ be a standard neighborhood of $w$ in $\jnx$ such that $q_n(N)\subseteq U$. Recall that $N$ is the disjoint union of $N_v$, $v\in q_{n}^{-1}(w)$ where $N_v$ is a $n$-fold product of open sets from the list $U_1,U_2,U_3,\dots,U_m,V$. Let $\wt{\alpha}_{k}=(\alpha_{k}^{1},\alpha_{k}^{2},\alpha_{k}^{3},\dots, \alpha_{k}^{n}):\ui\to X^n$ be a lift of $\alpha_k$. Since $\alpha_k$ is null at $w$, there exists $K\in\bbn$ such that $Im(\alpha_k)\subseteq q_n(N)$ for all $k\geq K$. Thus $Im(\wt{\alpha}_{k})\subseteq N$ for all $k\geq K$. In particular, if $k\geq K$, the connectedness of $Im(\wt{\alpha}_k)$ implies that $Im(\wt{\alpha}_{k})\subseteq N_{v_k}$ for some $v_k\in q_{n}^{-1}(w)$. Therefore, since $N_{v_k}$ has the form of a product of $n$-open sets in $X$, we have $\prod_{i=1}^{n}Im(\alpha_{k}^{i})\subseteq N_{v_k}$. We conclude that $\bigast_{i=1}^{n}Im(\alpha_{k}^{i})=q_n(\prod_{i=1}^{n}Im(\alpha_{k}^{i}))\subseteq q_n(N_{v_k})\subseteq q_n(N)\subseteq U$ for $k\geq K$.
\end{proof}

\begin{lemma}\label{type1removal}
Let $n\geq 2$ and $\alpha:\ui\to\jnx$ be a path such that $\alpha^{-1}(\jnx\setminus J_{n-1}(X))=(0,1)$. Let $\alpha^i:\ui\to X=J_1(X)$ be the unique paths such that $\alpha=\bigast_{i=1}^n \alpha^i$. Suppose there exist $j,k\in\{1,2,\dots, n\}$ with $j\neq k$, $\alpha^j(0)=e$, and $\alpha^k(1)=e$. Then $\alpha$ is homotopic to a path $\alpha':\ui\to J_{n-1}(X)$ by a homotopy with image $\bigast_{i=1}^n Im(\alpha^i)$.
\end{lemma}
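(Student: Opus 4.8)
The plan is to reduce the statement to a single application of Lemma~\ref{product-to-concatentation} for an appropriately chosen bijection $\phi$. Writing $\alpha=\bigast_{i=1}^{n}\alpha^{i}$ as given, the idea is to shuffle the factors so that the constant value $\alpha^{j}(0)=e$ and the constant value $\alpha^{k}(1)=e$ between them ``cover'' all $n$ of the factors produced by Lemma~\ref{product-to-concatentation}, forcing each of those factors to be a word of length at most $n-1$.

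Concretely, first choose any bijection $\phi:\{1,2,\dots,n\}\to\{1,2,\dots,n\}$ with $\phi(k)<\phi(j)$ (for instance $\phi(k)=1$, $\phi(j)=n$, and $\phi$ arbitrary on the remaining indices). Apply Lemma~\ref{product-to-concatentation} with this $\phi$, taking each $\alpha^{i}$ in the role of $\alpha_{i}$; this yields a path-homotopy
\[
\alpha=\bigast_{i=1}^{n}\alpha^{i}\ \simeq\ \alpha':=\prod_{l=1}^{n}\Bigl(\bigast_{i=1}^{n}\beta_{l,i}\Bigr)
\]
with image in $\bigast_{i=1}^{n}Im(\alpha^{i})$, where the $\beta_{l,i}$ are as defined in Lemma~\ref{product-to-concatentation}. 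Next I would verify that each factor $\bigast_{i=1}^{n}\beta_{l,i}$ of the concatenation $\alpha'$ has image in $J_{n-1}(X)$: if $l<\phi(j)$ then $\phi(j)\in\{l+1,\dots,n\}$, so $\beta_{l,j}$ is the constant path at $\alpha^{j}(0)=e$, and hence every word $\beta_{l,1}(t)\cdots\beta_{l,n}(t)$ has its $j$-th letter equal to $e$; otherwise $l\geq\phi(j)>\phi(k)$, so $\phi(k)\in\{1,\dots,l-1\}$, $\beta_{l,k}$ is the constant path at $\alpha^{k}(1)=e$, and every such word has its $k$-th letter equal to $e$. In either case the $l$-th factor takes values in $J_{n-1}(X)$, so $\alpha'$ does too; since $\{e\}$ is closed in the Hausdorff space $X$, Lemma~\ref{agreeingtopologieslemma} shows $J_{n-1}(X)$ inherits its topology from $J(X)$, so $\alpha'$ is a genuine path $\ui\to J_{n-1}(X)$. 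Finally $Im(\beta_{l,i})\subseteq Im(\alpha^{i})$ for all $l,i$ (each $\beta_{l,i}$ equals $\alpha^{i}$ or a constant path at one of its values), so both the homotopy and its endpoint $\alpha'$ stay inside $\bigast_{i=1}^{n}Im(\alpha^{i})$, which is the desired conclusion.

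The only step requiring thought is the choice of $\phi$: the condition $\phi(k)<\phi(j)$ is exactly what makes the two index sets $\{l:l<\phi(j)\}$ and $\{l:l>\phi(k)\}$ cover $\{1,\dots,n\}$, and this is what pushes a constantly-$e$ letter into every factor. After that, everything is routine bookkeeping with the definition of $\beta_{l,i}$ in Lemma~\ref{product-to-concatentation}, and I do not expect a genuine obstacle.
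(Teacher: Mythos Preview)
Your proposal is correct and follows essentially the same approach as the paper's proof: both apply Lemma~\ref{product-to-concatentation} with a bijection $\phi$ satisfying $\phi(k)<\phi(j)$ (the paper uses the specific choice $\phi(k)=1$, $\phi(j)=n$) and then observe that in each concatenation factor $\bigast_{i=1}^n\beta_{l,i}$ at least one letter is identically $e$, so the factor lies in $J_{n-1}(X)$. Your case analysis on $l<\phi(j)$ versus $l\geq\phi(j)$ is exactly the paper's observation, just phrased in terms of the general inequality rather than the extremal values.
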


\begin{proof}
The existence of the $\alpha^i$ is guaranteed by Corollary \ref{decomposition}. Suppose $\alpha^j(0)=e$ and $\alpha^k(1)=e$ with $j\neq k$. Define $\phi:\{1,2,\dots,n\}\to\{1,2,\dots,n\}$ to be a bijection such that $\phi(k)=1$ and $\phi(j)=n$. As described in Corollary \ref{product-to-concatentation}, we have that $\bigast_{i=1}^n \alpha^i \simeq \prod_{i=1}^n (\bigast_{m=1}^n \beta_{i,m})$ for paths
\[\beta_{i,m}=\begin{cases}
\alpha^{m}(0), & \text{ if }\phi(m)\in \{i,i+1,\dots ,n\}\\
\alpha^m, & \text{ if }\phi(m)=i\\
\alpha^{m}(1), & \text{ if }\phi(m)\in \{1,2,\dots, i-1\}\\
\end{cases}\]
by a homotopy with image in $\bigast_{i=1}^n Im(\alpha^i)$. For convenience, write $\gamma_{i}=\bigast_{m=1}^{n}\beta_{i,m}$. By our choice of $\phi$, $\alpha^{k}$ is the $k$-th factor of $\gamma_1$ and $\alpha^{j}$ is the $j$-th factor of $\gamma_n$. Therefore $\alpha^{k}(1)=e$ is the $k$-th factor of $\gamma_2,\gamma_3, \dots ,\gamma_n$ and $\alpha^{j}(0)=e$ is the $j$-th factor of $\gamma_1,\gamma_2,\dots,\gamma_{n-1}$. Therefore, $\alpha'=\prod_{i=1}^n \gamma_i$ has image in $J_{n-1}(X)$.
\end{proof}

\begin{lemma}\label{alltype1removal}
For any $n\geq 2$ and $\alpha\in\Omega(\jnx,e)$, there exists $\beta\in\Omega(J_n(X),e)$ such that $\alpha\simeq\beta$ and where $\beta$ satisfies the following: if $(a,b)$ is a connected component of $\beta^{-1}(\jnx\setminus J_{n-1}(X))$ and $\beta|_{[a,b]}=\bigast_{i=1}^n \beta|_{[a,b]}^i$, then $\beta|_{[a,b]}^j(0)=e$ if and only if $\beta|_{[a,b]}^j(1)=e$.
\end{lemma}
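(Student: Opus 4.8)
The plan is to act separately on each connected component of the open set $O:=\alpha^{-1}\bigl(\jnx\setminus J_{n-1}(X)\bigr)$. Since $\alpha(0)=\alpha(1)=e\in J_{n-1}(X)$, the set $O$ is a countable disjoint union of open intervals $(a_m,b_m)\subseteq(0,1)$, and because $J_{n-1}(X)$ is closed in $\jnx$ (Proposition \ref{quotientmapprop1}) we have $\alpha(a_m),\alpha(b_m)\in J_{n-1}(X)$, so Corollary \ref{decomposition} applies to a reparametrization of $\alpha|_{[a_m,b_m]}$ and yields the unique decomposition $\alpha|_{[a_m,b_m]}=\bigast_{i=1}^{n}\alpha_m^{i}$ with $\alpha_m^{i}:\ui\to X$. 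As $|\alpha(a_m)|<n$ and $|\alpha(b_m)|<n$, the lift $(\alpha_m^{1},\dots,\alpha_m^{n})$ has a coordinate equal to $e$ at each endpoint, so the sets $S_0^{m}=\{i:\alpha_m^{i}(0)=e\}$ and $S_1^{m}=\{i:\alpha_m^{i}(1)=e\}$ are nonempty. Call $(a_m,b_m)$ \emph{removable} if there are $j\in S_0^{m}$ and $k\in S_1^{m}$ with $j\neq k$, and \emph{good} otherwise. A short check shows a component is good exactly when $S_0^{m}=S_1^{m}=\{j_0\}$ for a single index $j_0$; equivalently $\alpha_m^{j}(0)=e$ if and only if $\alpha_m^{j}(1)=e$ for every $j$, which is the property demanded by the lemma.

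For each removable component, Lemma \ref{type1removal} gives a path-homotopy, rel endpoints, from a reparametrization of $\alpha|_{[a_m,b_m]}$ to a path into $J_{n-1}(X)$, with image inside $A_m:=\bigast_{i=1}^{n}Im(\alpha_m^{i})$; pulling back along the affine homeomorphism $[a_m,b_m]\to\ui$ gives $H_m:[a_m,b_m]\times\ui\to\jnx$ with $H_m(t,0)=\alpha(t)$, $H_m(a_m,s)=\alpha(a_m)$, $H_m(b_m,s)=\alpha(b_m)$, and $Im(H_m)\subseteq A_m$. Define $H:\ui\times\ui\to\jnx$ by $H=H_m$ on $[a_m,b_m]\times\ui$ for every removable $m$ and $H(t,s)=\alpha(t)$ otherwise, and put $\beta=H(\cdot,1)$. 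The pieces agree on overlaps and $H(0,s)=H(1,s)=e$, $H(\cdot,0)=\alpha$, so once continuity of $H$ is established we obtain $\alpha\simeq\beta$ in $\Omega(\jnx,e)$. Since $\beta$ maps $\ui\setminus O$ and every removable component's closure into $J_{n-1}(X)$ while agreeing with $\alpha$ on each good component, the components of $\beta^{-1}\bigl(\jnx\setminus J_{n-1}(X)\bigr)$ are exactly the good components $(a_m,b_m)$, on each of which $\beta|_{[a_m,b_m]}$ has the decomposition $\bigast_{i=1}^{n}\alpha_m^{i}$ and hence the desired property by the first paragraph. (If $O=\emptyset$ one simply takes $\beta=\alpha$.)

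The only subtle point is the continuity of $H$: away from $\ui\setminus O$ it is immediate, and the delicate case is a point $(t_0,s_0)$ with $t_0\in K:=\ui\setminus O$ that is an accumulation point of removable components. Everything reduces to the claim that \emph{for every neighborhood $U$ of $\alpha(t_0)$ there is $\delta>0$ with $A_m\subseteq U$ for all components $(a_m,b_m)\subseteq(t_0-\delta,t_0+\delta)$.} Granting this, shrink $\delta$ so that also $\alpha\bigl((t_0-\delta,t_0+\delta)\bigr)\subseteq U$; then any component contained in $(t_0-\delta,t_0+\delta)$ has its $H$-image contained in $U$ (in $A_m$ if removable, in $\alpha((a_m,b_m))$ if good), points of $K$ map under $H$ into $\alpha((t_0-\delta,t_0+\delta))\subseteq U$, and the at most two components straddling an endpoint of $(t_0-\delta,t_0+\delta)$ either miss a small enough neighborhood of $t_0$ or have $t_0$ as one of their own endpoints, in which case continuity at $(t_0,s_0)$ follows from the continuity of the single map $H_m$ on $[a_m,b_m]\times\ui$. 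Hence $H$ carries a neighborhood of $(t_0,s_0)$ into $U$.

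It remains to prove the claim, and this is the main obstacle: it is exactly the point at which one must control infinitely many of the product-images $A_m$ simultaneously rather than only the (possibly much smaller) images $Im(\alpha|_{[a_m,b_m]})$. Suppose the claim fails for $t_0$ and $U$. Then for each $k$ there is a component $(a_{m_k},b_{m_k})\subseteq(t_0-1/k,t_0+1/k)$ with $A_{m_k}\not\subseteq U$, and these are eventually distinct (a fixed component cannot lie in $(t_0-1/k,t_0+1/k)$ for all $k$). Passing to a subsequence of distinct components and reparametrizing each to $\ui$, the resulting sequence of paths is null at $\alpha(t_0)$, since for any neighborhood $V$ of $\alpha(t_0)$ one has $[a_{m_k},b_{m_k}]\subseteq(t_0-\delta',t_0+\delta')\subseteq\alpha^{-1}(V)$ once $1/k<\delta'$. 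Each such path decomposes as $\bigast_{i=1}^{n}\alpha_{m_k}^{i}$, so Lemma \ref{imageconvergencelemma} (which needs $n\geq 2$) forces $A_{m_k}=\bigast_{i=1}^{n}Im(\alpha_{m_k}^{i})\subseteq U$ for all but finitely many $k$, contradicting the choice of the $m_k$ and completing the argument.
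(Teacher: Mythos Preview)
Your proof is correct and follows essentially the same approach as the paper: apply Lemma \ref{type1removal} to each ``removable'' component, glue the resulting homotopies with the constant homotopy elsewhere, and use Lemma \ref{imageconvergencelemma} to verify continuity at accumulation points of components. The paper phrases the continuity check via convergent sequences in $\ui^2$ (which suffices since $\ui^2$ is first countable), whereas you argue directly with neighborhoods and a contradiction, but the substance is identical; your explicit observation that the ``good'' components are precisely those with $S_0^m=S_1^m=\{j_0\}$ is a nice clarification the paper leaves implicit.
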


\begin{proof}
Let $\{(a_j,b_j)\mid j\in J\}$ be the set of connected components $\alpha^{-1}(\jnx\setminus J_{n-1}(X))$ with the property that when we write $\alpha|_{[a_j,b_j]}=\bigast_{i=1}^n\alpha|_{[a_j,b_j]}^i$ (by Corollary \ref{decomposition}), there exist $k,m\in\bbn$ with $k\neq m$, $\alpha^k(0)=e$, and $\alpha^m(1)=e$. Set $U=\bigcup_{j\in J}(a_j,b_j)$. For $j\in J$, define $H_j:[a_j,b_j]\times I\to J_n(X)$ to be the homotopy obtained from Lemma \ref{type1removal} so that $H_j(s,0)=\alpha|_{[a_j,b_j]}(s)$, $H_j(s,1)$ has image in $J_{n-1}(X)$, and $Im(H_j)\subseteq \bigast_{i=1}^n Im(\alpha|_{[a_j,b_j]}^i)$. Now define $H:I^2\to \jnx$ by $$H(s,t)=
\begin{cases}
H_j(s,t), & \textit{if } s\in [a_j,b_j]\\
\alpha(s), & \textit{if } s\notin U.
\end{cases}$$
The continuity of $\alpha$ and each $H_j$ guarantees that $H$ is continuous everywhere except at points in $C\times I$, where $C$ is the set of limit points in $\partial U$. Thus to show $H$ is continuous, it suffices to show that $H$ is continuous at points in $C\times I$. Let $(w,t)\in C\times I$ be such a point. Let $\{(x_n,y_n)\}_{n\in\bbn}$ be a sequence in $I^2$ which converges to $(w,t)$. Let $M=\{n\in\bbn\mid x_n\in U\}$. If $M$ is finite then there exists $N\in\bbn$ such that $H(x_n,y_n)=\alpha(x_n)$ for all $n\geq N$, which implies $\{H(x_n,y_n)\}_{n\in\bbn}$ converges to $H(w,t)=\alpha(w)$. So suppose that $M$ is infinite. Let $V$ be an open neighborhood of $H(w,t)$. Let $U_k$ be the component of $x_k$ in $U$ for $k\in M$. By Lemma \ref{imageconvergencelemma}, there exists $K\in\bbn$ such that $H(U_k\times I)\subseteq \bigast_{i=1}^n Im(\alpha|_{U_k}^i) \subseteq V$ for all $k\geq K$. Thus $H(x_k,y_k)\in V$ for all $k\geq K$. Thus $\{H(x_k,y_k)\}_{k\in M}$ converges to $H(w,t)$. Also, $\{H(x_n,y_n)\}_{n\in\bbn\setminus M}$ is either finite or converges to $H(w,t)$. Hence $\{H(x_n,y_n)\}_{n\in\bbn}$ converges to $H(w,t)$. Therefore $H$ is continuous.

Let $\beta(s)=H(s,1)$ and observe that $\beta$ satisfies the conditions of the lemma.
\end{proof}

\begin{lemma}\label{type2removal}
Suppose $n\geq 2$ and $\alpha:\ui\to\jnx$ is a path such that $\alpha^{-1}(\jnx\setminus J_{n-2}(X))=(0,1)$ and where $\alpha$ satisfies the following: if $C_j$ is the closure of a connected component of $\alpha^{-1}(\jnx\setminus J_{n-1}(X))$ and $\alpha|_{C_j}=\bigast_{i=1}^n \alpha|_{C_j}^i$, then $\alpha|_{C_j}^m(0)=e$ if and only if $\alpha|_{C_j}^m(1)=e$. Then there exists path $\wt\alpha:\ui\to X^{2n-1}$ such that $q_{2n-1}\circ\wt\alpha=\alpha$. Moreover, $\wt\alpha=(\wt\alpha^1,\wt\alpha^2,\dots,\wt\alpha^{2n-1})$ for paths $\wt\alpha^i:\ui\to X$ and $\alpha=\bigast_{i=1}^{2n-1}\wt\alpha^i$ is homotopic to a path $\delta:\ui\to J_{n-1}(X)$ by a homotopy with image in $\bigast_{i=1}^{2n-1}Im(\wt\alpha^i)$.
\end{lemma}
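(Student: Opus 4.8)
The plan is to build the lift $\wt\alpha\colon\ui\to X^{2n-1}$ by hand, gluing together the canonical lifts supplied by Corollary~\ref{liftlemma} and Corollary~\ref{decomposition} over the strata on which $|\alpha|$ is constant, and then to obtain the ``moreover'' clause by a direct application of Lemma~\ref{product-to-concatentation}. First I would stratify $(0,1)$: since $\jnx\setminus J_{n-1}(X)$ is open, $\alpha^{-1}(\jnx\setminus J_{n-1}(X))$ is a disjoint union of open intervals, whose closures I write as the \emph{bumps} $[a_\lambda,b_\lambda]$; on the complement $B=(0,1)\setminus\bigcup_\lambda(a_\lambda,b_\lambda)$ one has $\alpha(t)\in J_{n-1}(X)\setminus J_{n-2}(X)$, so $|\alpha(t)|=n-1$. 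The hypothesis on $\alpha$ is exactly what forces each $a_\lambda,b_\lambda$ to lie in $(0,1)$ (if $a_\lambda=0$ then the $X^n$-lift of $\alpha|_{[0,b_\lambda]}$ would have at least two coordinates equal to $e$ at $0$ but exactly one at $b_\lambda$, contradicting the ``if and only if''); hence $\alpha(a_\lambda),\alpha(b_\lambda)$ always have length $n-1$, and the coordinate of the lift $\gamma_\lambda\colon[a_\lambda,b_\lambda]\to X^n$ of $\alpha|_{[a_\lambda,b_\lambda]}$ that equals $e$ at $a_\lambda$ is the same coordinate $p_\lambda\in\{1,\dots,n\}$ that equals $e$ at $b_\lambda$.

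Next I would set up two re-embeddings into $X^{2n-1}$, thinking of the $2n-1$ coordinates as $n-1$ ``even slots'' $2,4,\dots,2n-2$ and $n$ ``odd slots'' $1,3,\dots,2n-1$: a map $\Psi\colon X^{n-1}\to X^{2n-1}$ that places a word in the even slots with $e$ elsewhere, and for each $p$ a map $\Phi^{(p)}\colon X^n\to X^{2n-1}$ that places the coordinates other than $p$ into the even slots in order and the $p$-th coordinate into the odd slot $2p-1$, with $e$ in the remaining odd slots. These are arranged so that $\Phi^{(p)}(v)=\Psi(v\text{ with coordinate }p\text{ deleted})$ whenever the $p$-th coordinate of $v$ is $e$. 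I then define $\beta\colon(0,1)\to X^{2n-1}$ to be $\Psi$ composed with the (unique) lift of $\alpha|_B$ into $(X\setminus e)^{n-1}$ on $B$, and $\Phi^{(p_\lambda)}\circ\gamma_\lambda$ on each bump. By the compatibility just noted the two formulas agree at the bump endpoints, and $q_{2n-1}\circ\beta=\alpha|_{(0,1)}$. Continuity of $\beta$ at a point $t_0$ that is an accumulation point of bumps is the one delicate point and is handled as in the proof of Lemma~\ref{alltype1removal}: using that $X$ is Hausdorff one picks a standard neighborhood of $\alpha(t_0)$ whose $q_n$-preimage is a disjoint union of product neighborhoods $N_v$; the image of each nearby $\gamma_\lambda$ is connected, hence contained in a single $N_{v_\lambda}$ whose ``$e$-slot'' is forced to be $p_\lambda$; and since there are only finitely many possible values of $p_\lambda$ and each $\Phi^{(p)}$ is continuous, $\beta(t)\to\beta(t_0)$ follows. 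With $\beta$ continuous on $(0,1)$, Lemma~\ref{pathextension} (applied with $2n-1$ in place of $n$) produces the lift $\wt\alpha\colon\ui\to X^{2n-1}$, and inspecting the local picture near $0$ and near $1$ shows that every odd-slot coordinate $\wt\alpha^{2p-1}$ vanishes at both endpoints.

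For the ``moreover'' clause I would write $\alpha=\bigast_{i=1}^{2n-1}\wt\alpha^i$ and apply Lemma~\ref{product-to-concatentation} with the bijection $\phi$ of $\{1,\dots,2n-1\}$ given by $\phi(2j)=j$ and $\phi(2p-1)=(n-1)+p$. This yields a path-homotopy $\alpha\simeq\prod_{j=1}^{2n-1}\gamma_j$ with image in $\bigast_{i=1}^{2n-1}Im(\wt\alpha^i)$, and one checks straight from the defining formula for $\gamma_j=\bigast_k\beta_{j,k}$ that: for $1\le j\le n-1$ all odd slots of $\gamma_j$ are constant at $e$ (because $\wt\alpha^{2p-1}(0)=e$), so $|\gamma_j|\le n-1$; and for $j=(n-1)+p$ the even slots of $\gamma_j$ freeze to the word $\wt\alpha^2(1)\cdots\wt\alpha^{2n-2}(1)=\alpha(1)$ (using $\wt\alpha^{2p-1}(1)=e$), which lies in $J_{n-2}(X)$, so $|\gamma_j|\le(n-2)+1=n-1$. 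Thus $\delta:=\prod_{j=1}^{2n-1}\gamma_j$ has image in $J_{n-1}(X)$ and is path-homotopic to $\alpha$ through $\bigast_{i=1}^{2n-1}Im(\wt\alpha^i)$, as required.

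I expect the main obstacle to be the construction of $\beta$ in the second paragraph: verifying that the re-embeddings $\Psi$ and $\Phi^{(p)}$ actually match along $B$ (this is precisely where the hypothesis relating the two ends of each bump is used) and establishing continuity at accumulation points of the possibly infinitely many bumps. The final paragraph is then purely a reorganization of a finite product, so I anticipate it will be routine once the indices are bookkept carefully.
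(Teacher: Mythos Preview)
Your proposal is correct and follows essentially the same route as the paper: the lift $\wt\alpha$ is built by placing the $n-1$ ``stable'' coordinates of $\alpha$ into the even slots $2,4,\dots,2n-2$ and parking the single ``extra'' coordinate of each bump into the odd slot $2p_\lambda-1$; continuity at accumulation points of bumps is handled via standard neighborhoods exactly as in the paper; and the homotopy into $J_{n-1}(X)$ comes from Lemma~\ref{product-to-concatentation}. The only cosmetic difference is your choice of $\phi$ (even slots first, then odd) versus the paper's (which only fixes $\phi(2j)=2n-1$ for one even index $j$ with $\wt\alpha^{2j}(0)=e$ and counts constant-$e$ factors); both bookkeepings yield $\ge n$ identity letters in each $\gamma_j$. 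One small caveat: your parenthetical argument that $a_\lambda,b_\lambda\in(0,1)$ tacitly assumes $b_\lambda<1$ (resp.\ $a_\lambda>0$); it only rules out a bump touching exactly one endpoint, not the degenerate case of a single bump equal to $(0,1)$ --- but this edge case is easy (one already has an $X^n$-lift with $\ge 2$ coordinates equal to $e$ at both ends) and the paper's proof makes the same implicit assumption.
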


\begin{proof}
Suppose $\alpha:\ui\to\jnx$ satisfies the conditions in the statement of the lemma. Let $\mca=\{A_j\mid j\in J\}$ be the set of connected components of $\alpha^{-1}(\jnx\setminus J_{n-1}(X))$ and let $\mcc=\{\ov{A_j}\mid j\in J\}$. By Lemma \ref{decomposition}, we may write $\alpha|_{C_j}=\bigast_{i=1}^n\alpha|_{C_j}^i$ for paths $\alpha|_{C_j}^i:C_j\to X$. Additionally, by hypothesis, for each $C_j\in\mcc$, there exists unique $m_j\in\{1,2,\dots, n\}$ such that $\alpha|_{C_j}^{m_j}(0)=\alpha|_{C_j}^{m_j}(1)=e$. Let $\mcd=\{D_k\mid k\in K\}$ be the set of connected components of $(0,1)\setminus \bigcup\mca$. Note that each $D_k\in\mcd$ is either a point, a closed interval, or possibly an interval of the form $(0,a]$ or $[b,1)$. Since $\alpha(\bigcup\mcd)\subseteq J_{n-1}(X)\backslash J_{n-2}(X)$, and $J_{n-1}(X)\backslash J_{n-2}(X)\cong (X\backslash\{e\})^{n-1}$, we may write $\alpha|_{D_k}=\bigast_{i=1}^{n-1}\alpha|_{D_k}^i$ for unique maps $\alpha|_{D_k}^i:D_k\to X\backslash \{e\}$. For $i\in \{1,2,\dots,n\}$, define function $\gamma^{2i-1}:(0,1)\to X$ by
$$\gamma^{2i-1}(s)=
\begin{cases}
\alpha|_{C_j}^i(s), & \textit{if } s\in C_j \textit{ and } m_j=i\\
e, & \textit{otherwise}.
\end{cases}$$
For $i\in\{1,2,\dots, n-1\}$, define function $\gamma^{2i}:(0,1)\to X$ by
$$\gamma^{2i}(s)=
\begin{cases}
\alpha|_{D_k}^i(s), & \textit{if } s\in D_k\\
\alpha|_{C_j}^{i}(s), & \textit{if } s\in C_j \textit{ and } m_j> i\\
\alpha|_{C_j}^{i+1}(s), & \textit{if } s\in C_j \textit{ and } m_j\leq i.
\end{cases}$$
We verify the continuity of both $\gamma^{2i-1}$ and $\gamma^{2i}$ in a similar fashion. In both cases it suffices to check continuity at points in $\bigcup \mcd$. So let $t\in\bigcup\mcd$ and consider $\alpha(t)=x_1x_2\cdots x_{n-1}$. Note that $\gamma^{2i}(t)=x_i$ and $\gamma^{2i-1}(t)=e$. Let $V$ be an open neighborhood of $e$ in $X$ and let $U_1,U_2,\dots,U_{n-1}$ be open neighborhoods in $X$ of of $x_1,x_2,\dots,x_{n-1}$ respectively such that $U_1,U_2,\dots,U_{n-1},V$ are pairwise disjoint or equal. Construct standard open neighborhood $q_n(N)$ of $\alpha(t)$ where $N=N(U_1,U_2,\dots,U_{n-1},V)=\bigcup\{N_v\mid v\in q_{n}^{-1}(\alpha(t))\}$. Since $\alpha$ is continuous, there exists an open neighborhood $W$ of $t$ in $(0,1)$ such that $\alpha(W)\subseteq q_n(N)$. 

To see that $\gamma^{2i-1}(W)\subseteq V$, we check that for all $W_j=W\cap C_j$ where $m_j=i$ we have $\gamma^{2i-1}(W_j)\subseteq V$. When restricting to such $W_j$, we can lift $\alpha|_{W_j}$ to unique map $(\alpha|_{W_j}^1,\dots,\alpha|_{W_j}^n):W_j\to X^n$. Since $e\notin \bigcup_{r=1}^{n-1}U_r$, we must have that $(\alpha|_{W_j}^1,\dots,\alpha|_{W_j}^n)(W_j)\subseteq N_v$ where $N_v=U_1\times U_2\times\dots\times U_{i-1}\times V\times U_i\times \dots \times U_{n-1}$. Thus $\alpha^i(W_j)\subseteq V$ which shows that $\gamma^{2i-1}(W_j)\subseteq V$. We conclude that $\gamma^{2i-1}(W)\subseteq V$.

For the continuity of $\gamma^{2i}$, we show that $\gamma^{2i}(W)\subseteq U_i$. We begin by again checking that for all $W_j=W\cap C_j$ we have that $\gamma^{2i}(W_j)\subseteq U_i$. For $\alpha|_{W_j}$, its unique lift $(\alpha|_{W_j}^1,\dots,\alpha|_{W_j}^n) :W_j\to X^n$ has image in $N_v=U_1\times U_2\times\dots\times U_{m_j-1}\times V\times U_{m_j}\times \dots \times U_{n-1}$ due to the fact that $e\notin \bigcup_{r=1}^{n-1}U_r$. Thus if $i<m_j$, we have that $\alpha|_{W_j}^i(W_j)=\gamma^{2i}(W_j)\subseteq U_i$. Similarly if $i\geq m_j$, we have that $\alpha|_{W_j}^{i+1}(W_j)=\gamma^{2i}(W_j)\subseteq U_i$. It remains to show that for all $W'_k=W\cap D_k$ we have that $\gamma^{2i}(W'_k)=\alpha|_{D_k}^{i}(W_{k}')\subseteq U_i$. We have $q_{n-1}\circ (\alpha|_{D_k}^{1},\dots ,\alpha|_{D_k}^{n-1})(W_{k}')=\alpha|_{D_k}(W_{k}')\subseteq q_{n}(N)$. Note that since $e\notin \bigcup_{r=1}^{n-1}U_r$, we have $q_{n-1}^{-1}( q_n(N))=U_1\times U_2\times \dots \times U_{n-1}$ in $X^{n-1}$. Thus $(\alpha|_{D_k}^{1},\dots ,\alpha|_{D_k}^{n-1})(W_{k}')\subseteq U_1\times U_2\times \dots \times U_{n-1}$, completing the proof of continuity.

Let $\gamma:(0,1)\to X^{2n-1}$ be the map defined as $\gamma=(\gamma^1,\gamma^2,\dots,\gamma^{2n-1})$. By construction, we have that $q_{2n-1}\circ\gamma=\alpha|_{(0,1)}$. Using Lemma \ref{pathextension}, extend $\gamma$ to a path $\wt\alpha=(\wt\alpha^1,\wt\alpha^2,\dots,\wt\alpha^{2n-1}):\ui\to X^{2n-1}$ so that $\alpha=q_{2n-1}\circ\wt\alpha=\bigast_{i=1}^{2n-1}\wt{\alpha}^{i}$. The definition of $\gamma^{2i-1}$ makes it clear that $\wt{\alpha}^{2i-1}:I\to X$ must be a loop based at $e$. Since $\alpha(0)\in J_{n-2}(X)$, there exists $j\in\{1,2,\dots ,n-1\}$ such that $\wt{\alpha}^{2j}(0)=e$. Define $\phi:\{1,2,\dots,2n-1\}\to\{1,2,\dots,2n-1\}$ to be a bijection such that $\phi(2j)=2n-1$. As described in Corollary \ref{product-to-concatentation}, we have that $\bigast_{i=1}^{2n-1} \wt\alpha^i \simeq \prod_{i=1}^{2n-1} (\bigast_{m=1}^{2n-1} \beta_{i,m})$ by a homotopy with image in $\bigast_{i=1}^{2n-1} Im(\wt\alpha^i)$. Set $\delta_i=\ast_{m=1}^{2n-1}\beta_{i,m}$. For $i\in\{1,2,\dots,2n\}$ we have that $\wt\alpha^{2j}(0)=e$ appears in the factorization of $\delta_i$. Additionally, for $(n-1)$-many values of $k\in\{1,2,\dots,n\}$, either $\wt\alpha^{2k-1}(0)=e$ or $\wt\alpha^{2k-1}(1)=e$ appear in the factorization of $\delta_i$. Finally, when $i=2n-1$, there are $n$-many occurrences of $\wt\alpha^{2k-1}(0)=e$ or $\wt\alpha^{2k-1}(1)=e$ in the factorization of $\delta_i$. Thus by our choice of $\phi$, for all $i\in\{1,2,\dots,2n-1\}$ there exist at least $n$-many values of $m$ such that $\beta_{i,m}=e$. We conclude that $\delta=\prod_{i=1}^{2n-1}\delta_i$, our desired path, has image in $J_{n-1}(X)$.
\end{proof}

\begin{lemma}\label{alltype2removal}
Let $n\geq 2$ and $\beta\in\Omega(J_n(X),e)$ such that $\beta$ satisfies the following: if $(a,b)$ is a connected component of $\beta^{-1}(\jnx\setminus J_{n-1}(X))$ and $\beta|_{[a,b]}=\bigast_{i=1}^n \beta|_{[a,b]}^i$, then $\beta|_{[a,b]}^j(0)=e$ if and only if $\beta|_{[a,b]}^j(1)=e$. Then there exists $\gamma\in\Omega(J_{n-1}(X),e)$ with $\beta\simeq\gamma$.
\end{lemma}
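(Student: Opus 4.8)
The plan is to chop $\beta$ up along the open set $\beta^{-1}(\jnx\setminus J_{n-2}(X))$, apply Lemma~\ref{type2removal} to each piece, and glue the resulting homotopies together, controlling images near accumulation points exactly as in the proof of Lemma~\ref{alltype1removal}.

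Since $X$ is Hausdorff, $\{e\}$ is closed, so $J_{n-2}(X)$ is closed in $\jnx$; as $\beta(0)=\beta(1)=e\in J_{n-2}(X)$, the open set $\beta^{-1}(\jnx\setminus J_{n-2}(X))$ is a countable disjoint union of open intervals $(a_j,b_j)$ with $0<a_j<b_j<1$. For each $j$, the path $\beta|_{[a_j,b_j]}$, reparameterized linearly onto $\ui$, satisfies the hypotheses of Lemma~\ref{type2removal}: its preimage of $\jnx\setminus J_{n-2}(X)$ is exactly $(a_j,b_j)$ because the $(a_{j'},b_{j'})$ are pairwise disjoint and $a_j,b_j$ are interior to none of them; and every connected component of $\beta|_{[a_j,b_j]}^{-1}(\jnx\setminus J_{n-1}(X))$ is a connected component of $\beta^{-1}(\jnx\setminus J_{n-1}(X))$ with the same length-$n$ factorization (Corollary~\ref{decomposition}), so the required condition on it is part of the hypothesis on $\beta$. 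Lemma~\ref{type2removal} then supplies, for each $j$, a homotopy $H_j\colon[a_j,b_j]\times\ui\to\jx$ fixing endpoints (it is assembled from path-homotopies via Lemma~\ref{product-to-concatentation}) with $H_j(\cdot,0)=\beta|_{[a_j,b_j]}$ and $H_j(\cdot,1)=\delta_j$ for some $\delta_j\colon[a_j,b_j]\to J_{n-1}(X)$, together with a factorization $\beta|_{[a_j,b_j]}=\bigast_{i=1}^{2n-1}\wt{\alpha}_{j}^{i}$ into paths $\wt{\alpha}_{j}^{i}\colon[a_j,b_j]\to X$ with $Im(H_j)\subseteq\bigast_{i=1}^{2n-1}Im(\wt{\alpha}_{j}^{i})$. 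I would then define $H\colon\ui^2\to\jx$ by $H(s,t)=H_j(s,t)$ for $s\in[a_j,b_j]$ and $H(s,t)=\beta(s)$ for $s\notin\bigcup_j(a_j,b_j)$; this is well-defined on overlaps since each $H_j$ fixes endpoints and agrees with $\beta$ at $a_j$ and $b_j$. Setting $\gamma(s)=H(s,1)$ gives a loop based at $e$ with $\gamma|_{[a_j,b_j]}=\delta_j$ and $\gamma(s)=\beta(s)\in J_{n-2}(X)$ off $\bigcup_j(a_j,b_j)$, so $\gamma\in\Omega(J_{n-1}(X),e)$, provided $H$ is continuous.

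The one nontrivial point — and the step I expect to be the main obstacle — is continuity of $H$ at points $(w,t)$ with $w\notin\bigcup_j[a_j,b_j]$, where the pieces $[a_j,b_j]$ may accumulate. Since $\ui^2$ is metrizable I would verify sequential continuity: given $(x_k,y_k)\to(w,t)$, the indices $k$ with $x_k\notin\bigcup_j[a_j,b_j]$ contribute $H(x_k,y_k)=\beta(x_k)\to\beta(w)=H(w,t)$ by continuity of $\beta$. For the indices $k$ with $x_k\in[a_{j_k},b_{j_k}]$: because $w$ lies in no $[a_{j'},b_{j'}]$, no index can recur infinitely often, and only finitely many of the disjoint intervals $(a_j,b_j)$ can have length exceeding a given $\varepsilon$; combined with $x_k\to w$ this forces the intervals $[a_{j_k},b_{j_k}]$ eventually into any neighborhood of $w$, so $\{\beta|_{[a_{j_k},b_{j_k}]}\}$ is a null sequence at $\beta(w)$ in $\jnx$, hence also in $J_{2n-1}(X)$ (a neighborhood of $\beta(w)$ in $J_{2n-1}(X)$ meets $\jnx$ in a neighborhood for the quotient topology on $\jnx$, by Lemma~\ref{agreeingtopologieslemma}). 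Applying Lemma~\ref{imageconvergencelemma} in $J_{2n-1}(X)$ to the factorizations $\beta|_{[a_{j_k},b_{j_k}]}=\bigast_{i=1}^{2n-1}\wt{\alpha}_{j_k}^{i}$ shows that for any neighborhood $V$ of $\beta(w)$ one has $\bigast_{i=1}^{2n-1}Im(\wt{\alpha}_{j_k}^{i})\subseteq V$, hence $Im(H_{j_k})\subseteq V$ and so $H(x_k,y_k)\in V$, for all large $k$. Thus $H(x_k,y_k)\to H(w,t)$, and $H$ is continuous everywhere (on each closed square $[a_j,b_j]\times\ui$ it restricts to $H_j$, and on the closed complement it restricts to $\beta$). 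This produces the based homotopy $\beta\simeq\gamma$ and proves the lemma.
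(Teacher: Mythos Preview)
Your proof is correct and follows essentially the same approach as the paper: decompose along the components of $\beta^{-1}(\jnx\setminus J_{n-2}(X))$, apply Lemma~\ref{type2removal} on each piece, glue, and verify continuity at accumulation points via sequences and Lemma~\ref{imageconvergencelemma}. You are in fact slightly more careful than the paper in noting that Lemma~\ref{imageconvergencelemma} must be invoked in $J_{2n-1}(X)$ and that Lemma~\ref{agreeingtopologieslemma} guarantees nullity transfers from $\jnx$ to $J_{2n-1}(X)$; the only minor omission is that your continuity check assumes $w\notin\bigcup_j[a_j,b_j]$, whereas an endpoint $a_j$ or $b_j$ could itself be a limit of other intervals---but your same argument handles that case once you separate out the (at most one) recurring index $j$, where continuity of $H_j$ suffices.
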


\begin{proof}
Suppose $\beta\in\Omega(\jnx,e)$ satisfies the conditions in the lemma. Let $U=\bigcup_{j\in J}(a_j,b_j)$ where $(a_j,b_j)$ is a connected component of $\beta^{-1}(\jnx\setminus J_{n-2}(X))$. For $j\in J$, define $H_j:[a_j,b_j]\times I\to J_n(X)$ to be the homotopy obtained from Lemma \ref{type2removal} so that $H_j(s,0)=\beta|_{[a_j,b_j]}(s)$ and where $H_j(s,1)$ has image in $J_{n-1}(X)$. Now define $H:I^2\to \jnx$ by $$H(s,t)=
\begin{cases}
H_j(s,t), & \textit{if } s\in [a_j,b_j]\\
\beta(s), & \textit{if } s\notin U.
\end{cases}$$

To show $H$ is continuous, it suffices to show that $H$ is continuous at points in $C\times I$, where $C$ is the set of limit points in $\partial U$. Let $(w,t)\in C\times I$ be such a point. Let $\{(x_n,y_n)\}_{n\in\bbn}$ be a sequence in $I^2$ which converges to $(w,t)$. Let $M=\{n\in\bbn\mid x_n\in U\}$. If $M$ is finite then there exists $N\in\bbn$ such that $H(x_n,y_n)=\beta(x_n)$ for all $n\geq N$, which implies $\{H(x_n,y_n)\}_{n\in\bbn}$ converges to $H(w,t)$. So suppose that $M$ is infinite. Let $V$ be an open neighborhood of $H(w,t)$. Let $U_k$ be the component of $x_k$ in $U$ for $k\in M$. As described in Lemma \ref{type2removal}, $\beta|_{U_k}=\bigast_{i=1}^{2n-1}\wt\beta|_{U_k}^i$. By Lemma \ref{imageconvergencelemma}, there exists $K\in\bbn$ such that $H(U_k\times I)\subseteq \bigast_{i=1}^{2n-1} Im(\wt\beta|_{U_k}^i) \subseteq V$ for all $k\geq K$. Thus $H(x_k,y_k)\in V$ for all $k\geq K$. Thus $\{H(x_k,y_k)\}_{k\in M}$ converges to $H(w,t)$. Also, $\{H(x_n,y_n)\}_{n\in\bbn\setminus M}$ is either finite or converges to $H(w,t)$. Hence $\{H(x_n,y_n)\}_{n\in\bbn}$ converges to $H(w,t)$. Therefore $H$ is continuous.

Let $\gamma(s)=H(s,1)$ and observe that $\gamma\in\Omega(J_{n-1}(X),e)$.
\end{proof}

\begin{lemma}\label{dimensionlower}
For $n\geq 2$, every loop $\alpha\in\Omega(J_n(X),e)$ is homotopic to a loop $\gamma\in\Omega(J_{n-1}(X),e)$.
\end{lemma}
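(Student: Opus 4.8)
The plan is to obtain the reduction in two passes, each a ``removal'' step already established, and then compose the two homotopies. Fix $n\geq 2$ and a loop $\alpha\in\Omega(\jnx,e)$. The set $\alpha^{-1}(\jnx\setminus J_{n-1}(X))$ is open in $(0,1)$, hence a disjoint union of open intervals; on the closure of each such interval the path $\alpha$ takes values in $\jnx\setminus J_{n-1}(X)\cong (X\setminus e)^n$, so by Corollary \ref{decomposition} it lifts uniquely and decomposes as an $n$-fold product $\bigast_{i=1}^{n}\alpha^i$ of paths in $X$. The only obstruction to homotoping $\alpha$ down into $J_{n-1}(X)$ is the pattern of which of these coordinate paths begin and/or end at $e$, and the strategy is to normalize that pattern before collapsing.

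First I would apply Lemma \ref{alltype1removal} to replace $\alpha$ by a homotopic loop $\beta\in\Omega(\jnx,e)$ for which, over every component $(a,b)$ of $\beta^{-1}(\jnx\setminus J_{n-1}(X))$, the coordinate paths of $\beta|_{[a,b]}=\bigast_{i=1}^{n}\beta|_{[a,b]}^i$ satisfy $\beta|_{[a,b]}^j(0)=e$ if and only if $\beta|_{[a,b]}^j(1)=e$. Conceptually this uses the distributive-law shuffle of Lemma \ref{product-to-concatentation} (via Lemma \ref{type1removal}) to annihilate any ``crossed'' pair of coordinates --- one coordinate loop starting at $e$ and a different one ending at $e$ --- thereby pushing the offending component into $J_{n-1}(X)$; performing this on all components simultaneously is legitimate because, by Lemma \ref{imageconvergencelemma}, the images of the local homotopies shrink toward $e$ near the accumulation points of the components, so the glued homotopy is continuous.

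Next I would apply Lemma \ref{alltype2removal} to $\beta$. On this normalized loop each remaining top-stratum component has, by hypothesis, a unique coordinate index whose path is a loop based at $e$; one lifts such a component into $X^{2n-1}$ by interleaving these $e$-based loops among the $n-1$ ``essential'' coordinate paths (Lemma \ref{type2removal}), then shuffles again by the distributive law so that every factor of the resulting concatenation has at least $n$ coordinates equal to $e$, forcing that factor --- and hence the whole component --- into $J_{n-1}(X)$. Gluing over all components, with continuity at the component boundaries again supplied by Lemma \ref{imageconvergencelemma}, produces $\gamma\in\Omega(J_{n-1}(X),e)$ with $\beta\simeq\gamma$. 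Transitivity of $\simeq$ then gives $\alpha\simeq\gamma$, which is the assertion.

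\textbf{Main obstacle.} The substantive difficulty is entirely contained in Lemmas \ref{alltype1removal} and \ref{alltype2removal} and need not be redone here: a single shuffle cannot accomplish the reduction, since a shuffle that removes ``crossed'' $e$-patterns may create ``balanced'' ones and conversely, so two passes are genuinely required; and each pass is an infinitary gluing whose continuity at the (possibly infinitely many, accumulating) boundaries of the interval decomposition relies on the pre-$\Delta$-monoid structure of $\jx$ (Lemma \ref{jxisapredeltamonoidlemma}) together with the image-shrinking estimate of Lemma \ref{imageconvergencelemma}. Granting those lemmas, the present statement is just their composition.
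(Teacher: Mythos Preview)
Your proposal is correct and follows precisely the paper's own proof: apply Lemma \ref{alltype1removal} to obtain the normalized loop $\beta$, then apply Lemma \ref{alltype2removal} to $\beta$ to produce $\gamma\in\Omega(J_{n-1}(X),e)$. Your discussion of the mechanisms behind those lemmas is accurate but unnecessary here, since the lemma is, as you note, just their composition.
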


\begin{proof}
For an arbitrary loop $\alpha\in\Omega(\jnx,e)$, we apply Lemma \ref{alltype1removal} to obtain a loop $\beta\in\Omega(\jnx,e)$, homotopic to $\alpha$, which satisfies the conditions of Lemma \ref{alltype2removal}. Then by Lemma \ref{alltype2removal}, $\beta$ is homotopic to loop $\gamma\in\Omega(J_{n-1}(X),e)$.
\end{proof}

Finally, we are able to complete the proof of Theorem \ref{surjectivitytheorem}.

\begin{proof}[Proof of Theorem \ref{surjectivitytheorem}]
Given a loop $\alpha\in \Omega(\jx,e)$, we have $Im(\alpha)\subseteq \jnx$ for some $n\in\bbn$ by Proposition \ref{sequentiallycompactprop}. Lemma \ref{agreeingtopologieslemma} ensures that $\alpha$ is continuous with respect to the quotient topology on $\jnx$. Repeated application of Lemma \ref{dimensionlower} implies that there exists $\beta\in\Omega(J_1(X),e)$ with $\alpha\simeq\beta$ in $\jx$. Thus $\sigma_{\#}([\beta])=[\alpha]$.
\end{proof}

We consider some immediate consequences of Theorem \ref{surjectivitytheorem}, which will be used in Section \ref{computationsection}. The Hurewicz homomorphism $h_X:\pi_1(X,e)\to H_1(X)$ is the abelianization map, $\pi_1(\jx,e)$ is abelian, and $\sigma_{\#}:\pi_1(X,e)\to \pi_1(\jx,e)$ is surjective. Hence, we have the following corollary.

\begin{corollary}\label{rhocorollary}
If $X$ is a path-connected Hausdorff space and $h_X:\pi_1(X,e)\to H_1(X)$ is the Hurewicz homomorphism, then there exists a unique, natural, surjective homomorphism $\rho:H_1(X)\to \pi_1(\jx,e)$ such that $\rho\circ h_X=\sigma_{\#}$.
\end{corollary}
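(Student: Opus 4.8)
The plan is a short diagram chase resting on the three ingredients assembled just above the statement: the Hurewicz homomorphism $h_X\colon\pi_1(X,e)\to H_1(X)$ is the abelianization map; $\pi_1(\jx,e)$ is abelian, since $\jx$ is a pre-$\Delta$-monoid for Hausdorff $X$ (Lemma \ref{jxisapredeltamonoidlemma}) and in any pre-$\Delta$-monoid one has $\alpha\cdot\beta\simeq\beta\cdot\alpha$ rel endpoints for loops based at the identity (Corollary \ref{smallcommutinghomotopylemma}); and $\sigma_{\#}\colon\pi_1(X,e)\to\pi_1(\jx,e)$ is surjective by Theorem \ref{surjectivitytheorem}. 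Because the target $\pi_1(\jx,e)$ is abelian, $\sigma_{\#}$ kills every commutator, so $\ker(h_X)=[\pi_1(X,e),\pi_1(X,e)]\subseteq\ker(\sigma_{\#})$; the universal property of the quotient homomorphism $h_X$ then produces a unique homomorphism $\rho\colon H_1(X)\to\pi_1(\jx,e)$ with $\rho\circ h_X=\sigma_{\#}$. This settles existence and uniqueness simultaneously.

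For surjectivity I would simply compute $\rho(H_1(X))=\rho(h_X(\pi_1(X,e)))=\sigma_{\#}(\pi_1(X,e))$, which is all of $\pi_1(\jx,e)$ by Theorem \ref{surjectivitytheorem}.

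Naturality is the only point calling for a little care. Given a based map $f\colon(X,e_1)\to(Y,e_2)$, the induced monoid homomorphism $J(f)\colon\jx\to J(Y)$ (continuous, as in the discussion around Lemma \ref{homotopylemma}, because $q_X$ is a quotient map by Proposition \ref{quotientmapprop1}) satisfies $J(f)\circ\sigma_X=\sigma_Y\circ f$, hence $J(f)_{\#}\circ(\sigma_X)_{\#}=(\sigma_Y)_{\#}\circ f_{\#}$ on fundamental groups. Using naturality of the Hurewicz homomorphism, $h_Y\circ f_{\#}=f_{\ast}\circ h_X$, I would then compute $J(f)_{\#}\circ\rho_X\circ h_X=J(f)_{\#}\circ(\sigma_X)_{\#}=(\sigma_Y)_{\#}\circ f_{\#}=\rho_Y\circ h_Y\circ f_{\#}=\rho_Y\circ f_{\ast}\circ h_X$, and cancel the surjection $h_X$ on the right to conclude $J(f)_{\#}\circ\rho_X=\rho_Y\circ f_{\ast}$, which is exactly the required naturality square.

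I do not expect a genuine obstacle: once Theorem \ref{surjectivitytheorem} and the commutativity of $\pi_1$ of a pre-$\Delta$-monoid are in hand, the statement is formal. The only bookkeeping to watch is that ``natural'' is phrased against the correct morphisms, using functoriality of $J$ on based maps together with the fact (from Lemma \ref{homotopylemma}) that $J$ carries based homotopies to based homotopies, so that $J(f)_{\#}$ depends only on the based homotopy class of $f$.
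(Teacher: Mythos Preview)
Your proof is correct and follows essentially the same approach as the paper, which simply notes in the sentence preceding the corollary that $h_X$ is abelianization, $\pi_1(\jx,e)$ is abelian, and $\sigma_{\#}$ is surjective. You have fleshed out more detail than the paper provides, particularly on naturality, which the paper leaves entirely implicit.
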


The naturality of the maps $\sigma$ gives $\sigma_{\#}\circ f_{\#}=J(f)_{\#}\circ\sigma_{\#}$ for any based map $f:(X,e_1)\to (Y,e_2)$. Applying Theorem \ref{surjectivitytheorem} gives the following.

\begin{corollary}\label{inducedsurjectioncorollary}
If $Y$ is path-connected and Hausdorff and the map $f:(X,e_1)\to (Y,e_2)$ induces a surjection on $\pi_1$, then so does the homomorphism $J(f):(J(X),e_1)\to (J(Y),e_2)$.
\end{corollary}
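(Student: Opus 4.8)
The plan is to combine the naturality of the inclusion-of-generators maps $\sigma$ with the surjectivity result of Theorem \ref{surjectivitytheorem}. First, I would record the commuting square obtained by applying $\pi_1$ to the naturality square for $\sigma$: since $J$ is functorial and $J(f)\circ\sigma_X=\sigma_Y\circ f$ (both sides send a generator $x$ to the one-letter word $f(x)$), applying $\pi_1$ gives $J(f)_{\#}\circ (\sigma_X)_{\#}=(\sigma_Y)_{\#}\circ f_{\#}$ as homomorphisms $\pi_1(X,e_1)\to\pi_1(J(Y),e_2)$. This is exactly the identity quoted just before the corollary statement.

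Next I would chase surjectivity around this square. The hypotheses are that $Y$ is path-connected and Hausdorff and that $f_{\#}:\pi_1(X,e_1)\to\pi_1(Y,e_2)$ is surjective. By Theorem \ref{surjectivitytheorem} applied to the space $Y$, the homomorphism $(\sigma_Y)_{\#}:\pi_1(Y,e_2)\to\pi_1(J(Y),e_2)$ is surjective. Hence the composite $(\sigma_Y)_{\#}\circ f_{\#}:\pi_1(X,e_1)\to\pi_1(J(Y),e_2)$ is a composition of two surjections and is therefore surjective. By the commuting square, this composite equals $J(f)_{\#}\circ(\sigma_X)_{\#}$, so $J(f)_{\#}\circ(\sigma_X)_{\#}$ is surjective. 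A homomorphism whose composite with another homomorphism on the right is surjective must itself be surjective onto the same target; therefore $J(f)_{\#}:\pi_1(J(X),e_1)\to\pi_1(J(Y),e_2)$ is surjective.

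There is essentially no obstacle here: the only nontrivial input is Theorem \ref{surjectivitytheorem}, which has already been proved, and the only care needed is to make sure the functoriality/naturality identity $J(f)\circ\sigma_X=\sigma_Y\circ f$ is invoked correctly (it holds by the universal property of $J(X)$ as the free monoid on $(X,e)$, since $J(f)$ is the unique monoid homomorphism extending the based map $x\mapsto f(x)\in J(Y)$). Note that $X$ need not be assumed Hausdorff for this argument — only $Y$ is, so that Theorem \ref{surjectivitytheorem} applies to $Y$ — and indeed the statement of the corollary reflects this. One could phrase the last step slightly more carefully by noting that $\mathrm{Im}(J(f)_{\#})\supseteq \mathrm{Im}(J(f)_{\#}\circ(\sigma_X)_{\#})=\pi_1(J(Y),e_2)$, which immediately forces equality.
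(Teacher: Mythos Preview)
Your proof is correct and follows the same approach as the paper: the paper simply notes the naturality identity $\sigma_{\#}\circ f_{\#}=J(f)_{\#}\circ\sigma_{\#}$ and then states that applying Theorem \ref{surjectivitytheorem} (to $Y$) gives the result. Your write-up is a faithful expansion of exactly this argument.
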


\begin{corollary}\label{monoidisomorphism}
If $M$ is a path-connected, Hausdorff pre-$\Delta$-monoid, then $\sigma_{\#}:\pi_1(M,e)\to \pi_1(J(M),e)$ is an isomorphism.
\end{corollary}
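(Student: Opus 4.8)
The plan is to exhibit a two-sided inverse to $\sigma_\#$. Since $M$ is a topological monoid — or at worst a pre-$\Delta$-monoid, which is enough for the universal property we need — the identity map $\mathrm{id}_M:(M,e)\to(M,e)$ extends to a continuous homomorphism $\mu:J(M)\to M$ with $\mu\circ\sigma=\mathrm{id}_M$; this is just the monoid multiplication $\mu(x_1x_2\cdots x_n)=x_1\ast x_2\ast\cdots\ast x_n$, which is continuous on each $J_n(M)$ because $\mu_n:M^n\to M$ factors through $q_n$ (here one uses that $\mu_n$ is continuous by Proposition \ref{predeltamoncharprop}(3), noting $M^n$ is $\Delta$-generated as a product of paths' domains only after passing through $\Delta(M^n)$ — more carefully, $\mu_n\circ q_n^{-1}$ is well-defined and continuous since $\mu_n$ kills the relation generating $\sim$, and $q_n$ is quotient). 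Passing to fundamental groups, $\mu_\#\circ\sigma_\# = \mathrm{id}_{\pi_1(M,e)}$, so $\sigma_\#$ is injective (a section).

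It remains to show $\sigma_\#$ is surjective, and this is exactly Theorem \ref{surjectivitytheorem}: $M$ is path-connected and Hausdorff by hypothesis, so the inclusion $\sigma:M\to J(M)$ induces a surjection on $\pi_1$. Combining injectivity (from the retraction $\mu$) with surjectivity (from Theorem \ref{surjectivitytheorem}) gives that $\sigma_\#$ is an isomorphism, with inverse $\mu_\#$.

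The only subtle point — the "main obstacle" — is verifying the continuity of $\mu:J(M)\to M$ when $M$ is merely a pre-$\Delta$-monoid rather than a genuine topological monoid, since then $\mu_n:M^n\to M$ need not be continuous in the product topology. The fix is to observe that we only ever need $\mu\circ\alpha$ to be continuous for paths $\alpha:I\to J(M)$; by Proposition \ref{sequentiallycompactprop} such an $\alpha$ has image in some $J_n(M)$, and $J_n(M)$ carries the quotient topology of $q_n:M^n\to J_n(M)$ by Lemma \ref{agreeingtopologieslemma}, so it suffices that $\mu_n:\Delta(M^n)\to M$ be continuous — which is precisely Proposition \ref{predeltamoncharprop}(3). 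Thus $\mu$ is at least a continuous map on each path (indeed on $\Delta(J(M))$), which is all that is required to induce $\mu_\#$ on $\pi_1$ and to make the composite $\mu_\#\circ\sigma_\#$ the identity; alternatively one simply notes that $J(M)\to M$ is continuous outright whenever $M$ is a topological monoid, and invokes Lemma \ref{homequivlemma} together with the fact that the relevant properties are homotopy invariant to reduce the pre-$\Delta$ case to the topological-monoid case via the Moore loop space replacement. Either route closes the argument.
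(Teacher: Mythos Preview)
Your approach is exactly the paper's: the identity $M\to M$ extends to a retraction $\wt{id}:J(M)\to M$ of $\sigma$, so $\sigma_{\#}$ is split injective, and surjectivity is Theorem \ref{surjectivitytheorem}; the paper's proof is two sentences and simply asserts the continuity of $\wt{id}$ without further comment.

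Your extended discussion of the pre-$\Delta$-monoid continuity issue is more cautious than the paper, but has two loose ends worth flagging. First, the inference from ``$\mu_n:\Delta(M^n)\to M$ continuous'' to ``$\mu\circ\alpha$ continuous for paths $\alpha:I\to J_n(M)$'' is not immediate, since such a path need not lift globally to $M^n$; filling this in would require the path-decomposition machinery of Corollary \ref{liftlemma} and Lemma \ref{imageconvergencelemma} (and one must also handle homotopies $I^2\to J(M)$, not just paths, for $\mu_{\#}$ to be well-defined on homotopy classes). Second, the Moore-loop-space reduction is a non sequitur: $M$ is an arbitrary pre-$\Delta$-monoid, not a loop space, so there is no Moore replacement available and Lemma \ref{homequivlemma} has nothing to act on. Neither point undermines the core argument, which matches the paper's.
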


\begin{proof}
The identity map $id:M\to M$ induces a continuous homomorphism $\wt{id}:J(M)\to M$ such that $\wt{id}\circ\sigma=id$. Since $\sigma$ is a section, $\sigma_{\#}$ is injective. Surjectivity is given by Theorem \ref{surjectivitytheorem}. 
\end{proof}

Hence, iterated application of the James reduced product immediately stabilizes for $\pi_1$.

\begin{remark}
Using the surjectivity of $\sigma_{\#}$ or $\rho$, known cardinality results for fundamental groups \cite{Pawlikowski,Shelah} or first singular homology \cite{ConnerCorsionH1} imply that if $X$ is a Peano continuum, then either (1) $X$ is semilocally simply connected and $\pi_1(\jx,e)$ is a finitely generated abelian group or (2) $X$ is not semilocally simply connected and $\pi_1(\jx,e)$ is uncountable.
\end{remark}

\subsection{Some Computations}\label{computationsection}

We begin by considering the case when $X$ is well-pointed at its basepoint $e\in X$, i.e. when the inclusion $\{e\}\to X$ is a cofibration. Although, we could argue that the proof in \cite[Chapter VII.2]{WhiteheadEOH} applies, we provide a simple and direct argument that the epimorphism $\rho:H_1(X)\to \pi_1(\jx,e)$ from Corollary \ref{rhocorollary} is an isomorphism. Let $\lambda:X\to \Omega^{\ast}(\Sigma X)$ be the unit map of the loop-suspension adjunction composed with the based-homotopy equivalence $\Omega(\Sigma X)\cong \Omega^{\ast}(\Sigma X)$ to the Moore loop space. Let $\wt{\lambda}:J(X)\to \Omega^{\ast}(\Sigma X)$ be the induced continuous homomorphism such that $\wt{\lambda}\circ\sigma=\lambda$.

\begin{theorem}\label{wellpointedisomorphismtheorem}
If $X$ is path-connected, Hausdorff, and well-pointed at its basepoint $e\in X$, then $\rho:H_1(X)\to \pi_1(\jx,e)$ is an isomorphism.
\end{theorem}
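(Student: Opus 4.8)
The plan is the following. By Corollary \ref{rhocorollary} the homomorphism $\rho\colon H_1(X)\to\pi_1(\jx,e)$ is surjective, so it suffices to prove that it is injective, and for this I would route $\rho$ through the classical loop--suspension machinery by means of the continuous homomorphism $\wt{\lambda}\colon\jx\to\Omega^{\ast}(\Sigma X)$ (which exists because $\Omega^{\ast}(\Sigma X)$ is a genuine topological monoid, so the universal property of $\jx$ applies, and because the unit $X\to\Omega(\Sigma X)$ is a based map). First I would record the identities: since $\wt{\lambda}\circ\sigma=\lambda$ we get $\wt{\lambda}_{\#}\circ\sigma_{\#}=\lambda_{\#}$, and since $\sigma_{\#}=\rho\circ h_X$ (Corollary \ref{rhocorollary}) this gives
\[
\wt{\lambda}_{\#}\circ\rho\circ h_X=\lambda_{\#}\colon \pi_1(X,e)\longrightarrow \pi_1(\Omega^{\ast}(\Sigma X)).
\]
Because $h_X$ is the abelianization map, hence surjective, it suffices to exhibit $\lambda_{\#}$ as an \emph{injective} homomorphism out of $H_1(X)$ precomposed with $h_X$; then $\wt{\lambda}_{\#}\circ\rho$ is injective and, together with the surjectivity of $\rho$, we are done.

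Next I would identify $\lambda_{\#}$. Composing with the natural isomorphisms $\pi_1(\Omega^{\ast}(\Sigma X))\cong\pi_1(\Omega(\Sigma X))\cong\pi_2(\Sigma X)$ (the first induced by the based homotopy equivalence of the excerpt, the second the standard loop-space isomorphism), $\lambda_{\#}$ becomes the suspension homomorphism $E\colon\pi_1(X,e)\to\pi_2(\Sigma X)$, directly from the definition of the unit of the adjunction. Naturality of the Hurewicz map together with naturality of the homology suspension then yields a commuting square
\[
\begin{CD}
\pi_1(X,e) @>E>> \pi_2(\Sigma X)\\
@Vh_XVV @VVh_{\Sigma X}V\\
H_1(X) @>\ov{E}>> H_2(\Sigma X)
\end{CD}
\]
in which $\ov{E}$ is the homology suspension.

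Two standard facts finish the argument. First, since $X$ is path-connected, $\Sigma X$ is path-connected and simply connected --- apply the van Kampen theorem to the two open cones whose intersection deformation retracts onto $X$ --- so $H_1(\Sigma X)=0$ and the Hurewicz theorem makes $h_{\Sigma X}\colon\pi_2(\Sigma X)\to H_2(\Sigma X)$ an isomorphism. Second, because $(X,e)$ is well-pointed, the reduced suspension is homotopy equivalent to the unreduced one, and hence $\ov{E}\colon H_1(X)\to H_2(\Sigma X)$ is an isomorphism (Mayer--Vietoris, or the cofiber sequence of $\{e\}\hookrightarrow X$). Chasing the square gives $E=h_{\Sigma X}^{-1}\circ\ov{E}\circ h_X$; combined with the first paragraph and the surjectivity of $h_X$ this shows that, under the identification $\pi_1(\Omega^{\ast}(\Sigma X))\cong\pi_2(\Sigma X)$, the map $\wt{\lambda}_{\#}\circ\rho$ agrees with $h_{\Sigma X}^{-1}\circ\ov{E}$, an isomorphism. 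In particular $\rho$ is injective, and therefore an isomorphism.

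The step I expect to require the most care is the identification of $\lambda_{\#}$ with the suspension homomorphism and the verification that the Hurewicz--suspension square commutes in this non-CW generality: this is the bridge translating the purely monoid-theoretic epimorphism $\rho$ of Corollary \ref{rhocorollary} into classical homology. It is also the only place where well-pointedness --- rather than mere Hausdorffness and path-connectedness --- enters, namely it is exactly what guarantees that the homology suspension $\ov{E}$ is an isomorphism.
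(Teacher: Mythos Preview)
Your proposal is correct and follows essentially the same route as the paper: both factor through $\wt{\lambda}_{\#}$, use the compatibility of the suspension and Hurewicz maps, and invoke well-pointedness to make the homology suspension $H_1(X)\to H_2(\Sigma X)$ an isomorphism, from which injectivity of $\rho$ follows. The only cosmetic difference is that you additionally invoke the Hurewicz theorem to show $h_{\Sigma X}\colon\pi_2(\Sigma X)\to H_2(\Sigma X)$ is an isomorphism, whereas the paper deduces injectivity of $\rho$ directly from the equality $h_2\circ\wt{\lambda}_{\#}\circ\rho=\Sigma$ with $\Sigma$ an isomorphism.
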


\begin{proof}
Since $X$ is well-pointed, $\Sigma X$ is homotopy equivalent to the unreduced suspension on $X$ and it follows that the suspension homomorphism $\Sigma:H_1(X)\to H_2(\Sigma X)$ is an isomorphism. The injectivity of $\rho$ may be verified directly from the following commutative diagram where $h_1,h_2$ are the Hurewicz homomorphisms. Note that the outermost square commutes by the compatibility of the suspension operations and Hurewicz homomorphisms. The surjectivity of $h_1$ then gives the commutativity of the bottom square.
\[\xymatrix{
    & \pi_1(X) \ar[dr]^-{\lambda_{\#}} \ar@{->>}[dl]_-{h_1} \ar[d]_-{\sigma_{\#}} & \\
 H_1(X) \ar[dr]_-{\Sigma}^{\cong} \ar[r]_-{\rho} & \pi_1(\jx)  \ar[r]^-{\wt{\lambda}_{\#}}  & \pi_2(\Sigma X) \ar[dl]^-{h_2}\\
& H_2(\Sigma X)
}\]
\end{proof}

\begin{example}\label{heexampleotherpoint}
It is known that $H_1(\bbh)\cong \bbz^{\bbn}\oplus (\bbz^{\bbn}/\oplus_{\bbn}\bbz)$ where the second summand combines \cite[VII.42 Exercise 7]{Fuchs} with the Eda-Kawamura representation in \cite{Edasingularonedim16}. If $e\in\bbh$ is any point other than $b_0$, write $J(\bbh,e)$ for the James reduced product on $(\bbh,e)$. Since $\bbh$ is locally contractible at $e$, we have canonical isomorphism $\rho:H_1(\bbh)\to\pi_1(J(\bbh,e),e)$. Although $J(\bbh,e)$ is transfinitely $\pi_1$-commutative at its identity $e$ (since it is locally contractible at $e$), it is not transfinitely $\pi_1$-commutative at $b_0\in\bbh\subseteq J(\bbh,e)$. For if it were, the loop $\prod_{n=1}^{\infty}(\ell_n\cdot\ell_{n+1}\cdot\ell_{n}^{-}\cdot\ell_{n+1}^{-})\in \Omega(J(\bbh,e),b_0)$ would represent the trivial element in $\pi_1(J(\bbh,e),b_0)$. However, this loop is not null-homologous in $\bbh$ by Eda's $0$-form Lemma \cite[Lemma 3.6]{Edasingularonedim16}.
\end{example}

We contrast the previous example by using transfinite $\pi_1$-commutativity to compute $\pi_1(J(\bbh),b_0)$. Recall that $\mathbb{T}=\prod_{n\in\bbn}S^1$ is the infinite torus and that the embedding $\eta:\bbh\to\bbt$ induces the surjection $\eta_{\#}:\pi_1(\bbh,b_0)\to \bbz^{\bbn}$. Since $\bbt$ is a topological group with identity element $x_0$, there is a unique continuous monoid homomorphism $\wt{\eta}:J(\bbh)\to\bbt$ such that $\wt{\eta}\circ\sigma=\eta$.

\begin{theorem}\label{jhtheorem}
$\wt{\eta}_{\#}:\pi_1(J(\bbh),b_0)\to \pi_1(\bbt,x_0)$ is an isomorphism. In particular, $\pi_1(J(\bbh),b_0)\cong \bbz^{\bbn}$.
\end{theorem}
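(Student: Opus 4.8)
The plan is to produce a two-sided inverse of $\wt{\eta}_{\#}$ out of the factorization of $\sigma_{\#}$ that is forced by the pre-$\Delta$-monoid structure of $J(\bbh)$. The two substantive inputs — surjectivity of $\sigma_{\#}$ (Theorem \ref{surjectivitytheorem}) and transfinite $\pi_1$-commutativity of pre-$\Delta$-monoids at the identity (Theorem \ref{monoidtransfinitecommutethm}, packaged as Corollary \ref{factorizationlemma}) — are already available, so the argument is essentially a diagram chase.

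First I would record the maps in play. Since $\bbh$ is path-connected and Hausdorff, Theorem \ref{surjectivitytheorem} gives that $\sigma_{\#}\colon\pi_1(\bbh,b_0)\to\pi_1(J(\bbh),b_0)$ is surjective, and the embedding $\eta\colon\bbh\to\bbt$ induces the surjection $\eta_{\#}\colon\pi_1(\bbh,b_0)\to\pi_1(\bbt,x_0)=\bbz^{\bbn}$. From the defining identity $\wt{\eta}\circ\sigma=\eta$ we obtain $\wt{\eta}_{\#}\circ\sigma_{\#}=\eta_{\#}$, and since $\eta_{\#}$ is surjective this already forces $\wt{\eta}_{\#}$ to be surjective.

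For injectivity I would use that $J(\bbh)$ is a pre-$\Delta$-monoid with identity $b_0$ (Lemma \ref{jxisapredeltamonoidlemma}), so Corollary \ref{factorizationlemma} applies to the map $\sigma\colon(\bbh,b_0)\to(J(\bbh),b_0)$ and yields a unique homomorphism $g\colon\bbz^{\bbn}\to\pi_1(J(\bbh),b_0)$ with $g\circ\eta_{\#}=\sigma_{\#}$. Conceptually, transfinite $\pi_1$-commutativity of $J(\bbh)$ at its identity forces $\sigma_{\#}$ to annihilate the highly non-commutative kernel $\ker(\eta_{\#})$, so $\sigma_{\#}$ factors through the ``strong abelianization'' $\bbz^{\bbn}$ of $\pioneh$. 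It then remains to check that $g$ inverts $\wt{\eta}_{\#}$. On one side, $\wt{\eta}_{\#}\circ g\circ\eta_{\#}=\wt{\eta}_{\#}\circ\sigma_{\#}=\eta_{\#}$, and cancelling the epimorphism $\eta_{\#}$ on the right gives $\wt{\eta}_{\#}\circ g=\mathrm{id}_{\bbz^{\bbn}}$. On the other side, $g\circ\wt{\eta}_{\#}\circ\sigma_{\#}=g\circ\eta_{\#}=\sigma_{\#}$, and cancelling the epimorphism $\sigma_{\#}$ on the right gives $g\circ\wt{\eta}_{\#}=\mathrm{id}_{\pi_1(J(\bbh),b_0)}$. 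Hence $\wt{\eta}_{\#}$ is an isomorphism with inverse $g$, and in particular $\pi_1(J(\bbh),b_0)\cong\bbz^{\bbn}$.

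I do not expect a genuine obstacle at this stage: the heavy lifting has already been done, and what remains is the assembly above. The one point requiring care is that $\sigma_{\#}$ and $\eta_{\#}$ are \emph{only} surjective, not injective, so the inverse of $\wt{\eta}_{\#}$ cannot be obtained by inverting either map directly; it must be extracted as the factorization $g$ supplied by Corollary \ref{factorizationlemma}, and the two cancellation computations are exactly what is needed to verify that $g$ does the job.
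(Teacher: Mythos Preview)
Your proposal is correct and follows essentially the same argument as the paper: both produce the factorization $g$ of $\sigma_{\#}$ through $\eta_{\#}$ via Corollary \ref{factorizationlemma}, then use the surjectivity of $\eta_{\#}$ and $\sigma_{\#}$ to cancel and verify that $g$ and $\wt{\eta}_{\#}$ are mutual inverses. Your write-up is slightly more explicit (separating out the surjectivity of $\wt{\eta}_{\#}$ and citing Lemma \ref{jxisapredeltamonoidlemma}), but the substance is identical.
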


\begin{proof}
By Theorem \ref{surjectivitytheorem}, the inclusion $\sigma:\bbh\to J(\bbh)$ induces a surjection on $\pi_1$ and by Corollary \ref{factorizationlemma}, there exists a unique homomorphism $g:\pi_1(\bbt,x_0)\to \pi_1(J(\bbh),b_0)$ such that $g\circ \eta_{\#}=\sigma_{\#}$.
\[\xymatrix{
& \pi_1(\bbh,b_0) \ar@{->>}[dl]_-{\eta_{\#}} \ar@{->>}[d]_-{\sigma_{\#}} \ar@{->>}[dr]^-{\eta_{\#}} \\
\pi_1(\bbt,x_0) \ar[r]_-{g} & \pi_1(J(\bbh),b_0) \ar[r]_-{\wt{\eta}_{\#}} & \pi_1(\bbt,x_0)
}\]
Since $\wt{\eta}_{\#}\circ g\circ \eta_{\#}=\wt{\eta}_{\#}\circ\sigma_{\#}=\eta_{\#}$ where $\eta_{\#}$ is surjective, we have $\wt{\eta}_{\#}\circ g=id$. Additionally, since $\sigma_{\#}=g\circ \eta_{\#}=g\circ \wt{\eta}_{\#}\circ\sigma_{\#}$ and $\sigma_{\#}$ is surjective, we have $g\circ \wt{\eta}_{\#}=id$. Thus $g$ and $\wt{\eta}_{\#}$ are inverses.
\end{proof}

\begin{remark}
Since $\pi_1(J(\bbh),b_0)\cong \bbz^{\bbn}$ is cotorsion-free, it follows from Theorem \ref{cotorsionfreemainthm} that $J(\bbh)$ is transfinitely $\pi_1$-commutative and has well-defined transfinite $\pi_1$-products at all of its points. This provides an example of a monoid, which is not a group, satisfying the conclusion of Theorem \ref{grouptheorem}.
\end{remark}

\begin{remark}
The previous computations for $(\bbh,b_0)$ and $(\bbh,e)$ with $e\neq b_0$ indicate that given two points $e_1,e_2\in X$, the groups $\pi_1(J(X,e_1),e_1)$ and $\pi_1(J(X,e_2),e_2)$ need not be isomorphic. In particular, if $e\neq b_0$, then $\pi_1(J(\bbh),e)\cong H_1(\bbh)$ is first singular homology and $\pi_1(J(\bbh),b_0)\cong \check{H}_1(\bbh)$ is first \v{C}ech singular homology group.
\end{remark}

\begin{example}[Double Hawaiian Earring]\label{doubleheexample}
In $\bbr^2$, set $\bbh_1=\bbh$, $\bbh_2=\{(-x-1,y)\mid (x,y)\in\bbh\}$, and $A=[-1,0]\times \{0\}$. Then $X=\bbh_1\cup \bbh_2\cup A$ consists of the line segment $A$ with a copy of $\bbh$ attached at each endpoint (See Figure \ref{hearc}). We take the origin $e=b_0$ to be the basepoint of $X$ and let $D$ be the smallest closed disk containing $\bbh_1$. Let $i:\bbh_1\to X$ be the inclusion and $r:X\to \bbh_2\cup A$ be the map collapsing $\bbh_1$ to $e$. Since $\bbh_1$ and $\bbh_2\cup A$ are both based retracts of $X$, the induced homomorphisms $i_{\#}$, $J(i)_{\#}$ in the diagram below are injective and, likewise, the homomorphisms $r_{\#}$, $J(r)_{\#}$ are surjective.
\[\xymatrix{
1 \ar[r] & \pi_1(\bbh_1,e) \ar@{->>}[d]_-{\sigma_{\#}}  \ar[r]^-{i_{\#}} & \pi_1(X,e) \ar@{->>}[d]_-{\sigma_{\#}} \ar@{->>}[r]^-{r_{\#}} & \pi_1(\bbh_2\cup A,e) \ar@{->>}[d]_-{\sigma_{\#}}  \ar[r] & 1\\
1 \ar[r] & \pi_1(J(\bbh_1),e) \ar[r]_-{J(i)_{\#}} & \pi_1(\jx,e) \ar@{->>}[r]_-{J(r)_{\#}} & \pi_1(J(\bbh_2\cup A),e) \ar[r] & 1
}\]
Since the composition $r_{\#}\circ i_{\#}$ is trivial and the vertical maps are surjective, it is clear that $J(r)_{\#}\circ J(i)_{\#}$ is trivial. We show the lower sequence is exact. Suppose $\alpha\in \Omega(X,e)$ such that $J(r)_{\#}(\sigma_{\#}([\alpha]))=[\sigma\circ r\circ\alpha]=1$ in $\pi_1(J(\bbh_2\cup A),e)$. Since $X=(\bbh_1,e)\vee (\bbh_2\cup A,e)$ where $\bbh_2\cup A$ is locally simply connected at $e$, we may write $\alpha=a_1\cdot b_1\cdot a_2\cdot b_2\cdots a_m\cdot b_m$ where $a_k\in \Omega(\bbh_1,e)$ and $b_k\in \Omega(\bbh_2\cup A,e)$. However, since $\bbh_2\cup A$ is well-pointed at $e$, the homomorphism $\sigma_{\#}:\pi_1(\bbh_2\cup A)\to \pi_1(J(\bbh_2\cup A),e)$ is abelianization. Therefore, $[r\circ\alpha]=\left[\prod_{k=1}^{m}b_k\right]$ lies in the commutator subgroup of $\pi_1(\bbh_2\cup A,e)$. Since $r_{\#}$ is a retraction, $\left[\prod_{k=1}^{m}b_k\right]$ also lies in the commutator subgroup of $\pi_1(X,e)$. Since $\pi_1(\jx,e)$ is abelian, we have $\left[\sigma\circ \prod_{k=1}^{m}b_k\right]=1$ in $\pi_1(J(X),e)$ and therefore, \[J(i)_{\#}\left(\left[\sigma\circ\prod_{k=1}^{m}a_k\right]\right)=\prod_{k=1}^{m}[\sigma\circ a_k]=\prod_{k=1}^{m}[\sigma\circ a_k]\prod_{k=1}^{m}[\sigma\circ b_k]=\sigma_{\#}([\alpha]).\]
This proves that $\ker(J(r)_{\#})\leq Im(J(i)_{\#})$.

Since the lower sequence is exact and $\bbh_1$ is a retract of $X$, the homomorphism $J(i)_{\#}$ splits to give $\pi_1(\jx,e)\cong \pi_1(J(\bbh_1),e)\oplus \pi_1(J(\bbh_2\cup A),e)$. Recall that we computed $\pi_1(J(\bbh_1),e)\cong \check{H}_1(\bbh)\cong \bbz^{\bbn}$ in Theorem \ref{jhtheorem} and since $\bbh_2\cup A$ is well-pointed at $e$ and (freely) homotopic to $\bbh_2$, we have $\pi_1(J(\bbh_2\cup A),e)\cong H_1(\bbh_2\cup A)\cong H_1(\bbh_2)\cong \bbz^{\bbn}\oplus (\bbz^{\bbn}/\oplus_{\bbn}\bbz)$. We find that it is most intuitive to understand $\pi_1(\jx,e)$ by the isomorphism $\pi_1(\jx,e)\cong \check{H}_1(\bbh)\oplus H_1(\bbh)$. The first summand is \v{C}ech homology since $\jx$ is transfinitely $\pi_1$-commutative at $e$. Since the wild point $(-1,0)\in \bbh_2$ is not used as the basepoint to construct $\jx$, one may show, as in Example \ref{heexampleotherpoint}, that $\jx$ is not transfinitely $\pi_1$-commutative at $(-1,0)$. This explains why the second summand is ordinary singular homology, i.e. the finitary abelianization of $\pi_1(\bbh,b_0)$. Abstractly, we have an isomorphism $\pi_1(\jx,e)\cong \bbz^{\bbn}\oplus (\bbz^{\bbn}/\oplus_{\bbn}\bbz)$ since $\bbz^{\bbn}\oplus \bbz^{\bbn}\cong \bbz^{\bbn}$.

Finally, by taking an infinite concatenation of commutators in $\Omega(\bbh_2,(-1,0))$ as in Example \ref{heexampleotherpoint}, we see that $J(X)$ does not have well-defined infinite $\pi_1$-products at $(-1,0)$. On the other hand, consider the open neighborhood $U=\bbh_1\cup (-1/2,0]$ of $e$ in $X$. Since $q_{n}^{-1}(J_n(U))=U^n$ in $X^n$, a straightforward argument shows that $J(U)$ embeds onto an open subspace of $J(X)$. Since $U\simeq \bbh_1$ rel. basepoint, we have that $\pi_1(J(U),e)\cong \pi_1(J(\bbh_1),e)\cong \bbz^{\bbn}$ is cotorsion-free. Thus, by Theorem \ref{cotorsionfreemainthm}, $J(U)$ has well-defined transfinite $\pi_1$-products at all of its points. Since there is a path-connected open neighborhood of $e$ in $J(X)$ with well-defined transfinite $\pi_1$-products at $e$, $J(X)$ also has well-defined transfinite $\pi_1$-products at $e$. 
\end{example}

\begin{figure}[H]
\centering \includegraphics[height=1.5in]{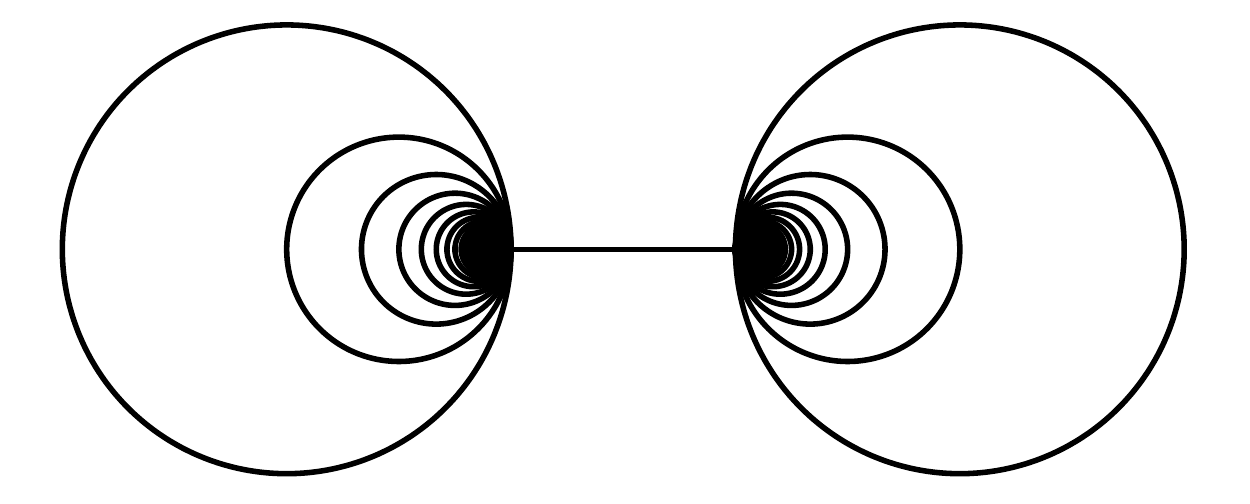}
\caption{\label{hearc}The space $X=\bbh_1\cup \bbh_2\cup A$}
\end{figure}

\begin{remark}[Transfinite Abelianization]\label{infinitaryabelianization}
The above examples serve to show that $\pi_1(\jx,e)$ is an abelian quotient of $\pi_1(X,e)$ in which infinite products of commutators of loops \textit{based only at} $e$ are trivialized. In this sense, $\pi_1(\jx,e)$ appears to behave as an ``infinitary abelianization" of $\pi_1(X,e)$ at $e$. In the case of $X=\bbh$, $\pi_1(J(\bbh),b_0)\cong \bbz^{\bbn}$ agrees with the notion of ``strong abelianization" of $\pioneh$ defined in \cite{CC,Edafreesigmaproducts}. Furthermore, the examples above justify transfinite $\pi_1$-commutativity (Definition \ref{deftransfinitecommut}) as a point-wise property since a space might only have this property at some of its points. This allows one to transfinitely abelianize $\pi_1(X,e)$ at any non-empty subset $A\subseteq X$: \textit{the transfinite abelianization of $\pi_1(X,e)$ at $A\subseteq X$} is the quotient $\pi_1(X,e)/C_{\tau}(A)$ where $C_{\tau}(A)$ is generated by homotopy classes of loops $\beta\cdot \prod_{\tau}\alpha_n\cdot \left(\prod_{\tau}\alpha_{\phi(n)}\right)^{-}\cdot \beta^{-}$ where $\beta:I\to X$ is a path from $e$ to $a\in A$, $\{\alpha_n\}$ is null at $a$, and $\phi:\bbn\to\bbn$ is a bijection. Note that $C_{\tau}(A)$ contains the ordinary commutator subgroup of $\pi_1(X,e)$ as long as $A\neq \emptyset$. Hence, $\pi_1(X,e)/C_{\tau}(A)$ is commutative in the usual sense. By Theorem \ref{monoidtransfinitecommutethm}, $\pi_1(J(X),e)$ is a quotient of the transfinite abelianization of $\pi_1(X,e)$ at $\{e\}$. These two groups are equivalent for all examples considered in this paper. Although the authors do not know if $\pi_1(J(X),e)$ and$\pi_1(X,e)/C_{\tau}(\{e\})$ are always isomorphic, it is expected that such equivalence would occur often (see Problem \ref{endproblem} below).
\end{remark}

Recall that there exists a natural epimorphism $\rho:H_1(X)\to \pi_1(\jx,e)$ induced by the Hurewicz map. Additionally there exists a canonical homomorphism $\Upsilon:H_1(X)\to \check{H}_1(X)$ to the first \v{C}ech singular homology group.

\begin{theorem}\label{kappahomomorphism}
If $X$ is a Peano continuum, then there is a natural surjective homomorphism $\kappa:\pi_1(\jx,e)\to \check{H}_1(X)$ such that $\kappa\circ \rho=\Upsilon$. Moreover, if $\Upsilon$ splits, then so does $\kappa$.
\end{theorem}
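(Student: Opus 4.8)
The plan is to reduce everything to the surjectivity of $\rho$ and the behavior of $J$ on polyhedra. Since $\rho:H_1(X)\to\pi_1(\jx,e)$ is a surjection (Corollary \ref{rhocorollary}), a homomorphism $\kappa$ with $\kappa\circ\rho=\Upsilon$ exists and is unique precisely when $\ker(\rho)\subseteq\ker(\Upsilon)$; once this inclusion is established, naturality, surjectivity, and the splitting claim all follow formally. So the heart of the argument is the kernel inclusion.

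To prove $\ker(\rho)\subseteq\ker(\Upsilon)$ I would fix a based $\mathrm{HPol}$-expansion $p_\lambda:(X,e)\to(X_\lambda,e_\lambda)$ of the compact metric space $X$ by path-connected finite polyhedra with basepoints chosen to be vertices, so that $\check{H}_1(X)=\varprojlim_\lambda H_1(X_\lambda)$ and $\Upsilon$ is induced coordinatewise by the $(p_\lambda)_*$. Let $c\in\ker(\rho)$. Since $X$ is path-connected we may write $c=h_X([\alpha])$ for some $\alpha\in\Omega(X,e)$, and then $[\sigma\circ\alpha]=\rho(c)=1$ in $\pi_1(\jx,e)$ because $\rho\circ h_X=\sigma_\#$. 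Applying the functor $J$ and the naturality of $\sigma$ (i.e. $\sigma\circ p_\lambda=J(p_\lambda)\circ\sigma$), we obtain $[\sigma\circ(p_\lambda\circ\alpha)]=J(p_\lambda)_\#(1)=1$ in $\pi_1(J(X_\lambda),e_\lambda)$ for every $\lambda$. Each $X_\lambda$ is path-connected, Hausdorff, and well-pointed (the inclusion of a vertex of a CW complex is a cofibration), so Theorem \ref{wellpointedisomorphismtheorem} applies: $\rho_{X_\lambda}:H_1(X_\lambda)\to\pi_1(J(X_\lambda),e_\lambda)$ is an isomorphism. Since $\rho_{X_\lambda}\big(h_{X_\lambda}[p_\lambda\circ\alpha]\big)=[\sigma\circ p_\lambda\circ\alpha]=1$ and $\rho_{X_\lambda}$ is injective, $(p_\lambda)_*(c)=h_{X_\lambda}[p_\lambda\circ\alpha]=0$ for all $\lambda$, whence $\Upsilon(c)=0$. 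This produces the unique $\kappa$ with $\kappa\circ\rho=\Upsilon$.

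Naturality of $\kappa$ is then automatic: for a based map $f:(X,e)\to(Y,e')$ of Peano continua the squares for $\rho$ and $\Upsilon$ commute by hypothesis, and since $\rho_X$ is an epimorphism the identity $\check{H}_1(f)\circ\kappa_X=\kappa_Y\circ J(f)_\#$ may be checked after precomposition with $\rho_X$, where both sides reduce to $\Upsilon_Y\circ H_1(f)$. For surjectivity, $Im(\kappa)=Im(\kappa\circ\rho)=Im(\Upsilon)$ because $\rho$ is onto, so it suffices to recall that $\Upsilon:H_1(X)\to\check{H}_1(X)$ is an epimorphism for every Peano continuum; this is a standard consequence of local path-connectedness, a thread in $\varprojlim_\lambda H_1(X_\lambda)$ being realized by a single loop in $X$ built as an infinite concatenation of loops supported in successively finer connected open covers (alternatively, one cites the known result). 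Finally, if $s:\check{H}_1(X)\to H_1(X)$ is a section of $\Upsilon$, then $\rho\circ s$ is a section of $\kappa$, since $\kappa\circ(\rho\circ s)=(\kappa\circ\rho)\circ s=\Upsilon\circ s=\mathrm{id}$.

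The main obstacle is the kernel inclusion $\ker(\rho)\subseteq\ker(\Upsilon)$: it hinges on presenting $\check{H}_1(X)$ by well-pointed path-connected polyhedra so that Theorem \ref{wellpointedisomorphismtheorem} can be invoked in each coordinate, and care is needed to arrange that the $\mathrm{HPol}$-expansion is based with basepoints at vertices. A secondary point, needed only to upgrade ``homomorphism'' to ``surjective homomorphism'', is the epimorphism property of $\Upsilon$ for Peano continua, which one can either prove by the realization argument sketched above or quote from the literature.
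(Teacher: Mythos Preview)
Your proof is correct and follows essentially the same approach as the paper: both arguments pass to an $\mathrm{HPol}$-expansion of $X$, invoke Theorem \ref{wellpointedisomorphismtheorem} on each polyhedral stage to identify $\pi_1(J(X_\lambda),e_\lambda)$ with $H_1(X_\lambda)$, cite the surjectivity of $\Upsilon$ for Peano continua, and observe that $\rho\circ s$ splits $\kappa$. The only difference is organizational---the paper builds coordinate maps $\kappa_n=\rho_n^{-1}\circ J(r_n)_\#$ directly and checks compatibility with the bonding maps, whereas you phrase the same content as the kernel inclusion $\ker(\rho)\subseteq\ker(\Upsilon)$; your framing is arguably cleaner since it bypasses the explicit bonding-map verification and does not require arranging that $(r_n)_\#$ be surjective on $\pi_1$.
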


\begin{proof}
Consider $X$ as the inverse limit $\varprojlim_{n}(X_n,r_{n+1,n})$ of compact polyhedra $X_n$ so that the inverse system $(X_n,r_{n+1,n})$ is an $HPol$-expansion for $X$ (see \cite{BF13,MS82} for details). Let $r_n:X\to X_n$ denote the projection maps. Since the compact metric space $X$ is locally path-connected, we may choose the spaces $X_n$ so that each homomorphism $(r_n)_{\#}:\pi_1(X,e)\to \pi_1(X_n,x_n)$ is surjective \cite{BF13}. For each $n\in\bbn$, we have the following commutative square where the vertical morphisms are the surjections induced by the Hurewicz homomorphism.
\[\xymatrix{
H_1(X) \ar@{->>}[r]^-{r_{n\ast}} \ar@{->>}[d]_-{\rho} & H_1(X_n) \ar[d]_-{\cong}^-{\rho_n} \\
\pi_1(\jx,e) \ar@{-->>}[ur]^-{\kappa_n} \ar@{->>}[r]_-{J(r_n)_{\#}} & \pi_1(J(X_n),p_n(e))
}\]
Since $r_n$ induces a surjection on $\pi_1$, both horizontal morphisms are surjective. Moreover, since $X_n$ is well-pointed at all of it's points, $\rho_n$ is an isomorphism (recall Theorem \ref{wellpointedisomorphismtheorem}). Thus $\kappa_n=(\rho_{n}^{-1}\circ J(r_n)_{\#}):\pi_1(\jx,e)\to H_1(X_n)$ is a surjective homomorphism satisfying $\kappa_n\circ \rho=r_{n\ast}$. The naturality of the maps $\rho_n$ give $\rho_n\circ (r_{n+1,n})_{\ast}\circ \kappa_{n+1}=J(r_{n+1,n})_{\#}\circ \rho_{n+1}\circ \kappa_{n+1}=J(r_{n+1,n})_{\#}\circ J(r_{n+1})_{\#}=J(r_n)_{\#}$, which shows that $(r_{n+1,n})_{\ast}\circ \kappa_{n+1}=\rho_{n}^{-1}\circ J(r_{n})_{\#}=\kappa_n$. Therefore, we have a natural induced homomorphism $\kappa:\pi_1(\jx,e)\to \varprojlim_{n}H_1(X_n)=\check{H}_1(X_n)$ to the inverse limit. Our construction of $\kappa$ ensures that $\kappa\circ \rho=\Upsilon$. Since $X$ is a Peano continuum, $\Upsilon :H_1(X)\to \check{H}_1(X)$ is surjective \cite{EKSingularSurj}. Therefore, $\kappa$ is surjective. For the final statement, note that if $\Upsilon$ has a section $s:\check{H}_1(X)\to H_1(X)$, then $\rho\circ s$ is a section of $\kappa$.
\end{proof}

It is known that when $X$ is a one-dimensional Peano continuum, the homomorphism $\Upsilon$ splits \cite{Edasingularonedim16}. Hence, we have the following corollary.

\begin{corollary}
If $X$ is a one-dimensional Peano continuum, then $\check{H}_1(X)$ is a summand of $\pi_1(\jx,e)$.
\end{corollary}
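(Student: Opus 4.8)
The plan is to read the corollary as an immediate harvest of Theorem \ref{kappahomomorphism} together with the one known input quoted just above it, so almost all the work has already been done. First I would observe that a one-dimensional Peano continuum is in particular a Peano continuum, so Theorem \ref{kappahomomorphism} applies and furnishes a natural surjective homomorphism $\kappa:\pi_1(\jx,e)\to\check{H}_1(X)$ satisfying $\kappa\circ\rho=\Upsilon$, where $\rho:H_1(X)\to\pi_1(\jx,e)$ is the epimorphism from Corollary \ref{rhocorollary} and $\Upsilon:H_1(X)\to\check{H}_1(X)$ is the canonical comparison homomorphism. Next I would invoke the cited fact that for a one-dimensional Peano continuum $X$ the homomorphism $\Upsilon$ splits, i.e.\ admits a section $s:\check{H}_1(X)\to H_1(X)$ with $\Upsilon\circ s=\mathrm{id}$. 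By the final clause of Theorem \ref{kappahomomorphism}, this forces $\kappa$ to split; concretely, $\rho\circ s$ is a section of $\kappa$ since $\kappa\circ(\rho\circ s)=(\kappa\circ\rho)\circ s=\Upsilon\circ s=\mathrm{id}$.

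It then remains only to translate ``split epimorphism'' into ``direct summand.'' Since $\jx$ is a pre-$\Delta$-monoid (Lemma \ref{jxisapredeltamonoidlemma}), the group $\pi_1(\jx,e)$ is abelian, so a split surjection $\kappa:\pi_1(\jx,e)\to\check{H}_1(X)$ with section $t=\rho\circ s$ yields the internal direct sum decomposition $\pi_1(\jx,e)=\ker(\kappa)\oplus t(\check{H}_1(X))$, and $t$ restricts to an isomorphism $\check{H}_1(X)\xrightarrow{\;\cong\;}t(\check{H}_1(X))$. Hence $\check{H}_1(X)$ is a summand of $\pi_1(\jx,e)$, which is exactly the assertion.

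I do not anticipate any genuine obstacle here: the substantive content — constructing $\kappa$ compatibly with $\rho$ and $\Upsilon$, and the splitting of $\Upsilon$ for one-dimensional Peano continua — is packaged entirely in Theorem \ref{kappahomomorphism} and the quoted result of Eda. The only point requiring the smallest care is to make sure the section of $\kappa$ is obtained by composing the section of $\Upsilon$ with $\rho$ on the correct side, and to note commutativity of $\pi_1(\jx,e)$ so that the split short exact sequence of abelian groups actually delivers a direct sum decomposition rather than merely a retract.
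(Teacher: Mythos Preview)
Your proposal is correct and follows exactly the same approach as the paper: the corollary is obtained immediately from Theorem \ref{kappahomomorphism} together with Eda's result that $\Upsilon$ splits for one-dimensional Peano continua, which forces $\kappa$ to split via the section $\rho\circ s$. The paper states this in one sentence without writing out the direct-sum translation, but your added remarks about abelianness of $\pi_1(\jx,e)$ and the explicit verification that $\rho\circ s$ is a section of $\kappa$ are correct elaborations of precisely the intended argument.
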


\begin{example}[Shrinking Wedges]
Let $\{X_k\}_{k\in\bbn}$ be a sequence of based, connected, compact CW-complexes and $X=\wt{\bigvee}_{k}X_k$ be the ``shrinking wedge" of this sequence with wedge point $e\in X$. Then $X$ is a Peano continuum and it is well-known that $\check{H}_1(X)\cong \prod_{k\in\bbn}H_1(X_k)$ \cite{Edafreesigmaproducts}. We show that $\pi_1(J(X),e)\cong \prod_{k\in\bbn}H_1(X_k)$ by showing the epimorphism $\kappa$ from Theorem \ref{kappahomomorphism} is injective. Let $r_k:X\to X_k$ be the canonical projections and suppose $\alpha\in \Omega(J(X),e)$ is a non-constant loop such that $\kappa([\alpha])=0$. We may assume $Im(\alpha)\subseteq X$, which gives that $r_k\circ\alpha$ is null-homologous in $X_k$ for all $k\in\bbn$. Applying Lemma \ref{generalpermutationlemma} to $(J(X),e)$ and $\mci(\alpha^{-1}(e))$, we see that $\alpha$ is homotopic in $J(X)$ to an infinite concatenation $\beta=\prod_{k=1}^{\infty}\beta_k$ where $\beta_k$ is a null-homologous loop in $X_k$. Since $\pi_1(X,e)$ canonically inject's into $\check{\pi}_1(X,e)$ \cite{MM}, it follows that $X$ is homotopically Hausdorff \cite[Lemma 2.1]{CMRZZ08}. Hence, we may replace each $\beta_k$ with a homotopic finite concatenation of commutator loops in $X_k$ without changing the homotopy class of $\beta$. Now, we may perform an infinite shuffle identical to that in the proof of (3) $\Rightarrow$ (1) in Theorem \ref{transfinitecharacteriationtheorem} to see that $\beta$ is homotopic in $\jx$ to an infinite concatenation $\gamma$ of consecutive inverse pairs in $X$. Such a loop $\gamma$ is null-homotopic in $X$ and hence $[\alpha]=[\gamma]=0$ in $\pi_1(\jx,e)$.
\end{example}

Our final two examples include spaces with trivial first \v{C}ech homology.

\begin{example}[Harmonic Archipelago]\label{exampleha}
The harmonic archipelago $\bbh\bba$ is the space obtained by attaching a 2-cell $e_n$ to $\bbh$ along the loop $\ell_{n}\cdot\ell_{n+1}^{-}$ for all $n\in\bbn$. Although $\pi_1(\bbh\bba,b_0)$ is uncountable and locally free \cite{hojkaHA}, every loop $\alpha\in\Omega(\bbh\bba,b_0)$ is homotopic to a loop in every neighborhood of $b_0$. Let $A_1=\bbh$ and $A_m=\bbh\cup e_1\cup e_2\cup\cdots \cup e_{m-1}$ so that $\{A_m\}$ is a $k_{\omega}$-decomposition for $\bbh\bba$. By Lemma \ref{komegalemma}, $J(\bbh\bba)\cong \varinjlim_{m} J(A_m)$ and $\{J_m(A_m)\}$ is a $k_{\omega}$-decomposition of $J(\bbh\bba)$. In particular, every loop and null-homotopy in $J(\bbh\bba)$ must have image in $J(A_m)$ for some $m\in\bbn$. Let $Y=J(\bbh)\cup\bbh\bba$, that is, the space obtained by attaching the 2-cells $e_n$ to $J(\bbh)$ by the attaching maps $\sigma\circ(\ell_{n}\cdot\ell_{n+1}^{-}):I\to \bbh\to J(\bbh)$. The map $\sigma:\bbh\bba\to J(\bbh\bba)$ factors as $\sigma=i_1\circ i_2$ for inclusions $i_2:\bbh\bba\to Y$ and $i_1:Y\to J(\bbh\bba)$. Since the induced homomorphism $\sigma_{\#}:\pi_1(\bbh\bba,b_0)\to \pi_1(J(\bbh\bba),b_0)$ is surjective by Theorem \ref{surjectivitytheorem}, $(i_1)_{\#}:\pi_1(Y,b_0)\to \pi_1(J(\bbh\bba),b_0)$ is surjective. We will show that $(i_1)_{\#}$ is an isomorphism. 

First, let $r_m:\bbh\to \bbh_{\geq m}$ be the retraction satisfying $f\circ\ell_i=\ell_m$ for $1\leq i\leq m-1$ and consider the following induced diagram where $i_3,i_4,i_5,i_6$ are the indicated inclusions.
\[\xymatrix{
\pioneh \ar[d]_-{(r_m)_{\#}} \ar@{->>}@/^4pc/[drr]^-{(i_5)_{\#}} \ar@{->>}[r]^-{\sigma_{\#}} & \pi_1(J(\bbh),b_0) \ar[d]_-{J(r_m)_{\#}} \ar@{->>}[dr]^-{(i_4)_{\#}}\\
\pi_1(\bbh_{\geq m},b_0) \ar@/_2pc/[rr]_-{(i_6)_{\#}} \ar@{->>}[r]_-{\sigma_{\#}} & \pi_1(J(\bbh_{\geq m}),b_0) \ar[r]_-{(i_3)_{\#} } & \pi_1(Y,b_0)
}\]
The top and bottom triangles and the square obviously commute. The outermost triangle commutes since $i_6\circ r_m\simeq i_5$ for any $m\in\bbn$. The last triangle commutes, i.e. $(i_3)_{\#}\circ J(r_m)_{\#}=(i_4)_{\#}$, by the surjectivity of the top $\sigma_{\#}$ and the commutativity of the other subdiagrams. Note that since $(i_4)_{\#}$ is surjective by the construction of $Y$, $(i_3)_{\#}$ is also surjective for all $m\in \bbn$.

Now suppose $\beta\in \Omega(J(\bbh_{\geq m}),b_0)$ is such that $i_1\circ i_3\circ\beta$ is null-homotopic in $J(\bbh\bba)$ Then $i_1\circ i_3\circ\beta$ is null-homotopic in $ J(A_m)$ for some $m\in\bbn$. We have the following commutative square where $i_7,i_8$ are the indicated inclusions.
\[\xymatrix{
\pi_1(J(\bbh_{\geq m})) \ar[r]^-{J(i_7)_{\#}}_-{\cong} \ar[d]_-{(i_3)_{\#}} & \pi_1(J(A_m),b_0) \ar[d]^-{J(i_8)_{\#}} \\
\pi_1(Y,b_0) \ar[r]_-{(i_1)_{\#}} & \pi_1(J(\bbh\bba),b_0)
}\]
Since $i_7:\bbh_{\geq m}\to A_m$ is a based homotopy equivalence, Lemma \ref{homotopyequivalenlemma} gives that $J(i_7)_{\#}$ is an isomorphism. Therefore, since $J(i_7)_{\#}([\beta])=1$, we have $[\beta]=1$ and thus $[i_3\circ\beta]=1$. Since $(i_3)_{\#}:\pi_1(\bbh_{\geq m},b_0)\to\pi_1(Y,b_0)$ is surjective, the injectivity of $(i_1)_{\#}$ follows.

By Theorem \ref{jhtheorem}, we may identify $\pi_1(J(\bbh),b_0)$ with $\bbz^{\bbn}$ where $[\sigma\circ\ell_n]$ corresponds to the unit vector $\mathbf{e}_n\in \bbz^{\bbn}$. Therefore, $\pi_1(Y,b_0)$ is isomorphic to the quotient $\bbz^{\bbn}/N$ where $N=\langle \mathbf{e}_n-\mathbf{e}_{n+1}\mid n\in\bbn\rangle$ for all $j,k\in\bbn$. It is straightforward to check that there is an isomorphism $\bbz^{\bbn}/\oplus_{\bbn}\bbz\to\bbz^{\bbn}/N$ induced by the homomorphism $\bbz^{\bbn}\to \bbz^{\bbn}$, $(a_1,a_2,a_3,\dots)\mapsto (a_1,a_2-a_1,a_3-a_2,\dots)$. We conclude that $\pi_1(J(\bbh\bba),b_0)\cong \pi_1(Y,b_0)\cong \bbz^{\bbn}/\oplus_{\bbn}\bbz$. Additionally, since $\bbh\bba$ is not homotopically Hausdorff at $b_0$ and $\sigma_{\#}:\pi_1(\bbh\bba,b_0)\to \pi_1(J(\bbh\bba),b_0)$ is non-trivial, $J(\bbh\bba)$ is not homotopically Hausdorff at $b_0$. It follows from Proposition \ref{hhausprop} that $J(\bbh\bba)$ is not homotopically Hausdorff at any of its points.
\end{example}

\begin{example}[Griffiths Twin Cone]
Let $O=\{2k-1\mid k\in\bbn\}$ and $E=\{2k\mid k\in\bbn\}$ so that $\bbh_{O}$ and $\bbh_{E}$ are the sub-Hawaiian earrings of odd and even indexed circles respectively. Let $C\bbh_{O}$ be the cone over $\bbh_{O}$ in $\bbr^3$ with vertex $(0,0,1)$ and $C\bbh_{E}$ be the cone over $\bbh_{E}$ in $\bbr^3$ with vertex $(0,0,-1)$. The space $\bbg=C\bbh_{O}\cup C\bbh_{E}$ is often called the Griffiths Twin Cone and is known to have uncountable fundamental group despite being the one-point union of two contractible spaces \cite{Griffiths}. Let $i:\bbh\to\bbg$ be the inclusion and consider the following diagram where $g$ is the isomorphism from Theorem \ref{jhtheorem}. 
\[\xymatrix{
\pioneh \ar@{->>}@/^2.1pc/[rr]^-{\sigma_{\#}} \ar[d]_-{i_{\#}} \ar[r]^-{\eta_{\#}} & \bbz^{\bbn} \ar[r]^-{g}_-{\cong} & \pi_1(J(\bbh),b_0) \ar[d]^-{J(i)_{\#}} \\
\pi_1(\bbg,b_0) \ar@{->>}[rr]_-{\sigma_{\#}} && \pi_1(J(\bbg),b_0)
}\]
The homomorphism $i_{\#}$ is surjective by the van Kampen Theorem and thus $J(i)_{\#}$ is surjective by Corollary \ref{inducedsurjectioncorollary}. Consider an arbitrary element $(a_1,a_2,a_3,a_4,\dots)\in\bbz^{\bbn}$ and define the corresponding loop $\alpha=\left(\prod_{k=1}^{\infty}\ell_{2k-1}^{a_{2k-1}}\right)\cdot \left(\prod_{k=1}^{\infty}\ell_{2k}^{a_{2k}}\right)$ in $\bbh$. Then $\eta_{\#}([\alpha])=(a_1,a_2,a_3,a_4,\dots)$. However, $i_{\#}([\alpha])=1$ in $\pi_1(\bbg,b_0)$ and thus $J(i)_{\#}\circ g((a_1,a_2,a_3,a_4,\dots))=\sigma_{\#}(i_{\#}([\alpha]))=1$. This proves that the surjection $J(i)_{\#}\circ g$ is trivial. We conclude that $\pi_1(J(\bbg),b_0)$ is the trivial group.
\end{example}

\begin{remark}
It remains an open question whether or not $\pi_1(\bbh\bba,b_0)$ and $\pi_1(\bbg,b_0)$ are isomorphic groups \cite[Problem 1.1]{KarimovRepovs}. In the previous two examples, we have shown that $\pi_1(J(\bbh\bba),b_0)\cong \bbz^{\bbn}/\oplus_{\bbn}\bbz$ and $\pi_1(J(\bbg),b_0)=1$. We encourage the reader to check that $\pi_1(J(\bbh\bba),b_0)$ and $\pi_1(J(\bbg),b_0)$ are isomorphic to the respective transfinite abelianizations of $\pi_1(\bbh\bba,b_0)$ and $\pi_1(\bbg,b_0)$ at $\{b_0\}$ (in the sense of Remark \ref{infinitaryabelianization}). In this way, we have identified a distinction between the natural infinitary structure each group inherits from the loop space. Since the referenced open problem only involves the underlying groups and not these infinitary operations, the fact that $\pi_1(J(\bbh\bba),b_0)\ncong \pi_1(J(\bbg),b_0)$ does not distinguish the group-isomorphism types of $\pi_1(\bbh\bba,b_0)$ and $\pi_1(\bbg,b_0)$.
\end{remark}

Recall that the natural map $\lambda:X\to \Omega^{\ast}(\Sigma X)$ to the Moore loop space induces a continuous homomorphism $\wt{\lambda}:J(X)\to \Omega^{\ast}(\Sigma X)$ such that $\wt{\lambda}\circ\sigma=\lambda$. Additionally, by Remark \ref{infinitaryabelianization}, there is a canonical surjection $\Lambda:\pi_1(X,e)/C_{\tau}(\{e\})\to \pi_1(J(X),e)$ on the transfinite abelianization at $\{e\}$ induced by $\sigma_{\#}$. Hence, we have the following commutative diagram.
\[\xymatrix{
& \pi_1(X,e) \ar@{->>}[dl] \ar@{->>}[d]_-{\sigma_{\#}} \ar[dr]^-{\lambda_{\#}}\\
\pi_1(X,e)/C_{\tau}(\{e\}) \ar@{->>}[r]_-{\Lambda} & \pi_1(J(X),e) \ar[r]_-{\wt{\lambda}_{\#}} & \pi_2(\Sigma X,e_0)
}\]
While $\wt{\lambda}_{\#}$ is an isomorphism on $\pi_1$ when $X$ is well-pointed (from the proof Theorem \ref{wellpointedisomorphismtheorem}), the authors find the results in this paper to be evidence that both $\wt{\lambda}_{\#}$ and $\Lambda$ are isomorphisms for all path-connected Hausdorff $X$.

\begin{problem}\label{endproblem}
For a path-connected Hausdorff space $X$ and point $e\in X$, consider the following three groups:
\begin{enumerate}
\item $\pi_1(J(X),e)$
\item $\pi_2(\Sigma X,e_0)$
\item $\pi_1(X,e)/C_{\tau}(\{e\})$.
\end{enumerate}
Must these groups all be naturally isomorphic to each other? 
\end{problem}

\end{document}